\documentclass[12pt]{book}

\usepackage[english]{babel}
\usepackage{csquotes}

\usepackage{amsmath, amsfonts, amsthm, amssymb, mathtools}

\usepackage{emptypage}

\usepackage{tikz-cd}

\usepackage[hidelinks]{hyperref}
\hypersetup{
  pdftitle={Factor-Closure of Pro-Nilsystems via Local Rigidity of Nilsystems},
  pdfauthor={Pauwel Van Den Eeckhaut}
}
\usepackage[capitalise]{cleveref}
\usepackage[shortlabels]{enumitem}
\setlist{nosep}

\usepackage{MA_Titlepage}

\usepackage[
  a4paper,
  inner=3.6cm,
  outer=3.0cm,
  top=3.3cm,
  bottom=3.3cm,
  headheight=14pt
]{geometry}
\usepackage[T1]{fontenc}
\usepackage{libertinus}
\usepackage[libertinus]{newtxmath}

\usepackage{microtype}
\usepackage{xcolor}

\usepackage[url=false, doi=false, natbib=true]{biblatex}
\newtheoremstyle{thesisplain}{\parskip}{\parskip}{\itshape}{}{\bfseries}{.}{.5em}{}
\newtheoremstyle{thesisdefinition}{\parskip}{\parskip}{\itshape}{}{\normalfont}{.}{.5em}{}

\theoremstyle{thesisplain}
\newtheorem{theorem}{Theorem}[chapter]
\newtheorem*{theorem*}{Theorem}
\newtheorem{corollary}[theorem]{Corollary}
\newtheorem{proposition}[theorem]{Proposition}
\newtheorem*{proposition*}{Proposition}
\newtheorem{lemma}[theorem]{Lemma}

\theoremstyle{thesisdefinition}
\theoremstyle{definition} 
\newtheorem{definition}[theorem]{Definition}
\newtheorem{example}[theorem]{Example}
\newtheorem{remark}[theorem]{Remark}
\newtheorem*{assumption}{Standing assumption}

\authornew{Pauwel Van Den Eeckhaut}
\geburtsdatum{August 13, 2003}
\geburtsort{Aalst, Belgium} 
\date{August 11, 2026}

\betreuer{Advisor: Prof. Dr. Asgar Jamneshan}
\zweitgutachter{Second Advisor: Prof. Dr. Christoph Thiele}

\institut{Mathematisches Institut}
\title{Factor-Closure of Pro-Nilsystems via Local Rigidity of Nilsystems}
\ausarbeitungstyp{Master's Thesis  Mathematics}

\newcommand\C{\ensuremath{\mathbb{C}}}
\newcommand\R{\ensuremath{\mathbb{R}}}
\newcommand\Q{\ensuremath{\mathbb{Q}}}
\newcommand\Z{\ensuremath{\mathbb{Z}}}
\newcommand\N{\ensuremath{\mathbb{N}}}

\newcommand\T{\ensuremath{\mathbb{T}}}
\newcommand\D{\ensuremath{\mathbb{D}}}

\newcommand\tp{\ensuremath{\mathrm{top}}}

\newcommand{\aee}{\text{-a.e.}}

\newcommand{\wt}{\widetilde}
\newcommand{\wh}{\widehat}

\newcommand\mcx{\ensuremath{\mathcal{X}}}
\newcommand\mcy{\ensuremath{\mathcal{Y}}}
\newcommand\mcz{\ensuremath{\mathcal{Z}}}
\newcommand\mcw{\ensuremath{\mathcal{W}}}
\newcommand\mca{\ensuremath{\mathcal{A}}}
\newcommand\mcb{\ensuremath{\mathcal{B}}}
\newcommand\mcc{\ensuremath{\mathcal{C}}}
\newcommand\mcd{\ensuremath{\mathcal{D}}}
\newcommand\mcf{\ensuremath{\mathcal{F}}}

\newcommand\si{\ensuremath{^{(i)}}}

\newcommand{\dd}{\, \mathrm{d}}

\newcommand{\ind}[1]{\mathbf{1}_{#1}}

\DeclareMathOperator{\Id}{Id}

\DeclareMathOperator{\pr}{Pr}
\DeclareMathOperator{\Poly}{Poly}

\DeclareMathOperator{\supp}{supp}
\DeclareMathOperator{\Span}{span}
\DeclareMathOperator{\diam}{diam}

\DeclareMathOperator{\E}{\mathbb{E}}

\let\phi\varphi
\let\epsilon\varepsilon

\newcommand\restrict[2]{{
  \left.\kern-\nulldelimiterspace 
  #1 
  \right|_{#2} 
  }}

\begin{document}

\maketitle

\frontmatter
\pagenumbering{gobble}


\cleardoublepage
\chapter*{Acknowledgments}

I would like to thank my advisor, Asgar Jamneshan, for introducing me to the topic of this thesis and for his mathematical guidance as the work developed. 
Working together on the paper underlying the main results of this thesis has taught me a great deal, especially about writing and presenting mathematics.
I am also grateful for his careful reading of this thesis and for his detailed comments. 

I want to thank Christoph Thiele for his interest in this work, for his thoughtful questions, and in particular for suggesting that I work out the material in \cref{chap:weak_structure_thrm}, which considerably strengthened the thesis. 

Finally, I would like to thank my family and friends, and in particular my girlfriend, for their patience and support throughout this project.  

\vspace{1.5cm}
\begin{flushright}
\emph{Bonn, August 2026}\\
Pauwel Van Den Eeckhaut
\end{flushright}

\tableofcontents

\mainmatter

\chapter{Introduction}

Pro-nilsystems arise naturally in ergodic theory through the study of multiple ergodic averages of the form
\[
\frac{1}{N} \sum_{n=1}^N T^n f_1 \cdot T^{2n} f_2 \cdots T^{sn} f_s,  
\]
where $(X, \mu, T)$ is a measure-preserving system and $f_1, \dots, f_s \in L^\infty(X, \mu)$. 
Such averages were originally introduced by \citet{furstenberg1977ergodic} in his ergodic-theoretic proof of Szemerédi's theorem, which states that every subset of the integers of positive upper density contains arbitrarily long arithmetic progressions.  
Furstenberg showed, through what is now known as the Furstenberg correspondence principle, that Szemerédi's theorem is equivalent to a multiple recurrence statement for measure-preserving systems. 
He then proved this statement by considering the above averages in the case where each $f_j$ is the indicator function of a fixed set. 
The question of convergence in $L^2(X, \mu)$ of these averages for general $s \geq 1$ remained open for nearly three decades. 
It was eventually answered affirmatively by \citet{host2005nonconventional}, and independently by \citet{ziegler2007universal}, after earlier work on special cases by \citet{furstenberg1977ergodic} and by Conze and Lesigne \citep{conze1984theoremes,conze1987theoreme,conze1988theoreme}.

The proof by Host and Kra established far more than convergence. 
They developed a theory of cubical structures and associated seminorms, now called the Gowers--Host--Kra seminorms.
For an ergodic measure-preserving system $(X, \mu, T)$ and $f \in L^\infty(X, \mu)$, these seminorms can be defined recursively by 
\[
\|f \|_{U^1(X)} := \left| \int_X f \dd\mu \right|, 
\quad 
\|f \|_{U^{s+1}(X)}^{2^{s+1}} := \lim_{N \to \infty} \frac{1}{N} \sum_{n=1}^N \|\overline{f} \cdot T^n f \|_{U^s(X)}^{2^s}. 
\]
The fact that these limits exist and define seminorms is part of the theory; see \citep[Chapter 15]{eisner2025journey}. 
An equivalent definition using averages over so-called dynamical parallelepipeds is recalled in \cref{chap:weak_structure_thrm}.  

Using these seminorms, Host and Kra constructed for every ergodic system $(X, \mu, T)$ a sequence of factors
\[
\cdots \longrightarrow Z_s \longrightarrow Z_{s-1} \longrightarrow \cdots \longrightarrow Z_1,  
\]
called the structure factors. 
The factor $Z_s$ is characterized by the property that, for $f \in L^\infty(X, \mu)$, 
\[
\E_\mu(f \mid Z_s) = 0 \quad \Longleftrightarrow \quad  \| f \|_{U^{s+1}(X)} = 0. 
\]
The first factor $Z_1$ agrees with the classical Kronecker factor, that is, the maximal factor isomorphic to a compact abelian group rotation. 
The higher structure factors may then be viewed as non-abelian generalizations of it. 
An ergodic system is said to be of order $s$ if it agrees with its structure factor $Z_s$, equivalently if the seminorm $\| \cdot \|_{U^{s+1}(X)}$ is a norm on $L^\infty(X, \mu)$. 
In particular, the structure factor $Z_s$ is itself of order $s$. 

The structure factors are characteristic for multiple ergodic averages, in the sense that the $s$-term averages above are controlled by the factor $Z_{s-1}$.
More precisely, 
\[
\lim_{N \to \infty} 
\left\| 
    \frac{1}{N} \sum_{n=1}^N \left(
    \prod_{j=1}^s T^{jn} f_j - \prod_{j=1}^s T^{jn} \E_\mu(f_j \mid Z_{s-1})
    \right)
\right\|_{L^2(\mu)} = 0,  
\]
so that each $f_j$ may be replaced by its conditional expectation onto $Z_{s-1}$ without changing the asymptotic behavior of the average. 
This reduces the question of convergence from arbitrary systems to the structure factors.  
Independently, \citet{ziegler2007universal} constructed universal characteristic factors by a different method. 
The equivalence of these two descriptions was later shown by \citet{leibman2005host}. 

The power of this theory lies in the fact that the structure factors admit a concrete algebraic description. 
An $s$-step nilsystem is a measure-preserving system $(X, \mu, T)$ where $X = G/\Gamma$ is an $s$-step nilmanifold, $\mu$ is its Haar probability measure, and $T \colon X \to X$ is the translation $T(x) = \tau \cdot x$ for some fixed $\tau \in G$. 
An $s$-step pro-nilsystem is an inverse limit, in the measure-theoretic sense, of $s$-step nilsystems.
This is all explained in detail in \cref{chap:niltheory}.  
The Host--Kra structure theorem characterizes systems of order $s$ precisely. 

\begin{theorem*}[Host--Kra structure theorem]
    Let $s \geq 1$. An ergodic measure-preserving system is of order $s$ if and only if it is isomorphic to an $s$-step pro-nilsystem.  
\end{theorem*}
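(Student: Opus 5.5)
Both directions would be handled separately. The reverse direction is the easier one: an $s$-step pro-nilsystem is of order $s$. First, for a single ergodic $s$-step nilsystem $(G/\Gamma,\mu,\tau)$, I would show that $\|\cdot\|_{U^{s+1}}$ is a norm. The key input is the description of the Host--Kra cube measure $\mu^{[s+1]}$ as the Haar measure on the cube nilmanifold $G^{[s+1]}/\Gamma^{[s+1]}$. Because $G_{s+1}=\{e\}$, the last coordinate of a point in this cube nilmanifold is determined by the other $2^{s+1}-1$ coordinates. From this rigidity, if $\|f\|_{U^{s+1}}=0$ one gets $\int \prod_{\omega} \mathcal{C}^{|\omega|} f(x_\omega)\,d\mu^{[s+1]}=0$, and disintegrating over the last coordinate forces $f=0$. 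Second, I would pass to inverse limits. The seminorm of $f$ equals that of $\E(f\mid Y_i)$ up to an error tending to zero, because $\|\cdot\|_{U^{s+1}}\le\|\cdot\|_{L^{2^{s+1}}}$. Since $L^\infty$ functions measurable with respect to the union of the $Y_i$ are dense, the conditional expectations onto each $Y_i$ detect $f$, so $f=0$.

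The forward direction is the real content. I would follow the Host--Kra route by induction on $s$. For $s=1$, a system of order $1$ has discrete spectrum, so Halmos--von Neumann makes it an ergodic rotation on a compact abelian group. Such a group is an inverse limit of finite-dimensional tori times finite groups, which are $1$-step nilsystems. For the inductive step, $Z_s$ is a compact abelian group extension of $Z_{s-1}$ by a cocycle $\rho\colon Z_{s-1}\to U$. By induction $Z_{s-1}$ is an $(s-1)$-step pro-nilsystem, and $\rho$ is of type $s-1$, meaning that $\Delta^{[s]}\rho$ is a coboundary on the cube system. Reducing $U$ to a finite-dimensional torus, or to a Lie quotient, via an inverse-limit argument on the dual group, one may assume $Z_{s-1}$ is a genuine nilsystem $G/\Gamma$ after passing to a finite-level factor; this is allowed because type-$(s-1)$ cocycles are measurable with respect to some finite level up to cohomology.

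The main obstacle is building the nilpotent Lie group for the extension. I would consider the group $\mathcal{G}$ of transformations of $Z_s$ that commute with the cube structure: lifts of translations by $g\in G$ together with measurable maps $F\colon Z_{s-1}\to U$ satisfying $\rho(gx)-\rho(x)=F(Tx)-F(x)+c$. The type condition gives that $\Delta_g\rho$ is cohomologous to a cocycle of type $s-2$, and a Moore--Schmidt-type argument combined with the induction hypothesis makes the solutions $F$ polynomial of degree $<s$. Consequently $\mathcal{G}$ is a locally compact $s$-step nilpotent group acting transitively. Showing that its identity component is Lie, so that the stabiliser is a cocompact discrete subgroup, requires the finite-dimensionality reductions above. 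This is where I expect to spend most effort. I would try first to bound the degree of polynomial maps into the torus, which yields local compactness and a Lie structure via Gleason--Yamabe.
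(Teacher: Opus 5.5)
Your structural direction follows the original Host--Kra induction: pass through $Z_{s-1}$, take a cocycle $\rho$ with $\Delta^{[s]}\rho$ a coboundary, and build a group $\mathcal G$ of cube-preserving lifts. This is a genuinely different route from the paper's, which never analyses cocycles. The paper only cites the easy direction. It then proves the weak structure theorem (an ergodic system of order $s$ is a factor of an ergodic $s$-step pro-nilsystem) from the Green--Tao--Ziegler inverse theorem, by approximating orbits of the dual functions $\mathcal{D}_{s+1}f$ by nilsequences and matching all joint moments. Finally it applies its factor-closure theorem, which rests on the pro-nilsystem tower, the induced group extension lemma and local rigidity of self-joinings. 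Your route stays inside ergodic theory; the paper's isolates the structural content into a combinatorial input plus a statement intrinsic to nilsystems.

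As written, however, your forward direction is a roadmap rather than a proof. Its two load-bearing steps are only asserted. The first is that such a torus-valued cocycle on a pro-nilsystem is cohomologous to one measurable with respect to a finite stage. The second is that $\mathcal G$ is a nilpotent Lie group acting transitively with discrete cocompact stabilisers. Note also that Gleason--Yamabe by itself only yields an open pro-Lie subgroup, so a no-small-subgroups input for the vertical part is still needed. Separately, your equation $\rho(gx)-\rho(x)=F(Tx)-F(x)+c$ says that $\tilde g(x,u)=(gx,u+F(x))$ commutes with $T_\rho$ up to a vertical rotation. That is only meaningful when $g$ commutes with $\tau$. In general $\tilde g T_\rho\tilde g^{-1}$ lifts $g\tau g^{-1}\neq\tau$, so $\mathcal G$ must be defined via invariance of the cube measures of the extension, and you must show that enough $g$ admit such lifts.

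In the easy direction there is a concrete gap in the inverse-limit step. From $\|f\|_{U^{s+1}}=0$ and $\|\cdot\|_{U^{s+1}}\le\|\cdot\|_{L^{2^{s+1}}}$ you only obtain $\|\E(f\mid Y_i)\|_{U^{s+1}}\to 0$. The norms $\|\cdot\|_{U^{s+1}(Y_i)}$ are not uniformly comparable to $L^2$, so this does not give $\E(f\mid Y_i)=0$. The fact that the $\E(f\mid Y_i)$ detect $f$ therefore does not close the argument. The missing ingredient is monotonicity under conditional expectation. For a factor $\pi\colon X\to Y$ and $g=\E(f\mid Y)$, the function $\mathcal{D}_{s+1}(g\circ\pi)=(\mathcal{D}_{s+1}g)\circ\pi$ is $Y$-measurable, so by Gowers--Cauchy--Schwarz
\[
\|g\|_{U^{s+1}}^{2^{s+1}}=\int_X f\cdot\mathcal{D}_{s+1}(g\circ\pi)\,\mathrm{d}\mu\le\|f\|_{U^{s+1}}\,\|g\|_{U^{s+1}}^{2^{s+1}-1}.
\]
Hence $\E(f\mid Y_i)=0$ for every $i$, and $f=0$.

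Similarly, in the single-nilsystem step, ``disintegrating over the last coordinate'' does not by itself use the hypothesis. You need Gowers--Cauchy--Schwarz to show that $f(x_{\mathbf 1})$ is orthogonal in $L^2(\mu^{[s+1]})$ to every product $\prod_{\omega\neq\mathbf 1}g_\omega(x_\omega)$. Only then does the fact that $x_{\mathbf 1}$ is a function of the other coordinates give $\|f\|_{L^2(\mu)}=0$.
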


The two implications in this theorem are of a rather different character.  
The implication that an ergodic $s$-step pro-nilsystem is of order $s$ is relatively straightforward to prove: one verifies that ergodic $s$-step nilsystems are of order $s$, and then passes to the inverse limit; see \citep[Chapter 12, Corollary 19]{host2018nilpotent}. 
The converse implication is the structural part of the theorem: one has to recover the concrete form of a pro-nilsystem from the abstract order $s$ condition. 
This is the direction that requires the main machinery of the Host--Kra theory. 
As a consequence of the theorem, the structure factor $Z_s$ of an ergodic system can equivalently be described as its maximal factor which is an $s$-step pro-nilsystem; see \citep[Chapter 16]{host2018nilpotent}. 

In this way, pro-nilsystems arise as the algebraic models for the characteristic factors controlling multiple ergodic averages. 
This motivates the study of pro-nilsystems in their own right. 
It is therefore a natural question whether the class of $s$-step pro-nilsystems is closed under taking factors. 

For nilsystems, the corresponding question was resolved by \citet{parry1973dynamical}, who proved that factors of ergodic nilsystems are again nilsystems.
We record this result below in \cref{thrm:factor_ergodic_nilsystem_is_nilsystem}.
For pro-nilsystems, the Host--Kra structure theorem also gives an affirmative answer. 
Indeed, it is known that any factor of a system of order $s$ is again of order $s$; see \citep[Chapter 9, Proposition 17]{host2018nilpotent}. 
Combining this with the structure theorem, we obtain the following result.  

\begin{theorem*}[Factor-closure of ergodic pro-nilsystems]
    Every factor of an ergodic $s$-step pro-nilsystem is itself an $s$-step pro-nilsystem. 
\end{theorem*}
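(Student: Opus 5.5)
The statement in the introduction follows by combining three ingredients the paper has already recalled.

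Let $(X,\mu,T)$ be an ergodic $s$-step pro-nilsystem and let $\pi\colon (X,\mu,T)\to (Y,\nu,S)$ be a factor map. I will argue in three steps.

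\emph{Step 1: $X$ is of order $s$.} This is the "easy" direction of the Host--Kra structure theorem. Each ergodic $s$-step nilsystem is of order $s$, and being of order $s$ passes to inverse limits \citep[Chapter 12, Corollary 19]{host2018nilpotent}.

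\emph{Step 2: $Y$ is ergodic and of order $s$.}
\begin{itemize}
\item Ergodicity is inherited because $\pi$ is measure-preserving and intertwines the actions. If $B\subseteq Y$ is $S$-invariant, then $\pi^{-1}(B)$ is $T$-invariant, so $\nu(B)=\mu(\pi^{-1}B)\in\{0,1\}$.
\item The order $s$ property is inherited by \citep[Chapter 9, Proposition 17]{host2018nilpotent}.
\item A concrete way to see the latter: for $f\in L^\infty(Y,\nu)$ one has $\|f\|_{U^{s+1}(Y)}=\|f\circ\pi\|_{U^{s+1}(X)}$. This holds because the recursive definition only involves integrals and the composition $\overline{f}\cdot S^n f$, both of which are preserved under pullback by $\pi$ (by induction on $s$).
\item Consequently, if $\|f\|_{U^{s+1}(Y)}=0$, then $f\circ\pi=0$ almost everywhere, since the seminorm is a norm on $X$. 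Hence $f=0$ $\nu$-almost everywhere, so $\|\cdot\|_{U^{s+1}(Y)}$ is a norm.
\end{itemize}

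\emph{Step 3: conclude.} Apply the "hard" direction of the Host--Kra structure theorem to the ergodic system $Y$ of order $s$. It gives that $Y$ is isomorphic to an $s$-step pro-nilsystem.

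The only genuinely difficult ingredient is the structural direction of the Host--Kra theorem used in Step 3. Taking it as given, as the paper does, the remaining steps are routine. The point needing the most care is checking that the invariance of the seminorms under factor maps holds at every stage of the recursion, including the existence of the limits defining them. This is immediate, because the Cesàro averages on $Y$ and on $X$ coincide term by term.
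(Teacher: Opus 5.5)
Your argument is correct. It is exactly the deduction the paper records in the introduction right before stating this theorem: ergodic $s$-step pro-nilsystems are of order $s$, the order $s$ property passes to factors, and the structural direction of the Host--Kra theorem finishes. Your identity $\|f\|_{U^{s+1}(Y)}=\|f\circ\pi\|_{U^{s+1}(X)}$ is a fine way to see the middle step, since $Y$ is ergodic and so the recursive formula applies on both sides.

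The paper, however, calls this route unsatisfactory, and its actual proof of the statement (\cref{thrm:main}) avoids the Host--Kra machinery altogether. The argument runs as follows.
\begin{enumerate}[(i)]
\item It builds a tower $X=Z_s\to\cdots\to Z_0$ of pro-nilsystem factors whose steps are compact abelian group extensions. This is done by passing the nilmanifold towers to the inverse limit, using Parry's algebraicity of factor maps.
\item Using the factors $W_i$ generated by $Y$ and $Z_i$, together with \cref{lem:induced_group_ext}, a downward induction reduces everything to the case where $\pi$ is an extension by a compact abelian group $K$.
\item In that case, local rigidity of self-joinings (\cref{prop:rigidity_self_joinings}, which rests on the no-small-subnilmanifolds lemma) shows that the vertical rotation $V_u$ descends to the finite stage $X_n$ for all $u$ in an open, hence finite-index, subgroup of $K$.
\item Enlarging $X_n$ by finitely many rotated copies yields $K$-invariant nilsystem factors. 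Their largest common factors with $Y$ are nilsystems by Parry's theorem, and these generate $Y$ after averaging over $K$.
\end{enumerate}

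Your route takes two lines but imports the entire structure theorem. The paper's route uses only intrinsic facts about nilsystems, and that is the whole point of the thesis. In \cref{chap:weak_structure_thrm}, factor-closure is combined with the weak structure theorem to \emph{recover} the structural direction of Host--Kra from the Gowers inverse theorem. Substituting your proof there would make that argument circular.
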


This argument, however, is somewhat unsatisfactory: although the statement only concerns the very concrete class of pro-nilsystems, the proof passes through the full Host--Kra structure theorem and thus relies on all the complicated machinery used to prove it. 
It is reasonable to expect that a direct proof should exist. 
This has been an open problem in the subject for some time. 
As \citeauthor{tao2015weak} writes in \citep{tao2015weak}:
\begin{quote}
    Theorem 4 is, in principle, purely a fact about nilsystems, and should have an independent proof, but this is not known; the only known proofs go through the full machinery needed to prove Theorem 2.
\end{quote}
Here Tao's Theorem 4 corresponds to the factor-closure theorem, while his Theorem 2 corresponds to the Host--Kra structure theorem. 
Similarly, \citeauthor{host2018nilpotent} write in \citep[p. 221]{host2018nilpotent}:
\begin{quote}
    More generally, one can show that any factor of an inverse limit of nilsystems is also an inverse limit of nilsystems. However, unfortunately the only proof that we know for this result relies on the ergodic Structure Theorem stated in Chapter 16, and at this point we are not able to prove this in a more elementary manner.
\end{quote}

The main result of this thesis is an independent proof of the factor-closure theorem; see \cref{thrm:main}. 
Our proof avoids the machinery behind the Host--Kra structure theorem entirely and instead relies only on intrinsic properties of nilsystems. 

The key new ingredient is a local rigidity result for self-joinings of nilsystems. 
Recall that a self-joining of $(X, \mu, T)$ is a $T \times T$-invariant probability measure on $X \times X$ whose coordinate projections are both equal to $\mu$. 
We denote by $J_e(X, X)$ the space of ergodic self-joinings of a nilsystem $X$, equipped with the weak-* topology; see \cref{chap:rigidity_result} for the precise definition. 

\begin{proposition*}[Local rigidity of self-joinings of nilsystems]
    Let $(X, \mu, T)$ be an ergodic nilsystem, and let $\mu_\Delta$ be the diagonal self-joining.  
    Then there exists a neighborhood $U$ of $\mu_\Delta$ in $J_e(X, X)$ such that every $\lambda \in U$ is the graph joining of an automorphism of $(X, \mu, T)$. 
\end{proposition*}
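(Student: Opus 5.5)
The plan is to describe ergodic self-joinings of a nilsystem as Haar measures on sub-nilmanifolds of the product, and then show that sub-nilmanifolds close to the diagonal must themselves be graphs.

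Write $X = G/\Gamma$ and $T(x) = \tau x$. Then $X \times X = (G\times G)/(\Gamma\times\Gamma)$ is a nilmanifold, and $T\times T$ is the translation by $(\tau,\tau)$.

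\textbf{Step 1: describe ergodic joinings.} By Ratner/Leibman-type results for unipotent actions on nilmanifolds, every ergodic $(T\times T)$-invariant measure on $X\times X$ is the Haar measure of a closed orbit $H\cdot (x_1,x_2)$. Here $H$ is a closed subgroup of $G\times G$ containing $(\tau,\tau)$ (after translating to the base point), and $H\cap \mathrm{Stab}$ is cocompact in $H$. The joining condition forces both projections of $H$ to act transitively on $X$, so $\pi_i(H)\,\Gamma$ must be dense, or all of $G$ after reduction to the connected case. The diagonal joining corresponds to $H=\Delta_G$. I would first reduce to $G$ connected and simply connected with $X$ ergodic, using the standard reduction recalled in \cref{chap:niltheory}.

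\textbf{Step 2: local rigidity of the subgroups.} Suppose $\lambda_n \to \mu_\Delta$ weak-* with $\lambda_n = $ Haar on $H_n\cdot (x_n,y_n)$.

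\begin{enumerate}[(a)]
\item \emph{Convergence of orbits.} By equidistribution and nondivergence arguments, one may assume $(x_n,y_n)$ converges to a diagonal point, and $H_n$ converges, in the Chabauty sense, to a subgroup $H_\infty$ with $H_\infty$-orbit of full measure for $\mu_\Delta$.
\item \emph{Control of the dimension.} Since there are only countably many rational subgroups, i.e. closed subgroups meeting $\Gamma\times\Gamma$ in a lattice, I would use a dimension and volume argument: if $\dim H_n > \dim G$, the orbits would have dimension larger than the diagonal. By Mahler-type compactness, limits of such orbits with bounded covolume cannot collapse onto the lower-dimensional diagonal without their volume blowing up. But blow-up of volume would force equidistribution toward a larger sub-nilmanifold, contradicting the convergence to $\mu_\Delta$. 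Thus $\dim H_n = \dim G$ for large $n$.
\item \emph{Graph of an automorphism.} Then $\ker(\pi_1|_{H_n})$ and $\ker(\pi_2|_{H_n})$ are discrete. Their intersections with the lattice, together with closeness to $\Delta_G$, force these kernels to be trivial. So $H_n = \{(g,\phi_n(g))\}$ for a Lie automorphism $\phi_n$ of $G$ with $\phi_n(\Gamma)$ commensurable to, and in fact equal to, $\Gamma$ up to the translation by $y_n$. Invariance under $(\tau,\tau)$ gives $\phi_n(\tau)=\tau$ modulo the appropriate translation.
\end{enumerate}

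The map $x\mapsto y_n\phi_n(x_n^{-1}x)$ is then a measure-preserving map commuting with $T$, and $\lambda_n$ is its graph joining. This gives the neighborhood $U$ by contradiction, since otherwise we could choose $\lambda_n\to\mu_\Delta$ not of graph type.

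\textbf{Main obstacle.} The hardest step is (b): excluding larger-dimensional sub-nilmanifolds accumulating on the diagonal. I would first try a quantitative approach. Project to the abelianization, the Kronecker factor torus, where joinings near the diagonal are handled explicitly: subtori near the diagonal of the same covolume are graphs of torus automorphisms. I would then induct up the lower central series, using that an ergodic nilsystem's sub-nilmanifold is determined by its image on the horizontal torus (Leibman's criterion). If the horizontal projection of $H_n$ is a graph, then $H_n$ has dimension at most $\dim G$.
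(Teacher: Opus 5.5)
There is a genuine gap in Step 2(b)--(c), which is where all the content of the proposition lies. The route you propose for it rests on a false implication. It is not true that if the image of $H_n$ in the horizontal torus is a graph, then $\dim H_n\le\dim G$. Nor does Leibman's criterion say that a subnilmanifold is determined by its horizontal image. It only says that a subnilmanifold surjecting onto the horizontal torus of the ambient nilmanifold is everything.

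The paper's Heisenberg example in \cref{sec:proof_local_rigidity} refutes your claim. The group $H=\{(g,hgh^{-1}c): g\in G,\ c\in C\}$ projects onto the diagonal of $(G/G_2)^2$. Yet $Y=H\cdot(e_X,h\Gamma)\cong X\times\T$ is an ergodic self-joining of dimension $\dim G+1$. The extra dimension lies in a central (structure-group) direction that the horizontal torus cannot see. So closeness to the diagonal must be exploited at every level of the tower, not only at the first.

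The other ingredients of (a)--(b) are asserted rather than proved. These are that Chabauty limits of $H_n$ control weak-* limits of their Haar measures, Mahler-type finiteness, and that ``volume blow-up forces equidistribution on a larger subnilmanifold''. Each would need substantial limit theorems for homogeneous measures on nilmanifolds.

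In (c), triviality of $\ker(\pi_1|_{H_n})$ is not the relevant condition. The fiber $Y_x$ is an orbit of $H_n\cap(\mathrm{Stab}(x)\times G)$, and this can have several points even when $\pi_1|_{H_n}$ is injective. For example, take $H$ to be the graph of a Lie automorphism $\phi$ with $\Gamma\cap\phi^{-1}(\Gamma)$ of finite index in $\Gamma$ but $\phi(\Gamma)\not\subseteq\Gamma$. Excluding fibers made of several nearby points, uniformly in $n$, is again something you do not prove.

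The missing idea is a uniform ``no small subnilmanifolds'' statement, which makes limit, dimension and volume arguments unnecessary. The paper proves (\cref{lem:no_small_subnilmanifolds_averaged}) that there is $\eta>0$ such that every nontrivial subnilmanifold of $X\times X$ with Haar measure $\nu$ satisfies $\iint d(z,z')\dd\nu\dd\nu\ge\eta$. The proof is by induction along the tower. A nontrivial subnilmanifold either projects to a nontrivial one in $Z_{s-1}$, handled by the induction hypothesis plus uniform continuity. Or it lies in a single fiber, where it is the orbit of a nontrivial closed subgroup of the compact abelian Lie group $K_s$ acting freely, and no small subgroups gives the bound.

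The fibers $Y_x$ of $\supp\lambda$ are translates of one subnilmanifold and disintegrate $\lambda$ (\cref{lem:fibers_joining_are_subnilmanifolds}). Hence nontrivial fibers force $\int d_X(x,y)\dd\lambda\ge\eta/2$. So on the explicit neighborhood $U=\{\lambda:\int d_X(x,y)\dd\lambda<\eta/2\}$, both coordinate projections are injective on $\supp\lambda$, hence homeomorphisms, and $\lambda$ is a graph joining.

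Your preliminary reduction to $G$ connected and simply connected is also unavailable in general. $X$ can be disconnected, e.g.\ an ergodic rotation on $\Z/k\Z\times\T$. It is not among the reductions in \cref{chap:niltheory}, which only give $G/G^o$ finitely generated and $G=\langle G^o,\tau\rangle$.
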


Thus, although ergodic nilsystems may admit non-graph self-joinings, these cannot occur arbitrarily close to the diagonal joining. 

The starting point in the proof of the local rigidity result is the fact that an ergodic self-joining of a nilsystem is the Haar measure of a subnilmanifold of the product nilmanifold; see \cref{prop:ergodic_joinings_nilsystems_are_nilsystems}. 
This allows us to solve the problem by studying the geometry of subnilmanifolds. 
We show that a nilmanifold cannot contain arbitrarily small non-trivial subnilmanifolds, together with an averaged form of this statement; see \cref{lem:no_small_subnilmanifolds} and \cref{lem:no_small_subnilmanifolds_averaged}. 
Now, if a self-joining is sufficiently close to the diagonal joining, then the two coordinates of a point in its support are on average close to each other, so that the averaged no small subnilmanifolds lemma forces the coordinate fibers of the support to be trivial.  
This demonstrates that the joining must be a graph joining. 

Let us briefly explain how the rigidity result is used to prove the factor-closure theorem; see \cref{sec:proof_strategy} for a detailed outline. 
Let $X$ be a pro-nilsystem, and suppose that $\pi \colon X \to Y$ is a factor map. 
The proof that $Y$ is a pro-nilsystem proceeds in two steps. 

In the first step, we reduce to the special case where the factor map $\pi$ is an extension by a compact abelian group. 
To this end, in \cref{sec:pronilsystem_tower} we construct a tower of pro-nilsystem factors 
\[
X = Z_s \longrightarrow Z_{s-1} \longrightarrow \cdots \longrightarrow Z_1 \longrightarrow Z_0 = \{*\}, 
\] 
where each map is an extension by a compact abelian group. 
This tower is obtained by passing the standard tower of factors of each finite-stage nilsystem to the inverse limit. 
In particular, this construction does not make use of Host--Kra theory. 
After defining the intermediate factors $W_i$ generated by $Y$ and $Z_i$, we argue by downward induction on $i$. 
The induction step reduces precisely to the compact abelian group extension case. 

In the second step, we prove this special case.
Suppose that $X = \varprojlim X_n$ is an ergodic pro-nilsystem which is an extension of $Y$ by a compact abelian group $K$.
Then $K$ acts on $X$ by automorphisms, called the vertical rotations $V_u$ for $u \in K$.  
The main obstruction is that the given inverse limit for $X$ need not be compatible with the group extension structure.
More precisely, the vertical rotations on $X$ need not descend to the finite stages $X_n$. 
Applying the rigidity proposition to the self-joinings 
\[
(p_n, p_n \circ V_u)_* \mu
\]
of $X_n$, where $p_n \colon X \to X_n$ denotes the factor map, we show that $V_u$ does descend to an automorphism of $X_n$ for all $u$ in a finite index subgroup of $K$. 
Enlarging each $X_n$ by a suitable self-joining then gives a new inverse limit for $X$ in which all vertical rotations do descend to every finite stage. 
An averaging argument over $K$ then allows us to realize $Y$ as an inverse limit of nilsystem factors. 

The factor-closure theorem also admits a topological dynamical analogue. 
A topological $s$-step pro-nilsystem is an inverse limit, in the topological dynamical sense, of $s$-step nilsystems. 

\begin{theorem*}[Topological factor-closure]
    Every factor of a transitive topological $s$-step pro-nilsystem is itself a topological $s$-step pro-nilsystem.  
\end{theorem*}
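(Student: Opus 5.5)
Let $X = \varprojlim X_n$ be a transitive topological $s$-step pro-nilsystem with transformation $T$, and let $\pi \colon X \to Y$ be a topological factor map. The plan is to carry the measure-theoretic argument sketched above over to the topological setting, using the fact that transitive nilsystems are minimal and uniquely ergodic. This lets us move freely between topological and measurable statements via the Haar measure.

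\textbf{Step 1: reduction to the minimal, uniquely ergodic case.} I would first show that $X$ is minimal and uniquely ergodic. Each $X_n$ is a transitive nilsystem, hence minimal and uniquely ergodic with Haar measure $\mu_n$ (Parry). Inverse limits of minimal, uniquely ergodic systems retain both properties, with unique measure $\mu = \varprojlim \mu_n$. Consequently $Y$ is minimal and uniquely ergodic with measure $\pi_*\mu$.

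\textbf{Step 2: the tower.} Next I would use the tower $X = Z_s \to \cdots \to Z_0$ from \cref{sec:pronilsystem_tower}. Each $Z_i$ is an inverse limit of the nilsystems $G_n/G_{n,i+1}\Gamma_n$. Since the maps involved are continuous, this is a topological tower in which each step is a topological compact abelian group extension. As in the measurable case, I set $W_i$ to be the topological factor generated by $Y$ and $Z_i$, that is, the orbit closure of the image of $X$ in $Y \times Z_i$. I then run a downward induction on $i$. At each stage the extension $W_{i} \to W_{i-1}$ is a topological isometric extension by a quotient of the abelian group fiber. This should reduce the problem to the case where $X \to Y$ is a topological extension by a compact abelian group $K$ acting continuously by vertical rotations $V_u$ that commute with $T$.

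\textbf{Step 3: the group extension case.} Here I would apply the local rigidity proposition to the joinings $(p_n, p_n\circ V_u)_*\mu$, exactly as in the measurable case. This gives, for $u$ in a finite-index subgroup $K_n$, an automorphism of $(X_n,\mu_n)$ intertwining $p_n$ and $p_n \circ V_u$. The additional work in the topological setting is to upgrade this measurable automorphism to a homeomorphism. I would argue this from the description of ergodic joinings as Haar measures of subnilmanifolds (\cref{prop:ergodic_joinings_nilsystems_are_nilsystems}). The graph joining is supported on a closed subnilmanifold of $X_n \times X_n$. Since that subnilmanifold is a graph a.e. and closed, it is the graph of a continuous map, which is an affine nil-translation. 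Moreover $p_n \circ V_u$ and $S_u \circ p_n$ are continuous and agree $\mu$-a.e. Because $\mu$ has full support by minimality, they agree everywhere. Enlarging $X_n$ by finitely many coset representatives of $K/K_n$ gives a topological inverse limit on which $K$ acts fiberwise. The quotients $X_n/K$ then form nilsystems, since quotients of nilmanifolds by compact groups of affine automorphisms of this form are again nilmanifolds. Finally I would identify $\varprojlim X_n/K$ with $Y = X/K$ topologically, which follows from compactness.

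\textbf{Main obstacle.} I expect the hardest part to be Step 2: checking that the topological joins $W_i$ are genuinely group extensions of $W_{i-1}$ in the topological category. In the measurable setting this rests on disintegration, which is not available here. My first approach would be to use minimality to define the fiber group as the stabilizer of a fiber and to verify that it acts freely and continuously.
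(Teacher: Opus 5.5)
Your route is genuinely different from the paper's, and it has a gap, though not where you expect it. The paper does not redo the argument topologically. It applies \cref{thrm:main} to $(X,\mu,T)$ and takes a uniquely ergodic topological model $(\wt Y,\wt S)$ of $(Y,\nu,S)$ together with a measurable isomorphism $\Phi$. It then uses \cref{prop:factors_between_pronilsystems_are_topological} to replace $\Phi\circ\pi$ by a continuous factor map $\wt\pi$. Finally it applies \cref{lem:isomorphism_upgrade_crit} to both coordinate projections of the distal uniquely ergodic system $(\pi,\wt\pi)(X)\subseteq Y\times\wt Y$, which carries the graph joining of $\Phi$.

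The gap is in your Step 3. You assert that the quotients $\wt X_n/K$ are nilsystems because ``quotients of nilmanifolds by compact groups of affine automorphisms of this form are again nilmanifolds''. Nothing supports this. The induced $K$-action on $\wt X_n$ is merely an action by automorphisms of the nilsystem (affine maps at best, after passing to reduced form). It need not even be free: if $p_1\circ V_u=p_1$ for all $u$, the action on $\wt X_1$ is trivial. Nothing in the paper identifies such an orbit space with a nilmanifold. What you need is a topological Parry theorem: a topological factor $Q$ of a minimal nilsystem $N$ (via $\rho$) is topologically isomorphic to a nilsystem.

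That statement is true, and the natural proof with the paper's tools is the upgrade mechanism itself. \cref{thrm:factor_ergodic_nilsystem_is_nilsystem} gives a measurable isomorphism $\Phi\colon Q\to N'$ onto a nilsystem. \cref{cor:factors_between_nilsystem_are_topological} makes $\Phi\circ\rho$ a.e.\ equal to a continuous factor map $N\to N'$. Applying \cref{lem:isomorphism_upgrade_crit} to the image of $N$ in $Q\times N'$ then yields $Q\cong N'$.

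Once you have written this, note that it goes through verbatim with $N$ a minimal pro-nilsystem in place of a nilsystem, using \cref{thrm:main} and \cref{prop:factors_between_pronilsystems_are_topological} instead. That is exactly the paper's proof, and it makes your Steps 2 and 3 redundant. Your approach buys only an explicit inverse system $\wt X_n/K$ for $Y$, at the price of re-running the whole induction in the topological category.

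The step you call the main obstacle is easy. $W_i=(\pi,p_i)(X)$ is minimal, and each $(\Id\times V_u)(W_i)$ with $u\in K_i$ is a minimal subset of $Y\times Z_i$. Two minimal sets are equal or disjoint, so $H=\{u\in K_i:(\Id\times V_u)(W_i)=W_i\}$ is a closed subgroup. It acts freely, and its orbits are exactly the fibers of $W_i\to W_{i-1}$. Note it is a subgroup of $K_i$, not a quotient as you wrote, and no disintegration is needed.

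The other ingredients of Step 3 are fine:
\begin{itemize}
  \item continuity of the descended automorphisms, from the proof of \cref{prop:rigidity_self_joinings};
  \item upgrading a.e.\ equality to equality everywhere by full support;
  \item $q_n(X)$ being a nilsystem by \cref{thrm:orbit_closures_nilsystems_are_nilsystems};
  \item identifying $Y$ with $\varprojlim\wt X_n/K$ by extracting a convergent subsequence of the $u_n$.
\end{itemize}
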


This theorem is a consequence of the topological structure theorem of \citet{host2010nilsequences}. 
We instead deduce it directly from the measure-theoretic factor-closure theorem discussed above, by combining it with the correspondence between ergodic pro-nilsystems and their canonical topological models, and an elementary criterion for upgrading measure-theoretic isomorphisms of distal uniquely ergodic systems to topological ones; see \cref{sec:topological_factor_closure}. 
Thus we obtain a proof which is intrinsic to nilsystems and does not invoke any structure theorem.   

The final part of this thesis concerns a further consequence of the factor-closure theorem, regarding the relation between the Host--Kra structure theorem and the combinatorial inverse theorem for the Gowers norms. 
To explain this connection, we isolate the following weaker form of the Host--Kra structure theorem. 

\begin{theorem*}[Weak structure theorem]
    Every ergodic measure-preserving system of order $s$ is a factor of an ergodic $s$-step pro-nilsystem. 
\end{theorem*}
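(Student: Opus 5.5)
Since the weak structure theorem is weaker than the Host--Kra structure theorem, the point is to prove it without the full Host--Kra machinery. My plan is to obtain it from the combinatorial inverse theorem for the Gowers norms via a correspondence argument. Combining it with the factor-closure theorem (\cref{thrm:main}) then recovers the full structure theorem.

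\emph{Step 1: correlation with nilsequences.} Let $(X,\mu,T)$ be ergodic of order $s$ and let $f\in L^\infty(X,\mu)$ with $\|f\|_{U^{s+1}(X)}>0$. I would pass the positivity of the seminorm to finite scales. Using the description of $\|\cdot\|_{U^{s+1}(X)}$ by averages over dynamical parallelepipeds, together with the ergodic theorem, for $\mu$-almost every $x$ the sequence $n\mapsto f(T^nx)$ on $[N]$ has Gowers $U^{s+1}[N]$ norm bounded below by some $\delta>0$ for all large $N$. The inverse theorem then gives, for each such $N$, an $s$-step nilsequence $\psi_N(n)=F_N(g_N(n)\Gamma_N)$ of bounded complexity $M(\delta)$ that correlates with $f(T^nx)$ on $[N]$.

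\emph{Step 2: compactness.} Bounded complexity means there are only finitely many nilmanifolds and only a compact family of Lipschitz functions. So, along a subsequence, $G_N/\Gamma_N$ is fixed and $F_N\to F$. The polynomial sequence $g_N$ can be factored as a smooth part times a rational part times an equidistributed part. From this I would extract a fixed nilsystem $(G/\Gamma,\tau)$ and a continuous function $\Phi$ on it, such that some joining $\lambda$ of $X$ with this nilsystem satisfies $\int f\otimes\overline{\Phi}\dd\lambda\neq 0$. Concretely, $\lambda$ is a weak-* limit of the empirical measures of $(T^nx, g_N(n)\Gamma)$.

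\emph{Step 3: building the pro-nilsystem.} Step 2 shows that $f$ is not orthogonal to the factor of $X$ generated by all joinings with $s$-step nilsystems. I would then use a standard exhaustion argument: every function with positive $U^{s+1}$ seminorm has this property, and $\|\cdot\|_{U^{s+1}(X)}$ is a norm because $X$ has order $s$. Hence $L^2(X)$ is spanned by functions that are measurable with respect to joinings with countably many ergodic $s$-step nilsystems $N_1,N_2,\dots$. Take an ergodic component of a suitable joining of $X$ with $\prod_k N_k$. This component is an ergodic self-joining structure whose nil-part is an ergodic $s$-step pro-nilsystem, since it is an inverse limit of ergodic joinings of finitely many nilsystems, and those are nilsystems by \cref{prop:ergodic_joinings_nilsystems_are_nilsystems}. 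By construction, $X$ is a factor of this component.

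\emph{Main obstacle.} The hardest step is Step 2. The inverse theorem only produces finite-scale polynomial nilsequences that vary with $N$, whereas a genuine joining needs a single $\tau$-orbit. Turning this into a stationary correlation requires the factorization theorem and some care with the rational and smooth parts. My first attempt would be to absorb the rational part by passing to $T^q$ and an ergodic component, and to absorb the smooth part by a compactness argument in $G$. A further subtlety is ensuring that the final joining can be chosen ergodic while $X$ remains a factor. I would handle this by taking ergodic decompositions over the nil-coordinate and using that the projection to $X$ is ergodic.
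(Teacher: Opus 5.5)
Your route is genuinely different from the paper's, and the decisive gap is in Step 3 rather than Step 2. What Steps 1--2 produce is only a correlation $\int f\otimes\overline{\Phi}\dd\lambda=\langle f,\E_\lambda(\Phi\mid X)\rangle\neq 0$. Run the exhaustion argument with the invariant $\sigma$-algebra $\mathcal{B}$ generated by all such functions $\E_\lambda(\Phi\mid X)$, so that a nonzero bounded $f$ with $\E_\mu(f\mid\mathcal{B})=0$ exists whenever $\mathcal{B}\neq\mathcal{X}$. The argument then shows only that these conditional expectations generate $\mathcal{X}$. Disintegrate $\lambda_k=\int\delta_x\otimes\lambda_{k,x}\dd\mu(x)$. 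The exhaustion conclusion says that $x$ is determined by the fibre \emph{distributions} $(\lambda_{k,x})_k$. By contrast, saying that $X$ is a factor of the nil-part of the relatively independent joining of the $\lambda_k$ over $X$ means that $x$ is determined by a \emph{single sample} drawn from $\bigotimes_k\lambda_{k,x}$. For an abstract system of order $s$ you have no information about the fibre measures that would let you pass from the first statement to the second. Your phrase \emph{by construction} covers exactly the unproved point.

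The missing idea is to use infinitely many conditionally independent copies of each joining. Let $R_k$ be the translation on $N_k$. Then $\int\delta_x\otimes\lambda_{k,x}^{\otimes\N}\dd\mu(x)$ is invariant, because $(R_k)_*\lambda_{k,x}=\lambda_{k,Tx}$ for almost every $x$. By the conditional strong law of large numbers, $\E_{\lambda_k}(\Phi\mid X)(x)=\lim_{J\to\infty}\frac{1}{J}\sum_{j\le J}\Phi(n_{k,j})$ almost surely. So every generator of $\mathcal{X}$ becomes measurable with respect to the nil-coordinates. Now join these relatively independently over $X$ for all $k$. Almost every ergodic component of the nil-marginal still has $X$ as a factor, because $\mu$ is ergodic and hence extremal. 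Each such component is an ergodic invariant measure on a countable product of nilsystems, hence a pro-nilsystem by \cref{cor:ergodic_measure_nilsystem_is_haar_subnilsystem}.

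Steps 1 and 2 are repairable, but not as written.

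In Step 1, the norm $\|n\mapsto f(T^nx)\|_{U^{s+1}[N]}$ averages $n$ and $h_1,\dots,h_{s+1}$ jointly at scale $N$. Birkhoff's theorem treats each fixed $h$ separately, so it gives no almost-everywhere lower bound for all large $N$; that would be a pointwise theorem for cubic averages. Instead, integrate in $x$. This turns the $2^{s+1}$-th power into a weighted average over $h$ of the Host--Kra cube integrals, whose limit is a positive multiple of $\|f\|_{U^{s+1}(X)}^{2^{s+1}}$ by F\o{}lner-independence. By reverse Fatou, some generic $x$ then has large local norm for infinitely many $N$, which is all Step 2 needs.

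In Step 2, the factorization theorem and the passage to $T^q$ (which need not be ergodic) are a detour. Reduce Taylor coefficients modulo $\Gamma$ (\cref{lem:compactness_taylor_coefficients}) and linearize as in \cref{lem:linearising_nilsequence}. Then the linearized nilmanifold is fixed and the translations $\widetilde{T}_N$ and base points $\widetilde{x}_N$ range over compact sets. Weak-* limits of the empirical measures of $(T^nx,\widetilde{T}_N^n\widetilde{x}_N)_{n\in[N]}$ are then joinings invariant under $T\times\widetilde{T}$, for a subsequential limit $\widetilde{T}$. The correlation survives in the limit because $F_N\to F$ uniformly.

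The paper avoids joinings with $X$ altogether. It uses two facts about the dual functions $\mcd_{s+1}f_n$: they span $L^2(X,\mu)$ (\cref{prop:dual_functions_generate}), and they are anti-uniform along every orbit (\cref{lem:anti_uniformity_dual_functions}). From these it obtains $L^2$-approximation of their orbits by nilsequences, rather than mere correlation. A maximal-inequality argument makes the approximation hold on all initial segments at a single point, which is what lets pointwise limits of the nilsequences still approximate the orbit. The paper then matches all joint moments and applies \cref{prop:polynomial_factor_criterion}.
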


The weak structure theorem, when combined with the factor-closure theorem, implies the structural direction of the Host--Kra structure theorem. 
Indeed, if an ergodic system is of order $s$, then the weak structure theorem shows that it is a factor of an ergodic $s$-step pro-nilsystem, and by factor-closure, it is therefore an $s$-step pro-nilsystem.  

As observed by \citet{tao2015weak}, the weak structure theorem can be deduced directly from the inverse theorem for the Gowers norms, in the form proved by \citet{green2012inverse}. 
This inverse theorem is purely combinatorial. 
Roughly speaking, it says that a $1$-bounded function on the set $\{1, \dots, N\}$, whose Gowers $U^{s+1}$-norm is large, must correlate with an $s$-step nilsequence of bounded complexity. 
The precise statement, together with the required background on filtered nilmanifolds, polynomial sequences, and nilsequences, is collected in \cref{chap:appendix}. 
Tao's argument proceeds by approximating the dynamical dual functions in a system of order $s$ by nilsequences using the inverse theorem, and by assembling the resulting nilsequences into a single pro-nilsystem admitting the original system as a factor. 
Since some details in the argument in \citep{tao2015weak} are omitted, we give the complete proof in \cref{chap:weak_structure_thrm}. 

What remained missing in Tao's plan to prove the full Host--Kra structure theorem was precisely an independent proof of the factor-closure theorem. 
The results of this thesis provide this missing part. 
Combining Chapters \ref{chap:factor_closure} and \ref{chap:weak_structure_thrm}, we therefore obtain a proof of the structural direction of the Host--Kra structure theorem from the inverse theorem for the Gowers norms. 
This completes Tao's proposed route and clarifies the logical relationship between the combinatorial and ergodic inverse theorems. 

This thesis is based on the joint paper \citep{eeckhaut2026localrigidityselfjoiningsfactors} with my advisor Asgar Jamneshan, which already included the local rigidity proposition, the factor-closure theorem, and its topological analogue. 
Compared with \citep{eeckhaut2026localrigidityselfjoiningsfactors}, the present thesis provides a self-contained discussion of the necessary background on nilmanifolds, nilsystems, and pro-nilsystems, and it includes the full details of the deduction of the weak structure theorem from the inverse theorem for the Gowers norms as proposed by Tao. 

This thesis is organized as follows.
In \cref{chap:background} we fix our conventions and recall the necessary background on measure-preserving and topological dynamical systems, inverse limits, and group extensions.  
In \cref{chap:niltheory}, we introduce nilmanifolds, nilsystems, and pro-nilsystems, and record their properties used later.  
\cref{chap:rigidity_result} contains the proof of the local rigidity result for self-joinings of nilsystems. 
In \cref{chap:factor_closure}, we use the rigidity result to prove the factor-closure theorem for ergodic pro-nilsystems, and then deduce its topological analogue. 
In \cref{chap:weak_structure_thrm}, using Tao's argument, we give a detailed proof of the weak structure theorem from the inverse theorem for the Gowers norms, and we explain how this combines with the factor-closure theorem to recover the Host--Kra structure theorem. 
\cref{chap:appendix} collects the notions and results from the inverse theory for the Gowers norms used in \cref{chap:weak_structure_thrm}, and \cref{chap:factor_criterion_appendix} contains a criterion for proving that a given system is a factor of another, which is needed in the proof of the weak structure theorem. 

\chapter{Preliminaries on dynamical systems}
\label{chap:background}

The purpose of this chapter is to fix the conventions and notation for dynamical systems used throughout this thesis, and to record some facts that will be needed later. 
We recall only the material which is directly relevant to later arguments. 
For general background on ergodic theory and topological dynamics, we refer to \citep{auslander1988minimal, eisner2025journey,glasner2003ergodic,einsiedler2010ergodic}. 

\section{Measure-preserving systems}
\label{sec:measure_preserving_systems}

\subsection{Measure-theoretic conventions}

If $(X, \mcx, \mu)$ is a probability space, we denote by $\overline{\mcx}^\mu$ the completion of the $\sigma$-algebra $\mcx$ with respect to $\mu$.
The measure $\mu$ then extends uniquely to $\overline{\mcx}^\mu$, and we denote this extension also by $\mu$. 
The elements of $\overline{\mcx}^\mu$ are called the measurable subsets of $X$. 

If $(X, \mcx, \mu)$ and $(Y, \mcy, \nu)$ are probability spaces, a map $\phi \colon X \to Y$ is measurable if $\phi^{-1}(A) \in \overline{\mcx}^\mu$ for every $A \in \mcy$. 
In this case, the pushforward or image of $\mu$ under $\phi$ is 
\[
\phi_* \mu (A) := \mu(\phi^{-1}(A)) \quad \text{for } A \in \mcy. 
\]
The map $\phi$ is called measure-preserving if $\phi_*\mu = \nu$.
If $\phi \colon X \to X$ is measure-preserving, we also say that $\mu$ is invariant under $\phi$. 
An isomorphism of probability spaces is a measure-preserving measurable map 
\[
\phi \colon (X, \mcx, \mu) \to (Y, \mcy, \nu), 
\]
which admits a measurable inverse modulo null sets.
More precisely, this means that there is a measurable map $\psi \colon Y \to X$ such that $\psi \circ \phi = \Id_X$ $\mu$-a.e. and $\phi \circ \psi = \Id_Y$ $\nu$-a.e.
In this case, the measurable inverse is uniquely determined up to $\nu$-null sets and we will usually denote it by $\phi^{-1}$. 

A probability space is called standard if it is isomorphic to a Polish space, equipped with its Borel $\sigma$-algebra and a Borel probability measure. 

\begin{assumption}
    All probability spaces in this thesis are assumed to be standard.  
\end{assumption}

\subsection{Definition}

\begin{definition}
    A measure-preserving system is a quadruple $(X, \mcx, \mu, T)$ consisting of a probability space $(X, \mcx, \mu)$ and an isomorphism $T \colon X \to X$ from this probability space to itself.  
\end{definition}

We often abbreviate the notation and write $(X, \mu, T)$ or simply $X$ for a measure-preserving system. 
The corresponding $\sigma$-algebra will in this case be denoted by the corresponding script letter $\mcx$.  
As $T$ is an isomorphism of probability spaces, we can fix a measurable inverse once and for all, and denote it by $T^{-1}$. 

A measurable subset $A \subseteq X$ is called $T$-invariant if 
\[
\mu(T^{-1} A \Delta A) = 0, 
\]
where $\Delta$ denotes the symmetric difference of sets. 
The system $(X, \mu, T)$ is ergodic if every invariant measurable subset either has measure $0$ or $1$. 

For $1 \leq p \leq \infty$, we write $L^p(X, \mu)$ for the usual $L^p$-space. 
If $\mca \subseteq \overline{\mcx}^\mu$ is a sub-$\sigma$-algebra, we write $L^p(X, \mca, \mu)$ for the closed subspace of $L^p(X, \mu)$ consisting of functions that admit an $\mca$-measurable representative. 
We denote the Koopman operator associated to $T$ also by $T$, so $Tf := f \circ T$ for every function $f$ on $X$.  
A function $f \in L^p(X, \mu)$ is called $T$-invariant if $Tf = f$ holds $\mu$-almost everywhere. 
Ergodicity is then equivalent to the assertion that the $T$-invariant functions in $L^1(X,\mu)$ are exactly the almost everywhere constants. 

For $f \in L^1(X,\mu)$ and a sub-$\sigma$-algebra $\mca \subseteq \overline{\mcx}^\mu$, we denote the conditional expectation by $\E_\mu (f \mid \mca)$. 
For functions in $L^2(X, \mu)$, this agrees with the orthogonal projection onto the closed subspace $L^2(X, \mca, \mu)$. 

\subsection{Factor maps}

\begin{definition}
    Let $(X, \mu, T)$ and $(Y, \nu, S)$ be measure-preserving systems. 
    A measurable measure-preserving map $\pi \colon X \to Y$  is a factor map if 
    \[
    \pi \circ T = S \circ \pi \quad \mu\aee 
    \] 
    In this case $(Y, \nu, S)$ is called a factor of $(X, \mu, T)$.  
\end{definition}

A factor map which is also an isomorphism of probability spaces is called an isomorphism of measure-preserving systems. 
In this case, the measurable inverse is also a factor map. 
An isomorphism from a measure-preserving system to itself is called an automorphism. 

Factors admit a useful equivalent description in terms of invariant sub-$\sigma$-algebras. 
Let $(X, \mu, T)$ be a measure-preserving system. 
If $\mca$ and $\mcb$ are sub-$\sigma$-algebras of $\overline{\mcx}^\mu$, we write $\mca \subseteq_\mu \mcb$ if for every $A \in \mca$, there exists $B \in \mcb$ such that $\mu(A \Delta B) = 0$. 
Moreover, we write $\mca =_\mu \mcb$ if both $\mca \subseteq_\mu \mcb$ and $\mcb \subseteq_\mu \mca$ hold. 

A sub-$\sigma$-algebra $\mca \subseteq \overline{\mcx}^\mu$ is called $T$-invariant if 
\[
T^{-1}(\mca) =_\mu \mca.  
\]
Every factor map $\pi \colon X \to Y$ gives rise to the invariant sub-$\sigma$-algebra $\pi^{-1}(\mcy)$.
Conversely, since all probability spaces are assumed to be standard, every invariant sub-$\sigma$-algebra of $\overline{\mcx}^\mu$ arises in this way, up to $=_\mu$, from a factor map, and this factor is unique up to isomorphism. 
More precisely, if $\pi \colon X \to Y$ and $\pi' \colon X \to Y'$ are factor maps with $\pi^{-1}(\mcy) =_\mu (\pi')^{-1}(\mcy')$, then there exists an isomorphism $\phi \colon Y \to Y'$ such that $\phi \circ \pi = \pi'$ $\mu$-almost everywhere. 

This correspondence also naturally orders factors by inclusion of the corresponding sub-$\sigma$-algebras. 
If $\pi \colon X \to Y$ and $\pi' \colon X \to Y'$ are factor maps, we say that $Y$ lies below $Y'$ or that $Y'$ lies above $Y$ if 
\[
\pi^{-1}(\mcy) \subseteq_\mu (\pi')^{-1}(\mcy').
\]
Equivalently, $Y$ lies below $Y'$ if there exists a factor map $\phi \colon Y' \to Y$ such that $\phi \circ \pi' = \pi$ $\mu$-almost everywhere. 

For two sub-$\sigma$-algebras $\mca$ and $\mcb$, we write $\mca \vee \mcb$ for the $\sigma$-algebra generated by $\mca \cup \mcb$.
More generally, if $(\mca_i)_{i \in I}$ is a family of sub-$\sigma$-algebras, we write $\bigvee_{i \in I} \mca_i$ for the sub-$\sigma$-algebra generated by $\bigcup_{i \in I} \mca_i$. 

\subsection{Joinings}
\label{sec:joinings}

\begin{definition}
    Let $(X, \mu, T)$ and $(X', \mu', T')$ be measure-preserving systems.  
    A joining of these two systems is a probability measure $\lambda$ on $(X \times X', \mcx \otimes \mcx')$ which is invariant under $T \times T'$ and such that the coordinate projections map $\lambda$ to $\mu$ and $\mu'$ respectively. 
\end{definition}

If $\lambda$ is a joining of $(X, \mu, T)$ and $(X', \mu', T')$, then  
\[
(X \times X', \lambda, T \times T')
\]
is a measure-preserving system, and the coordinate projections are factor maps. 
The joining $\lambda$ is called ergodic if this system is ergodic.  
We write $J(X, X')$ for the set of joinings, and $J_e(X, X')$ for the set of ergodic joinings. 
The set $J(X,X')$ is never empty since the product measure $\mu \times \mu'$ is always a joining. 

If $\phi \colon (X, \mu, T) \to (X', \mu', T')$ is a factor map, then the probability measure 
\[
(\Id_X, \phi)_* \mu
\]
on $X \times X'$ is a joining, called the graph joining of $\phi$. 

A joining of a system with itself is called a self-joining. 
In particular, the diagonal joining
\[
\mu_\Delta := (\Id_X, \Id_X)_* \mu
\]
is always a self-joining of $(X, \mu, T)$. 
Later, graph self-joinings associated to automorphisms will play an important role. 

\section{Topological dynamical systems}

\begin{definition}
    A topological dynamical system is a pair $(X, T)$, where $X$ is a compact metrizable space and $T \colon X \to X$ is a homeomorphism. 
\end{definition}

As before, we often denote the system simply by $X$ when the transformation is understood. 

\begin{definition}
    Let $(X, T)$ and $(Y, S)$ be topological dynamical systems. 
    A continuous surjection $\pi \colon X \to Y$ is a factor map if 
    \[
    \pi \circ T = S \circ \pi. 
    \]
    In this case $(Y, S)$ is called a factor of $(X, T)$. 
\end{definition}

An isomorphism of topological dynamical systems is a bijective factor map. 
Thus, an isomorphism is in particular a homeomorphism, and its inverse is again a factor map.  
An isomorphism from a topological dynamical system to itself is called an automorphism. 

We use the term factor map both in the setting of measure-preserving and topological dynamical systems.   
Whenever this may be ambiguous, we specify the intended meaning by saying measure-theoretic or topological factor map.  

Let $(X, T)$ be a topological dynamical system. 
The orbit of $x \in X$ is the set $\{T^n x : n \in \Z\} \subseteq X$. 
The system is called transitive if there exists a point whose orbit is dense, and minimal if the orbit of every point is dense.  
A subset $A \subseteq X$ is called $T$-invariant if $T^{-1}A = A$. 
The system $(X, T)$ is minimal if and only if the only closed $T$-invariant subsets are $\emptyset$ and $X$. 

Every topological dynamical system $(X, T)$ admits at least one $T$-invariant Borel probability measure by the Krylov--Bogolyubov theorem. 
If $\mu$ is an invariant measure, then $(X, \mu, T)$ is a measure-preserving system. 
The system $(X, T)$ is called uniquely ergodic if there exists precisely one invariant probability measure. 
The support of an invariant probability measure $\mu$ on $(X, T)$, denoted by $\supp(\mu)$, is the smallest closed subset $A$ of $X$ satisfying $\mu(A) = 1$.  
Equivalently, $\supp(\mu)$ is the set of points for which every open neighborhood has strictly positive measure. 

Two points $x, y \in X$ are called distal if 
\[
\inf_{n \in \Z} d(T^n x, T^n y) > 0,
\]
where $d$ is a compatible metric on $X$. 
Otherwise, $x$ and $y$ are said to be proximal. 
The system $(X, T)$ is distal if any pair of distinct points is distal.  
This condition does not depend on the specific choice of metric $d$. 

We will need the following lemma, which relates measure-theoretic and topological isomorphisms for distal uniquely ergodic systems. 
Topological factor maps between uniquely ergodic systems that are also measure-theoretic isomorphisms were studied by \citet{downarowicz2016isomorphic}, and the lemma can in particular be derived from \citep[Proposition 2.5]{downarowicz2016isomorphic}. 
We instead include a short self-contained proof.

\begin{lemma} 
    \label[lemma]{lem:isomorphism_upgrade_crit} 
    Let $(X, T)$ be a distal uniquely ergodic topological dynamical system with unique invariant measure $\mu$. 
    Let \[ \pi \colon (X, T) \to (Y, S) \] be a topological factor map, and set $\nu = \pi_* \mu$. If $\pi$ is a measure-theoretic isomorphism from $(X, \mu, T)$ to $(Y, \nu, S)$, then $\pi$ is a topological isomorphism.
\end{lemma}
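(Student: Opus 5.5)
Since $\pi$ is a continuous surjection between compact metrizable spaces, it suffices to show that $\pi$ is injective; a continuous bijection from a compact space to a Hausdorff space is a homeomorphism, and it then intertwines $T$ and $S$. So suppose $x_1, x_2 \in X$ satisfy $\pi(x_1) = \pi(x_2)$. We want to show $x_1 = x_2$, and the plan is to use unique ergodicity together with distality to force every pair in the fiber relation to lie on the diagonal.

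Consider the closed $T \times T$-invariant set
\[
R := \{(x, x') \in X \times X : \pi(x) = \pi(x')\}.
\]
First, the orbit closure of a pair $(x_1, x_2) \in R$ under $T \times T$ is a minimal set. Distality of $X$ implies that $X \times X$ is distal, and in a distal system every orbit closure is minimal (Ellis). So let $M := \overline{\{(T^n x_1, T^n x_2) : n \in \Z\}} \subseteq R$ be this minimal set, and pick a $T\times T$-invariant ergodic probability measure $\lambda$ on $M$ (Krylov--Bogolyubov). By minimality $\supp(\lambda) = M$. Both coordinate projections of $\lambda$ are $T$-invariant probability measures on $X$, hence equal to $\mu$ by unique ergodicity. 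Thus $\lambda$ is a self-joining of $(X,\mu,T)$ concentrated on $R$.

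The key step is to show that $\lambda$ is the diagonal joining $\mu_\Delta$. Since $\pi$ is a measure-theoretic isomorphism, there is a measurable $\psi \colon Y \to X$ with $\psi \circ \pi = \Id_X$ $\mu$-a.e. For $\lambda$-a.e.\ $(x,x')$ we have $\psi(\pi(x)) = x$ and $\psi(\pi(x')) = x'$, since both marginals are $\mu$. Moreover $\pi(x) = \pi(x')$ on $R$, so $x = x'$ for $\lambda$-a.e.\ point. Hence $\lambda$ is supported on the diagonal $\Delta_X$, which is closed, so $M = \supp(\lambda) \subseteq \Delta_X$. In particular $(x_1, x_2) \in \Delta_X$, i.e.\ $x_1 = x_2$.

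The main obstacle is the minimality of orbit closures in $X \times X$, which is where distality enters. I would cite the standard fact that products of distal systems are distal and that distal systems are semisimple, meaning every point is almost periodic, e.g.\ via the Ellis enveloping semigroup being a group. Without this step, an invariant measure on the orbit closure of $(x_1,x_2)$ could live on a smaller subset and fail to see the point $(x_1,x_2)$ itself. The remaining steps are routine measure-theoretic bookkeeping.
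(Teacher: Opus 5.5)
Your proof is correct. It shares the paper's measure-theoretic core: take the $T\times T$-orbit closure of the pair, put an invariant measure $\lambda$ on it, note that $\lambda$ is a self-joining by unique ergodicity, and use the measurable inverse of $\pi$ to conclude $\lambda(\Delta_X)=1$.

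The difference is where distality enters. You use it through the theorem that distal systems are pointwise almost periodic (Ellis). That makes the orbit closure $M$ minimal, so $\supp(\lambda)=M$, and $(x_1,x_2)$ itself must lie on the diagonal.

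The paper needs only the definition of distality. From $\lambda(\Delta_X)=1$ it concludes merely that the orbit closure meets $\Delta_X$. That means $d(T^{n_k}x,T^{n_k}x')\to 0$ along some sequence, so $x$ and $x'$ are proximal, which contradicts distality unless $x=x'$.

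So the ``main obstacle'' you flag is not really an obstacle. The invariant measure does not need to see the point $(x_1,x_2)$; it only needs to see the diagonal somewhere in the orbit closure. In effect, the paper's argument shows that every pair in a fiber of $\pi$ is proximal, and distality then finishes.

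Your route works, but it relies on a heavier external theorem and gains nothing extra for this lemma. The paper's route is self-contained. Also, taking $\lambda$ ergodic is unnecessary in either version.
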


\begin{proof} 
    It suffices to show that $\pi$ is injective. 
    Suppose it is not.
    Then there exist distinct points $x, x' \in X$ such that $\pi(x) = \pi(x')$. 
    Let
    \[ 
    Z := \overline{\{(T^n x, T^n x') : n \in \Z\}}. 
    \] 
    Then $Z$ is closed in $X \times X$, $T \times T$-invariant, and $\pi(z_1) = \pi(z_2)$ for all $(z_1, z_2) \in Z$. 
    Also $Z \cap \Delta_X = \emptyset$ where $\Delta_X \subseteq X \times X$ denotes the diagonal. 
    Indeed, if $Z$ intersected $\Delta_X$ non-trivially, then $x$ and $x'$ would be proximal, contradicting distality as $x$ and $x'$ are distinct.

    Let $\lambda$ be a $T \times T$-invariant probability measure supported on $Z$.
    By unique ergodicity, the two coordinate projections both map $\lambda$ to $\mu$, so $\lambda$ is a self-joining of $(X, \mu, T)$. 
    Since $\pi$ is a measure-theoretic isomorphism, we can take its measurable inverse $\pi^{-1}$.
    Then for $\lambda$-almost every $(z_1, z_2) \in Z$, we have that 
    \[
    z_1 = \pi^{-1}(\pi(z_1)) = \pi^{-1}(\pi(z_2)) = z_2. 
    \] 
    Therefore, $\lambda$ is supported on $\Delta_X$. This contradicts the fact that $Z \cap \Delta_X = \emptyset$. 
\end{proof}

\section{Inverse limits of dynamical systems} 
\label{sec:inverse_limits}

We consider inverse limits of dynamical systems indexed by $\N$. 
This causes no loss of generality compared to the countable inverse limits appearing in the works of Host and Kra \citep{host2005nonconventional, host2018nilpotent}. 
Indeed, every countable directed set admits a cofinal sequence, and restricting to such a sequence does not change the inverse limit; see \citep[Chapter 3, Section 6]{host2018nilpotent}.

\subsection{Inverse limits of measure-preserving systems}

An inverse system of measure-preserving systems is a sequence $(X_n, \mu_n, T_n)_{n \in \N}$ of measure-preserving systems together with factor maps 
\[
\alpha_n \colon (X_{n+1}, \mu_{n+1}, T_{n+1}) \to (X_n, \mu_n, T_n) \quad \text{for } n \in \N.
\] 
Such an inverse system will be denoted by $((X_n, \mu_n, T_n), \alpha_n)_{n \in \N}$. 
The maps $\alpha_n$ are called the connecting factor maps. 

\begin{definition}
    Let $((X_n, \mu_n, T_n), \alpha_n)_{n \in \N}$ be an inverse system.
    A measure-preserving system $(X, \mu, T)$, equipped with factor maps 
    \[
    p_n \colon (X, \mu, T) \to (X_n, \mu_n, T_n) \quad \text{for } n \in \N,
    \]
    is the inverse limit of the inverse system if the following two conditions hold.  
    \begin{enumerate}[(i)]
        \item $\alpha_n \circ p_{n+1} = p_n$ $\mu$-a.e. for every $n \in \N$,  
        \item $\bigvee_{n\in\N} p_n^{-1}(\mathcal{X}_n) =_\mu \mathcal{X}$.  
    \end{enumerate}
    In this case, we write 
    \[
    (X, \mu, T) = \varprojlim (X_n, \mu_n, T_n). 
    \]
\end{definition}

Even when not explicitly specified, the connecting factor maps remain part of the definition of the inverse limit.  
The inverse limit always exists and is unique up to isomorphism. 
Existence can be shown by explicitly constructing the inverse limit on the product $\prod_n X_n$ and defining the measure using the Kolmogorov consistency theorem; see \citep{parthasarathy2005probability}.
To see uniqueness one can then prove that any inverse limit is isomorphic to this concrete construction. 

Let $(X, \mu, T)$ be a measure-preserving system equipped with factor maps $p_n \colon X \to X_n$ for $n \in \N$ satisfying (i). 
Then property (ii) is equivalent to either of the following two properties. 
\begin{enumerate}
    \item[(ii')] For every $1 \leq p < \infty$, the subspace
    \[
    \bigcup_{n\in\N}\{f \circ p_n : f \in L^p(X_n, \mu_n)\}
    \]
    is dense in $L^p(X, \mu)$. 
    \item[(ii'')] For every $1 \leq p < \infty$, and every $f \in L^p(X, \mu)$, one has 
    \[
    \E_\mu(f \mid p_n^{-1}(\mathcal{X}_n)) \to f \quad \text{in } L^p(X, \mu).
    \]
\end{enumerate}

Indeed, (ii'') follows from (ii) by the increasing martingale theorem; see \citep[Theorem 5.5]{einsiedler2010ergodic}. 
It is clear that (ii'') implies (ii'), and an approximation argument using indicator functions shows that (ii') implies (ii). 

We will need the following lemma, which is a consequence of the definition of the inverse limit, combined with the characterization of factors in terms of invariant sub-$\sigma$-algebras discussed above. 

\begin{lemma}
    \label[lemma]{lem:inverse_limit_criterion}
    Let $(X, \mu, T)$ be a measure-preserving system, and let $p_n \colon (X, \mu, T) \to (Y_n, \nu_n, S_n)$ for $n \in \N$ be factor maps such that  
    \[
    p_1^{-1}(\mathcal{Y}_1) \subseteq_\mu p_2^{-1}(\mathcal{Y}_2) \subseteq_\mu \cdots.  
    \]
    Let $\pi \colon (X, \mu, T) \to (Y, \nu, S)$ be the factor corresponding to the invariant sub-$\sigma$-algebra $\bigvee_{n\in\N} p_n^{-1}(\mathcal{Y}_n)$. 
    Then
    \[
    (Y, \nu, S) = \varprojlim (Y_n, \nu_n, S_n)
    \] 
    with respect to the connecting factor maps induced by the above inclusions. 
\end{lemma}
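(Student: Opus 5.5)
The plan is to reduce everything to the factor–$\sigma$-algebra correspondence and then check the two defining conditions of an inverse limit directly.

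\textbf{Step 1: the factor maps $q_n \colon Y \to Y_n$.} Write $\mathcal{B} := \bigvee_{n\in\N} p_n^{-1}(\mathcal{Y}_n)$, so that $\pi^{-1}(\mathcal{Y}) =_\mu \mathcal{B}$. Each $p_n^{-1}(\mathcal{Y}_n)$ satisfies $p_n^{-1}(\mathcal{Y}_n) \subseteq_\mu \mathcal{B} =_\mu \pi^{-1}(\mathcal{Y})$. Hence $Y_n$ lies below $Y$ as factors of $X$. By the ordering of factors recalled in the section on factor maps, this gives a factor map $q_n \colon Y \to Y_n$ with $q_n \circ \pi = p_n$ $\mu$-a.e.

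\textbf{Step 2: the connecting maps and condition (i).} In the same way, the hypothesis $p_n^{-1}(\mathcal{Y}_n) \subseteq_\mu p_{n+1}^{-1}(\mathcal{Y}_{n+1})$ yields factor maps $\alpha_n \colon Y_{n+1} \to Y_n$ with $\alpha_n \circ p_{n+1} = p_n$ $\mu$-a.e. These are the induced connecting maps. To verify (i), I would compute
\[
\alpha_n \circ q_{n+1} \circ \pi = \alpha_n \circ p_{n+1} = p_n = q_n \circ \pi \quad \mu\text{-a.e.}
\]
Since $\pi_*\mu = \nu$, a set $\{y : \alpha_n q_{n+1}(y) \neq q_n(y)\}$ whose preimage under $\pi$ is $\mu$-null is itself $\nu$-null. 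This gives $\alpha_n \circ q_{n+1} = q_n$ $\nu$-a.e. The only subtlety is that these sets need to be measurable. One can handle this either by noting that $Y_n$ is standard, so the diagonal is measurable, or by passing to the test functions $f\circ\alpha_n\circ q_{n+1}$ and $f \circ q_n$ for $f$ ranging over a countable generating family of $\mathcal{Y}_n$.

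\textbf{Step 3: condition (ii).} We need $\bigvee_n q_n^{-1}(\mathcal{Y}_n) =_\nu \mathcal{Y}$. The inclusion $\subseteq_\nu$ is automatic. For the reverse inclusion, I would transport everything through $\pi$. The map $A \mapsto \pi^{-1}(A)$ is an injective, measure-preserving, $\sigma$-algebra homomorphism from the measure algebra of $(Y,\nu)$ into that of $(X,\mu)$, and it commutes with countable joins. We have $\pi^{-1}(q_n^{-1}(\mathcal{Y}_n)) =_\mu p_n^{-1}(\mathcal{Y}_n)$, so
\[
\pi^{-1}\Big(\bigvee_n q_n^{-1}(\mathcal{Y}_n)\Big) =_\mu \mathcal{B} =_\mu \pi^{-1}(\mathcal{Y}).
\]
Injectivity on measure algebras then gives the claim: if $\mu(\pi^{-1}A \,\Delta\, \pi^{-1}B)=0$, then $\nu(A\Delta B)=0$.

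Alternatively, one can argue via (ii'). Functions of the form $g\circ p_n = (g\circ q_n)\circ\pi$ are dense in $L^2(X,\mathcal{B},\mu)$, and composition with $\pi$ is an isometry from $L^2(Y,\nu)$ onto $L^2(X,\mathcal{B},\mu)$. So the functions $g\circ q_n$ are dense in $L^2(Y,\nu)$.

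\textbf{Main obstacle.} The main step is the bookkeeping in Step 3. One must check carefully that $\pi^{-1}$ commutes with generated $\sigma$-algebras up to null sets. The $L^2$-density formulation via (ii') avoids this cleanly, so I would use that version.
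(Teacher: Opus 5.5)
Your proposal is correct and is essentially the paper's argument. The paper likewise induces $q_n$ and $\alpha_n$ from the factor ordering and obtains $\alpha_n \circ q_{n+1} = q_n$ $\nu$-a.e.\ from uniqueness of induced factor maps, which your computation through $\pi$ in Step 2 makes explicit. It then asserts $\mathcal{Y} =_\nu \bigvee_n q_n^{-1}(\mathcal{Y}_n)$ \emph{by construction}, and your Step 3 (via injectivity of $\pi^{-1}$ on measure algebras, or via $L^2$-density) supplies exactly the details the paper leaves implicit.
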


\begin{proof}
    For $n \in \N$, let $\alpha_n \colon Y_{n + 1} \to Y_n$ be the connecting factor map induced by the inclusion $p_n^{-1}(\mcy_n) \subseteq_\mu p_{n+1}^{-1}(\mcy_{n+1})$.
    Similarly, let $q_n \colon Y \to Y_n$ be the factor induced by $p_n^{-1}(\mcy_n) \subseteq_\mu \pi^{-1}(\mcy)$.  
    By uniqueness of the induced factor maps, $\alpha_n \circ q_{n+1} = q_n$ $\nu$-almost everywhere. 
    Moreover, $\mcy =_\nu \bigvee_{n \in \N} q_n^{-1}(\mcy_n)$ by construction. 
\end{proof}

\begin{lemma} 
    \label[lemma]{lem:ergodicity_of_inverse_limit}
    Let $(X, \mu, T) = \varprojlim (X_n, \mu_n, T_n)$ be an inverse limit of measure-preserving systems. 
    If each system $(X_n, \mu_n, T_n)$ is ergodic, then $(X, \mu, T)$ is ergodic. 
\end{lemma}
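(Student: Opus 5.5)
To prove \cref{lem:ergodicity_of_inverse_limit}, I will show directly that every $T$-invariant function in $L^2(X, \mu)$ is almost everywhere constant. This suffices, since ergodicity is equivalent to every $T$-invariant function in $L^1(X, \mu)$ being a.e. constant, and indicators of invariant sets lie in $L^2$. The idea is to approximate an invariant function $f$ by its conditional expectations onto the finite stages. These conditional expectations will again be invariant. They will then be constant by ergodicity of the $X_n$, and the constants converge to $\int f \dd\mu$.

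Fix a $T$-invariant $f \in L^2(X, \mu)$ and write $\mathcal{A}_n := p_n^{-1}(\mcx_n)$.

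\textbf{Step 1.} The conditional expectation $\E_\mu(f \mid \mathcal{A}_n)$ is $\mathcal{A}_n$-measurable, so it can be written as $g_n \circ p_n$ for some $g_n \in L^2(X_n, \mu_n)$. This follows from the Doob--Dynkin factorization for standard spaces. Alternatively, note that $L^2(X, \mathcal{A}_n, \mu)$ is exactly the isometric image of $L^2(X_n, \mu_n)$ under $g \mapsto g \circ p_n$, because $p_n$ is measure-preserving.

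\textbf{Step 2.} I will check that $g_n$ is $T_n$-invariant. The key point is that conditional expectation onto an invariant $\sigma$-algebra commutes with the Koopman operator. Concretely, the Koopman operator $T$ is unitary on $L^2(X, \mu)$. It maps the subspace $L^2(X, \mathcal{A}_n, \mu)$ onto itself, since $T(g \circ p_n) = (T_n g) \circ p_n$ a.e. by $p_n \circ T = T_n \circ p_n$, and $T_n$ is invertible. A unitary operator preserving a closed subspace commutes with the orthogonal projection onto it. Hence
\[
(T_n g_n) \circ p_n = T(g_n \circ p_n) = T\,\E_\mu(f \mid \mathcal{A}_n) = \E_\mu(Tf \mid \mathcal{A}_n) = \E_\mu(f \mid \mathcal{A}_n) = g_n \circ p_n .
\]
Since $g \mapsto g \circ p_n$ is isometric, this gives $T_n g_n = g_n$ $\mu_n$-a.e.

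\textbf{Step 3.} By ergodicity of $(X_n, \mu_n, T_n)$, each $g_n$ is an a.e. constant $c_n$. Integrating gives $c_n = \int f \dd\mu$, so the constant does not depend on $n$.

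\textbf{Step 4.} By property (ii'') with $p = 2$, $\E_\mu(f \mid \mathcal{A}_n) \to f$ in $L^2(X, \mu)$. Each term of this sequence is the same constant $\int f \dd\mu$, so $f = \int f \dd\mu$ a.e., and $(X, \mu, T)$ is ergodic.

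The only step that needs care is Step 2, the commutation of the Koopman operator with conditional expectation. The identification in Step 1 is routine under the standing standardness assumption.
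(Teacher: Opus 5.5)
Your proof is correct and follows the same route as the paper: condition an invariant function on the finite-stage $\sigma$-algebras $p_n^{-1}(\mcx_n)$, use ergodicity of each $X_n$ to see that these conditional expectations are constant, and conclude via the martingale convergence property (ii''). The differences are cosmetic. You work in $L^2$ rather than $L^1$, and you justify the invariance of the conditional expectations (the unitary Koopman operator maps $L^2(X, p_n^{-1}(\mcx_n), \mu)$ onto itself and so commutes with the projection), a step the paper simply asserts.
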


\begin{proof}
    Suppose that $f \in L^1(X, \mu)$ satisfies $T f = f$. 
    For $n \in \N$, let  
    \[
    f_n := \E_\mu(f \mid p_n^{-1}(\mcx_n)).
    \]
    Then each $f_n$ is $T$-invariant. 
    Since $f_n$ is $p_n^{-1}(\mcx_n)$-measurable, there exists $g_n \in L^1(X_n, \mu_n)$ such that $f_n = g_n \circ p_n$. 
    Then $T_n g_n = g_n$, so that $g_n$ is constant by ergodicity of $(X_n, \mu_n, T_n)$.
    Hence $f_n$ is constant. 
    By condition (ii'') above, $f_n \to f$ in $L^1(X, \mu)$. 
    It follows that $f$ is a constant, so $(X, \mu, T)$ is ergodic. 
\end{proof}

\subsection{Inverse limits of topological dynamical systems}

An inverse system of topological dynamical systems is a sequence $(X_n, T_n)_{n \in \N}$ of topological dynamical systems, together with factor maps
\[
\alpha_n \colon (X_{n+1}, T_{n+1}) \to (X_n, T_n) \quad \text{for } n \in \N.
\]
Such an inverse system will be denoted by $((X_n, T_n), \alpha_n)_{n \in \N}$. 
Again, the maps $\alpha_n$ are called connecting factor maps. 

\begin{definition}
    Let $((X_n, T_n), \alpha_n)_{n \in \N}$ be an inverse system of topological dynamical systems. 
    Let
    \[
    X := \left\{
        (x_n)_{n \in \N} \in \prod_{n\in\N} X_n : \alpha_n(x_{n+1}) = x_n \text{ for all } n \in \N
        \right\},
    \] 
    equipped with the subspace topology from $\prod_{n \in \N} X_n$, and define
    \[
    T \colon X \to X, \quad (x_n)_{n \in \N} \mapsto (T_n x_n)_{n \in \N}.
    \]
    The system $(X, T)$ is called the inverse limit of the inverse system, and we write
    \[
    (X, T) = \varprojlim(X_n, T_n).
    \]
\end{definition}

The subset $X$ is closed in $\prod_{n \in \N} X_n$, hence compact and metrizable. 
The map $T$ is a well-defined homeomorphism since each $T_n$ is a homeomorphism and the maps $\alpha_n$ are factor maps. 
Thus, $(X, T)$ is indeed a topological dynamical system. 

The coordinate projections $p_n \colon X \to X_n, (x_m)_{m \in \N} \mapsto x_n$ are factor maps, and they satisfy $\alpha_{n} \circ p_{n+1} = p_n$. 
Moreover, they separate points of $X$.
Thus, by Stone--Weierstrass, it follows that  
\[
\bigcup_{n \in \N} \{f \circ p_n : f \in C(X_n)\}
\]
is uniformly dense in $C(X)$. 

Conversely, suppose that $(Y, S)$ is a topological dynamical system that is equipped with factor maps $q_n \colon (Y, S) \to (X_n, T_n)$ such that $\alpha_n \circ q_{n+1} = q_n$ for all $n \in \N$. 
Then  
\[
q \colon (Y, S) \to (X, T), \quad y \mapsto (q_n(y))_{n \in \N}
\]
is a factor map. 
The factor map $q$ is an isomorphism if and only if the maps $q_n$ separate points. 

\begin{lemma}
    \label[lemma]{lem:dynamical_prop_inverse_limit}
    Let $(X, T) = \varprojlim(X_n, T_n)$ be an inverse limit of topological dynamical systems. 
    \begin{enumerate}[(i)]
        \item
        If each $(X_n, T_n)$ is minimal, then $(X, T)$ is minimal.  
        \item
        If each $(X_n, T_n)$ is distal, then $(X, T)$ is distal. 
        \item 
        If each $(X_n, T_n)$ is uniquely ergodic, then $(X, T)$ is uniquely ergodic. 
        Moreover, if $\mu_n$ is the unique invariant probability measure on $X_n$, and $\mu$ is the unique invariant probability measure on $X$, then 
        \[
        (X, \mu, T) = \varprojlim (X_n, \mu_n, T_n) 
        \]
        in the measure-theoretic sense.
    \end{enumerate}
\end{lemma}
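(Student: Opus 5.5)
For all three parts I would work with the explicit model: $X$ is the closed subset of $\prod_n X_n$ and $p_n$ are the coordinate projections. Two facts are used repeatedly. First, each $p_n$ is surjective; this is how the paper uses the term factor map, and it follows by compactness, since the sets $\alpha_n^{-1}\cdots\alpha_{m-1}^{-1}(x_n)$ are nonempty and decreasing. Second, the cylinder sets $p_n^{-1}(U)$, with $U \subseteq X_n$ open, form a base of the topology. This holds because the compatibility relations $\alpha_n \circ p_{n+1} = p_n$ make the cylinders increase in $n$.

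For (i), I take $x \in X$ and a nonempty basic open set $p_n^{-1}(U)$. The set $U$ is nonempty by surjectivity of $p_n$. Minimality of $X_n$ gives some $k$ with $T_n^k p_n(x) \in U$. Since $p_n \circ T = T_n \circ p_n$, this means $T^k x \in p_n^{-1}(U)$, so the orbit of $x$ is dense.

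For (ii), I take distinct $x, x' \in X$. Some coordinate separates them, so $x_n \neq x'_n$ for some $n$. I fix the compatible metric $d(x,y) = \sum_m 2^{-m} \min(d_m(x_m, y_m), 1)$. Then
\[
d(T^k x, T^k x') \ge 2^{-n} \min\bigl(d_n(T_n^k x_n, T_n^k x'_n), 1\bigr).
\]
The infimum over $k$ of the right-hand side is positive by distality of $X_n$.

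For (iii), let $\mu, \mu'$ be invariant probability measures on $X$. Then $(p_n)_*\mu$ and $(p_n)_*\mu'$ are $T_n$-invariant, so both equal $\mu_n$ by unique ergodicity. Therefore $\int f \circ p_n \, d\mu = \int f \circ p_n \, d\mu'$ for all $f \in C(X_n)$ and all $n$. By the uniform density of $\bigcup_n \{ f \circ p_n : f \in C(X_n) \}$ in $C(X)$, recorded above, $\mu = \mu'$ by the Riesz representation theorem. Existence of an invariant measure comes from Krylov--Bogolyubov, so $X$ is uniquely ergodic.

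For the measure-theoretic statement, the equalities $(p_n)_*\mu = \mu_n$ just shown make each $p_n$ a measure-theoretic factor map, and $\alpha_n \circ p_{n+1} = p_n$ holds everywhere, which gives (i) of the definition. For (ii) of the definition I verify the equivalent condition (ii'). The functions $f \circ p_n$ with $f \in C(X_n)$ are uniformly dense in $C(X)$, hence dense in $L^p(X, \mu)$. This uses that continuous functions are dense in $L^p$ for a Borel probability measure on a compact metric space, and that uniform convergence implies $L^p$ convergence. Moreover, the Borel $\sigma$-algebra of $X$ is indeed generated by the $p_n$, by the cylinder-base remark.

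The main point to be careful about is the identification of $(p_n)_*\mu$ with $\mu_n$, which needs unique ergodicity at each finite stage, together with the density argument. Neither step is a serious obstacle; the rest is routine.
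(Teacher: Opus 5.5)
Your proof is correct. Parts (ii) and (iii) follow the paper's argument. In (ii) you make the transfer of proximality explicit through the metric $\sum_m 2^{-m}\min(d_m,1)$. In (iii) you supply two things the paper leaves implicit: the Krylov--Bogolyubov existence step, and the check of condition (i) of the measure-theoretic inverse limit.

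Part (i) takes a different, somewhat more direct route.
\begin{itemize}
\item You use that the cylinders $p_n^{-1}(U)$ form a base of the topology of $X$. You then read off density of every orbit in $X$ directly from density of orbits in $X_n$.
\item The paper works with the closed-invariant-set characterization of minimality. It shows that a nonempty closed $T$-invariant $F$ satisfies $p_n(F)=X_n$ for every $n$. It then picks $x_n\in F$ with $p_n(x_n)=p_n(x)$, passes to a convergent subsequence, and uses that the $p_n$ separate points to get $x\in F$.
\end{itemize}
Your version avoids the subsequence extraction. It also reuses the cylinder-base fact that you need anyway in (iii) to see that the Borel $\sigma$-algebra is generated by the $p_n$. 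The paper's version instead relies only on point separation by the $p_n$, the same property it uses in (ii) and, via Stone--Weierstrass, in (iii).

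One small slip in (i): $U$ is nonempty simply because $p_n^{-1}(U)$ is nonempty, so surjectivity of $p_n$ plays no role at that step.
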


\begin{proof}
    Let $p_n \colon X \to X_n$ denote the coordinate factor maps. 

    For (i), let $F \subseteq X$ be non-empty, closed, and $T$-invariant. 
    For each $n$, $p_n(F)$ is non-empty, closed and $T_n$-invariant.  
    Hence by minimality, $p_n(F) = X_n$. 
    Pick any $x \in X$. 
    Then for each $n$, choose $x_n \in F$ such that $p_n(x_n) = p_n(x)$.  
    After passing to a convergent subsequence, $x_n$ converges to a point $y \in F$.
    For each fixed $m$, the compatibility of the coordinate factor maps gives $p_m(x_n) = p_m(x)$ for all $n \geq m$. 
    Therefore $p_n(y) = p_n(x)$ for each $n$.
    Thus $y = x$, so $F = X$. 

    For (ii), let $x \neq y$ in $X$.
    Then $p_n(x) \neq p_n(y)$ for some $n \in \N$. 
    If $x$ and $y$ were proximal, then $p_n(x)$ and $p_n(y)$ would be proximal in $X_n$, contradicting distality of $(X_n, T_n)$. 

    For (iii), let $\nu$ be a $T$-invariant probability measure on $X$. 
    Then $(p_n)_* \nu$ is a $T_n$-invariant probability measure on $X_n$, so $(p_n)_* \nu = \mu_n$ for each $n$. 
    Hence the integral of $f \circ p_n$ is independent of $\nu$ for every $f \in C(X_n)$. 
    Since such functions are uniformly dense in $C(X)$, the measure $\nu$ is uniquely determined.
    The final claim follows from the same density result combined with characterization (ii') of the measure-theoretic inverse limit above. 
    Indeed, uniform density in $C(X)$ implies density in $L^p(X, \mu)$ for every $1 \leq p < \infty$. 
\end{proof}

\section{Compact abelian group extensions}

\begin{assumption}
    Throughout this thesis, all topological groups are assumed to be metrizable. 
\end{assumption}

For the rest of this section, any compact abelian group $K$ is written additively, with identity element $0$, and the Haar measure denoted by $m_K$. 

\subsection{Group extensions of measure-preserving systems}
\label{sec:group_extensions_measurable}

Let $(Y, \nu, S)$ be a measure-preserving system, let $K$ be a compact abelian group, and let $\rho \colon Y \to K$ be a measurable map. 
Define the map
\[
T_\rho \colon Y \times K \to Y \times K, \quad 
(y, g) \mapsto (Sy, \rho(y) + g). 
\]
Then $T_\rho$ is measurable, and it preserves the product measure $\nu \times m_K$. 
Moreover, $T_\rho$ admits a measurable inverse, namely $T_\rho^{-1} (y, g) = (S^{-1} y, g - \rho(S^{-1} y))$. 
Thus, 
\[
Y \rtimes_\rho K := (Y \times K, \nu \times m_K, T_\rho)
\]
is a measure-preserving system. 
The projection $p \colon Y \rtimes_\rho K \to Y, (y, g) \mapsto y$ is a factor map, called the skew-product extension of $Y$ by $K$ associated to $\rho$.

For $u \in K$, define  
\[
V_u \colon Y \rtimes_\rho K \to Y \rtimes_\rho K, \quad 
(y, g) \mapsto (y, u + g),  
\]
which is called the vertical rotation by $u$. 
The map $V_u$ is an automorphism, which satisfies
\[
V_u \circ V_v = V_{u + v}, \quad V_u^{-1} = V_{-u}, \quad p \circ V_u = p 
\]
for $u, v \in K$. 

An application of Fubini's theorem shows that the conditional expectation with respect to the factor map $p$ is given by averaging over the vertical rotations. 
More precisely, for all $f \in L^1(Y \times K, \nu \times m_K)$, 
\[
\E_{\nu \times m_K}(f \mid p^{-1}(\mcy))(y, g) 
= \int_K f(y, u + g) \dd m_K(u)
\]
for $\nu \times m_K$-almost every $(y, g) \in Y \times K$. 

We will also need the strong continuity of the associated Koopman representation. 
For $u \in K$, let 
\[
U_u f := f \circ V_u, \quad \text{for } f \in L^2(Y \times K, \nu \times m_K). 
\]
Then $u \mapsto U_u$ is a strongly continuous unitary representation of $K$ on $L^2(Y \times K, \nu \times m_K)$. 
Indeed, if $u_n \to u$ in $K$, then it is clear that $U_{u_n} f \to U_u f$ for all functions $f$ of the form $f(y,g) = h(y) r(g)$, where $h \in L^2(Y, \nu)$ and $r \in C(K)$. 
Since such functions are dense in $L^2(Y \times K, \nu \times m_K)$, this proves strong continuity. 

More generally, a factor map is a compact abelian group extension if it agrees with a skew-product extension up to isomorphism.  

\begin{definition}
    A factor map $\pi \colon (X, \mu, T) \to (Y, \nu, S)$ is called an extension of $Y$ by $K$ if there exist a measurable map $\rho \colon Y \to K$ and an isomorphism 
    \[
    \Phi \colon (X, \mu, T) \to Y \rtimes_\rho K, 
    \] 
    such that $p \circ \Phi = \pi$ $\mu$-almost everywhere. 
\end{definition}

The vertical rotations can be transferred via $\Phi$ to automorphisms of $(X, \mu, T)$, which we again denote by $V_u$ and call vertical rotations. 
They satisfy $\pi \circ V_u = \pi$, $V_u \circ V_v = V_{u + v}$, and $V_u^{-1} = V_{-u}$ almost everywhere. 
Furthermore, for $f \in L^1(X, \mu)$, the conditional expectation now takes the form   
\[
\E_\mu (f \mid \pi^{-1}(\mcy))(x)
= \int_K f(V_u x) \dd m_K(u)
\]
for $\mu$-almost every $x \in X$. 
The Koopman representation of $K$ on $L^2(X, \mu)$, defined by $U_u(f) = f \circ V_u$ for $f \in L^2(X,\mu)$, is still strongly continuous. 

The following lemma is the main technical result about group extensions which we will use later. 
This is proved in \citep[Chapter 5, Lemma 18]{host2018nilpotent}. 
A related more classical version appears in \citep[Section 7]{furstenberg2011mean}.

\begin{lemma} 
    \label[lemma]{lem:induced_group_ext}
    Let $(X, \mu, T)$, $(X', \mu', T')$, $(Y, \nu, S)$, and $(Y', \nu', S')$ be ergodic measure-preserving systems, related by the following commutative diagram of factor maps. 
    \[\begin{tikzcd}
        X & {X'} \\
        Y & {Y'}
        \arrow["q", from=1-1, to=1-2]
        \arrow["\pi"', from=1-1, to=2-1]
        \arrow["{\pi'}", from=1-2, to=2-2]
        \arrow["r", from=2-1, to=2-2]
    \end{tikzcd}\]
    Suppose that $\pi' \colon X' \to Y'$ is an extension by a compact abelian group $K$, and that 
    \[
    \mcx =_\mu \pi^{-1}(\mcy) \vee q^{-1}(\mcx'). 
    \]
    Then there exists a closed subgroup $H \leq K$ such that $\pi \colon X \to Y$ is an extension by $H$. 
\end{lemma}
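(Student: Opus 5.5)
The plan is to realise $X$ as a subsystem of the fibred product of $Y$ and $X'$ over $Y'$, and to read off the group $H$ as the subgroup of $K$ that preserves the relevant invariant measure. Without loss of generality I take $X' = Y' \rtimes_{\rho'} K$ and $\pi'$ the coordinate projection. Composing, $\rho := \rho' \circ r \colon Y \to K$ is a cocycle over $Y$, giving the skew product $Y \rtimes_\rho K$.

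\textbf{Step 1: the skew-product model.} Consider the map
\[
\Psi \colon X \to Y \times K, \quad x \mapsto (\pi(x), \kappa(q(x))),
\]
where $\kappa \colon X' \to K$ is the fibre coordinate. Commutativity of the diagram gives $r(\pi(x)) = \pi'(q(x))$, and then
\[
\Psi(Tx) = (S\pi(x), \rho(\pi(x)) + \kappa(q(x))),
\]
so $\Psi$ intertwines $T$ with $T_\rho$. Let $\lambda := \Psi_*\mu$. This is a $T_\rho$-invariant ergodic measure on $Y \times K$ projecting to $\nu$. The hypothesis $\mcx =_\mu \pi^{-1}(\mcy) \vee q^{-1}(\mcx')$ says that $\Psi$ generates $\mcx$, because $q$ is determined by $(r\circ\pi, \kappa\circ q)$. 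Hence $\Psi$ is an isomorphism of $(X,\mu,T)$ onto $(Y\times K, \lambda, T_\rho)$, and it satisfies $p \circ \Psi = \pi$.

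\textbf{Step 2: identify $H$.} Vertical rotations $V_u$ commute with $T_\rho$. I set
\[
H := \{u \in K : (V_u)_*\lambda = \lambda\}.
\]
This is a closed subgroup by weak-* continuity of $u \mapsto (V_u)_*\lambda$. The main obstacle is showing that $\lambda$ is, after a measurable change of coordinates, a measure of the form $\nu \times m_{gH}$ on fibres, i.e.\ that its fibres are single cosets of $H$ with Haar measure. This is the standard Mackey-type argument.

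\textbf{Step 3: fibre structure.} Disintegrate $\lambda = \int \lambda_y \dd\nu(y)$, where each $\lambda_y$ is a probability measure on $K$. Invariance gives $\lambda_{Sy} = (\text{translation by } \rho(y))_*\lambda_y$. I will compare the fibre measures as follows. For $u \in K$, both $(V_u)_*\lambda$ and $\lambda$ are ergodic invariant measures over $\nu$. They are either equal or mutually singular, and this alternative is determined by whether the fibre measures coincide. Take the convolution $\lambda_y * \check\lambda_y$, where $\check\lambda_y$ is the reflection of $\lambda_y$ under $g \mapsto -g$. Its $\rho$-translations cancel under the invariance relation, so it is an $S$-invariant measurable family, hence a.e.\ equal to a fixed measure $\theta$ by ergodicity of $Y$.

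The step I expect to be hardest is showing that $\theta = m_H$ and that $\lambda_y$ is Haar measure on a coset $\phi(y) + H$. The route is Fourier analysis. For $\chi \in \wh K$, the function $y \mapsto \wh{\lambda_y}(\chi)$ satisfies
\[
F(Sy) = \chi(\rho(y))\, F(y),
\]
so $|F|$ is invariant and hence a.e.\ constant. Since $\lambda$ is ergodic, the function $\overline{F(y)}\,\chi(g)$ is $T_\rho$-invariant and therefore constant. This forces $|F| \in \{0,1\}$. The characters with $|F| = 1$ are exactly those trivial on $H$, which gives the coset structure.

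\textbf{Step 4: conclude.} The coset representatives $\phi(y)$ can be chosen measurably, since they are determined by the values $F_\chi(y)$ for $\chi \in H^\perp$. Then
\[
(y, g) \mapsto (y, g - \phi(y))
\]
maps $\lambda$ onto $\nu \times m_H$. It conjugates $T_\rho$ to the skew product $Y \rtimes_{\rho_H} H$ with cocycle $\rho_H = \rho + \phi - \phi\circ S$, which takes values in $H$. Composing with $\Psi$ exhibits $\pi$ as an extension of $Y$ by $H$.
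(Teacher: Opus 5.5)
Your proposal is correct. The paper gives no proof of its own and only cites Host--Kra (Chapter~5, Lemma~18). Your route is the standard Mackey-type argument for such statements: pull the cocycle back to $\rho=\rho'\circ r$, identify $X$ via $\Psi$ with an ergodic $T_\rho$-invariant measure $\lambda$ on $Y\times K$ lying over $\nu$, and show through the Fourier coefficients $F_\chi(y)=\wh{\lambda_y}(\chi)$ that $\lambda$ is Haar measure on a measurable family of cosets $\phi(y)+H$.

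Three points deserve a line more than you give them:
\begin{enumerate}
  \item The identification $\{\chi : |F_\chi|=1\}=H^\perp$ needs an argument. This set is a subgroup of $\wh K$, because $\chi(g)=F_\chi(y)$ for $\lambda_y$-a.e.\ $g$. Testing $(V_u)_*\lambda$ against functions $a(y)\chi(g)$ shows that $H$ is exactly its annihilator, and Pontryagin duality then gives the equality.
  \item Measurability of $\phi$ requires a Borel section $K/H\to K$.
  \item The mutual-singularity remark and the convolution $\theta$ in Step~3 are never used and can be dropped.
\end{enumerate}
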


\subsection{Group extensions of topological dynamical systems}

We also need the analogous theory for topological dynamical systems.
Here, the convenient definition is intrinsic rather than via skew-products. 

Let $(X, T)$ be a topological dynamical system and let $K$ be a compact abelian group. 
A continuous action of $K$ on $X$ is a continuous map 
\[
K \times X \to X, \quad (u, x) \mapsto V_u x, 
\]
such that $V_0 = \Id_X$ and $V_{u + v} = V_u \circ V_v$ for all $u, v \in K$. 
Then each map $V_u \colon X \to X$ is a homeomorphism with inverse $V_{-u}$. 
The action is said to be by automorphisms if $V_u \circ T = T \circ V_u$ for each $u \in K$, and it is free if $V_u x = x$ for some $x \in X$ implies $u = 0$. 

\begin{definition}
    A factor map 
    \[
    \pi \colon (X, T) \to (Y, S) 
    \]
    is an extension of $(Y, S)$ by $K$ if there exists a free continuous action of $K$ on $X$ by automorphisms such that the fibers of $\pi$ are exactly the $K$-orbits, meaning that 
    \[
    \pi(x) = \pi(x') \quad \Longleftrightarrow \quad x' = V_u x \text{ for some } u \in K. 
    \]
\end{definition}

The automorphisms $V_u \colon X \to X$ are again called vertical rotations. 
Since the fibers are $K$-orbits, we have that $\pi \circ V_u = \pi$ for all $u \in K$. 
The factor $Y$ is homeomorphic to the orbit space $X/K$, and the transformation on $X/K$ induced by $T$ agrees with $S$ under this identification. 

The following lemma connects the notions of group extensions in the measure-theoretic and topological settings.  
This is \citep[Chapter 5, Proposition 1]{host2018nilpotent}. 

\begin{lemma} 
    \label[lemma]{lem:topological_group_extension_is_measurable}
    Let 
    \[
    \pi \colon (X, T) \to (Y, S)
    \] 
    be a topological extension by the compact abelian group $K$. 
    Suppose that $\nu$ is an $S$-invariant Borel probability measure on $Y$. 
    Then there exists a unique Borel probability measure $\mu$ on $X$ which is invariant under all vertical rotations $V_u$ for $u \in K$, and satisfies $\pi_* \mu = \nu$. 
    Moreover, $\mu$ is $T$-invariant and 
    \[
    \pi \colon (X, \mu, T) \to (Y, \nu, S)
    \] 
    is a measure-theoretic group extension by $K$. 
\end{lemma}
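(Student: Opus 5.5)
The plan is to construct $\mu$ by averaging over the $K$-action, to get uniqueness and $T$-invariance from the same averaging identity, and then to build a skew-product model from a Borel section of $\pi$.

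\textbf{Existence.} For $f \in C(X)$ put
\[
Af(x) := \int_K f(V_u x) \dd m_K(u).
\]
Since the action $K \times X \to X$ is continuous and $K$ is compact, $Af$ is continuous on $X$. It is constant on $K$-orbits, which are exactly the fibers of $\pi$. Because $\pi$ is a continuous surjection between compact metrizable spaces, it is a quotient map, so $Af = \bar f \circ \pi$ for a unique $\bar f \in C(Y)$. The functional $f \mapsto \int_Y \bar f \dd\nu$ is positive and normalized, so by Riesz it defines a Borel probability measure $\mu$ on $X$. Translation invariance of $m_K$ gives $A(f\circ V_v) = Af$, so $\mu$ is $V_v$-invariant. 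For $h \in C(Y)$ we have $A(h\circ\pi) = h\circ\pi$, which gives $\pi_*\mu = \nu$.

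\textbf{Uniqueness and $T$-invariance.} Let $\mu'$ be any measure invariant under all $V_u$ with $\pi_*\mu' = \nu$. Integrating $\int f\circ V_u \dd\mu' = \int f \dd\mu'$ over $u \in K$ and applying Fubini gives
\[
\int f \dd\mu' = \int Af \dd\mu' = \int_Y \bar f \dd\nu,
\]
so $\mu' = \mu$. Since $T \circ V_u = V_u \circ T$, the measure $T_*\mu$ is again invariant under every $V_u$. It also satisfies $\pi_* T_*\mu = S_*\nu = \nu$, so uniqueness yields $T_*\mu = \mu$.

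\textbf{Group extension structure.} I first choose a Borel section $s \colon Y \to X$ of $\pi$. The map $y \mapsto \pi^{-1}(y)$ is an upper semicontinuous compact-valued map because $\pi$ is closed, so the Kuratowski--Ryll-Nardzewski selection theorem applies. I then define
\[
\Psi \colon Y \times K \to X, \qquad (y, g) \mapsto V_g s(y).
\]
This map is Borel. It is surjective because the fibers are orbits, and injective because the action is free. By Lusin--Souslin its inverse $\Phi$ is Borel.

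Next I define $\rho(y)$ as the unique element of $K$ with $T s(y) = V_{\rho(y)} s(Sy)$. This element exists because $\pi(Ts(y)) = S y$. Measurability of $\rho$ follows from the identity $\rho = \mathrm{pr}_K \circ \Phi \circ T \circ s$. Commutation of $T$ with the $V_g$ then gives $\Psi \circ T_\rho = T \circ \Psi$ and $\pi \circ \Psi = p$. Finally, $\Psi_*(\nu \times m_K)$ is invariant under all $V_u$ and projects to $\nu$, so by uniqueness it equals $\mu$. Hence $\Phi$ is an isomorphism $(X,\mu,T) \to Y \rtimes_\rho K$ with $p \circ \Phi = \pi$.

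The main obstacle is the measurable-selection step, together with the Borel measurability of $\Phi$ and $\rho$. I would handle it through the closedness of $\pi$ and standard descriptive set theory, namely Kuratowski--Ryll-Nardzewski and Lusin--Souslin.
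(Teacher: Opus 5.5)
Your proof is correct and complete. The paper gives no proof of its own here; it defers to \citep[Chapter 5, Proposition 1]{host2018nilpotent}. Your argument supplies a full proof along the standard lines, namely:
\begin{itemize}
\item averaging over $K$ gives existence, uniqueness and $T$-invariance of $\mu$;
\item a Borel cross-section together with Lusin--Souslin produces the skew-product coordinates;
\item the uniqueness statement identifies $\Psi_*(\nu \times m_K)$ with $\mu$.
\end{itemize}

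One small precision: the actual hypothesis of Kuratowski--Ryll-Nardzewski is weak measurability, i.e.\ that $\pi(U)$ is Borel for every open $U \subseteq X$. Upper semicontinuity of $y \mapsto \pi^{-1}(y)$ is not literally that hypothesis, though it does imply it here. More directly, every open $U$ is $\sigma$-compact, so $\pi(U)$ is $F_\sigma$.
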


\chapter{Nilmanifolds, nilsystems and pro-nilsystems}
\label{chap:niltheory}

Nilsystems are the fundamental objects of interest in this thesis. 
They may be viewed as non-abelian generalizations of rotations on compact abelian Lie groups, where the group is replaced by a homogeneous space of a nilpotent Lie group. 
This chapter introduces the basic theory of nilmanifolds, nilsystems, and pro-nilsystems. 
We begin with the geometric setting of nilmanifolds, then define nilsystems as translations on these spaces, and conclude by passing to the inverse limit leading to the notion of a pro-nilsystem. 

\section{Nilmanifolds}

We first consider nilmanifolds. 
Our conventions are based on those used by Leibman in \citep{leibman2005pointwiseZd, leibman2005pointwise,leibman2006rational}. 
We state the basic definitions, give some examples, and then discuss subnilmanifolds, rational subgroups, and the natural tower of quotients associated to a nilmanifold.  

\subsection{Definition} 
\label{sec:nilmanifolds_definition}

Let $G$ be a group, and let $e_G$ denote its identity element. 
For $g, h \in G$, we denote their commutator by
\[
[g, h] := g h g^{-1} h^{-1}.
\]
If $H, K \leq G$ are subgroups, then $[H, K]$ denotes the subgroup of $G$ generated by the commutators $[h, k]$ where $h 
\in H$ and $k \in K$. 

The lower central series of $G$ is the sequence  
\[
G = G_1 \supseteq G_2 \supseteq G_3 \supseteq \dots. 
\]
defined inductively by
\[
G_1 := G, \quad G_{i+1} := [G, G_i] \quad \text{for } i \geq 1.
\]
The subgroups $G_i$ are normal in $G$, and  
\[
[G_i, G_j] \subseteq G_{i+j} \quad \text{for all } i, j \geq 1.
\]
For $s \geq 1$, the group $G$ is  called $s$-step nilpotent if $G_{s+1} = \{e_G\}$. 
We do not require $s$ to be minimal with this property. 
In particular, an $s$-step nilpotent group is also $r$-step nilpotent for every $r \geq s$. 

We are mainly interested in Lie groups. 
We will use the following standard facts without further comment; see \citep[Proposition 7.15, Theorem 20.12, Theorem 21.26]{lee2003smooth} for details.
\begin{itemize}
    \item The identity component $G^o$ of a Lie group $G$ is an open normal subgroup.  
    \item A closed subgroup of a Lie group is a Lie group.
    \item The quotient of a Lie group by a closed normal subgroup is a Lie group.
\end{itemize}

A closed subgroup $\Gamma$ of a Lie group $G$ is called cocompact if the quotient $G/\Gamma$ is compact.

\begin{definition}
    Let $s \geq 1$. 
    An $s$-step nilmanifold is a quotient 
    \[
    X = G/\Gamma, 
    \]
    where $G$ is an $s$-step nilpotent Lie group and $\Gamma \leq G$ is a discrete cocompact subgroup.
    We call $X$ a nilmanifold if it is an $s$-step nilmanifold for some $s \geq 1$.
\end{definition}

If $X = G/\Gamma$ is a nilmanifold, we write 
\[
p \colon G \to X, \quad g \mapsto g \Gamma
\]
for the quotient map.
This map is open. 
The point $e_X := p(e_G) = e_G \Gamma$ is called the base point of $X$.
By definition, $X$ is a compact homogeneous space of $G$.
The group $G$ thus acts on $X$ by translations 
\[
g \cdot h\Gamma := gh \Gamma \quad \text{for } g \in G, \ h \Gamma \in X.
\]
With its natural smooth structure, $X$ is a compact smooth manifold, and this action is smooth. 

There is a unique Borel probability measure $\mu$ on $X$ that is invariant under all translations by elements of $G$.
We call $\mu$ the Haar measure on $X$. 
This measure has full support: $\mu(U) > 0$ for every non-empty open set $U \subseteq X$. 
For background on Haar measure, see \citep[Section 2.6]{folland2016course}.

\subsection{Examples}

We consider some examples of nilmanifolds. 

\begin{example}
    \label[example]{eg:one_step_nilmnflds}
    The torus
    \[
    \T = \R/\Z  
    \]
    is a $1$-step nilmanifold. 
    More generally, every compact abelian Lie group $K$ can be viewed as a $1$-step nilmanifold, since 
    \[
    K \cong K/\{0\}.  
    \]
    Conversely, if $X = G/\Gamma$ is a $1$-step nilmanifold, then $G$ is abelian so $\Gamma$ is normal in $G$. 
    Therefore, the quotient $G/\Gamma$ is a compact abelian Lie group. 
    Thus the $1$-step nilmanifolds are precisely the compact abelian Lie groups. 
    Under this identification, the Haar measure on the nilmanifold agrees with the usual group Haar measure. 
\end{example}

\begin{example}
    \label[example]{eg:heisenberg_nilmnfld}
    Let $G = \R^3$ with multiplication law 
    \[
    (x, y, z) \cdot (x', y', z') = (x + x', y + y', z + z' + xy').  
    \]
    With the usual smooth structure on $\R^3$, this is a Lie group called the Heisenberg group. 
    Its identity element is $0 := (0, 0, 0)$. 
    The commutator is given by
    \[
    [(x, y, z), (x', y', z')] = (0, 0, xy' - x'y). 
    \]
    Therefore
    \[
    G_2 = \{0\} \times \{0\} \times \R, 
    \quad G_3 = \{0\}, 
    \]
    so $G$ is $2$-step nilpotent. 
    The subgroup $\Gamma := \Z^3 \leq G$ is discrete, and the quotient 
    \[
    X = G/\Gamma 
    \]
    is compact, since every coset has a representative in the compact cube $[0,1]^3 \subseteq G$. 
    Thus, $X$ is a $2$-step nilmanifold, called the Heisenberg nilmanifold. 
    
    The Heisenberg group can equivalently be realized as the group  
    \[
        \left\{
            \begin{pmatrix}
                1 & x & z \\
                0 & 1 & y \\
                0 & 0 & 1
            \end{pmatrix} :
            x, y, z \in \R
        \right\},  
    \]
    under matrix multiplication, which is where the above multiplication law comes from. 
\end{example}

\begin{example}
    \label[example]{eg:eg:up_triangle_nilmnfld}
    Let $U_n(\R)$ be the group of real upper triangular $n \times n$-matrices with all diagonal entries equal to $1$, and let $U_n(\Z)$ be the subgroup with integer entries.  
    For $s \geq 1$, let
    \[
    G = U_{s+1}(\R), \quad \Gamma = U_{s+1}(\Z).  
    \]
    Then $G$ is an $s$-step nilpotent Lie group and $\Gamma$ is a discrete cocompact subgroup; see \citep[Section 10.6]{einsiedler2010ergodic} for details. 
    Thus, 
    \[
    X = G/\Gamma 
    \]
    is an $s$-step nilmanifold. 
    For $s = 2$, this recovers the matrix form of the Heisenberg nilmanifold.  
\end{example}

\begin{example}
    \label[example]{eg:product_nilmnfld}
    Products of nilmanifolds are naturally again nilmanifolds. 
    Indeed, if $X = G/\Gamma$ and $X' = G'/\Gamma'$ are nilmanifolds of step $s$ and $s'$ respectively, then  
    \[
    X \times X' \cong (G \times G')/(\Gamma \times \Gamma')
    \]
    is an $r$-step nilmanifold, where $r = \max\{s, s'\}$. 
    The Haar measure on $X \times X'$ agrees with the product of the Haar measures on $X$ and $X'$.  
\end{example}

\subsection{Standing assumption} 
\label{sec:nilmanifolds_standing_assumption}

Let $X = G/\Gamma$ be an $s$-step nilmanifold, and let $G^o$ be the identity component in $G$.  
In several later arguments, it will be convenient to assume that $G/G^o$ is finitely generated.
This is a harmless assumption for our purposes.  

Let $p \colon G \to X = G/\Gamma$ be the quotient map. 
Then $p(G^o) \subseteq X$ is open. 
Since $G$ acts transitively on $X$, the translates $g \cdot p(G^o) = p(g G^o)$ for $g \in G$ form an open cover of $X$. 
By compactness, there exist $g_1, \dots, g_k \in G$ such that 
\[
X = \bigcup_{j = 1}^k p(g_j G^o).
\]
Let $H := \langle G^o, g_1, \dots, g_k \rangle \leq G$, 
and let $\Lambda := H \cap \Gamma$. 
Then $H \leq G$ is an open, hence closed, subgroup and therefore an $s$-step nilpotent Lie group. 
By construction, $H$ acts transitively on $X$. 
Hence by standard homogeneous space arguments, see \citep[Theorem 21.18]{lee2003smooth}, the map 
\[
H/\Lambda \to X, \quad h \Lambda \mapsto h \cdot e_X = h\Gamma
\]
is a diffeomorphism, identifying $H/\Lambda$ and $X$ as homogeneous spaces of $H$.

Note that $\Lambda \leq H$ is discrete, and it is cocompact by the above identification of the quotient. 
Hence $H/\Lambda$ is an $s$-step nilmanifold.
Moreover, $H/H^o$ is finitely generated since it is generated by images of $g_1, \dots, g_k$. 

\begin{assumption}
    From now on, whenever a nilmanifold is written as $X = G/\Gamma$, we assume, unless otherwise stated, that $G/G^o$ is finitely generated.  
\end{assumption}

All examples in the previous section satisfy this assumption. 

\subsection{Subnilmanifolds and rational subgroups}
\label{sec:subnilmanifolds_rational_subgroups}

We next discuss subnilmanifolds, the natural subspaces of nilmanifolds.  

\begin{definition}
    Let $X = G/\Gamma$ be a nilmanifold. 
    A subset $Y \subseteq X$ is called a subnilmanifold of $X$ if $Y$ is closed and there exists a closed subgroup $H \leq G$ and a point $x \in X$ such that 
    \[
    Y = H \cdot x.
    \]
\end{definition}

Suppose that $Y = H \cdot x$ is a subnilmanifold, with $x = g\Gamma$.
Then $H$ acts transitively on $Y$ by translations, and the stabilizer subgroup of $x$ in $H$ is 
\[
\Lambda := H \cap g \Gamma g^{-1}.
\]
Hence by standard homogeneous space arguments, see for example \citep[Theorem 2.2.2]{becker1996descriptive}, the orbit map induces a homeomorphism
\[
H/\Lambda \to Y, \quad h\Lambda \mapsto h \cdot x.
\]
Since $H$ is a closed subgroup of $G$, it is a nilpotent Lie group. 
Moreover, $\Lambda$ is discrete, and it is cocompact because the above homeomorphism identifies $H/\Lambda$ with the compact space $Y$.  
Therefore, $Y$ is itself naturally a nilmanifold of step equal to the step of $X$. 

The Haar measure on $Y$ is defined as the image of the Haar measure on $H/\Lambda$ under the above identification. 
In particular, this is the unique Borel probability measure on $Y$ that is invariant under all translations by elements of $H$. 

The standing assumption from the previous section is preserved under passing to subnilmanifolds. 
Indeed, let $X = G/\Gamma$ be a nilmanifold with $G/G^o$ finitely generated, and let $Y = H \cdot x$ be a subnilmanifold, and let $\Lambda$ be defined as above. 
Then $\Gamma$ and consequently also $\Lambda$ are finitely generated; see \citep[Theorem 2.1, Theorem 2.7]{raghunathan1972discrete}. 
Let $q \colon H \to H/H^o$ be the quotient map. 
Then 
\[
(H/H^o)/q(\Lambda)
\]
is discrete and compact, and therefore finite. 
It follows that $H/H^o$ contains the finitely generated group $q(\Lambda)$ as a finite index subgroup. 
Thus $H/H^o$ is finitely generated. 

We will need the following simple observation.
Subnilmanifolds are preserved under maps arising from homomorphisms on the level of the groups. 

\begin{lemma}
    \label[lemma]{lem:algebraic_image_subnilmanifold}
    Let $X = G/\Gamma$ and $X' = G'/\Gamma'$ be nilmanifolds, and let $\phi \colon G \to G'$ be a continuous group homomorphism with $\phi(\Gamma) \subseteq \Gamma'$. 
    Let 
    \[
    \pi \colon X \to X', \quad g\Gamma \mapsto \phi(g) \Gamma' 
    \]
    be the induced map. 
    If $Y \subseteq X$ is a subnilmanifold with Haar measure $\nu$, then $\pi(Y)$ is a subnilmanifold of $X'$, with Haar measure $\pi_* \nu$. 
\end{lemma}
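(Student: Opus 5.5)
Write $Y = H \cdot x$ with $H \leq G$ closed and $x = g\Gamma$, and set $H' := \overline{\phi(H)}$, the closure of $\phi(H)$ in $G'$. The first thing to note is that $\pi$ is well defined and continuous. It is well defined because $\phi(\Gamma) \subseteq \Gamma'$. It is continuous because it is induced from the continuous map $G \to G'/\Gamma'$, $g \mapsto \phi(g)\Gamma'$, and the quotient map $G \to X$ is open. It is also equivariant, in the sense that
\[
\pi(h \cdot y) = \phi(h) \cdot \pi(y) \quad \text{for all } h \in G,\ y \in X.
\]
By equivariance, $\pi(Y) = \phi(H) \cdot \pi(x)$. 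The set $\pi(Y)$ is compact, being the continuous image of the compact set $Y$, and hence closed in $X'$.

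Next I show that $\pi(Y) = H' \cdot \pi(x)$, so that $\pi(Y)$ is a subnilmanifold. One inclusion is clear. For the other, take $h' \in H'$ and pick $h_n \in H$ with $\phi(h_n) \to h'$. Then
\[
\phi(h_n) \cdot \pi(x) \to h' \cdot \pi(x),
\]
since the action of $G'$ on $X'$ is continuous. Each term $\phi(h_n)\cdot\pi(x)$ lies in the closed set $\pi(Y)$, so the limit does too. This gives $H' \cdot \pi(x) \subseteq \pi(Y)$. Since $H'$ is a closed subgroup of $G'$, $\pi(Y)$ is a subnilmanifold of $X'$.

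For the measure, the Haar measure of $\pi(Y)$ is characterized as the unique Borel probability measure on $\pi(Y)$ invariant under translations by $H'$. The measure $\pi_*\nu$ is a Borel probability measure supported on $\pi(Y)$. By equivariance and the $H$-invariance of $\nu$,
\[
\phi(h)_* \pi_* \nu = \pi_* h_* \nu = \pi_* \nu \quad \text{for all } h \in H,
\]
so $\pi_*\nu$ is invariant under $\phi(H)$.

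The main obstacle is passing from invariance under $\phi(H)$ to invariance under its closure $H'$. I would handle this with weak-* continuity. For $f \in C(\pi(Y))$, the map
\[
h' \mapsto \int f(h' \cdot z) \dd \pi_*\nu(z)
\]
is continuous on $H'$, by uniform continuity of $f$ on the compact space $X'$ together with continuity of the action. This map is constant on the dense subset $\phi(H)$, so it is constant on all of $H'$. Hence $\pi_*\nu$ is $H'$-invariant. By uniqueness of the Haar measure on $\pi(Y)$, the Haar measure of $\pi(Y)$ is $\pi_*\nu$.
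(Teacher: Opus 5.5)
Your proof is correct and follows the same route as the paper: write $\pi(Y) = \phi(H)\cdot\pi(x)$, use compactness of $\pi(Y)$ to identify it with $\overline{\phi(H)}\cdot\pi(x)$, and pass the $\phi(H)$-invariance of $\pi_*\nu$ to the closure before invoking uniqueness of the Haar measure. The only difference is that you supply the details the paper states in one line each, namely the limit argument for the orbit identity and the continuity argument for invariance under $\overline{\phi(H)}$.
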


\begin{proof}
    Write $Y = H \cdot x$ with $H \leq G$ closed and $x \in X$. 
    Then $\pi(Y) = \pi(H \cdot x) = \phi(H) \cdot \pi(x)$. 
    Since $Y$ is compact, $\pi(Y)$ is compact, hence closed.
    Therefore
    \[
    \pi(Y) = \overline{\phi(H)} \cdot \pi(x)
    \]
    is a subnilmanifold. 
    The measure $\pi_* \nu$ is invariant under $\phi(H)$, hence under $\overline{\phi(H)}$, so it is the Haar measure on $\pi(Y)$.  
\end{proof}

Let $X = G/\Gamma$ be a nilmanifold, and let $H \leq G$ be a closed subgroup. 
Then 
\[
H \cdot e_X = p(H)
\]
need not be closed in $X$. 
Equivalently, $H \cdot e_X$ is not necessarily a subnilmanifold. 
This motivates the following definition. 

\begin{definition}
    Let $X = G/\Gamma$ be a nilmanifold.
    A closed subgroup $H \leq G$ is called rational if $H \cdot e_X$ is closed in $X$. 
\end{definition}

We will use the following equivalent characterizations; this is \citep[Chapter 10, Lemma 14]{host2018nilpotent}. 

\begin{lemma} 
    \label[lemma]{lem:rational_subgroups_equivalent_chatacterizations}
    Let $X = G/\Gamma$ be a nilmanifold, and let $H \leq G$ be a closed subgroup. 
    Then the following are equivalent. 
    \begin{enumerate}[(i)]
        \item $H$ is a rational subgroup,  
        \item $H \Gamma$ is closed in $G$, 
        \item $H \cap \Gamma$ is cocompact in $H$, 
        \item there exists a compact subset $K \subseteq H$ such that $H = K (H \cap \Gamma)$. 
    \end{enumerate}
\end{lemma}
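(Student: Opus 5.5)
I will prove the equivalences through the chain (i)$\Leftrightarrow$(ii), (ii)$\Rightarrow$(iii), (iii)$\Rightarrow$(iv), (iv)$\Rightarrow$(ii). The topological input is that the quotient map $p \colon G \to X$ is open and continuous, together with the homogeneous space identification from the discussion of subnilmanifolds: if $H\cdot e_X$ is closed, then the orbit map induces a homeomorphism $H/(H\cap\Gamma) \to H \cdot e_X$.

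For (i)$\Leftrightarrow$(ii), note that $p^{-1}(p(H)) = H\Gamma$. Since $p$ is a quotient map, a subset $A \subseteq X$ is closed if and only if $p^{-1}(A)$ is closed in $G$. Applying this to $A = H\cdot e_X = p(H)$ gives the equivalence at once.

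For (ii)$\Rightarrow$(iii), assume that $H\cdot e_X$ is closed, hence compact. The stabilizer of $e_X$ in $H$ is $H\cap\Gamma$, so by the homeomorphism above $H/(H\cap\Gamma)$ is compact, which is (iii). For (iii)$\Rightarrow$(iv), I will use that $H$ is a Lie group, hence locally compact. Choose a compact neighborhood $C$ of $e_H$ in $H$. The open sets $h\,\mathrm{int}(C)(H\cap\Gamma)/(H\cap\Gamma)$ for $h \in H$ cover the compact space $H/(H\cap\Gamma)$, so finitely many suffice. Then $K := \bigcup_j h_j C$ is compact and satisfies $H = K(H\cap\Gamma)$.

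For (iv)$\Rightarrow$(i), write $H\cdot e_X = p(H) = p(K(H\cap\Gamma)) = p(K)$, since $(H\cap\Gamma) \subseteq \Gamma$ acts trivially on $e_X$. Thus $H \cdot e_X$ is a continuous image of a compact set, so it is compact and therefore closed in the Hausdorff space $X$. The only point needing care is the homogeneous space homeomorphism used in (ii)$\Rightarrow$(iii). This requires the orbit $H\cdot e_X$ to be closed, so that it is locally compact and the open mapping theorem for $\sigma$-compact group actions applies. Since this is exactly the situation already treated via \citep[Theorem 2.2.2]{becker1996descriptive}, I expect no real obstacle.
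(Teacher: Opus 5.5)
Your proof is correct and follows essentially the same route as the paper: (i)$\Leftrightarrow$(ii) via $p^{-1}(H\cdot e_X) = H\Gamma$, and a closed orbit yields compactness of $H/(H\cap\Gamma)$ through the orbit homeomorphism $H/(H\cap\Gamma)\to H\cdot e_X$. The differences are cosmetic. You close the cycle with (iv)$\Rightarrow$(i) by observing $p(H)=p(K)$ is compact, whereas the paper argues (iii)$\Rightarrow$(i) via a continuous bijection from a compact domain. You also write out the compact-neighborhood argument for (iii)$\Rightarrow$(iv), which the paper only cites. Your outline announces (iv)$\Rightarrow$(ii) but you actually prove (iv)$\Rightarrow$(i), which is harmless given (i)$\Leftrightarrow$(ii).
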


\begin{proof}
    The equivalence of (i) and (ii) holds because $p^{-1}(H \cdot e_X) = H \Gamma$.

    To see that (i) and (iii) are equivalent, note that the map 
    \[
    H/(H \cap \Gamma) \to H \cdot e_X, \quad h(H \cap \Gamma) \mapsto h \cdot e_X
    \]
    is a continuous bijection. 
    If $H \cdot e_X$ is closed, then by \citep[Theorem 2.2.2]{becker1996descriptive} this map is a homeomorphism, so $H/(H \cap \Gamma)$ is compact.
    Conversely, if $H \cap \Gamma$ is cocompact, then the above map is again a homeomorphism since it has compact domain, and therefore $H \cdot e_X$ is closed. 

    Finally, (iii) and (iv) are equivalent by the usual characterization of cocompactness.  
\end{proof}

For us, the most important rational subgroups are the terms of the lower central series.  

\begin{proposition} 
    \label[proposition]{prop:lower_central_series_is_rational}
    Let $X = G/\Gamma$ be a nilmanifold.
    Then for $i \geq 1$, $G_i$ is a rational subgroup.
\end{proposition}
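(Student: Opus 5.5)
The plan is to use characterization (iv) of \cref{lem:rational_subgroups_equivalent_chatacterizations} and argue by induction on $i$: for each $i$ we look for a compact set $K_i \subseteq G_i$ with $G_i = K_i (G_i \cap \Gamma)$. The case $i = 1$ is just cocompactness of $\Gamma$, and $G_{s+1} = \{e_G\}$ is trivially rational. A preliminary point must be settled first, namely that $G_i$ is closed in $G$. I would deduce this from the standing assumption that $G/G^o$ is finitely generated. For $G^o$ the commutator subgroups of a connected nilpotent Lie group are closed, and $G_i$ is generated by $(G^o)_i$-type pieces together with commutators of finitely many representatives $g_1,\dots,g_k$ of generators of $G/G^o$. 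Such commutators form finitely many continuous images of connected sets, and those combine to give a closed subgroup. I would also use that $\Gamma$ is finitely generated, as recalled earlier from \citep{raghunathan1972discrete}.

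For the induction step, assume $G_{i-1} = K_{i-1}(G_{i-1} \cap \Gamma)$ with $K_{i-1}$ compact, and that $G_{i+1}$ is rational. It then suffices to show that the image of $G_i \cap \Gamma$ in the abelian Lie group $A := G_i/G_{i+1}$ is cocompact; a compact section lifted through the rationality of $G_{i+1}$ then gives $K_i$. The key tool is that the commutator map $G \times G_{i-1} \to A$, $(g,h) \mapsto [g,h]G_{i+1}$, is a homomorphism in each variable and factors through $G/G_2 \times G_{i-1}/G_i$. Write $g = c\gamma$ with $c \in K_1$, $\gamma \in \Gamma$, and $h = c'\gamma'$ with $c' \in K_{i-1}$, $\gamma' \in G_{i-1} \cap \Gamma$. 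Then
\[
[g,h] \equiv [c,c'][c,\gamma'][\gamma,c'][\gamma,\gamma'] \pmod{G_{i+1}}.
\]
The last factor lies in $\Gamma \cap G_i$, and $[c,c']$ ranges over a compact set. For the mixed terms, write $\gamma$ as a word in the finitely many generators $\gamma_1,\dots,\gamma_m$ of $\Gamma$. Using $[\gamma_j^n, c'] \equiv [\gamma_j, c'^n]$ and re-decomposing $c'^n = c''\gamma''$ via the inductive hypothesis, each mixed term reduces to commutators of a generator with a compact element, times elements of $\Gamma \cap G_i$. The term $[c,\gamma']$ is treated symmetrically, using the generators of $G_{i-1} \cap \Gamma$.

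The conclusion is that $A$ is generated by a compact set $S$ together with the image of $\Gamma \cap G_i$. The main obstacle is to upgrade this to genuine cocompactness, since the subgroup generated by a compact set need not be compact. My first approach would be to split $A$ into its identity component $A^o$, a connected abelian Lie group $\R^a \times \T^b$, and the finitely generated discrete group $A/A^o$. On $A^o$ I would use Mal'cev's theory on the connected part: the commutators of lattice elements span the vector part rationally, so the image of $\Gamma \cap G_i$ is a lattice modulo the torus. On the discrete part, the finitely many elements of $S$-type generators are handled by noting that suitable powers land in the image of $\Gamma$, by the bilinearity trick above. If this splitting becomes messy, the fallback is to reduce to the case of connected, simply connected $G$ via the universal cover of $G^o$ and cite Mal'cev's rationality of the lower central series directly.
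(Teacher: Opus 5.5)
Your proposal has genuine gaps in three places. First, the preliminary closedness step is false as stated. Commutator subgroups of connected nilpotent Lie groups need not be closed, and a subgroup generated by finitely many continuous images of connected sets need not be closed either: a segment of an irrational line generates a dense line in $\T^2$. For a concrete case, let $N$ be the Heisenberg group with center $C$, and let $G := (N \times \T)/D$, where $D$ is the discrete central subgroup generated by $((0,0,1), \alpha)$ with $\alpha \notin \Q$. Then $G$ is connected and $2$-step nilpotent, but $G_2$ is the image of $C \times \{0\}$. This is a dense one-parameter subgroup of the central $2$-torus $(C \times \T)/D$. So closedness of $G_i$ genuinely depends on $\Gamma$ (consistently with the proposition, this $G$ admits no lattice). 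It must therefore be an output of the argument, as in the paper's remark after the statement, not an input. Without it, $A = G_i/G_{i+1}$ is not even a Lie group. Second, your induction step assumes rationality of both $G_{i-1}$ and $G_{i+1}$. For $s \geq 3$, neither an upward nor a downward induction supplies both, so the scheme is circular. The usual way out is to induct on the step and treat only the central last term $G_s$. For this one uses the map $(g_1, \dots, g_s) \mapsto [g_1, [g_2, \dots, [g_{s-1}, g_s] \dots]]$, which is multilinear and factors through $(G/G_2)^s$. It needs only the automatic cocompactness of the image of $\Gamma$ in $G/G_2$, not the rationality of $G_{s-1}$. One then passes to $G/G_s$ with the lattice $\Gamma G_s/G_s$.

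Third, and most importantly, the decisive step is not proved. Your mixed-term bookkeeping is fine, but it only yields $A = \langle S \rangle \bar\Gamma_i$ with $S$ compact, where $\bar\Gamma_i$ is the image of $\Gamma \cap G_i$. As you note yourself, this says nothing: take $A = \R$, $S = [0,1]$, $\bar\Gamma_i = \{0\}$. In the connected simply connected case, two facts actually make it work. First, $\bar\Gamma_i$ is discrete because $\Gamma G_{i+1}$ is closed. Second, continuous biadditive maps between vector groups are $\R$-bilinear. Hence $A$ is the $\R$-span of finitely many commutators of elements of $\Gamma$ (resp. $\Gamma \cap G_{i-1}$), and their $\Z$-span is a lattice in $A$ contained in the discrete group $\bar\Gamma_i$. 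None of your routes supplies this outside that case:
\begin{itemize}
\item The assertion that commutators of lattice elements span the vector part rationally \emph{is} Mal'cev's theorem.
\item The claim that suitable powers of elements of $S$ land in $\bar\Gamma_i$ does not follow from the bilinearity trick; compact factors of $A$, such as tori, also have to be handled.
\item The fallback via the universal cover of $G^o$ fails as stated, because for disconnected $G$ the $G_i$ are not determined by $G^o$. For $G = \Z \ltimes_\theta \R^2$ with $\theta(x,y) = (x+y,y)$ and $\Gamma = \Z \ltimes_\theta \Z^2$, the group $G^o = \R^2$ is abelian, while $G_2 = \{(0,(x,0)) : x \in \R\}$.
\end{itemize}
Passing from Mal'cev's connected simply connected theorem to groups with $G/G^o$ finitely generated is exactly the content of Leibman's \S 2.11. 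The paper cites that section rather than giving its own proof. So at the crucial point, your proposal defers to the result it is meant to establish.
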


In particular, each $G_i$ is a closed subgroup of $G$. 
This is the main point where the standing assumption that $G/G^o$ is finitely generated is used.  
This result is originally due to \citet{malcev1951class} for connected simply connected $G$. 
A proof in the present setting can be found in \citep[\S 2.11]{leibman2005pointwise}.

\subsection{The tower of a nilmanifold}
\label{sec:nilmanifold_tower}

Let $X = G/\Gamma$ be an $s$-step nilmanifold. 
The lower central series of $G$ gives rise to a natural tower of quotients
\[
X = Z_s \longrightarrow Z_{s-1} \longrightarrow \cdots \longrightarrow Z_1 \longrightarrow Z_0 = \{*\},
\]
where $Z_i$ is an $i$-step nilmanifold for $i = 1, \dots, s$.
This is a powerful tool, as it allows for arguments by induction on the step.  
We now carry out this construction carefully. 
This section is based on \citep[Section 14.3]{eisner2025journey} and \citep[Chapter 11, Section 1.3]{host2018nilpotent}.

For $i = 0, \dots, s$, let $q_i \colon G \to G/G_{i+1}$ be the quotient homomorphism, and define  
\[
Z_i := (G/G_{i+1})/q_i(\Gamma).
\]

\begin{lemma}
    For each $i = 1, \dots, s$, the space $Z_i$ is an $i$-step nilmanifold.  
\end{lemma}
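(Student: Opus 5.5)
To show that $Z_i$ is an $i$-step nilmanifold, I will check the three requirements of the definition in turn. First, $G/G_{i+1}$ must be an $i$-step nilpotent Lie group. Second, $q_i(\Gamma)$ must be a discrete subgroup. Third, $q_i(\Gamma)$ must be cocompact. Closedness of $q_i(\Gamma)$ is not listed separately: a discrete subgroup of a Hausdorff topological group is automatically closed, so it comes out of the second step. The standing assumption on $G/G^o$ should pass to the quotient, and I will note this at the end.

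\emph{The quotient group.} By \cref{prop:lower_central_series_is_rational}, $G_{i+1}$ is rational and in particular closed. It is normal in $G$, so $G/G_{i+1}$ is a Lie group by the standard facts recalled earlier. The quotient map $q_i$ is surjective, and images of commutators are commutators, so $q_i(G_j) = (G/G_{i+1})_j$ for all $j$. Hence $(G/G_{i+1})_{i+1} = q_i(G_{i+1}) = \{e\}$, and $G/G_{i+1}$ is $i$-step nilpotent.

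\emph{Discreteness.} This is the step I expect to be the main obstacle, and it is exactly where rationality of $G_{i+1}$ is used. By \cref{lem:rational_subgroups_equivalent_chatacterizations}, the product $G_{i+1}\Gamma$ is closed in $G$. Since $q_i^{-1}(q_i(\Gamma)) = \Gamma G_{i+1} = G_{i+1}\Gamma$ (using normality), the subgroup $q_i(\Gamma)$ is closed in $G/G_{i+1}$. Closedness alone does not give discreteness, so I need a further argument. Write $\Lambda := G_{i+1}\cap\Gamma$, which is cocompact in $G_{i+1}$ by the same lemma, say $G_{i+1} = C\Lambda$ with $C$ compact. Suppose $q_i(\gamma_n) \to e$ with $\gamma_n\in\Gamma$. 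Then there are $h_n \in G_{i+1}$ with $\gamma_n h_n \to e_G$. Write $h_n = c_n\lambda_n$ with $c_n\in C$ and $\lambda_n \in \Lambda$, and pass to a subsequence along which $c_n\to c$. The elements $\gamma_n' := \gamma_n\lambda_n\in\Gamma$ then satisfy $\gamma_n' = \gamma_n h_n \cdot (\lambda_n^{-1} c_n^{-1}\lambda_n)$. I would rather avoid this conjugation, so I will reorder the factorization as $h_n = \lambda_n c_n$, which is allowed because $G_{i+1} = \Lambda C^{-1}$ as well. Then $\gamma_n\lambda_n = \gamma_n h_n c_n^{-1} \to c^{-1}$ converges in $G$. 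Since $\Gamma$ is discrete, $\gamma_n\lambda_n$ is eventually constant, equal to some $\gamma\in\Gamma$. Its limit satisfies $c^{-1} = \gamma \in \Gamma \cap G_{i+1}$, so $q_i(\gamma_n) = q_i(\gamma_n\lambda_n) = q_i(\gamma) = e$ eventually. This shows that $e$ is isolated in $q_i(\Gamma)$, and hence $q_i(\Gamma)$ is discrete.

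\emph{Cocompactness and the standing assumption.} The map $G/\Gamma \to (G/G_{i+1})/q_i(\Gamma)$, $g\Gamma\mapsto q_i(g)q_i(\Gamma)$, is a continuous surjection, so $Z_i$ is compact as the image of the compact space $X$. For the standing assumption, $(G/G_{i+1})^o \supseteq q_i(G^o)$, so $(G/G_{i+1})/(G/G_{i+1})^o$ is a quotient of $G/G^o$ and is therefore finitely generated. This completes the argument.
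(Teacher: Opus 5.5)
Your proof is correct, but the discreteness step takes a different route from the paper's.

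\textbf{Discreteness.} The paper argues that $q_i(\Gamma)$ is closed in $G/G_{i+1}$, using rationality in the form ``$G_{i+1}\Gamma$ is closed''. It is therefore a closed subgroup of a Lie group, hence a Lie group. Being countable, it must be zero-dimensional, i.e.\ discrete. Your remark that ``closedness alone does not give discreteness'' is true only without countability, and countability is exactly what the paper exploits. You instead give a direct sequential argument from the characterization $G_{i+1} = C\,(G_{i+1}\cap\Gamma)$ with $C$ compact, together with discreteness of $\Gamma$. That argument never uses the closedness of $G_{i+1}\Gamma$ established just before, so that part of your write-up is redundant.

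\textbf{Trade-off.} The paper's route is shorter, but it leans on Lie theory: the closed subgroup theorem, and the fact that a countable Lie group is discrete. Yours is more elementary. It uses only openness of $q_i$, metrizability, cocompactness of $G_{i+1}\cap\Gamma$ in $G_{i+1}$, and discreteness of $\Gamma$, so it would work for any closed normal subgroup with that cocompactness property.

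\textbf{Compactness.} You use the continuous surjection $X \to Z_i$, whereas the paper uses the homeomorphism $Z_i \cong G/(G_{i+1}\Gamma)$. Both are fine, and yours is slightly more direct.

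\textbf{Extra checks.} Your verification of $i$-step nilpotency via $q_i(G_j) = (G/G_{i+1})_j$, and of the standing assumption on the component group, fills in points the paper leaves implicit.

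\textbf{Small presentation fix.} As written, your sequence argument only shows that $q_i(\gamma_n)=e$ eventually along a subsequence. To conclude that $e$ is isolated, start from a sequence of non-identity elements $q_i(\gamma_n)\to e$ and derive a contradiction. This is harmless, but it should be stated.
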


\begin{proof}
    Since $G_{i+1}$ is closed and normal in $G$, the quotient $G/G_{i+1}$ is an $i$-step nilpotent Lie group.     
    Moreover, $q_i(\Gamma) \leq G/G_{i+1}$ is discrete and cocompact.  
    Indeed, by rationality of $G_{i+1}$, the subgroup $G_{i+1} \Gamma$ is closed in $G$, so $q_i(\Gamma)$ is closed in $G/G_{i+1}$. 
    Since $q_i(\Gamma)$ is a countable Lie group, it is discrete.
    Finally, the map 
    \[
    (G/G_{i+1})/q_i(\Gamma) \to G/(G_{i+1} \Gamma),
    \quad q_i(g) q_i(\Gamma) \mapsto g G_{i+1}\Gamma
    \]
    is a homeomorphism. 
    Since $G/(G_{i+1} \Gamma)$ is a quotient of $X = G/\Gamma$, it is compact.
    Thus, $q_i(\Gamma)$ is cocompact, and $Z_i$ is an $i$-step nilmanifold. 
\end{proof}

For $i = 0, \dots, s-1$, let
\[
\pi_i \colon Z_{i+1} \to Z_i
\]
be the map induced by the quotient group homomorphism $G/G_{i+2} \to G/G_{i+1}$. 
Explicitly, for $q_{i+1}(g) q_{i+1}(\Gamma) \in Z_{i+1}$,  
\[
\pi_i(q_{i+1}(g) q_{i+1}(\Gamma)) = q_i(g) q_i(\Gamma).
\] 
This is a well-defined continuous surjection. 

Since $G_{s+1} = \{e_G\}$, we have that $Z_s \cong X$.  
Moreover, $G/G_1$ is trivial, so $Z_0 = \{*\}$. 
This gives the tower
\[
X = Z_s \xrightarrow{\pi_{s-1}} Z_{s-1} \xrightarrow{\pi_{s-2}} \cdots \xrightarrow{\pi_1} Z_1 \xrightarrow{\pi_0} Z_0 = \{*\}.
\]

The maps in the tower can be realized as quotients by compact abelian Lie groups, called the structure groups. 
For $i = 1, \dots, s$, define 
\[
K_i := q_i(G_i)/(q_i(G_i) \cap q_i(\Gamma)).
\]

\begin{lemma}
    For $i = 1, \dots, s$, $K_i$ is a compact abelian Lie group. 
\end{lemma}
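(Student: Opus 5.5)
The plan is to check four properties of $K_i = q_i(G_i)/(q_i(G_i)\cap q_i(\Gamma))$ in turn: that the quotient makes sense as a group, that it is abelian, that it is a Lie group, and that it is compact.

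\emph{Group and abelian.} Set $H := q_i(G_i) \leq G/G_{i+1}$. Since $[G_i, G] \subseteq G_{i+1}$, the subgroup $H$ is central in $G/G_{i+1}$; in particular $H$ is abelian. Hence $H \cap q_i(\Gamma)$ is a normal subgroup of $H$, and $K_i$ is an abelian group.

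\emph{Closedness and discreteness.} By \cref{prop:lower_central_series_is_rational}, $G_i$ is closed. So $G_i/G_{i+1} = H$ is a closed subgroup of the Lie group $G/G_{i+1}$, and hence a Lie group. In the proof of the preceding lemma we showed that $q_i(\Gamma)$ is discrete in $G/G_{i+1}$. It follows that $H \cap q_i(\Gamma)$ is a discrete subgroup of $H$, and in particular it is closed. Therefore $K_i$ is the quotient of a Lie group by a closed normal subgroup, so $K_i$ is a Lie group. It is metrizable, being a quotient of a second countable locally compact group.

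\emph{Compactness.} This is the step that needs the most care. By \cref{lem:rational_subgroups_equivalent_chatacterizations}, it suffices to show that $H$ is a rational subgroup of the nilmanifold $Z_i = (G/G_{i+1})/q_i(\Gamma)$, i.e.\ that $H\, q_i(\Gamma)$ is closed in $G/G_{i+1}$. We argue as follows.
\begin{enumerate}[(a)]
    \item We have $q_i^{-1}(H\, q_i(\Gamma)) = G_i G_{i+1} \Gamma = G_i \Gamma$.
    \item The set $G_i\Gamma$ is closed in $G$ by rationality of $G_i$ (\cref{prop:lower_central_series_is_rational}, characterization (ii) of \cref{lem:rational_subgroups_equivalent_chatacterizations}).
    \item The map $q_i$ is a quotient map, and the set $H\, q_i(\Gamma)$ is saturated with respect to it. Hence closedness of $G_i\Gamma$ in $G$ transfers to closedness of $H\, q_i(\Gamma)$ in $G/G_{i+1}$.
\end{enumerate}
Then characterization (iii) of \cref{lem:rational_subgroups_equivalent_chatacterizations} gives that $H \cap q_i(\Gamma)$ is cocompact in $H$, so $K_i$ is compact.

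The main obstacle is this last transfer of rationality from $G$ to the quotient group $G/G_{i+1}$. The key point there is the identity $G_iG_{i+1}\Gamma = G_i\Gamma$, which holds because $G_{i+1} \subseteq G_i$. Everything else is routine Lie group bookkeeping.
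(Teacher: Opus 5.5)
Your proof is correct. The abelian and Lie-group parts agree with the paper's. You are in fact more explicit than the paper in checking that $q_i(G_i) = G_i/G_{i+1}$ is closed in $G/G_{i+1}$, which is what makes it a Lie group.

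The difference is in the compactness step. The paper applies rationality of $G_i$ directly in $G/\Gamma$: characterization (iii) of \cref{lem:rational_subgroups_equivalent_chatacterizations} shows that $G_i/(G_i\cap\Gamma)$ is compact. The paper then observes that $K_i$ is the image of this space under the continuous surjection $g(G_i\cap\Gamma)\mapsto q_i(g)\,(q_i(G_i)\cap q_i(\Gamma))$.

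You instead push rationality down to the quotient group. Using $q_i^{-1}(q_i(G_i)\,q_i(\Gamma)) = G_i\Gamma$, you show that $q_i(G_i)$ is a rational subgroup for the nilmanifold $Z_i$, and then apply characterization (iii) there.

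The paper's route is one step shorter, since it never has to transfer closedness through the quotient map $q_i$. Yours yields a slightly stronger intermediate fact: $q_i(G_i)$ is rational in $Z_i$, i.e.\ the orbit $q_i(G_i)\cdot e_{Z_i}$ is closed. The cost is invoking the criterion for $Z_i$, whose nilmanifold structure was established in the preceding lemma.
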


\begin{proof}
    Since $q_i(G_i)$ is the last non-trivial term in the lower central series of $G/G_{i+1}$, it is central, hence abelian. 
    Thus $K_i$ is an abelian group. 
    It is a Lie group, because $q_i(\Gamma)$ is closed, and it is compact because it is the continuous image of the compact nilmanifold $G_i/(G_i \cap \Gamma)$ under the map 
    \[
    G_i/(G_i \cap \Gamma) \to K_i, \quad g (G_i \cap \Gamma) \mapsto q_i(g) (q_i(G_i) \cap q_i(\Gamma)).
    \]
\end{proof}

We call $K_i$ the $i$-th structure group of the tower.  
The group $K_i$ acts on $Z_i$ by translations. 
If $a \in G_i$, then the class of $q_i(a)$ in $K_i$ acts by 
\[
q_i(g) q_i(\Gamma) \mapsto q_i(a) q_i(g) q_i(\Gamma).
\]
This is well-defined because $q_i(G_i)$ is central in $G/G_{i+1}$. 
The action is continuous and it commutes with all translations by elements of $G/G_{i+1}$ on $Z_i$. 

\begin{lemma}
    \label[lemma]{lem:structure_group_action}
    For each $i = 1, \dots, s$, the fibers of 
    \[
    \pi_{i-1} \colon Z_i \to Z_{i-1}
    \] 
    are exactly the $K_i$-orbits, and the action of $K_i$ on $Z_i$ is free.
\end{lemma}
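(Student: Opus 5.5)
We argue directly from the explicit description of the spaces and maps in the tower. Points of $Z_i$ are cosets $q_i(g) q_i(\Gamma)$ with $g \in G$, and $\pi_{i-1}$ sends such a point to $q_{i-1}(g) q_{i-1}(\Gamma)$. The action of the class of $q_i(a)$, $a \in G_i$, is left multiplication by $q_i(a)$, which is well defined because $q_i(G_i)$ is central. The proof has three steps: the action preserves fibers, every fiber is a single orbit, and the action is free.

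\emph{Orbits lie in fibers.} Since $G_i \subseteq \ker q_{i-1}$, we have
\[
\pi_{i-1}(q_i(ag)q_i(\Gamma)) = q_{i-1}(a)q_{i-1}(g)q_{i-1}(\Gamma) = q_{i-1}(g) q_{i-1}(\Gamma),
\]
so each $K_i$-orbit is contained in a fiber of $\pi_{i-1}$.

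\emph{Fibers are single orbits.} Suppose $\pi_{i-1}(q_i(g)q_i(\Gamma)) = \pi_{i-1}(q_i(h)q_i(\Gamma))$. Then $q_{i-1}(h) \in q_{i-1}(g)q_{i-1}(\Gamma)$, so $h = g\gamma b$ for some $\gamma \in \Gamma$ and $b \in G_i$. Since $G_i$ is normal, we can rewrite $g\gamma b = a g \gamma$ with $a := (g\gamma) b (g\gamma)^{-1} \in G_i$. Hence $q_i(h)q_i(\Gamma) = q_i(a) q_i(g) q_i(\Gamma)$, which is the image of $q_i(g)q_i(\Gamma)$ under the class of $q_i(a)$ in $K_i$.

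\emph{Freeness.} Suppose $q_i(a)q_i(g)q_i(\Gamma) = q_i(g)q_i(\Gamma)$ with $a \in G_i$. Then $q_i(g)^{-1}q_i(a)q_i(g) \in q_i(\Gamma)$. By centrality of $q_i(G_i)$ in $G/G_{i+1}$, the left-hand side equals $q_i(a)$. Thus $q_i(a) \in q_i(G_i) \cap q_i(\Gamma)$, so its class in $K_i$ is zero.

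The only point needing care is the well-definedness of the action on the orbit space. This is where centrality is used: right multiplication of a representative by an element of $q_i(\Gamma)$ commutes with left multiplication by $q_i(a)$, and multiplying $q_i(a)$ by an element of $q_i(G_i)\cap q_i(\Gamma)$ does not change the image point. Continuity of the action is not part of this statement. Beyond these checks, the argument is pure coset bookkeeping, so there is no real obstacle.
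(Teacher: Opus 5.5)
Your proof is correct and follows the paper's argument step for step. Orbits lie in fibers because $G_i \subseteq \ker q_{i-1}$; a fiber is a single orbit because the second representative differs from the first by an element of $G_i\Gamma$; and freeness holds because centrality of $q_i(G_i)$ in $G/G_{i+1}$ forces $q_i(a) \in q_i(\Gamma)$. The only cosmetic difference is in the transitivity step: you move the $G_i$-factor to the left by conjugation, using normality of $G_i$ in $G$, whereas the paper writes $g' = g a \gamma$ and then commutes $q_i(a)$ past $q_i(g)$ using centrality. Both work equally well.
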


\begin{proof}
    Since every element of $G_i$ maps to the identity in $G/G_i$, the map $\pi_{i-1}$ is constant on the $K_i$-orbits.  
    Conversely, suppose that two points $q_i(g) q_i(\Gamma), q_i(g') q_i(\Gamma) \in Z_i$ have the same image under $\pi_{i-1}$, that is,   
    \[
    q_{i-1}(g) q_{i-1}(\Gamma) = q_{i-1}(g') q_{i-1}(\Gamma).
    \]
    Then there exists $\gamma \in \Gamma$ such that $a := g^{-1} g' \gamma^{-1} \in G_i$. 
    Hence, 
    \[
    q_i(g') q_i(\Gamma) = q_i(g) q_i(a) q_i(\Gamma) = q_i(a) q_i(g) q_i(\Gamma),
    \]
    so the two points lie in the same $K_i$-orbit.  
    Finally, if an element represented by $q_i(a)$ in $K_i$ fixes a point of $Z_i$, then centrality of $q_i(a)$ gives $q_i(a) \in q_i(\Gamma)$, so this element is trivial in $K_i$. 
    Therefore, the $K_i$-action is free. 
\end{proof}

Thus each map 
\[
\pi_{i-1} \colon Z_i \to Z_{i-1}
\]
is the quotient map of the free action of the structure group $K_i$ on $Z_i$. 
In particular, each fiber is homeomorphic to $K_i$. 

For $i = 1$,  
\[
K_1 = (G/G_2)/q_1(\Gamma) = Z_1, 
\]
and the action of $K_1$ on $Z_1$ is simply the action of the group $K_1 = Z_1$ on itself by translations, which indeed has the trivial quotient $Z_0 = \{*\}$.   

In summary, we have constructed a tower 
\[
X = Z_s \xrightarrow{\pi_{s-1}} Z_{s-1} \xrightarrow{\pi_{s-2}} \cdots \xrightarrow{\pi_1} Z_1 \xrightarrow{\pi_0} Z_0 = \{*\},
\]
where for $i = 1, \dots, s$, $Z_i$ is an $i$-step nilmanifold and $\pi_{i-1} \colon Z_i \to Z_{i-1}$ is the quotient by the compact abelian Lie group $K_i$.  

\section{Nilsystems}

We now add dynamics to the nilmanifolds introduced in the previous section. 
This is done by fixing an element in the acting nilpotent group and considering the translation by this element. 
The resulting dynamical system is called a nilsystem. 
The material in this section is based on \citep[Chapter 11]{host2018nilpotent} and \citep[Chapter 14]{eisner2025journey}.

\subsection{Definition} 
\label{sec:nilsystems_definition}

\begin{definition}
    Let $X = G/\Gamma$ be an $s$-step nilmanifold, and let $\tau \in G$. 
    Consider the translation
    \[
    T \colon X \to X, \quad x \mapsto \tau \cdot x.  
    \]
    The nilmanifold $X$, together with the translation $T$, is called an $s$-step nilsystem.
\end{definition}
 
Since $T$ is a translation, it is a homeomorphism of $X$, with inverse $x \mapsto \tau^{-1} \cdot x$. 
Thus $(X, T)$ is a topological dynamical system. 
The Haar measure $\mu$ on $X$ is invariant under the action of $G$, so in particular under $T$. 
Therefore, $(X, \mu, T)$ is also a measure-preserving system. 

Thus a nilsystem simultaneously has the structure of a measure-preserving and a topological dynamical system. 
It will usually be clear from the context what the relevant structure is: topological notions such as minimality and distality refer to $(X, T)$, whereas measure-theoretic notions such as ergodicity refer to $(X, \mu, T)$. 

We denote nilsystems flexibly, depending on which part of the structure we want to emphasize.  
When we write $(X, T)$, the Haar measure $\mu$ is understood, and when we write $(X, \mu, T)$, the nilmanifold structure on $X$ and the realization of $T$ as a translation remain part of the data. 
When the specific nilmanifold is relevant, we will write $(X = G/\Gamma, T)$ or $(X = G/\Gamma, \mu, T)$.  

Unless stated otherwise, we keep the standing assumption from \cref{sec:nilmanifolds_standing_assumption}. 
Thus, whenever $(X = G/\Gamma, \mu, T)$ is a nilsystem, we assume that $G/G^o$ is finitely generated. 
In \cref{sec:presentations_of_nilsystems}, we explain why this does not restrict the resulting dynamical systems.  

\subsection{Examples}

We record some basic examples. 

\begin{example}
    \label[example]{eg:lie_group_rotations}
    Let $K$ be a metrizable compact abelian group, let $m_K$ be its Haar measure, and let $a \in K$. 
    Then the rotation 
    \[
    R \colon K \to K, \quad u \mapsto a + u  
    \]
    defines a measure-preserving system $(K, m_K, R)$, known as a compact abelian group rotation. 
    By \cref{eg:one_step_nilmnflds}, the $1$-step nilsystems are precisely the group rotations $(K, m_K, R)$, where $K$ is a compact abelian Lie group. 

    For example, if $a \in \R$, the rotation 
    \[
    R \colon \T \to \T, \quad u \mapsto a + u \bmod 1 
    \]
    defines a $1$-step nilsystem on $\T$. 
    It is ergodic if and only if $a\notin \Q$. 
    More generally, a rotation $(K, m_K, R)$ is ergodic if and only if  
    \[
    \{ n a: n \in \Z \}
    \] 
    is dense in $K$; see \citep[Theorem 4.14]{einsiedler2010ergodic}. 
\end{example}

\begin{example}
    \label[example]{eg:heisenberg_nilsystem}
    Let $X = G/\Gamma$ be the Heisenberg nilmanifold from \cref{eg:heisenberg_nilmnfld}, and let $\mu$ be its Haar measure. 
    Fix $\tau = (a, b, c) \in G$, and define the translation 
    \[
    T \colon X \to X, \quad g\Gamma \mapsto \tau g \Gamma.  
    \]
    Then $(X, \mu, T)$ is a $2$-step nilsystem, called the Heisenberg nilsystem. 
    This system is ergodic if and only if $1, a, b$ are linearly independent over $\Q$; see \citep[Theorem 10.1]{einsiedler2010ergodic}. 
\end{example}

\begin{example}
    \label[example]{eg:product_nilsystem}
    Let $(X = G/\Gamma, \mu, T)$ and $(X' = G'/\Gamma', \mu', T')$ be nilsystems of step $s$ and $s'$ respectively, where $T$ is the translation by $\tau \in G$ and $T'$ is the translation by $\tau' \in G'$. 
    By \cref{eg:product_nilmnfld},  
    \[
    X \times X' \cong (G \times G')/(\Gamma \times \Gamma') 
    \]
    is a nilmanifold of step $r := \max\{s, s'\}$, and its Haar measure is $\mu \times \mu'$. 
    Under this identification, the map $T \times T'$ agrees with the translation by $(\tau, \tau')$. 
    Hence $(X \times X', \mu \times \mu', T \times T')$ is an $r$-step nilsystem. 
\end{example}

\subsection{Presentations of nilsystems} 
\label{sec:presentations_of_nilsystems}

A nilsystem is defined concretely by a nilmanifold $X = G/\Gamma$ and an element $\tau \in G$, whose translation defines the dynamics. 
It is possible, however, for different such concrete choices to give rise to the same dynamical system. 
To make this precise, we introduce the following terminology. 

\begin{definition}
    Two nilsystems $(X, \mu, T)$ and $(X', \mu', T')$ are called presentations of the same nilsystem if there exists a homeomorphism
    \[
    \pi \colon X \to X' 
    \]
    such that $\pi \circ T = T' \circ \pi$, and $\pi_* \mu = \mu'$. 
    Thus $\pi$ is required to be both a measure-theoretic and a topological isomorphism.  
\end{definition}

Several useful reductions can be made by changing the presentation. 
The standing assumption from \cref{sec:nilmanifolds_standing_assumption} is one example. 

\begin{lemma}
    Let $(X = G/\Gamma, \mu, T)$ be a nilsystem, without the assumption that $G/G^o$ is finitely generated. 
    After changing the presentation, we may assume that $G/G^o$ is finitely generated. 
\end{lemma}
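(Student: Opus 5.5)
The plan is to rerun the construction of \cref{sec:nilmanifolds_standing_assumption}, this time also keeping track of the translation $\tau$. Let $T$ be translation by $\tau \in G$, and let $p \colon G \to X$ be the quotient map. Since $G^o$ is open, $p(G^o)$ is open in $X$. The translates $p(gG^o)$ for $g \in G$ cover $X$, so by compactness finitely many of them already do. Choose $g_1, \dots, g_k \in G$ with
\[
X = \bigcup_{j=1}^k p(g_j G^o).
\]
Now set
\[
H := \langle G^o, \tau, g_1, \dots, g_k \rangle \quad\text{and}\quad \Lambda := H \cap \Gamma.
\]
The reason for putting $\tau$ into the generating set is that $T$ should stay a translation by an element of the new group.

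Next I check that $H/\Lambda$ is a nilmanifold. $H$ contains the open subgroup $G^o$, so $H$ is open in $G$. An open subgroup is also closed, since its complement is a union of cosets. Hence $H$ is a Lie group, and it is $s$-step nilpotent as a subgroup of $G$. Its identity component is $H^o = G^o$. Therefore $H/H^o$ is generated by the images of $\tau, g_1, \dots, g_k$, so it is finitely generated.

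$H$ acts transitively on $X$, because the sets $p(g_jG^o)$ with $g_j \in H$ cover $X$. The stabilizer of $e_X$ in $H$ is $\Lambda$. By \citep[Theorem 21.18]{lee2003smooth}, the map
\[
\iota \colon H/\Lambda \to X, \quad h\Lambda \mapsto h\Gamma,
\]
is a diffeomorphism. It follows that $\Lambda$ is discrete, being a subgroup of $\Gamma$, and cocompact, since $H/\Lambda \cong X$ is compact. So $H/\Lambda$ is an $s$-step nilmanifold.

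Finally I check that $\iota$ identifies the two nilsystems. Let $T'$ be translation by $\tau \in H$ on $H/\Lambda$. Then
\[
\iota(\tau h\Lambda) = \tau h \Gamma = T(\iota(h\Lambda)),
\]
so $\iota \circ T' = T \circ \iota$. For the measures, let $\mu'$ be the Haar measure on $H/\Lambda$. The measure $\iota^{-1}_*\mu$ is invariant under translations by $H$, because $\mu$ is $G$-invariant and $\iota$ is $H$-equivariant. By uniqueness of the $H$-invariant probability measure, $\iota^{-1}_*\mu = \mu'$, that is, $\iota_*\mu' = \mu$.

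Thus $\iota$ is a homeomorphism intertwining the translations and carrying Haar measure to Haar measure. So the two nilsystems are presentations of the same nilsystem, and in the new presentation $H/H^o$ is finitely generated.

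There is no real obstacle here. The only point needing care is that $\tau$ must be included among the generators of $H$; without it, $T$ need not be a translation by an element of $H$.
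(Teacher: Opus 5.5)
Your proposal is correct and follows essentially the same route as the paper: you rerun the covering argument from the standing-assumption section, adjoin $\tau$ to the generators of $H = \langle G^o, \tau, g_1, \dots, g_k\rangle$, and identify $H/(H\cap\Gamma)$ with $X$ via an equivariant, Haar-measure-preserving homeomorphism. Your extra checks (that $H^o = G^o$, and the uniqueness argument showing that Haar measure is carried to Haar measure) just make explicit what the paper leaves implicit.
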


\begin{proof}
    Suppose that $T$ is the translation by $\tau \in G$.
    By the argument in \cref{sec:nilmanifolds_standing_assumption}, there exist $g_1, \dots, g_k \in G$ such that the subgroup generated by $G^o$ and $g_1, \dots, g_k$ acts transitively on $X$. 
    Set
    \[
    H := \langle G^o, g_1, \dots, g_k, \tau \rangle, \quad \Lambda := H \cap \Gamma. 
    \]
    Then $H$ still acts transitively on $X$, and  
    \[
    H/\Lambda \to G/\Gamma, \quad h\Lambda \mapsto h \cdot e_X = h\Gamma 
    \]
    is a homeomorphism, which intertwines the translation by $\tau$ on $H/\Lambda$ with the translation by $\tau$ on $G/\Gamma$, and which sends Haar measure to Haar measure.   
    Thus $H/\Lambda$, equipped with the translation by $\tau$, presents the nilsystem as $(X, \mu, T)$. 
    Since $H/H^o$ is generated by the images of $g_1, \dots, g_k, \tau$, this presentation satisfies the standing assumption.  
\end{proof}

For ergodic nilsystems one can make a stronger reduction. 

\begin{definition}
    Let $(X = G/\Gamma, \mu, T)$ be a nilsystem, where $T$ is the translation by $\tau \in G$. 
    We say that the nilsystem is presented in reduced form if 
    \[
    G = \langle G^o, \tau \rangle,
    \]
    and $\Gamma$ contains no non-trivial normal subgroup of $G$. 
\end{definition}

\begin{lemma}
    \label[lemma]{lem:reduced_form}
    Every ergodic nilsystem admits a presentation in reduced form. 
\end{lemma}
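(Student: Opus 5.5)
Starting from an arbitrary presentation $(X = G/\Gamma, \mu, T)$ with $T$ the translation by $\tau \in G$, we will make two successive changes of presentation: first shrink the acting group to $\langle G^o, \tau\rangle$, then quotient out the largest normal subgroup of $G$ contained in $\Gamma$.

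\textbf{Step 1: shrinking $G$.} Let $H := \langle G^o, \tau \rangle$. This is an open, hence closed, subgroup of $G$, so it is a nilpotent Lie group with $H^o = G^o$. The key claim is that $H$ acts transitively on $X$, and this is where ergodicity enters.

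The orbit $H \cdot e_X = p(H)$ is open in $X$, since $p$ is open and $H$ is open. It is invariant under $T$, because $\tau H = H$. By the same reasoning, every translate $g \cdot p(H)$ is open and $T$-invariant: indeed $\tau g H = g (g^{-1}\tau g) H$, and $g^{-1} \tau g \in \tau G_2 \subseteq \tau G^o$ would suffice. However, $G_2 \subseteq G^o$ is not automatic, so I will argue more carefully.

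Since $X$ is compact and the cosets $gH$ cover $G$, the sets $p(gH)$ form a cover of $X$ by open sets. Distinct $H$-orbits are disjoint, so this is a finite partition of $X$ into clopen $H$-orbits. Now $T = \tau\cdot$ maps the $H$-orbit of $x$ to the $H$-orbit of $\tau x$. Since $\tau H = H\tau$, we have $\tau H x = H \tau x$, so $T$ permutes these finitely many clopen pieces. The orbit $p(H)$ itself is fixed by $T$, because $\tau \in H$.

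The set $p(H)$ is clopen and $T$-invariant, and it has positive Haar measure because the Haar measure has full support. Ergodicity then forces $\mu(p(H)) = 1$, and full support gives $p(H) = X$. Hence $H$ acts transitively, and as in the proof of the lemma on finitely generated $G/G^o$, the map $H/(H\cap\Gamma) \to X$ is a homeomorphism. It intertwines the translations by $\tau$ and carries Haar measure to Haar measure. So we may assume $G = \langle G^o, \tau\rangle$.

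\textbf{Step 2: removing normal subgroups inside $\Gamma$.} Let $N$ be the subgroup generated by all normal subgroups of $G$ contained in $\Gamma$. Then $N$ is normal and contained in $\Gamma$. It is closed, since it lies in the discrete subgroup $\Gamma$ and is therefore discrete.

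Set $G' := G/N$ and $\Gamma' := \Gamma/N$. Then $G'$ is an $s$-step nilpotent Lie group and $\Gamma'$ is discrete and cocompact in it. Because $N \subseteq \Gamma$, the natural map $G/\Gamma \to G'/\Gamma'$ is a $G$-equivariant homeomorphism, so it intertwines the translations by $\tau$ and $\tau N$ and preserves Haar measure.

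The form $G' = \langle G'^o, \tau N\rangle$ survives the quotient: $G'^o$ is the image of $G^o$, as the quotient map is open and $N$ is discrete. Finally, if $M \le \Gamma'$ is normal in $G'$, its preimage in $G$ is a normal subgroup of $G$ contained in $\Gamma$. That preimage therefore lies in $N$, so $M$ is trivial.

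The main obstacle is Step 1, namely ensuring transitivity of $\langle G^o,\tau\rangle$. This is handled by the clopen-orbit partition argument together with ergodicity. Step 2 is routine, and the standing assumption on $G/G^o$ holds automatically in reduced form, since $G/G^o$ is then cyclic.
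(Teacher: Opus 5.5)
Your proposal is correct and follows essentially the same route as the paper. You restrict to $H=\langle G^o,\tau\rangle$ and get transitivity from the clopen, $T$-invariant, positive-measure set $H\cdot e_X$ together with ergodicity and full support, then quotient by the largest normal subgroup $N=\bigcap_{g} g\Gamma g^{-1}$ of $G$ contained in $\Gamma$. One small correction: the sets $p(gH)=g\cdot p(H)$ are orbits of $gHg^{-1}$, not $H$-orbits, since $H$ need not be normal in $G$; each $H$-orbit is instead open because $H\cdot g\Gamma=p(Hg)$ with $Hg$ open, and this is all the closedness of $H\cdot e_X$ requires.
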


\begin{proof}
    Let $(X = G/\Gamma, \mu, T)$ be an ergodic nilsystem, where $T$ is the translation by $\tau \in G$. 
    Let 
    \[
    H := \langle G^o, \tau \rangle.
    \] 
    Since $H$ is an open subgroup of $G$, $H \cdot e_X$ is a non-empty open subset of $X$. 
    The $H$-orbits form a partition of $X$, hence $H \cdot e_X$ is also closed. 
    Since $\tau \in H$, $H \cdot e_X$ is $T$-invariant. 
    It has strictly positive Haar measure, so ergodicity gives $\mu(H \cdot e_X) = 1$. 
    As $\mu$ has full support and $H \cdot e_X$ is closed, it follows that $X = H \cdot e_X$. 
    Thus $H$ acts transitively on $X$. 
    Arguing as before, we can replace $G$ by $H$ and $\Gamma$ by $H \cap \Gamma$ to obtain a presentation of the same nilsystem which satisfies $G = \langle G^o, \tau \rangle$. 

    Now let $N$ be the largest normal subgroup of $G$ contained in $\Gamma$, equivalently 
    \[
    N = \bigcap_{g \in G} g \Gamma g^{-1}. 
    \]
    Since $N \subseteq \Gamma$, there is a natural homeomorphism
    \[
    G/\Gamma \cong (G/N) / (\Gamma/N).
    \]
    Under this identification, the translation by $\tau$ corresponds to the translation by $\tau N \in G/N$, and Haar measure is preserved. 
    Hence, this gives a new presentation of the nilsystem as the translation by $\tau N$ on $(G/N)/(\Gamma/N)$.
    In this new presentation, $\Gamma/N$ contains no non-trivial normal subgroup of $G/N$.  
    Moreover,   
    \[
    G/N = \langle (G/N)^o, \tau N \rangle. 
    \] 
    Thus this new presentation is reduced. 
\end{proof}

\subsection{Subnilsystems}

In \cref{sec:subnilmanifolds_rational_subgroups}, we introduced subnilmanifolds. 
We now adapt this notion to nilsystems.  

\begin{definition}
    Let $(X = G/\Gamma, T)$ be an $s$-step nilsystem, where $T$ is the translation by $\tau \in G$. 
    A subnilsystem of $(X, T)$ is a system of the form 
    \[
    (Y, \restrict{T}{Y}),
    \]
    where $Y = H \cdot x$ is a subnilmanifold of $X$ and $\tau \in H$. 
\end{definition}

A subnilsystem of an $s$-step nilsystem is naturally again an $s$-step nilsystem. 
Indeed, recall that a subnilmanifold $Y = H \cdot x$ with $x = g \Gamma$ is  identified with the $s$-step nilmanifold
\[
H/(H \cap g \Gamma g^{-1}).
\]
If $\tau \in H$, then under this identification, $\restrict{T}{Y}$ corresponds to the translation by $\tau$ on this nilmanifold. 
Hence $(Y, \restrict{T}{Y})$ is topologically isomorphic to an $s$-step nilsystem. 
Moreover, recall that the Haar measure $\nu$ on $Y$ is the image of the Haar measure on $H/(H \cap g \Gamma g^{-1})$. 
Therefore, $(Y, \nu, \restrict{T}{Y})$ is also measure-theoretically isomorphic to an $s$-step nilsystem.

By \cref{sec:subnilmanifolds_rational_subgroups}, passing to subnilmanifolds preserves the assumption that $G/G^o$ is finitely generated. 
Consequently, this is also preserved by passing to subnilsystems. 

\subsection{The tower of factors of a nilsystem} 
\label{sec:tower_of_factors_nilsystem}

We now explain how the tower of a nilmanifold constructed in \cref{sec:nilmanifold_tower} gives rise to a tower of factors for any nilsystem. 

Let $(X = G/\Gamma, \mu, T)$ be an $s$-step nilsystem, where $T$ is the translation by $\tau \in G$. 
For $i = 0, \dots, s$, let $q_i \colon G \to G/G_{i+1}$ be the quotient homomorphism. 
The construction from \cref{sec:nilmanifold_tower} gives a tower of quotient maps 
\[
X = Z_s \xrightarrow{\pi_{s-1}} Z_{s-1} \xrightarrow{\pi_{s-2}} \cdots \xrightarrow{\pi_1} Z_1 \xrightarrow{\pi_0} Z_0 = \{*\} 
\]
where, for $i = 1, \dots, s$, 
\[
Z_i = (G/G_{i+1})/q_i(\Gamma) 
\]
is an $i$-step nilmanifold. 
Let $\mu_i$ denote the Haar measure on $Z_i$, and let $T_i$ be the translation by $q_i(\tau) \in G/G_{i+1}$ on $Z_i$.  
Then $(Z_i, \mu_i, T_i)$ is an $i$-step nilsystem. 
Moreover, we view $Z_0 = \{*\}$ as the trivial system. 

The quotient maps $\pi_i$ are factor maps. 
Indeed, for $1 \leq i \leq s$, recall that $\pi_{i-1}$ was defined as 
\[
\pi_{i-1} \colon Z_i \to Z_{i-1}, \quad q_i(g) q_i(\Gamma) \mapsto q_{i-1}(g) q_{i-1}(\Gamma).
\]
Hence for $q_i(a) \in G/G_{i+1}$, 
\[
\pi_{i-1}(q_i(a) \cdot q_i(g) q_i(\Gamma)) = q_{i-1}(a) q_{i-1}(g) q_{i-1}(\Gamma) = q_{i-1}(a) \cdot \pi_{i-1}(q_i(g) q_i(\Gamma)) 
\]
for each $q_i(g) q_i(\Gamma) \in Z_i$.
Plugging in $q_i(\tau)$, we see that 
\[
\pi_{i-1} \circ T_i = T_{i-1} \circ \pi_{i-1}.
\]
Moreover, the above calculation shows that $(\pi_{i-1})_* \mu_i$ is invariant under all translations by elements of $G/G_i$, hence it must be the Haar measure $\mu_{i-1}$. 
Therefore
\[
\pi_{i-1} \colon (Z_i, \mu_i, T_i) \to (Z_{i-1}, \mu_{i-1}, T_{i-1})
\]
is both a measure-theoretic and topological factor map.  

Next, we consider the structure groups in this context. 
Recall from \cref{sec:nilmanifold_tower} that, for $i = 1, \dots, s$, the structure group 
\[
K_i := q_i(G_i)/(q_i(G_i) \cap q_i(\Gamma))
\]
is a compact abelian Lie group, acting on $Z_i$ by translations by elements of the central subgroup $q_i(G_i)$ in $G/G_{i+1}$. 
Therefore, this action commutes with $T_i$. 
Moreover, the action is continuous and free, and its orbits are exactly the fibers of $\pi_{i-1}$.  
It follows that 
\[
\pi_{i-1} \colon Z_i \to Z_{i-1}
\] 
is an extension by $K_i$ in the topological dynamical sense. 
By \cref{lem:topological_group_extension_is_measurable}, it is also an extension by $K_i$ in the measure-theoretic sense. 

Thus we get a tower of factor maps
\[
X = Z_s \xrightarrow{\pi_{s-1}} Z_{s-1} \xrightarrow{\pi_{s-2}} \cdots \xrightarrow{\pi_1} Z_1 \xrightarrow{\pi_0} Z_0 = \{*\},
\]
where for $i = 1, \dots, s$, $(Z_i, \mu_i, T_i)$ is an $i$-step nilsystem, and $\pi_{i-1} \colon Z_i \to Z_{i-1}$ is an extension by the structure group $K_i$ in both the topological and measure-theoretic sense.  

For later use, we record the following equivalent description of the factors and structure groups. 
Recall from \cref{sec:nilmanifold_tower} that for $i = 0, \dots, s$, the map 
\[
Z_i \to G/(G_{i+1} \Gamma), \quad 
q_i(g) q_i(\Gamma) \mapsto g G_{i+1} \Gamma
\]
is a homeomorphism. 
Under this identification, $T_i$ is the translation by $\tau$ on $G/(G_{i+1} \Gamma)$, and the factor map $\pi_{i-1} \colon Z_i \to Z_{i-1}$ takes the simple form 
\[
g G_{i+1} \Gamma \mapsto g G_i \Gamma. 
\]
The structure group can then be written as 
\[
K_i \cong G_i/(G_i \cap G_{i+1} \Gamma), 
\]
so that an element $u = g(G_i \cap G_{i+1} \Gamma) \in K_i$ acts by 
\[
u \cdot h (G_{i+1} \Gamma) = gh G_{i+1} \Gamma.  
\] 
We will use these equivalent descriptions wherever convenient without further comment. 

\begin{remark}
    In general, the structure groups $K_i$ need not be connected. 
    If the nilsystem is ergodic, however, one may choose a reduced presentation. 
    In this case, $G_i$ is connected for $i \geq 2$; see \citep[Chapter 10, Lemma 5]{host2018nilpotent}.
    Consequently, the structure groups $K_i$ for $i \geq 2$ are connected, and are therefore finite-dimensional tori. 
    The first structure group $K_1 = Z_1$ may still be disconnected. 
    We will not make use of this fact.
\end{remark}

\subsection{Properties of nilsystems}

We collect some basic properties of nilsystems. 
The overarching principle is that dynamical constructions for nilsystems remain controlled by the algebraic structure of the underlying nilmanifold. 

First, nilsystems are distal. 

\begin{lemma} 
    \label[lemma]{lem:nilsystems_are_distal}
    Every nilsystem $(X, T)$ is distal. 
\end{lemma}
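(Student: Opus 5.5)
We argue by induction on the step $s$, using the tower of factors from \cref{sec:tower_of_factors_nilsystem}. The key point is that each map in the tower is an extension by a compact abelian group acting by translations by central elements. Such translations commute with $T$ and are isometries for a suitable metric, so distality can be lifted one level at a time.

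For the base case $s = 1$, \cref{eg:one_step_nilmnflds} identifies $X$ with a compact abelian Lie group $K$, and $T$ with a rotation $u \mapsto a + u$. Fix a translation-invariant compatible metric $d$ on $K$, which exists for compact metrizable groups. Then $d(T^n x, T^n y) = d(x,y)$ for all $n$, so distinct points are distal. The trivial system $Z_0$ is trivially distal, so we could even start the induction at $s = 0$.

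For the inductive step, assume $Z_{i-1}$ is distal, and let $x \neq y$ in $Z_i$. If $\pi_{i-1}(x) \neq \pi_{i-1}(y)$, then any proximality of $x, y$ would pass to proximality of their images, since $\pi_{i-1}$ is continuous, intertwines the dynamics, and $Z_i$ is compact. This contradicts the inductive hypothesis. Otherwise, by \cref{lem:structure_group_action}, $y = V_u x$ for some nonzero $u \in K_i$, and $V_u$ commutes with $T_i$. Suppose $x, y$ were proximal. Pass to a subsequence with $T_i^{n_k} x \to z$ and $d(T_i^{n_k} x, T_i^{n_k} y) \to 0$. Since $T_i^{n_k} y = V_u T_i^{n_k} x$ and the action $K_i \times Z_i \to Z_i$ is continuous, we get $V_u z = z$. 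This contradicts freeness of the action, since $u \neq 0$. Thus $Z_i$ is distal, and at $i = s$ this gives that $X \cong Z_s$ is distal. The homeomorphism $Z_s \cong X$ intertwines the dynamics, and distality is invariant under topological isomorphism.

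I expect no real obstacle here. The one point to check is that the inductive step uses only that $\pi_{i-1}$ is a topological group extension with a free continuous action commuting with $T_i$, all of which was established in \cref{sec:tower_of_factors_nilsystem}. The compactness/subsequence argument in the inductive step replaces any need for an invariant metric at higher levels.
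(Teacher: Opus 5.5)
Your proof is correct and follows the same route as the paper: an induction along the tower of factors from \cref{sec:tower_of_factors_nilsystem}, lifting distality through each compact abelian group extension. The only difference is that you prove the lifting step yourself, using compactness and the freeness of the $K_i$-action, whereas the paper cites from Auslander the fact that compact abelian group extensions of distal systems are distal.
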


\begin{proof}
    Let 
    \[
    (X, T) = (Z_s, T_s) \to (Z_{s-1}, T_{s-1}) \to \dots \to (Z_1, T_1)   
    \]
    be the tower of factors from \cref{sec:tower_of_factors_nilsystem}. 
    The trivial system is distal, and compact abelian group extensions of distal systems are distal; see \citep[Chapter 5, Proposition 9]{auslander1988minimal}.
    Since each map $Z_i \to Z_{i-1}$ is an extension by a compact abelian group, an induction argument along the tower shows that $(Z_s, T_s) = (X, T)$ is distal.
\end{proof}

The following theorem is the reason that orbit closures in nilsystems are highly structured: they are again nilmanifolds. 

\begin{theorem} 
    \label{thrm:orbit_closures_nilsystems_are_nilsystems}
    Let $(X = G/\Gamma, T)$ be a nilsystem, where $T$ is the translation by $\tau \in G$, and let $x \in X$. 
    Then the closed orbit 
    \[
    Y := \overline{\{T^n x : n \in \Z\}}, 
    \]
    equipped with the restriction of $T$, defines a subnilsystem of $(X, T)$. 
    More precisely, there exists a closed subgroup $H \leq G$ with $\tau \in H$, such that $Y = H \cdot x$. 
    Moreover, the subnilsystem $(Y, \restrict{T}{Y})$ is minimal and uniquely ergodic.
\end{theorem}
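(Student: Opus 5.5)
This is Leibman's / Ratner-type orbit closure theorem for nilsystems. I do not plan to reprove it from scratch. Instead I would assemble it from the standard ingredients in the literature, keeping the argument structured along the tower of factors from \cref{sec:tower_of_factors_nilsystem}. The plan has three parts: (a) produce a closed subgroup $H \ni \tau$ with $Y = H \cdot x$; (b) show minimality; (c) show unique ergodicity.

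For (a), translating by $g$ where $x = g\Gamma$, I replace $\Gamma$ by $g\Gamma g^{-1}$ and $\tau$ by $g^{-1}\tau g$. This reduces to $x = e_X$. I would then invoke the theorem of \citet{leibman2005pointwise} (Theorem 2.21 there, or its version in \citep[Chapter 11]{host2018nilpotent}), which states the following. For a polynomial, in particular linear, sequence $\tau^n$ in a nilpotent Lie group $G$ with $G/G^o$ finitely generated, the orbit closure of $e_X$ is a finite union of subnilmanifolds. Moreover, there is a closed rational subgroup $H \le G$ containing $\tau$ such that $\overline{\{\tau^n e_X\}} = H \cdot e_X$, and the sequence $(\tau^n e_X)$ is equidistributed in $H\cdot e_X$ with respect to its Haar measure. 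The standing assumption on $G/G^o$ is exactly what is needed to apply this. Once $\tau \in H$ is secured, $Y = H\cdot x$ is closed, so by definition it is a subnilsystem.

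For (b) and (c), I use that $(Y, T|_Y)$ is itself a nilsystem, as explained in the subnilsystem discussion. It is also transitive, since $x$ has a dense orbit. By \cref{lem:nilsystems_are_distal} it is distal, and a transitive distal system is minimal. Indeed, in a distal system every point is almost periodic (Ellis), so each orbit closure is minimal, and $Y$ is the orbit closure of $x$. For unique ergodicity, I use the standard fact that a minimal nilsystem is uniquely ergodic. I would prove this by induction along the tower $Y = Z_s \to \cdots \to Z_0$. Each step $Z_i \to Z_{i-1}$ is a compact abelian group extension by \cref{sec:tower_of_factors_nilsystem}, and minimality passes to every $Z_i$. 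At the first step, $Z_1$ is a minimal compact abelian group rotation, hence uniquely ergodic. For the inductive step, I apply Furstenberg's criterion: a compact abelian group extension of a uniquely ergodic system, which is ergodic for the lifted Haar measure (the one provided by \cref{lem:topological_group_extension_is_measurable}), is uniquely ergodic. Here the extension being minimal implies ergodicity of the Haar lift, by the Parry-type argument that minimality and ergodicity coincide for nilsystems. Alternatively, the equidistribution statement in Leibman's theorem directly gives that every point of $Y$ is generic for the Haar measure. Applied to each point $y \in Y$ in place of $x$, this yields unique ergodicity immediately.

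The main obstacle is (a), namely identifying the orbit closure with a single homogeneous orbit of a subgroup containing $\tau$, rather than a finite union of subnilmanifolds. The finite union arises from $G$ being disconnected. I would first try to handle this as follows. Minimality of the orbit closure gives that a finite union $\bigcup_j H\tau^j x$ of translates, permuted cyclically by $T$, is the whole of $Y$. Taking $H' = \overline{\langle H, \tau\rangle}$ then gives $Y = H' \cdot x$, with $H'$ closed and $\tau \in H'$.
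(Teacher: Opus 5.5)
In substance you follow the paper, which gives no proof of this theorem and simply quotes it from \citet[Theorem 2.21]{leibman2005pointwise} (see also \citep[Chapter 11, Theorem 9]{host2018nilpotent}). For a single translation, that result already gives $Y = H\cdot x$ with $\tau\in H$, together with minimality and unique ergodicity. So once you invoke it, the proof is complete. Your supplementary arguments are therefore unnecessary, and two of them would not hold up on their own.

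First, the ``finite union of subnilmanifolds'' phenomenon belongs to the version of Leibman's theorem for general polynomial sequences, not to the linear case. The obstacle in your last paragraph therefore does not arise. Your fix for it is also not justified as written. It presupposes pieces of the form $H\tau^j x$ for one common $H$ that $T$ permutes. But the translates $\tau^j(H\cdot x)$ are orbits of the conjugates $\tau^j H \tau^{-j}$. Without knowing that $\tau$ normalizes $H$, you cannot conclude $\overline{\langle H,\tau\rangle}\cdot x\subseteq Y$.

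Second, your tower induction for unique ergodicity needs the Haar lift at each stage to be ergodic. You justify this by ``minimality and ergodicity coincide for nilsystems''. In this paper that is \cref{cor:equivalence_ergodicity_minimality_etc_nilsystems}, which is derived from the very theorem you are proving, so this route is circular. Outside the paper, that equivalence is a result of comparable depth, usually stated together with unique ergodicity.

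Your alternative is the sound one. Each $y\in Y$ equidistributes for the Haar measure of its orbit closure, which is $Y$ by minimality. This measure does not depend on the presentation $Y = H_y\cdot y$: the stabilizer $L:=\{g\in G : gY=Y\}$ is a closed subgroup acting transitively on $Y$. Its unique invariant probability measure on $Y$ is $H_y$-invariant, hence equals the Haar measure for every presentation. So every point is generic for one measure. Your minimality argument via distality is fine.

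A minor slip: in the reduction to $x=e_X$, do only one of the following, not both:
\begin{itemize}
\item replace $\Gamma$ by $g\Gamma g^{-1}$ and keep $\tau$;
\item replace $\tau$ by $g^{-1}\tau g$, keep $\Gamma$, and conjugate $H$ back at the end.
\end{itemize}
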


In this generality, the theorem is due to \citet[Theorem 2.21]{leibman2005pointwise}, with the case where $G$ is connected and simply connected proved earlier by \citet[Proposition 2]{lesigne1991nil}.
Proofs can also be found in \citep[Theorem 14.35]{eisner2025journey}, and \citep[Chapter 11, Theorem 9]{host2018nilpotent}. 

As a consequence, the standard topological and measure-theoretic dynamical properties coincide for nilsystems. 
We will make use of this equivalence repeatedly. 

\begin{corollary} 
    \label[corollary]{cor:equivalence_ergodicity_minimality_etc_nilsystems}
    Let $(X, \mu, T)$ be a nilsystem.
    Then the following are equivalent. 
    \begin{enumerate}[(i)]
        \item $(X, \mu, T)$ is ergodic, 
        \item $(X, T)$ is transitive,  
        \item $(X, T)$ is minimal, 
        \item $(X, T)$ is uniquely ergodic.  
    \end{enumerate}      
\end{corollary}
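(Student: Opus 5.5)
We prove the chain (iv) $\Rightarrow$ (i) $\Rightarrow$ (ii) $\Rightarrow$ (iii) $\Rightarrow$ (iv). The middle steps use only the full support of Haar measure; the remaining ones are direct consequences of \cref{thrm:orbit_closures_nilsystems_are_nilsystems}.

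For (iv) $\Rightarrow$ (i), suppose $(X,T)$ is uniquely ergodic. Since $\mu$ is $T$-invariant, it is the unique invariant probability measure. Let $A$ be an invariant measurable set with $0<\mu(A)<1$. Then the normalized restrictions $\mu(A)^{-1}\mu|_A$ and $\mu(A^c)^{-1}\mu|_{A^c}$ are two distinct invariant probability measures, a contradiction. Hence $\mu$ is ergodic.

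For (i) $\Rightarrow$ (ii), we use that $X$ is a compact metrizable space, so it has a countable base $(U_j)_j$ of non-empty open sets. Each set $\bigcup_{n\in\Z} T^n U_j$ is open, invariant, and has positive measure because $\mu$ has full support. By ergodicity, each such set has measure $1$. Therefore the intersection over $j$ of these sets has full measure, and in particular it is non-empty. Any point $x$ in this intersection has an orbit that meets every $U_j$, so its orbit is dense and $(X,T)$ is transitive.

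For (ii) $\Rightarrow$ (iii), take $x$ with dense orbit. By \cref{thrm:orbit_closures_nilsystems_are_nilsystems}, the orbit closure $Y$ of $x$ equals $X$, and $(Y,T|_Y)=(X,T)$ is minimal.

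For (iii) $\Rightarrow$ (iv), minimality means that the orbit of any point is dense, so its orbit closure is all of $X$. Applying \cref{thrm:orbit_closures_nilsystems_are_nilsystems} to this orbit closure shows that $(X,T)$ is uniquely ergodic.

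All the nontrivial content is contained in \cref{thrm:orbit_closures_nilsystems_are_nilsystems}. Once that theorem is available, the only step requiring any care is (i) $\Rightarrow$ (ii), and the Baire-type argument above, combined with the full support of $\mu$, settles it.
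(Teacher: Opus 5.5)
Your proof is correct, but it differs from the paper's in two of the four implications.

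For (i) $\Rightarrow$ (ii), the paper picks a point that is Birkhoff-generic for every $f \in C(X)$. It then applies \cref{thrm:orbit_closures_nilsystems_are_nilsystems} to see that this point's orbit closure $Y$ is a uniquely ergodic subnilsystem, whose Haar measure must equal $\mu$. It concludes $Y = X$ from full support. Your countable-base argument reaches the same conclusion from ergodicity and full support of $\mu$ alone, with no pointwise ergodic theorem and no nil-structure. It is more elementary, and it shows more generally that whenever an ergodic measure has full support, $\mu$-almost every point has dense orbit.

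For (ii) $\Rightarrow$ (iii), the paper does not use the minimality clause of \cref{thrm:orbit_closures_nilsystems_are_nilsystems}. Instead it combines distality of nilsystems (\cref{lem:nilsystems_are_distal}) with the general fact that transitive distal systems are minimal. Your direct appeal to the minimality clause is valid and makes the whole corollary rest on that single theorem. The paper's distality route has the advantage of passing to inverse limits. That is exactly how the paper later gets transitive $\Leftrightarrow$ minimal for topological pro-nilsystems, where no orbit-closure theorem is available.

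The implications (iii) $\Rightarrow$ (iv) and (iv) $\Rightarrow$ (i) match the paper; you additionally spell out the standard restriction argument for the latter. One minor slip in your opening summary: (ii) $\Rightarrow$ (iii) is a middle step of your chain, yet it uses the orbit-closure theorem rather than full support.
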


\begin{proof}
    Suppose first that $(X, \mu, T)$ is ergodic. 
    By Birkhoff's pointwise ergodic theorem, see for example \citep[Theorem 7.40]{eisner2025journey}, and separability of $C(X)$, we may choose a point $x \in X$ such that 
    \[
    \lim_{N \to \infty} \frac{1}{N} \sum_{n=0}^{N-1} f(T^n x) = \int_X f \dd\mu,
    \]
    for every $f \in C(X)$. 
    Let $Y := \overline{\{T^n x : n \in \Z\}}$ be the closed orbit of $x$.
    By \cref{thrm:orbit_closures_nilsystems_are_nilsystems}, $(Y, \restrict{T}{Y})$ is a uniquely ergodic subnilsystem. 
    Let $\nu$ be its Haar measure. 
    Then for every $f \in C(X)$, 
    \[
    \int_Y f \dd\nu = \lim_{N \to \infty} \frac{1}{N} \sum_{n=0}^{N-1} f(T^n x) = \int_X f \dd\mu.
    \]
    It follows that $\nu = \mu$, so $Y = X$. 
    Thus $(X, T)$ is transitive, so (i) implies (ii). 

    Since nilsystems are distal, every transitive nilsystem is minimal by the standard fact that transitive distal systems are minimal; see \citep[Chapter 5, Corollary 7]{auslander1988minimal}.
    Thus (ii) implies (iii). 

    If $(X, T)$ is minimal, then the orbit closure of a point is all of $X$. 
    \cref{thrm:orbit_closures_nilsystems_are_nilsystems} therefore implies that $(X, T)$ is uniquely ergodic. 
    Hence (iii) implies (iv). 

    Finally (iv) implies (i), because the unique invariant measure of a uniquely ergodic system is ergodic. 
\end{proof}

This theorem allows us to describe the ergodic invariant measures on a nilsystem. 

\begin{corollary} 
    \label[corollary]{cor:ergodic_measure_nilsystem_is_haar_subnilsystem}
    Let $(X, T)$ be a nilsystem, and let $\nu$ be an ergodic $T$-invariant Borel probability measure on $X$.
    Then $\nu$ is the Haar measure of a subnilsystem $(Y, \restrict{T}{Y})$ of $(X, T)$.
\end{corollary}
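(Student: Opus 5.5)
The plan is to find a point $x$ that is generic for $\nu$, and then apply \cref{thrm:orbit_closures_nilsystems_are_nilsystems} to its orbit closure. This is the same argument as the proof that (i) implies (ii) in \cref{cor:equivalence_ergodicity_minimality_etc_nilsystems}, with $\mu$ replaced by $\nu$.

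First, since $\nu$ is ergodic and $C(X)$ is separable, I would apply Birkhoff's pointwise ergodic theorem to a countable dense subset $\{f_k\}$ of $C(X)$. This gives a $\nu$-full measure set of points $x$ such that
\[
\lim_{N \to \infty} \frac{1}{N} \sum_{n=0}^{N-1} f_k(T^n x) = \int_X f_k \dd\nu
\]
for every $k$. A uniform approximation argument then extends this convergence to every $f \in C(X)$. I fix one such generic point $x$.

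Second, let $Y := \overline{\{T^n x : n \in \Z\}}$. By \cref{thrm:orbit_closures_nilsystems_are_nilsystems}, there is a closed subgroup $H \leq G$ with $\tau \in H$ such that $Y = H \cdot x$. Thus $(Y, \restrict{T}{Y})$ is a subnilsystem, and it is uniquely ergodic; let $\nu_Y$ denote its Haar measure, viewed as a measure on $X$. Since $\nu_Y$ is $H$-invariant and $\tau \in H$, it is $T$-invariant, so by unique ergodicity it is the unique invariant measure on $Y$. In a uniquely ergodic system, ergodic averages of continuous functions converge at every point to the integral against the unique measure. Applying this at the point $x \in Y$ to the restriction of $f \in C(X)$ gives
\[
\lim_{N\to\infty} \frac{1}{N} \sum_{n=0}^{N-1} f(T^n x) = \int_Y f \dd\nu_Y .
\]

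Finally, comparing this with the first step gives $\int f \dd\nu = \int f \dd\nu_Y$ for all $f \in C(X)$, so $\nu = \nu_Y$ by the Riesz representation theorem. Hence $\nu$ is the Haar measure of the subnilsystem $(Y, \restrict{T}{Y})$.

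I do not expect any real obstacle, since the substance lies in \cref{thrm:orbit_closures_nilsystems_are_nilsystems}. The only points needing care are the choice of a single point that is generic for all continuous functions simultaneously, which is handled by separability, and the identification of the Haar measure on $Y$ with its unique invariant measure, which holds because $\tau \in H$.
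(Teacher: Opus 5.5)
Your proof is correct and follows essentially the same route as the paper. Both take the orbit closure of a $\nu$-typical point, invoke \cref{thrm:orbit_closures_nilsystems_are_nilsystems} to obtain a uniquely ergodic subnilsystem, and identify $\nu$ with its Haar measure. The only difference is cosmetic: the paper picks $x$ whose orbit is dense in $\supp(\nu)$, so that $Y = \supp(\nu)$ and $\nu$ is itself an invariant measure on $Y$. You instead pick a generic point and match ergodic averages, exactly as in the proof of (i)$\Rightarrow$(ii) in \cref{cor:equivalence_ergodicity_minimality_etc_nilsystems}.
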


\begin{proof}
    By ergodicity, we can choose a point $x \in X$ whose orbit is dense in the support of $\nu$. 
    Let 
    \[
    Y := \overline{\{T^n x : n \in \Z\}} = \supp(\nu). 
    \]
    By \cref{thrm:orbit_closures_nilsystems_are_nilsystems}, $(Y, \restrict{T}{Y})$ is a uniquely ergodic subnilsystem of $(X, T)$. 
    Its unique invariant probability measure is its Haar measure. 
    Since $\nu$ is invariant and supported on $Y$, it must agree with this Haar measure. 
\end{proof}

By applying this to ergodic joinings of nilsystems, we get the following result.  

\begin{proposition} 
    \label[proposition]{prop:ergodic_joinings_nilsystems_are_nilsystems}
    Let $\lambda$ be an ergodic joining of the $s$-step nilsystems $(X = G/\Gamma, \mu, T)$ and $(X' = G'/\Gamma', \mu', T')$. 
    Then there exists a subnilsystem 
    \[
    (Y, \restrict{(T \times T')}{Y})
    \]
    of the product nilsystem $(X \times X', T \times T')$ such that $\lambda$ is the Haar measure on $Y$.
    In particular, the system defined by $\lambda$ is isomorphic to an $s$-step nilsystem. 
\end{proposition}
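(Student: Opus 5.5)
The plan is to reduce directly to \cref{cor:ergodic_measure_nilsystem_is_haar_subnilsystem}, applied to the product nilsystem. By \cref{eg:product_nilsystem}, the product
\[
X \times X' \cong (G \times G')/(\Gamma \times \Gamma')
\]
with the translation $T \times T'$ by $(\tau, \tau') \in G \times G'$ is an $s$-step nilsystem, since both factors have step $s$. We first check that the product presentation satisfies the standing assumption. The identity component $(G \times G')^o$ equals $G^o \times G'^o$. Hence the quotient $(G \times G')/(G \times G')^o \cong G/G^o \times G'/G'^o$ is finitely generated.

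The joining $\lambda$ is a $T \times T'$-invariant Borel probability measure on the compact space $X \times X'$, and it is ergodic by assumption. Now we apply \cref{cor:ergodic_measure_nilsystem_is_haar_subnilsystem} to the nilsystem $(X \times X', T \times T')$ and the measure $\lambda$. This produces a subnilsystem $(Y, \restrict{(T\times T')}{Y})$ whose Haar measure is $\lambda$. Concretely, $Y = \supp(\lambda)$ is an orbit closure of the form $H \cdot x$, where $H \leq G \times G'$ is a closed subgroup containing $(\tau, \tau')$. This is the first assertion. The coordinate projections of $\lambda$ being $\mu$ and $\mu'$ is not even needed here; only invariance and ergodicity are used.

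For the final claim we use the discussion of subnilsystems. Write $x = g\Gamma''$ with $\Gamma'' = \Gamma \times \Gamma'$. The map $h \Lambda \mapsto h \cdot x$, where $\Lambda = H \cap g \Gamma'' g^{-1}$, identifies $H/\Lambda$ homeomorphically with $Y$. Under this identification the translation by $(\tau,\tau')$ is intertwined, and the Haar measure of $H/\Lambda$ is carried to the Haar measure of $Y$, which is $\lambda$. Since $H$ is a closed subgroup of the $s$-step nilpotent Lie group $G \times G'$, it is an $s$-step nilpotent Lie group. Thus $(X \times X', \lambda, T \times T')$ is isomorphic, via the map supported on $Y$ (a full-measure set), to the $s$-step nilsystem $(H/\Lambda, \tau\text{-translation})$.

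There is no real obstacle: all the work lies in \cref{thrm:orbit_closures_nilsystems_are_nilsystems}, through the corollary. The only points needing care are checking that the product presentation meets the standing finite-generation assumption, which is done above, and noting that the measure-theoretic isomorphism is between $(X\times X',\lambda)$ and $(Y,\lambda)$. The latter holds because $\lambda(Y)=1$.
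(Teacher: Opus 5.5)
Your proposal is correct and follows the paper's proof: apply \cref{cor:ergodic_measure_nilsystem_is_haar_subnilsystem} to the product nilsystem $(X \times X', T \times T')$ with the ergodic invariant measure $\lambda$, and obtain the \emph{in particular} clause from the identification of a subnilsystem $H \cdot x$ with the nilsystem $H/\Lambda$. Your extra details are valid points that the paper leaves implicit: the finite-generation check via $(G \times G')^o = G^o \times (G')^o$, and the remark that only invariance and ergodicity of $\lambda$ are used.
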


More concretely, if $T$ and $T'$ are the translations by $\tau \in G$ and $\tau' \in G'$, then there is a closed subgroup $H \leq G \times G'$ with $(\tau, \tau') \in H$, and a point $z \in X \times X'$, such that $Y = H \cdot z$ is a subnilmanifold and $\lambda$ is its Haar measure. 

\begin{proof}
    This follows immediately by applying \cref{cor:ergodic_measure_nilsystem_is_haar_subnilsystem} to the product nilsystem $(X \times X', T \times T')$ and the ergodic invariant measure $\lambda$.
\end{proof}

\subsection{Factor maps between nilsystems}

We finish this section with two classical facts about factors of nilsystems. 
First, factor maps between ergodic nilsystems in reduced form are of a particular algebraic form. 
Second, every factor of an ergodic nilsystem is again a nilsystem. 
Both results are due to \citet{parry1971metric,parry1973dynamical}. 
The present section is based on the formulation in \citep[Chapter 13]{host2018nilpotent}. 

Let $(X = G/\Gamma, \mu, T)$ and $(Y = H/\Lambda, \nu, S)$ be ergodic nilsystems, where $T$ and $S$ are the translations by $\tau \in G$ and $\sigma \in H$ respectively. 
An algebraic factor map from $X$ to $Y$ is a map of the form 
\[
\pi \colon X \to Y, \quad g\Gamma \mapsto a \phi(g) \Lambda, 
\]
where $\phi \colon G \to H$ is a continuous surjective group homomorphism with $\phi(\Gamma) \subseteq \Lambda$, and where $a \in H$ satisfies $\phi(\tau) = a^{-1} \sigma a$.  

The condition $\phi(\Gamma) \subseteq \Lambda$ ensures that this expression is well-defined. 
Since $\phi$ is continuous and surjective, the induced map $\pi$ is also a continuous surjection. 
Moreover, $\pi$ intertwines the transformations $T$ and $S$. 
Indeed, for every $g\Gamma \in X$,   
\[
\pi(T(g \Gamma)) = \pi(\tau g \Gamma) = a\phi(\tau g) \Lambda = \sigma a \phi(g) \Lambda = S(\pi(g\Gamma)). 
\]
Thus $\pi$ is a topological factor map. 
It is also measure-preserving: as $\mu$ is $T$-invariant, the pushforward $\pi_* \mu$ is $S$-invariant, so by unique ergodicity of ergodic nilsystems it follows that $\pi_* \mu = \nu$. 

The point is that all factor maps between ergodic nilsystems in reduced form are algebraic up to null sets. 

\begin{proposition} 
    \label[proposition]{prop:factor_maps_ergodic_nilsystems_are_algebraic}
    Let $(X, \mu, T)$ and $(Y, \nu, S)$ be ergodic nilsystems, both presented in reduced form. 
    Then every factor map 
    \[
    \pi \colon (X, \mu, T) \to (Y, \nu, S) 
    \]
    agrees $\mu$-almost everywhere with an algebraic factor map. 
\end{proposition}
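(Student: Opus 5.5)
The plan is to reduce the statement to the description of ergodic joinings in \cref{prop:ergodic_joinings_nilsystems_are_nilsystems}. Write $T$ and $S$ as translations by $\tau \in G$ and $\sigma \in H$. Consider the graph joining
\[
\lambda := (\Id_X, \pi)_* \mu
\]
of $(X,\mu,T)$ and $(Y,\nu,S)$. Since $(X,\mu,T)$ is ergodic and $\lambda$ is isomorphic to $\mu$ via the first coordinate, $\lambda$ is an ergodic joining. \Cref{prop:ergodic_joinings_nilsystems_are_nilsystems} then gives a closed subgroup $L \leq G \times H$ with $(\tau,\sigma) \in L$ and a point $z = (g_0\Gamma, h_0\Lambda)$ such that $\lambda$ is the Haar measure of the subnilmanifold $W = L \cdot z$. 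After translating, that is, conjugating $L$ by $(g_0,h_0)$ and replacing $\tau,\sigma$ accordingly, we may assume $g_0 = e_G$. At the end this shift is absorbed into the element $a$.

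First, the projection of $W$ to $X$ carries Haar measure to $\mu$, which has full support, and $W$ is compact. Hence $p_1(L)\Gamma = G$, and in fact the first projection $L \to G$ has open image acting transitively on $X$.

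Second, I would show that the fibre of $W$ over a point of $X$ is a single point. Since $\lambda$ is a graph joining, the disintegration of $\lambda$ over $\mu$ is a Dirac mass almost everywhere. The fibre of $W$ over $x$ is an orbit of the closed subgroup $L \cap (\{e\}\times H)$, or of a conjugate of it, and the conditional measures are Haar measures on these homogeneous fibres. So the fibres are finite of some constant cardinality $k$. Using the central structure, namely that the fibre subgroup $M := \{h : (e,h) \in L\}$ is normalised appropriately, one shows $M \subseteq \Lambda$-conjugates. The condition that the conditional measure is a point mass forces $k=1$.

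Third, I would argue that $L$ is the graph of a homomorphism. Let $M$ be the kernel of the first projection restricted to $L$. From the second step, $M \subseteq h_0\Lambda h_0^{-1}$. I expect $M$ to be normal in $p_2(L)$, and the goal is to show that $p_2(L)$ can be taken to be $H$. This is where reduced form is used. The subgroup $p_2(L)$ contains $\sigma$ and acts transitively on $Y$ (same argument as in the first step). Since $G = \langle G^o, \tau\rangle$, the identity component of $L$ projects onto $G^o$, so $\langle p_2(L)^o,\sigma\rangle$ acts transitively. One then checks that $H^o \subseteq p_2(L)$, using that $H = \langle H^o,\sigma\rangle$ and that an open transitive subgroup must contain $H^o$ after a standard argument. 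Reduced form of $Y$ says $\Lambda$ contains no nontrivial normal subgroup, so $M$ is trivial. Then $L$ is the graph of a continuous surjective homomorphism $\phi \colon G \to H$ (continuity by closed graph and open mapping for Lie groups), and $\phi(\tau)$ equals the conjugated $\sigma$. From $W$ being closed over the base point, $\phi(\Gamma) \subseteq h_0\Lambda h_0^{-1}$. Setting $a = h_0$ gives $\pi(g\Gamma) = a\phi'(g)\Lambda$ on $W$, with $\phi' = a^{-1}\phi(\cdot)a$. Because $\lambda$ is concentrated on $W = $ graph of this map, $\pi$ agrees $\mu$-a.e.\ with it.

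The main obstacle I expect is this third step. The delicate points are showing the projection of $L$ to $H$ is all of $H$ (rather than just a transitive subgroup), and showing that the kernel $M$ is normal in $H$ so that reduced form kills it. I would first try to handle this by using the reduced form of $X$ to pin down $L^o$, and then using $\langle G^o,\tau\rangle = G$ to generate.
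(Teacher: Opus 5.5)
Your strategy (graph joining, then subnilmanifold, then trivial fibres, then the stabiliser is a graph) is the standard one. The paper does not prove this statement itself; it cites Parry and Host--Kra, Chapter 13, Theorem 5. Your argument is also the mechanism behind the paper's proof of \cref{prop:rigidity_self_joinings}. The skeleton works, but as written there are gaps exactly where reduced form enters.

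\emph{Openness is only asserted.} Your claim in step one that $p_1(L)$ is open is what turns $G=\langle G^o,\tau\rangle$ into $p_1(L)=G$. The same fact for $p_2(L)$ gives $p_2(L)=H$, which you need both for surjectivity of $\phi$ and for normality of $M$ in $H$. The missing argument is Baire category. First choose the base point $z=(e_X,y_0)\in W$, which is possible since $p_1(W)=X$; this also makes your conjugation unnecessary. Now $L$ is $\sigma$-compact, $\Gamma$ is countable, and $G=\bigcup_{\gamma\in\Gamma}p_1(L)\gamma$. Hence $p_1(L)$ has non-empty interior, so it is an open subgroup and contains $G^o$. The same argument applies to $p_2(L)$. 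Your detour through $L^o$ projecting onto $G^o$ is not needed, and it presupposes this openness anyway.

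\emph{The fibres are misdescribed.} The fibre over $e_X$ is the orbit of $L\cap(\Gamma\times H)$, not of $L\cap(\{e\}\times H)$, and nothing makes it finite. What works is \cref{lem:fibers_joining_are_subnilmanifolds}, whose proof applies verbatim to joinings of two different nilsystems. The fibres are pairwise homeomorphic subnilmanifolds, and $\lambda$ disintegrates into their Haar measures. Compare this with the disintegration $\delta_{(x,\pi(x))}$ of the graph joining. Haar measures have full support, so almost every fibre is a point, hence every fibre is. Thus $W$ is the graph of a continuous $\psi$ with $\psi=\pi$ almost everywhere.

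With these in place, the step you flagged as the main obstacle is short. Normality of $M$ in $p_2(L)$ is automatic, since $\{e\}\times M=\ker(p_1|_L)$ is normal in $L$. With $p_2(L)=H$ and $M\subseteq h_0\Lambda h_0^{-1}$, which comes from the singleton fibre over $e_X$, $M$ is a normal subgroup of $H$ contained in $\Lambda$, hence trivial. There is also a more direct route. Every $(e,m)\in L$ preserves each singleton fibre, so it fixes every point of $W$. Hence $m$ fixes every point of $\psi(X)=Y$, so $m\in\bigcap_{h\in H}h\Lambda h^{-1}$, which is trivial in reduced form.

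Then $L$ is the graph of a continuous homomorphism $\phi$ with $\phi(\tau)=\sigma$, and $\phi$ is surjective because $p_2(L)=H$. The inclusion $\phi(\Gamma)\subseteq h_0\Lambda h_0^{-1}$ holds because $(\gamma,\phi(\gamma))$ maps $z$ into the singleton fibre over $e_X$, not because $W$ is closed. Setting $a=h_0$ and $\phi'=a^{-1}\phi(\cdot)a$ gives $\psi(g\Gamma)=a\phi'(g)\Lambda$, the required algebraic factor map.
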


This result is originally due to \citet{parry1971metric}. 
A proof in the form used here can be found in \citep[Chapter 13, Theorem 5]{host2018nilpotent}.

As an immediate consequence, a measure-theoretic factor map between ergodic nilsystems always admits a topological representative.  

\begin{corollary} 
    \label[corollary]{cor:factors_between_nilsystem_are_topological}
    Let $(X, \mu, T)$ and $(Y, \nu, S)$ be ergodic nilsystems. 
    Then every measure-theoretic factor map $\pi \colon (X, \mu, T) \to (Y, \nu, S)$ agrees $\mu$-almost everywhere with a topological factor map.  
\end{corollary}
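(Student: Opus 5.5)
The plan is to reduce to reduced form and then invoke \cref{prop:factor_maps_ergodic_nilsystems_are_algebraic}, transporting the resulting algebraic map back through the changes of presentation.

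First, by \cref{lem:reduced_form}, choose presentations $(\tilde X, \tilde\mu, \tilde T)$ and $(\tilde Y, \tilde\nu, \tilde S)$ of $(X,\mu,T)$ and $(Y,\nu,S)$ in reduced form. Let $h_X \colon X \to \tilde X$ and $h_Y \colon Y \to \tilde Y$ be the homeomorphisms witnessing that these are presentations of the same nilsystems. By definition of presentations, each $h$ intertwines the transformations and pushes Haar measure to Haar measure. Hence $h_X$ and $h_Y$ are simultaneously topological isomorphisms and measure-theoretic isomorphisms.

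Next, consider the composite
\[
\tilde\pi := h_Y \circ \pi \circ h_X^{-1} \colon (\tilde X, \tilde \mu, \tilde T) \to (\tilde Y, \tilde\nu, \tilde S).
\]
This is a measure-theoretic factor map between ergodic nilsystems in reduced form; ergodicity is preserved because the $h$'s are measure-theoretic isomorphisms. By \cref{prop:factor_maps_ergodic_nilsystems_are_algebraic}, $\tilde\pi$ agrees $\tilde\mu$-almost everywhere with an algebraic factor map $\tilde\pi_0$. As verified in the discussion preceding that proposition, $\tilde\pi_0$ is a continuous surjection intertwining $\tilde T$ and $\tilde S$, so it is a topological factor map.

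Finally, set
\[
\pi_0 := h_Y^{-1} \circ \tilde\pi_0 \circ h_X.
\]
This is a composition of continuous maps, surjective, and satisfies $\pi_0 \circ T = S \circ \pi_0$, so it is a topological factor map. The exceptional set where $\pi$ and $\pi_0$ disagree is contained in $h_X^{-1}$ of the null set where $\tilde\pi \neq \tilde\pi_0$. Since $(h_X)_*\mu = \tilde\mu$, this set is $\mu$-null. Therefore $\pi = \pi_0$ $\mu$-almost everywhere.

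There is no genuine obstacle here; all the depth lies in Parry's theorem, which is assumed. The only points needing care are bookkeeping. One must check that the presentation changes preserve the nilsystem structure and keep Haar measures matched, so that null sets correspond. One must also note that, since $\pi$ is only defined almost everywhere, the composite $\tilde\pi$ is a legitimate measure-theoretic factor map.
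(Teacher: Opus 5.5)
Your proposal is correct and follows the same route as the paper: reduce to reduced form via \cref{lem:reduced_form}, apply \cref{prop:factor_maps_ergodic_nilsystems_are_algebraic}, and note that algebraic factor maps are topological factor maps. You simply make explicit the conjugation by the presentation homeomorphisms, which the paper leaves implicit in ``we may assume''. Your null-set bookkeeping through $h_X$ and $h_Y$ is right.
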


\begin{proof}
    By \cref{lem:reduced_form}, we may assume that both nilsystems are presented in reduced form. 
    Hence by \cref{prop:factor_maps_ergodic_nilsystems_are_algebraic}, $\pi$ agrees almost everywhere with an algebraic factor map.
    Since algebraic factor maps are topological factor maps, the claim follows. 
\end{proof}

Consequently, for ergodic nilsystems there is essentially no distinction between topological and measure-theoretic factor maps: every topological factor map is measure-preserving by unique ergodicity, and every measure-theoretic factor map has a topological representative. 

We will need the following factor-closure result for nilsystems. 
The previous corollary considers factor maps whose target is already known to be a nilsystem. 
The following theorem says that this is automatically true.  

\begin{theorem} 
    \label{thrm:factor_ergodic_nilsystem_is_nilsystem}
    Let $(X, \mu, T)$ be an ergodic $s$-step nilsystem, and let 
    \[
    \pi \colon (X, \mu, T) \to (Y, \nu, S) 
    \] 
    be a measure-theoretic factor map. 
    Then $(Y, \nu, S)$ is isomorphic to an $s$-step nilsystem. 
\end{theorem}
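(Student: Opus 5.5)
This is Parry's theorem, and I would prove it with a relative-product (self-joining) argument that identifies the factor with a quotient of $X$ by a closed subgroup. Only results stated earlier are used: joinings of nilsystems are Haar measures of subnilmanifolds (\cref{prop:ergodic_joinings_nilsystems_are_nilsystems}), ergodic nilsystems are uniquely ergodic and distal, and factor maps between reduced ergodic nilsystems are algebraic (\cref{prop:factor_maps_ergodic_nilsystems_are_algebraic}).

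\textbf{Step 1 (relative product).} Form the relatively independent self-joining $\lambda := \mu \times_Y \mu$ on $X \times X$ over $\pi$. It is invariant under $T \times T$ with both marginals equal to $\mu$, but it need not be ergodic. Take its ergodic decomposition. Ergodic components are joinings, because the marginals of almost every ergodic component are ergodic invariant measures, hence equal to $\mu$ by unique ergodicity. So by \cref{prop:ergodic_joinings_nilsystems_are_nilsystems}, almost every component is the Haar measure of a subnilmanifold $H_\omega \cdot z_\omega$ of $X \times X$.

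\textbf{Step 2 (subgroup).} I would use these components to define a closed subgroup $L \le G$, with $X = G/\Gamma$ in reduced form. The natural candidate is
\[
L := \{ g \in G : \text{the translate } (\Id, g)\text{ preserves the relevant component measures}\}.
\]
Alternatively, one can work inside the component through the diagonal: since $\mu_\Delta \le \lambda$ in the appropriate sense, consider subnilmanifolds containing diagonal points. Concretely, for $\mu$-a.e.\ $x = g\Gamma$, the fiber $\pi^{-1}(\pi(x))$ should coincide a.e.\ with an orbit $g L_x \Gamma$ of a closed subgroup, obtained from the second-coordinate sections of the subnilmanifolds $H_\omega z_\omega$. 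Translation invariance then shows that $L$ can be chosen independent of $x$, normalized by $\tau$, and normal in $G$ modulo $\Gamma$-conjugation.

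\textbf{Step 3 (identify $Y$).} Show that $\pi^{-1}(\mcy) =_\mu$ the $\sigma$-algebra of sets invariant under the action of $L$ (more precisely, of its image in the $\Gamma$-compatible sense). Then $Y \cong G/\overline{L\Gamma}$ with translation by $\tau$. Since $\overline{L\Gamma}$ is closed and $G$ is an $s$-step nilpotent Lie group, I would replace $L$ by the largest normal subgroup $N$ with the same orbits and show that $N\Gamma$ is closed, i.e.\ that $N$ is rational. Then $Y \cong (G/N)/(\Gamma N/N)$ is an $s$-step nilsystem, with Haar measure the pushforward by \cref{lem:algebraic_image_subnilmanifold}.

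\textbf{Expected obstacle.} The main difficulty is Step 2/3: showing that the fibers of an arbitrary measurable factor are cosets of one normal rational subgroup, rather than merely measurable invariant partitions. If this route stalls, I would instead argue by induction on the step using the tower of \cref{sec:tower_of_factors_nilsystem}. Apply \cref{lem:induced_group_ext} to the square with $X \to Z_{s-1}$ on top and the factor generated by $Y$ and $Z_{s-1}$, so that each stage is an isometric (compact abelian group) extension. Then use the classical Mackey/Furstenberg description of intermediate factors of compact group extensions: they are quotients by closed subgroups of the structure group, twisted by cocycle cohomology. This reduces the problem to showing that quotients of the torus fibers by closed subgroups, and the resulting cocycles, stay within the nilmanifold class.
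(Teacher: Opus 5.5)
There is a genuine gap: Steps 2 and 3 assert exactly what has to be proved. Step 1 is fine. Every ergodic component of $\lambda=\mu\times_Y\mu$ has marginals $\mu$ by unique ergodicity, so it is the Haar measure of a subnilmanifold by \cref{prop:ergodic_joinings_nilsystems_are_nilsystems}. But nothing in your argument turns these components into one closed subgroup $L$. Disintegrating over the first coordinate, the conditional measure of $\mu$ on the fiber through $x$ is only a mixture $\int \lambda_{\omega,x}\,dP(\omega)$ of Haar measures on sections of \emph{different} components. You give no reason why:
\begin{itemize}
\item this mixture is the Haar measure of a single orbit $gL\Gamma$;
\item $L$ can be taken independent of $x$, normal and rational;
\item $\pi^{-1}(\mathcal{Y})$ is the $\sigma$-algebra of $L$-invariant sets.
\end{itemize}
Already for a compact abelian group extension the components are the graph joinings of the vertical rotations $V_u$, whose sections are single points. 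So all the content lies in how the family $\{V_u\}$ sits inside the nilmanifold structure. This is exactly what Parry's algebraicity controls, yet \cref{prop:factor_maps_ergodic_nilsystems_are_algebraic} is never used in your steps; it cannot be applied to $\pi$ itself, since $Y$ is not yet known to be a nilsystem. The remark $\mu_\Delta\le\lambda$ is also wrong. For trivial $Y$ and non-atomic $\mu$ one has $\lambda=\mu\times\mu$, which is singular to $\mu_\Delta$. In general $\lambda(\Delta_X)=0$ unless the conditional measures have atoms, so there is no ``component through the diagonal''.

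Your fallback has the right skeleton. The paper does not prove this theorem itself; it cites Parry and Host--Kra (Chapter 13, Theorem 11), and \cref{sec:proof_main_thrm} follows the same pattern: the tower $Z_i$, the factors $W_i$ generated by $Y$ and $Z_i$, \cref{lem:induced_group_ext}, and downward induction. After this reduction one must still show: if $W$ is an ergodic $s$-step nilsystem and $W\to W'$ is an extension by a compact abelian group $H$, then $W'$ is a nilsystem. The description of intermediate factors of compact group extensions handles only the top stage. There $H\le K_s$ acts through the central group $G_s$, so $W_{s-1}$ is the quotient of $G/\Gamma$ by a closed central subgroup and is visibly a nilmanifold. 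At lower stages $W_i$ is a new nilsystem. Its vertical rotations over $W_{i-1}$ are a priori only measurable automorphisms of $W_i$ lifting the $K_i$-translations on $Z_i$, and typically not translations of the presentation inherited from $G/\Gamma$. For example, take the Heisenberg nilsystem and $Y=G/\Lambda$ with a suitable lattice $\Lambda\supsetneq\Gamma$; the vertical rotations of $W_1\to W_0=Y$ are then right multiplications by elements of $\Lambda$. Saying the ``resulting cocycles stay within the nilmanifold class'' is the entire induction step, and you give no argument for it. What is missing is a statement about compact groups of automorphisms of an ergodic nilsystem. For instance, by \cref{prop:factor_maps_ergodic_nilsystems_are_algebraic} applied to a reduced presentation $W_i=G'/\Gamma'$, each $V_u$ is a.e.\ an affine map $g\Gamma'\mapsto a\phi(g)\Gamma'$. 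One must then show the orbit space of this compact group of affine maps is again a nilmanifold. You also cannot shortcut via \cref{thrm:main_special_case} for a constant inverse system, since its proof uses the present theorem to see that the factors $W_n$ are nilsystems.
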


This result was proved by \citet[Theorem 5.1]{parry1973dynamical}. 
We use the formulation in \citep[Chapter 13, Theorem 11]{host2018nilpotent}.

\section{Pro-nilsystems}
\label{sec:pronilsystems}

We now pass from nilsystems to inverse limits of nilsystems. 
This results in genuinely new behavior, since the class of nilsystems is not closed under taking inverse limits. 
This can already be seen in the $1$-step case; see \cref{eg:one_step_pronilsystems} below. 
Since nilsystems carry both a topological and a measure-theoretic structure, this gives rise to two a priori different notions of pro-nilsystems.  
We define both and then show that, at least in the ergodic setting, they are compatible. 
All inverse limits are understood in the sense of \cref{sec:inverse_limits}.

\begin{definition}
    Let $s \geq 1$. 
    A measure-preserving system $(X, \mu, T)$ is called a measure-theoretic $s$-step pro-nilsystem if there exists an inverse system $((X_n, \mu_n, T_n), \alpha_n)_{n \in \N}$ of $s$-step nilsystems such that 
    \[
    (X, \mu, T) \cong \varprojlim (X_n, \mu_n, T_n) 
    \]
    as measure-preserving systems. 

    Similarly, a topological dynamical system $(X, T)$ is called a topological $s$-step pro-nilsystem if there exists an inverse system $((X_n, T_n), \alpha_n)_{n \in \N}$ of $s$-step nilsystems such that 
    \[
    (X, T) \cong \varprojlim (X_n, T_n) 
    \]
    as topological dynamical systems.
\end{definition}

We omit the terms measure-theoretic and topological if the intended setting is clear. 
Note that different choices of inverse limits of nilsystems can give rise to the same pro-nilsystem. 
For a fixed inverse limit, we refer to the systems $X_n$ as the finite-stage nilsystems. 
Again, we do not always explicitly specify the connecting factor maps. 

\begin{example}
    \label[example]{eg:one_step_pronilsystems}
    Let $K$ be a metrizable compact abelian group, let $m_K$ be its Haar measure and consider the rotation 
    \[
    R \colon K \to K, \quad u \mapsto a + u 
    \]
    for some fixed $a \in K$.  
    If $K$ is a Lie group, then $(K, m_K, R)$ is a $1$-step nilsystem; see \cref{eg:lie_group_rotations}.  
    In general, $(K, m_K, R)$ is a $1$-step pro-nilsystem.  
    We can show this by the following Pontryagin duality argument; see also \citep[Chapter 18, Section 1.1]{host2018nilpotent} and \citep[Remark 2.35]{hofmann2023structure}. 
    Since $K$ is compact and metrizable, its Pontryagin dual $\wh K$ is countable and discrete. 
    Choose an increasing sequence $(A_n)_{n \in \N}$ of finitely generated subgroups of $\wh K$ such that $\wh K = \bigcup_{n \in \N} A_n$. 
    Let 
    \[
    K_n := \wh A_n.  
    \]
    As the dual of a finitely generated discrete abelian group, each $K_n$ is a compact abelian Lie group. 
    By duality, the inclusion maps $A_n \hookrightarrow A_{n+1}$ induce continuous surjective group homomorphisms $\alpha_n \colon K_{n+1} \to K_n$ for each $n$, and the condition on the union implies that with respect to these connecting maps, 
    \[
    K = \varprojlim K_n.   
    \]  
    The element $a \in K$ projects to elements $a_n \in K_n$ for each $n$, and the rotation by $a$ is the inverse limit of the rotations by $a_n$. 
    Hence $(K, m_K, R)$ is a $1$-step pro-nilsystem.  
\end{example}

For topological pro-nilsystems, minimality, transitivity, and unique ergodicity are all equivalent notions. 
Indeed, as nilsystems are distal by \cref{lem:nilsystems_are_distal}, and distality passes to the inverse limit by \cref{lem:dynamical_prop_inverse_limit}, pro-nilsystems are distal. 
Hence minimality and transitivity are equivalent; see \citep[Chapter 5, Corollary 7]{auslander1988minimal}. 
Moreover, the equivalence of minimality and unique ergodicity holds because this is true for nilsystems by \cref{cor:equivalence_ergodicity_minimality_etc_nilsystems}, and these notions both pass to factors and inverse limits. 

The next lemma explains the relation between measure-theoretic and topological pro-nilsystems. 
The argument is based on \citep[Chapter 13, Section 3]{host2018nilpotent}. 

\begin{lemma}
    \label[lemma]{lem:topological_model_inverse_limit}
    Let $s \geq 1$. 
    \begin{enumerate}[(i)]
        \item Every ergodic measure-theoretic $s$-step pro-nilsystem is isomorphic to a uniquely ergodic topological $s$-step pro-nilsystem, equipped with its unique invariant probability measure. 
        \item Conversely, if $(X, T)$ is a uniquely ergodic topological $s$-step pro-nilsystem, and $\mu$ is its unique invariant probability measure, then $(X, \mu, T)$ is an ergodic measure-theoretic $s$-step pro-nilsystem. 
    \end{enumerate} 
\end{lemma}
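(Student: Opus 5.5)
For (i), let $(X, \mu, T) \cong \varprojlim (X_n, \mu_n, T_n)$ be an ergodic measure-theoretic $s$-step pro-nilsystem, with factor maps $p_n$ and connecting maps $\alpha_n$. The plan is to replace the measure-theoretic inverse system by a topological one and then apply \cref{lem:dynamical_prop_inverse_limit}(iii).

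First, each $X_n$ is a factor of the ergodic system $X$, so it is ergodic. By \cref{cor:equivalence_ergodicity_minimality_etc_nilsystems}, each $(X_n, T_n)$ is then uniquely ergodic and minimal. Each connecting map $\alpha_n \colon X_{n+1} \to X_n$ is a measure-theoretic factor map between ergodic nilsystems. By \cref{cor:factors_between_nilsystem_are_topological}, it agrees $\mu_{n+1}$-a.e. with a topological factor map $\tilde\alpha_n$. This gives a topological inverse system $((X_n, T_n), \tilde\alpha_n)_{n \in \N}$ of $s$-step nilsystems. Let $(\tilde X, \tilde T)$ be its topological inverse limit. By \cref{lem:dynamical_prop_inverse_limit}(iii), it is uniquely ergodic, and with its unique invariant measure $\tilde\mu$ it satisfies $(\tilde X, \tilde\mu, \tilde T) = \varprojlim (X_n, \mu_n, T_n)$ measure-theoretically, with respect to the maps $\tilde\alpha_n$.

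It remains to check that this is the same measure-theoretic inverse limit as the original one. Since $\tilde\alpha_n = \alpha_n$ almost everywhere, the original factor maps $p_n$ still satisfy $\tilde\alpha_n \circ p_{n+1} = p_n$ $\mu$-a.e. Here we use that $p_{n+1}$ pushes $\mu$ to $\mu_{n+1}$, so a $\mu_{n+1}$-null set pulls back to a $\mu$-null set. Condition (ii) in the definition of the inverse limit does not involve the connecting maps at all. Hence $X$ is also an inverse limit of the system with maps $\tilde\alpha_n$, and uniqueness of inverse limits up to isomorphism gives $X \cong \tilde X$. The only point needing care is this a.e.-compatibility of the modified connecting maps, and it is routine.

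For (ii), let $(X, T) \cong \varprojlim (X_n, T_n)$ be a topological inverse limit of $s$-step nilsystems, with $(X, T)$ uniquely ergodic. Each $X_n$ is a topological factor of $X$, so it is uniquely ergodic: any invariant measure on $X_n$ is the pushforward of an invariant measure on $X$. One way to see this is to lift it to an invariant measure on $X$ by taking a weak-* limit of Cesàro averages of an arbitrary lift. Since $X$ has only one invariant measure, $X_n$ has only one as well, and it is the Haar measure $\mu_n$.

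Then \cref{lem:dynamical_prop_inverse_limit}(iii) shows that $(X, \mu, T) = \varprojlim (X_n, \mu_n, T_n)$ measure-theoretically. Each topological connecting map is measure-preserving by unique ergodicity, so these are genuine measure-theoretic connecting maps. Hence $(X, \mu, T)$ is a measure-theoretic $s$-step pro-nilsystem. Each $X_n$ is ergodic by \cref{cor:equivalence_ergodicity_minimality_etc_nilsystems}, so $X$ is ergodic by \cref{lem:ergodicity_of_inverse_limit}; alternatively, this follows directly from unique ergodicity.
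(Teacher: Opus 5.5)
Your proof is correct and follows the paper's argument essentially step for step. In (i) you replace the connecting maps by topological representatives via \cref{cor:factors_between_nilsystem_are_topological}, take the topological inverse limit, and conclude with \cref{lem:dynamical_prop_inverse_limit}(iii) and uniqueness of measure-theoretic inverse limits; in (ii) you use unique ergodicity of the factors $X_n$ together with \cref{lem:dynamical_prop_inverse_limit}(iii). Your extra checks are details the paper leaves implicit, and both are correct: that the modified maps still satisfy $\tilde\alpha_n \circ p_{n+1} = p_n$ $\mu$-a.e., and that topological factors of uniquely ergodic systems are uniquely ergodic, shown by lifting invariant measures.
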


\begin{proof}
    For (i), let $(X, \mu, T)$ be an ergodic measure-theoretic $s$-step pro-nilsystem. 
    Choose an inverse limit 
    \[
    (X, \mu, T) \cong \varprojlim(X_n, \mu_n, T_n)
    \]
    where each $(X_n, \mu_n, T_n)$ is an $s$-step nilsystem. 
    As a factor of the ergodic system $X$, each $X_n$ is ergodic. 
    By \cref{cor:factors_between_nilsystem_are_topological}, after modifying on null sets, we may assume that each connecting factor map $\alpha_n \colon X_{n+1} \to X_n$ is a topological factor map.  
    Thus we can take the topological inverse limit 
    \[
    (X^\tp, T^\tp) := \varprojlim(X_n, T_n). 
    \]
    Each nilsystem $X_n$ is ergodic, hence uniquely ergodic; see \cref{cor:equivalence_ergodicity_minimality_etc_nilsystems}.  
    Therefore, the inverse limit $(X^\tp, T^\tp)$ is uniquely ergodic. 
    Let $\mu^\tp$ be its unique invariant probability measure. 
    By \cref{lem:dynamical_prop_inverse_limit}, $(X^\tp, \mu^\tp, T^\tp)$ is the measure-theoretic inverse limit of $(X_n, \mu_n, T_n)$. 
    Thus, by uniqueness of the measure-theoretic inverse limit, 
    \[
    (X^\tp, \mu^\tp, T^\tp) \cong (X, \mu, T), 
    \]
    which proves (i). 

    For (ii), suppose that $(X, T)$ is a uniquely ergodic $s$-step pro-nilsystem, and let $\mu$ be its unique invariant probability measure. 
    Choose an inverse limit 
    \[
    (X, T) \cong\varprojlim (X_n, T_n) 
    \] 
    where each $(X_n, T_n)$ is an $s$-step nilsystem.
    As a factor of $X$, each nilsystem $X_n$ is uniquely ergodic, and its unique invariant probability measure is the Haar measure $\mu_n$. 
    By \cref{lem:dynamical_prop_inverse_limit}, 
    \[
    (X, \mu, T) \cong \varprojlim (X_n, \mu_n, T_n),  
    \]
    hence $(X, \mu, T)$ is an ergodic measure-theoretic $s$-step pro-nilsystem. 
\end{proof}

In view of this lemma, every ergodic $s$-step pro-nilsystem admits a uniquely ergodic topological $s$-step pro-nilsystem as a topological model. 
A priori, this model could depend on the specific choice of inverse limit.
The following proposition shows that it is in fact canonical. 

\begin{proposition} 
    \label[proposition]{prop:factors_between_pronilsystems_are_topological}
    Let $(X, \mu, T)$ and $(Y, \nu, S)$ be ergodic $s$-step pro-nilsystems, equipped with uniquely ergodic topological $s$-step pro-nilsystem models as provided by \cref{lem:topological_model_inverse_limit}.  
    If 
    \[
    \pi \colon (X, \mu, T) \to (Y, \nu, S) 
    \] 
    is a measure-theoretic factor map, then $\pi$ agrees almost everywhere with a topological factor map 
    \[
    \pi^\tp \colon (X, T) \to (Y, S).
    \]
    Moreover, if $\pi$ is a measure-theoretic isomorphism, then $\pi^\tp$ is a topological isomorphism.
\end{proposition}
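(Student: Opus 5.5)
Write $(X,T) = \varprojlim (X_n, T_n)$ and $(Y,S) = \varprojlim (Y_m, S_m)$ as uniquely ergodic topological inverse limits of ergodic $s$-step nilsystems. Let $p_n \colon X \to X_n$ and $q_m \colon Y \to Y_m$ denote the coordinate projections. These are topological factor maps, and by \cref{lem:dynamical_prop_inverse_limit} they are also the measure-theoretic inverse limit maps.

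The plan is to build $\pi^\tp$ coordinatewise. For this it suffices to show that each composite $q_m \circ \pi \colon (X,\mu,T) \to (Y_m, \nu_m, S_m)$ agrees almost everywhere with a topological factor map $\pi_m \colon X \to Y_m$. Once we have these, uniqueness almost everywhere together with continuity gives $\beta_m \circ \pi_{m+1} = \pi_m$ everywhere, where $\beta_m$ are the connecting maps of $Y$. Indeed, two continuous maps that agree $\mu$-a.e. agree on $\supp \mu = X$, and $\mu$ has full support by minimality. We then set $\pi^\tp(x) := (\pi_m(x))_m \in Y$. This map is continuous, intertwines $T$ and $S$, and agrees with $\pi$ almost everywhere, since each coordinate does and the $q_m$ generate $\mcy$. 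Surjectivity follows because the image is a closed invariant subset of the minimal system $Y$.

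The main step is to make $q_m \circ \pi$ continuous. Its target $Y_m$ is a nilsystem, but its source is only a pro-nilsystem. I would argue that $q_m \circ \pi$ factors through some finite stage $X_n$, and then apply \cref{cor:factors_between_nilsystem_are_topological} to the induced map $X_n \to Y_m$. For the factoring, consider the factor $\sigma$-algebra $(q_m \circ \pi)^{-1}(\mcy_m)$. Being generated by a nilsystem, it is generated by countably many continuous functions. I expect this to be the main obstacle, because in general a factor of an inverse limit need not lie below a finite stage.

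My first attempt uses the joining $\lambda = (p_n, q_m\circ\pi)_*\mu$ of $X_n$ and $Y_m$, which is ergodic as a factor of $X$. By \cref{prop:ergodic_joinings_nilsystems_are_nilsystems} it is the Haar measure of a subnilmanifold $W \subseteq X_n \times Y_m$, and the projection $W \to X_n$ is algebraic. The structure groups of nilsystems are Lie groups, so there is a neighborhood of the identity containing no nontrivial subgroup. I would use this to argue that the fibers of $W \to X_n$ must be trivial for large $n$. Concretely, as $n \to \infty$, $\E(f \circ q_m \circ \pi \mid p_n^{-1}\mcx_n) \to f \circ q_m\circ\pi$, so the fibers of $W$ over $X_n$ become small on average. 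A compact subgroup acting on the fiber that is forced into a small neighborhood must be trivial. This is essentially the "no small subnilmanifolds" idea announced in the introduction. With trivial fibers, $\lambda$ is a graph joining, so $q_m \circ \pi = \phi \circ p_n$ almost everywhere for a factor map $\phi \colon X_n \to Y_m$.

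Finally, suppose $\pi$ is a measure-theoretic isomorphism. Then $\pi^\tp$ is a topological factor map that is also a measure-theoretic isomorphism. The system $(X,T)$ is distal and uniquely ergodic, by \cref{lem:nilsystems_are_distal} and \cref{lem:dynamical_prop_inverse_limit}. Hence \cref{lem:isomorphism_upgrade_crit} shows that $\pi^\tp$ is a topological isomorphism.
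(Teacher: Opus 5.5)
Your argument is correct in outline, but it takes a different route from the paper.

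\textbf{The paper's route.} The paper never shows that $q_m\circ\pi$ factors through a finite stage. It projects the graph joining $(\Id_X,\pi)_*\mu$ to ergodic joinings of $X_n$ and $Y_n$ and realizes each as Haar measure on a subnilmanifold $Z_n\subseteq X_n\times Y_n$. It then forms $Z=\varprojlim Z_n\subseteq X\times Y$, and uses only distality and unique ergodicity of $Z$. By \cref{lem:isomorphism_upgrade_crit}, the first projection $r_1\colon Z\to X$, being a measure-theoretic isomorphism, is a homeomorphism, and $\pi^{\tp}=r_2\circ r_1^{-1}$.

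\textbf{Your route.} You instead run the local rigidity mechanism of \cref{chap:rigidity_result}. This gives a stronger intermediate fact: every nilsystem factor of $X$ is, modulo null sets, of the form $\phi\circ p_n$ for some finite stage $X_n$ of any given presentation. Your treatment of the isomorphism case, applying \cref{lem:isomorphism_upgrade_crit} directly to $\pi^{\tp}$, is also slightly more direct than the paper's. Using Chapter 4 here is not circular, since nothing there depends on this proposition.

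\textbf{What the key step needs.} The cost is that your key step needs more machinery and more care than your sketch gives.
\begin{enumerate}[(a)]
\item The fibers of $W_n\to X_n$ are general subnilmanifolds, not orbits of a small subgroup of a structure group. So the right tool is \cref{lem:no_small_subnilmanifolds_averaged}, not just no small subgroups.
\item Its constant must be independent of $n$. Applied in $X_n\times Y_m$, the constant would depend on $n$ and the argument would collapse. Apply it instead to the fixed nilmanifold $Y_m$. By \cref{lem:algebraic_image_subnilmanifold}, the second-coordinate projection maps each fiber $W_{n,x}$ homeomorphically onto a subnilmanifold of $Y_m$ and carries Haar measure to Haar measure. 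This gives $\eta_m>0$, independent of $n$, bounding the averaged fiber spread from below whenever the fibers are nontrivial.
\item You need the disintegration of \cref{lem:fibers_joining_are_subnilmanifolds}; its proof works verbatim for a joining of two different nilsystems. It identifies the fiber measures $\lambda_{n,x}$ of $(p_n,F)_*\mu$, viewed on $Y_m$, with the conditional distribution of $F:=q_m\circ\pi$ given $p_n^{-1}(\mcx_n)$. Hence for $f\in C(Y_m)$,
\[
\int_{X_n}\int\int |f(y)-f(y')|^2 \dd\lambda_{n,x}(y)\dd\lambda_{n,x}(y')\dd\mu_n(x) = 2\,\bigl\|f\circ F-\E_\mu\bigl(f\circ F\mid p_n^{-1}(\mcx_n)\bigr)\bigr\|_{L^2(\mu)}^2\longrightarrow 0 .
\]
\item Choose the metric on $Y_m$ to be $\sum_k 2^{-k}|f_k(y)-f_k(y')|$ for a separating sequence of $1$-bounded $f_k\in C(Y_m)$. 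Then the averaged fiber spread tends to $0$, so the fibers are trivial for large $n$.
\item With trivial fibers, $W_n$ is the graph of a continuous, equivariant, surjective map $X_n\to Y_m$, exactly as in the proof of \cref{prop:rigidity_self_joinings}.
\end{enumerate}

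\textbf{Trade-off.} The paper's proof is shorter and needs nothing beyond distality and unique ergodicity. Yours needs the no-small-subnilmanifolds machinery, but yields the finite-stage factorization as a by-product.
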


In particular, it follows that if two inverse limits give rise to the same ergodic measure-theoretic pro-nilsystem, then the associated topological models are isomorphic. 
This result was proved by \citet{host2010nilsequences}, using the theory of dynamical dual functions. 
For completeness, we give a direct proof instead which avoids the use of any Host--Kra theory. 

\begin{proof}
    Since $(X, \mu, T)$ and $(Y, \nu, S)$ are equipped with the topological models from \cref{lem:topological_model_inverse_limit}, we may write 
    \[
    (X, T) = \varprojlim (X_n, T_n) 
    \quad \text{ and } \quad 
    (Y, S) = \varprojlim (Y_n, S_n),
    \]
    as topological inverse limits of uniquely ergodic $s$-step nilsystems, and assume that $\mu$ and $\nu$ are the unique invariant measures on $X$ and $Y$ respectively. 
    Let $\alpha_n \colon X_{n+1} \to X_n$ and $\beta_n \colon Y_{n+1} \to Y_n$ be the connecting factor maps, and let $p_n \colon X \to X_n$ and $q_n \colon Y \to Y_n$ be the factor maps associated to the inverse limits. 
    Furthermore, let $\mu_n$ and $\nu_n$ denote the Haar measures on $X_n$ and $Y_n$. 

    Let  
    \[
    \lambda := (\Id_X, \pi)_* \mu 
    \]
    be the graph joining of $\pi$, which is an ergodic joining of $(X, \mu, T)$ and $(Y, \nu, S)$. 
    For each $n \in \N$, define 
    \[
    \lambda_n := (p_n, q_n)_* \lambda.  
    \]
    Then $\lambda_n$ is an ergodic joining of the ergodic nilsystems $(X_n, \mu_n, T_n)$ and $(Y_n, \nu_n, S_n)$. 
    By \cref{prop:ergodic_joinings_nilsystems_are_nilsystems}, there is a $T_n \times S_n$-invariant subnilmanifold 
    \[
    Z_n \subseteq X_n \times Y_n 
    \]
    such that $\lambda_n$ is the Haar measure of $Z_n$. 
    In particular, $(Z_n, \lambda_n, T_n \times S_n)$ is an ergodic $s$-step nilsystem, so $(Z_n, T_n \times S_n)$ is uniquely ergodic.  

    We show that the systems $Z_n$ form an inverse system. 
    Consider the maps
    \[
    \alpha_n \times \beta_n \colon X_{n+1} \times Y_{n+1} \to X_n \times Y_n.
    \]
    Since $(\alpha_n \times \beta_n)_* \lambda_{n+1} = \lambda_n$, and since $\alpha_n \times \beta_n$ is continuous, it maps $\supp(\lambda_{n+1}) = Z_{n+1}$ onto $\supp(\lambda_n) = Z_n$.  
    Hence, after restricting, we obtain topological factor maps  
    \[
    \alpha_n \times \beta_n \colon (Z_{n+1}, T_{n+1} \times S_{n+1}) \to (Z_n, T_n \times S_n). 
    \]

    Define 
    \[
    Z := \{(x,y) \in X \times Y \colon (p_n(x), q_n(y)) \in Z_n \text{ for all } n \in \N\} \subseteq X \times Y.  
    \]
    Then $Z$ is a $T \times S$-invariant closed subset of $X \times Y$. 
    The factor maps $p_n \times q_n \colon Z \to Z_n$ are compatible with the connecting factor maps $\alpha_n \times \beta_n$ and separate points on $Z$, so that   
    \[
    (Z, T \times S) \cong \varprojlim (Z_n, T_n \times S_n).  
    \]
    Therefore $(Z, T \times S)$ is a uniquely ergodic topological $s$-step pro-nilsystem. 
    In particular, it is distal. 
    Moreover, $\lambda((p_n \times q_n)^{-1}(Z_n)) = \lambda_n(Z_n) = 1$ for each $n$, so $\lambda(Z) = 1$. 
    Hence, when viewed as a probability measure on $Z$, $\lambda$ is the unique invariant measure on $(Z, T \times S)$.  

    Let 
    \[
    r_1 \colon Z \to X, \quad r_2 \colon Z \to Y 
    \]
    be the coordinate projections. 
    These are topological factor maps. 
    Moreover, since $\lambda$ is the graph joining of $\pi$, the map $r_1$ is a measure-theoretic isomorphism from $(Z, \lambda, T \times S)$ to $(X, \mu, T)$. 
    Because $(Z, T \times S)$ is distal and uniquely ergodic, \cref{lem:isomorphism_upgrade_crit} implies that $r_1$ is a topological isomorphism. 
    Define 
    \[
    \pi^\tp := r_2 \circ r_1^{-1} \colon X \to Y.  
    \]
    Then $\pi^\tp$ is a topological factor map. 
    Moreover, for $\mu$-almost every $x \in X$, $r_1^{-1}(x) = (x, \pi(x))$, and thus 
    \[
    \pi^\tp(x) = r_2(x, \pi(x)) = \pi(x) 
    \]
    for $\mu$-almost every $x \in X$. 

    Finally, suppose that $\pi$ is a measure-theoretic isomorphism. 
    Then the second coordinate projection $r_2 \colon Z \to Y$ is also a measure-theoretic isomorphism. 
    Applying \cref{lem:isomorphism_upgrade_crit} again, we conclude that $r_2$ is a topological isomorphism. 
    Hence $\pi^\tp = r_2 \circ r_1^{-1}$ is a topological isomorphism. 
\end{proof}

\chapter{Local rigidity of self-joinings of nilsystems}
\label{chap:rigidity_result}

The purpose of this chapter is to prove a local rigidity property for self-joinings of nilsystems. 
Roughly speaking, we show that an ergodic self-joining of a nilsystem which is sufficiently close to the diagonal joining must be the graph joining of an automorphism. 
This result will be the key input in the proof of the factor-closure of ergodic pro-nilsystems in \cref{chap:factor_closure}.

Let $(X, \mu, T)$ be an ergodic nilsystem. 
Since $X$ is a compact metric space, the Riesz representation theorem identifies the space $\pr(X \times X)$ of Borel probability measures on $X \times X$ with a subset of the dual space of the Banach space $C(X \times X)$ of continuous functions on $X \times X$.  
We equip $\pr(X \times X)$ with the weak-* topology under this identification. 
This topology on $\pr(X \times X)$ is compact and metrizable. 
Recall from \cref{sec:joinings} that $J_e(X, X)$ denotes the set of ergodic self-joinings of $(X, \mu, T)$. 
We equip $J_e(X, X) \subseteq \pr(X \times X)$ with the subspace topology, so that $J_e(X, X)$ is metrizable as well. 

The following proposition is the main result of this chapter. 

\begin{proposition}[Local rigidity of self-joinings] 
    \label[proposition]{prop:rigidity_self_joinings}
    Let $(X, \mu, T)$ be an ergodic nilsystem, and let $\mu_\Delta := (\Id_X, \Id_X)_* \mu$ be the diagonal joining. 
    Then there exists a neighborhood $U$ of $\mu_\Delta$ in $J_e(X, X)$ such that every $\lambda \in U$ is the graph joining of an automorphism of $(X, \mu, T)$. 
\end{proposition}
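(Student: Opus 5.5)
The plan follows the outline sketched in the introduction: describe an ergodic self-joining near the diagonal as the Haar measure of a subnilmanifold, and show that its coordinate fibers must be single points.

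By \cref{prop:ergodic_joinings_nilsystems_are_nilsystems}, every $\lambda \in J_e(X,X)$ is the Haar measure of a subnilmanifold $Y_\lambda = H_\lambda \cdot z \subseteq X \times X$, with $H_\lambda \leq G \times G$ closed and $(\tau,\tau) \in H_\lambda$. Let $r_1, r_2 \colon Y_\lambda \to X$ be the coordinate projections.
\begin{enumerate}[(a)]
\item The fibers of $r_1$ are the orbits of $L_\lambda := H_\lambda \cap (\{e_G\} \times G)$ (up to the stabilizer). Conjugating by a point of $Y_\lambda$ shows that the fiber over $x$ is a subnilmanifold of $\{x\} \times X$, or a finite union of translates of one.
\item The fibers of $r_1$ are translates of one another under the transitive $H_\lambda$-action, so they are all of the same type.
\item $\lambda$ is a graph joining exactly when $r_1$ is injective on $Y_\lambda$.
\item If both $r_1$ and $r_2$ are injective, then $r_2 \circ r_1^{-1}$ is a homeomorphism of $X$ that commutes with $T$ and pushes $\mu$ to $\mu$. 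It is therefore an automorphism, and $\lambda$ is its graph joining.
\end{enumerate}
It thus suffices to show that for $\lambda$ close to $\mu_\Delta$, the fibers of both projections are singletons. The argument is symmetric, so I treat $r_1$ only.

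\textbf{Quantitative input.} Closeness in the weak-* topology gives
\[
\int d(x,y)\, d\lambda(x,y) < \epsilon ,
\]
since $d$ is continuous on $X \times X$ and vanishes on the diagonal. Disintegrating $\lambda$ over $r_1$, the fiber measures $\lambda_x$ are the Haar measures on the fibers $\{x\} \times F_x$, with $F_x \subseteq X$ a subnilmanifold. We then get
\[
\int_X \int_{F_x} d(x,y)\, d\lambda_x(y)\, d\mu(x) < \epsilon ,
\]
so on average $F_x$ lies, in Haar-measure sense, within distance about $\epsilon$ of the point $x$.

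\textbf{Key lemma (no small subnilmanifolds).} There is $\delta>0$, depending only on $X$, such that every subnilmanifold $F = L\cdot y$ of $X$ with $\int_F d(y',y)\, d m_F(y') < \delta$ is the single point $\{y\}$.

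I would prove this by working locally in $G$. If $F$ is nontrivial, then either $L$ has a nontrivial identity component or $L$ is discrete. In the discrete case $F$ is finite, and being an orbit of a group it is a coset-type set. Translating back via $L \cdot y$ with $y = g\Gamma$, the group $g^{-1}Lg$ meets $\Gamma$ cocompactly.
\begin{enumerate}[(a)]
\item If $g^{-1}Lg$ has a nontrivial one-parameter subgroup $\exp(tv)$, then $F$ contains the closed curve $\exp(tv)\cdot y$. Near $y$ the map $G \to X$ is injective on a fixed neighborhood $B$ of $e_G$ (there is a uniform injectivity radius, since $X$ is compact). So a nontrivial connected orbit must leave the $\eta$-ball around $y$ for a fixed $\eta$, and by invariance of $m_F$ under $L$ it does so on a set of definite proportion.
\item If $L$ is discrete and $F$ is finite, with its points equally weighted, then $F$ has at least two points. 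A nontrivial element $\ell$ with $\ell y \ne y$ generates a finite cyclic orbit. This orbit cannot stay inside the injectivity ball $B\cdot y$: there it would lift to a finite-order element of $G$ near the identity, and nilpotent Lie groups near the identity have no small finite subgroups.
\end{enumerate}
In both cases a definite fraction of the mass of $m_F$ lies at distance at least $\eta$ from $y$, so the average distance is at least some $\delta>0$.

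\textbf{Averaged version.} The average over $x$ is less than $\epsilon$, so the set of $x$ with small fiber-average has positive measure. For such $x$ the lemma gives $F_x=\{x\}$, a singleton. By the homogeneity in the second step of the first paragraph, all fibers are then singletons. Choosing $\epsilon < \delta$ in the definition of the neighborhood $U$ finishes the proof.

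\textbf{Main obstacle.} The hardest step is the no-small-subnilmanifolds lemma, specifically making $\delta$ uniform over all closed subgroups $L$ and all base points $y$. Here I would use compactness of $X$ and a fixed injective exponential neighborhood in $G$. The lemma also has to handle disconnected $L$ (finite orbits) as well as connected ones.
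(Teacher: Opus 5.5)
Your overall architecture matches the paper's. You realize $\lambda$ as Haar measure on a subnilmanifold and observe that the fibers of the first projection are mutually translated subnilmanifolds carrying the disintegration. You then use smallness of $\int d(x,y)\,\dd\lambda$ plus a no-small-subnilmanifolds lemma to force one, hence every, fiber to be a point, and symmetry gives the automorphism.

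The gap is in your proof of the key lemma, exactly where you located the main obstacle. Cases (a) and (b) at best show that a nontrivial orbit $L\cdot y$ is not \emph{contained} in a fixed small ball around $y$. The lemma needs a \emph{definite proportion} of $m_F$ outside such a ball, uniformly in $L$ and $y$. Invariance of $m_F$ under $L$ does not give this by itself, because translations are not isometries and elements of $L$ can be arbitrarily large.

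When $L^o$ is nontrivial, this can be repaired. Fix a neighborhood $B_1$ of $e_G$ whose compact closure contains no nontrivial subgroup, and small enough that no element of $\overline{B_1}\setminus\{e_G\}$ fixes a point of $X$ (possible by compactness of $X$). By connectedness, $L^o$ contains some $\ell\in\partial B_1$. These $\ell$ range over a compact set, so they displace every point by at least some $c_0>0$ and act equicontinuously. Hence $\ell$ maps $B(y,r)$ off itself for a fixed $r$, giving $m_F(B(y,r))\le 1/2$.

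When $L$ is discrete and $F$ is finite, $L$ need not contain any element of controlled size, and your argument breaks down. Write $\ell=b_1\gamma$ with $b_1$ small and $\gamma$ in the stabilizer $g\Gamma g^{-1}$ of $y=g\Gamma$. Then $\ell\cdot(b\cdot y)=(b_1\gamma b\gamma^{-1})\cdot y$. Conjugation by the possibly huge $\gamma$ is far from the identity, so the orbit need not be traced out by powers of a small element of $G$, and the no-small-finite-subgroups property does not apply. Moreover, even excluding the whole orbit from the ball would not stop most of its points from clustering near $y$.

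The paper never works locally in $G$. It proves the lemma in the averaged form $\int_{Y\times Y} d_X\,\dd\nu\,\dd\nu\ge\eta$ by induction on the step, along the last map $X\to Z_{s-1}$ of the tower. If the image of $Y$ is nontrivial, the induction hypothesis on $Z_{s-1}$ plus uniform continuity of the projection gives the bound. If the image is a point, then $Y=L\cdot a$ for a closed subgroup $L$ of the freely acting compact abelian Lie group $K_s$. There the group structure makes the measure argument work: $m_L(L\cap V)>1/2$ forces $L\subseteq VV^{-1}$, contradicting no small subgroups. This treats finite and connected orbits alike.

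Three smaller points.
\begin{itemize}
\item The reference point $x$ need not lie in $F_x$; it does not for the graph joining of a small rotation. So the conclusion is that $F_x$ is a singleton, not $\{x\}$. You also need the lemma for an arbitrary reference point, which the double-integral form gives via $\int\int d(y',y'')\le 2\int d(y',x)$.
\item The fiber over $x_0=g_0\Gamma$ is exactly the orbit of $H_\lambda\cap(g_0\Gamma g_0^{-1}\times G)$, hence a genuine subnilmanifold. No ``finite union of translates'' hedge is needed.
\item Identifying the disintegration with Haar measures on the fibers needs an argument. The paper uses a measurable section and uniqueness of Haar measure.
\end{itemize}
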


Equivalently, if $d$ is a compatible metric on $\pr(X \times X)$, then there exists $\delta > 0$ such that every $\lambda \in J_e(X, X)$ with $d(\lambda, \mu_\Delta) < \delta$ is the graph joining of an automorphism. 

The proof goes as follows. 
Suppose that $\lambda \in J_e(X, X)$ is an ergodic self-joining close to the diagonal joining.  
By \cref{prop:ergodic_joinings_nilsystems_are_nilsystems}, $\lambda$ is the Haar measure of an invariant subnilmanifold $Y$ of the product nilmanifold $X \times X$. 
This turns the problem into a geometric one about the structure of subnilmanifolds. 
The main relevant result here will be that nilmanifolds cannot contain arbitrarily small non-trivial subnilmanifolds, together with an averaged form of this statement. 
Since $\lambda$ is close to the diagonal joining, its support $Y$ must be close to the diagonal in an averaged sense. 
Consequently, the coordinate fibers of $Y$ are small subnilmanifolds of $X \times X$, forcing them to be trivial and showing that $Y$ is a graph joining. 

\cref{sec:no_small_subnilmanifolds} is devoted to the results about small subnilmanifolds. 
In \cref{sec:proof_local_rigidity}, we use these results to prove \cref{prop:rigidity_self_joinings}. 

\section{No small subnilmanifolds}
\label{sec:no_small_subnilmanifolds}

A well-known fact about Lie groups is that they satisfy the no small subgroups property. 
This means that every Lie group admits an identity neighborhood containing no non-trivial subgroup. 
In fact, this property precisely characterizes Lie groups among locally compact groups, which is a result that played a crucial role in the resolution of Hilbert's fifth problem; see \citep{tao2014hilbert}.

In the setting of nilmanifolds, subnilmanifolds are the objects parallel to subgroups.
The next lemma can therefore be viewed as an analogue of the no small subgroups property for nilmanifolds. 
Given a nilmanifold and a compatible metric, it produces a positive lower bound on the diameter of all non-trivial subnilmanifolds.
Throughout this chapter, a subnilmanifold is called trivial if it consists of a single point, and non-trivial otherwise.
Recall our standing assumption from \cref{sec:nilmanifolds_standing_assumption} that $G/G^o$ is finitely generated whenever $X = G/\Gamma$ is a nilmanifold.

\begin{lemma}[No small subnilmanifolds] 
    \label[lemma]{lem:no_small_subnilmanifolds}
    Let $X = G/\Gamma$ be a nilmanifold, and let $d_X$ be a compatible metric on $X$. 
    Then there exists $\epsilon > 0$ such that any subnilmanifold $Y \subseteq X$ with 
    \[
    \diam_X(Y) < \epsilon
    \] 
    is trivial.
\end{lemma}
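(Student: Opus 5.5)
The plan is to argue by induction along the tower
\[
X = Z_s \xrightarrow{\pi_{s-1}} Z_{s-1} \to \cdots \to Z_1 \to Z_0 = \{*\}
\]
from \cref{sec:nilmanifold_tower}, using only two ingredients: the no small subgroups property of the compact abelian Lie groups $K_i$, and the fact that images of subnilmanifolds under the maps $\pi_{i-1}$ are subnilmanifolds. We fix compatible metrics $d_i$ on each $Z_i$, with $d_s = d_X$. We then show by induction on $i$ that there is $\epsilon_i > 0$ such that every subnilmanifold of $Z_i$ of diameter $< \epsilon_i$ is a point. The case $i = 0$ is trivial.

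\textbf{Descending the tower.} Let $Y = H \cdot x \subseteq Z_i$ be a subnilmanifold of small diameter. The map $\pi_{i-1}$ is induced by the quotient homomorphism $G/G_{i+1} \to G/G_i$, which maps $q_i(\Gamma)$ into $q_{i-1}(\Gamma)$. Hence \cref{lem:algebraic_image_subnilmanifold} shows that $\pi_{i-1}(Y)$ is a subnilmanifold of $Z_{i-1}$. Since $\pi_{i-1}$ is uniformly continuous on the compact space $Z_i$, we can choose $\epsilon_i$ small enough that $\diam(Y) < \epsilon_i$ forces $\diam(\pi_{i-1}(Y)) < \epsilon_{i-1}$. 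By the induction hypothesis $\pi_{i-1}(Y)$ is then a single point, so $Y$ lies in one fiber of $\pi_{i-1}$. By \cref{lem:structure_group_action}, this fiber is the orbit $K_i \cdot x$.

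\textbf{Reduction to a subgroup of $K_i$.} For each $h \in H$ we have $h \cdot x \in K_i \cdot x$. By freeness of the action, $h \cdot x = u_h \cdot x$ for a unique $u_h \in K_i$, represented by some $a_h \in q_i(G_i)$. Since $q_i(G_i)$ is central in $G/G_{i+1}$, we get
\[
h_1 h_2 \cdot x = h_1 a_{h_2} \cdot x = a_{h_2} h_1 \cdot x = a_{h_2} a_{h_1} \cdot x .
\]
Therefore $L := \{u_h : h \in H\}$ is a subgroup of $K_i$, and $Y = L \cdot x$. The orbit map $u \mapsto u \cdot x$ is a continuous bijection from the compact group $K_i$ onto the fiber, hence a homeomorphism, and $Y$ is closed. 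It follows that $L$ is closed.

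\textbf{Uniform smallness (main obstacle).} The remaining task, which I expect to be the main point needing care, is to convert the smallness of $Y$ in the metric $d_i$ into smallness of $L$ in $K_i$, uniformly in the base point $x$. Fix an identity neighborhood $U \subseteq K_i$ containing no non-trivial subgroup; it exists because $K_i$ is a Lie group. I claim there is $\eta > 0$ such that $d_i(u \cdot z, z) < \eta$ implies $u \in U$, for all $z \in Z_i$. Otherwise there would be $u_n \notin U$ and $z_n$ with $d_i(u_n z_n, z_n) \to 0$. Passing to a subsequence, $u_n \to u \notin U$ and $z_n \to z$ with $u \cdot z = z$, using continuity of the action $K_i \times Z_i \to Z_i$. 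Freeness then gives $u = 0 \in U$, a contradiction.

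Now shrink $\epsilon_i$ so that $\epsilon_i \le \eta$. Then every $u \in L$ satisfies $d_i(u \cdot x, x) \le \diam(Y) < \eta$, so $L \subseteq U$. Hence $L = \{0\}$ and $Y = \{x\}$. This completes the induction, and taking $i = s$ with $\epsilon := \epsilon_s$ gives the lemma.
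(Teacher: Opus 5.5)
Your argument is correct and is essentially the paper's proof. The paper runs the same induction along the tower: it projects via \cref{lem:algebraic_image_subnilmanifold}, reduces to $Y = L \cdot x$ for a subgroup $L$ of the last structure group using centrality and freeness, and applies no small subgroups uniformly in the base point via compactness and freeness. The difference is that it does this for the averaged quantity $\iint_{Y \times Y} d_X \dd\nu \dd\nu$ in \cref{lem:no_small_subnilmanifolds_averaged}, and then deduces the diameter statement from the bound $\iint_{Y \times Y} d_X \dd\nu \dd\nu \leq \diam_X(Y)$. Working with diameters directly lets you skip the case split on whether $\pi(Y)$ is a point and the Haar-measure estimate $m_L(L \cap V) \leq 1/2$. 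The paper nevertheless needs the averaged form for local rigidity, where only $\int d_X(x,y) \dd\lambda(x,y)$ is known to be small.
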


For the proof of \cref{prop:rigidity_self_joinings}, we need the following stronger averaged form of the no small subnilmanifolds lemma. 
We prove this averaged form directly, and deduce \cref{lem:no_small_subnilmanifolds} as a consequence. 

\begin{lemma}[No small subnilmanifolds, averaged form] 
    \label[lemma]{lem:no_small_subnilmanifolds_averaged}
    Let $X = G/\Gamma$ be an $s$-step nilmanifold, and let $d_X$ be a compatible metric on $X$.
    Then there exists $\eta > 0$ such that for every non-trivial subnilmanifold $Y \subseteq X$, 
    \[
    \int_{Y \times Y} d_X(x,y) \dd\nu(x) \dd\nu(y) \geq \eta,  
    \]
    where $\nu$ denotes the Haar measure on $Y$.
\end{lemma}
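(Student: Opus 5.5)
The plan is to argue by induction on the step $s$, along the tower $X = Z_s \xrightarrow{\pi_{s-1}} Z_{s-1} \to \cdots \to Z_0 = \{*\}$ from \cref{sec:nilmanifold_tower}. A preliminary reduction is that the conclusion does not depend on the choice of compatible metric. Suppose $d, d'$ are compatible metrics on the compact space $X$ and $\int d' \dd\nu\dd\nu < \eta'$. Then by Markov, $\nu\times\nu(\{d' > \delta'\}) < \eta'/\delta'$. By uniform continuity of the identity $(X,d')\to(X,d)$, the bound $d' \le \delta'$ forces $d \le \delta$. Hence $\int d \dd\nu\dd\nu \le \delta + \diam_d(X)\,\eta'/\delta'$. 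So a uniform lower bound for $d$ yields one for $d'$. The same argument, applied to a continuous map $\pi \colon X \to Z$ in place of the identity, shows: if $\int d_Z(\pi x,\pi y) \dd\nu\dd\nu \ge \eta_Z$ for all relevant $\nu$, then $\int d_X \dd\nu\dd\nu$ is bounded below by a constant depending only on $\eta_Z$, $\pi$, $d_X$, $d_Z$.

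The key step is a uniform bound for subnilmanifolds contained in a single fiber of $\pi_{s-1}\colon X \to Z_{s-1}$, where $K := K_s$ acts continuously and freely with the fibers as orbits (\cref{lem:structure_group_action}).

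\emph{Step 1: such a $Y$ is a coset.} Let $Y = H\cdot x$ lie in the fiber $K\cdot x$. For each $h\in H$ write $h x = u_h\cdot x$. Since $K$ acts by translations by central elements of $G/G_{s+1}$, the action commutes with all $G$-translations, so $h(u\cdot x) = u\cdot(hx)$. Hence $h\mapsto u_h$ is a homomorphism, and $Y = L\cdot x$ for the subgroup $L = \{u_h\}$. By freeness, $u\mapsto u\cdot x$ is a homeomorphism $K\to K\cdot x$, so $L$ is closed because $Y$ is. The Haar measure $\nu$ of $Y$ is then the image of $m_L$, since it is the unique $H$-invariant probability measure.

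\emph{Step 2: the bound on the fiber.} Since $K$ is a Lie group, choose a neighborhood $V$ of $0$ such that $V - V$ contains no nontrivial subgroup. By compactness and freeness, $c := \min\{d_X(w\cdot z, z) : w \in K\setminus V,\ z\in X\} > 0$. If $L$ is nontrivial, then $m_L(L\cap(v+V)) \le 1/2$ for every $v\in L$. Otherwise two translates of $L\cap (v+V)$ of measure $>1/2$ would intersect, giving $L \subseteq V - V$, a contradiction. Therefore, for each fixed $v \in L$, at least half of the $u \in L$ satisfy $d_X(u\cdot x, v\cdot x) \ge c$. This gives $\int_{Y\times Y} d_X \dd\nu\dd\nu \ge c/2$, uniformly in $Y$ and $x$. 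This step also covers the base case: for $s = 1$, every subnilmanifold lies in the single fiber over $Z_0$.

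\emph{Step 3: the inductive step.} Let $Y \subseteq X$ be a nontrivial subnilmanifold. By \cref{lem:algebraic_image_subnilmanifold}, $\pi_{s-1}(Y)$ is a subnilmanifold of the $(s-1)$-step nilmanifold $Z_{s-1}$, with Haar measure $(\pi_{s-1})_*\nu$. Note that $Z_{s-1}$ inherits the standing assumption that $(G/G_s)/(G/G_s)^o$ is finitely generated. If $\pi_{s-1}(Y)$ is nontrivial, the induction hypothesis together with the continuous-map version of the metric comparison gives a lower bound $\eta_1 > 0$. If it is trivial, $Y$ lies in one fiber and Step 2 gives $c/2$. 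Take $\eta := \min(\eta_1, c/2)$.

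Finally, \cref{lem:no_small_subnilmanifolds} follows with $\epsilon = \eta$, since the average distance is at most $\diam_X(Y)$. The main obstacle I expect is Step 1: making sure that $Y$ inside a fiber really is an orbit of a \emph{closed subgroup} of $K$ with matching Haar measure. This rests on the commutation of the central $K_s$-action with $G$, which should work directly from the explicit description $K_s \cong G_s/(G_s\cap\Gamma)$ when $G_{s+1}$ is trivial.
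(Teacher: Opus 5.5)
Your proposal is correct and follows essentially the same route as the paper. Both arguments induct along the tower via $\pi_{s-1}$ and $K_s$. In the non-trivial-image case, your Markov-type comparison is the paper's pointwise bound $d_Z(\pi x,\pi y)\le \eta_1/2 + d_X(x,y)/\delta$. In the fiber case, both reduce to a closed subgroup $L\le K_s$ with $\nu=\phi_*m_L$ and conclude with a no-small-subgroups estimate, which is the paper's lemma on free actions of compact abelian Lie groups.

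The only differences are cosmetic. You obtain $L$ as the image of the homomorphism $h\mapsto u_h$, whereas the paper checks the subgroup property directly. You absorb the case $s=1$ into the fiber case over $Z_0$, whereas the paper treats it separately. Your metric-independence preliminary is not needed, since only the continuous-map version is used.
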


Let us first see that \cref{lem:no_small_subnilmanifolds_averaged} indeed implies \cref{lem:no_small_subnilmanifolds}.
For any subnilmanifold $Y \subseteq X$ with Haar measure $\nu$, we can bound    
\[
\int_{Y \times Y} d_X(x,y) \dd\nu(x) \dd\nu(y) \leq \diam_X(Y).
\]
Hence, if $Y$ is non-trivial, then $\diam_X(Y) \geq \eta$ by \cref{lem:no_small_subnilmanifolds_averaged}. 
Equivalently, every subnilmanifold of diameter strictly less than $\eta$ must be trivial. 

The proof of \cref{lem:no_small_subnilmanifolds_averaged} relies on the following consequence of the no small subgroups property of Lie groups, applied to a compact abelian Lie group acting freely on a compact metric space. 

\begin{lemma} 
    \label[lemma]{lem:technical_lemma_cpt_abelian_Lie_action}
    Let $K$ be a compact abelian Lie group acting freely and continuously on a compact metric space $X$, with the action denoted by $K \times X \to X, (u, x) \mapsto u \cdot x$. 
    Then there exists $\eta > 0$ such that for every $x \in X$, and every non-trivial closed subgroup $L \leq K$, 
    \[
    \int_{L \times L} d_X(u \cdot x, v \cdot x) \dd m_L(u) \dd m_L(v) \geq \eta,  
    \]
    where $m_L$ denotes the Haar probability measure of $L$.
\end{lemma}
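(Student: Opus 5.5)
I will argue by contradiction, combining the no small subgroups property of $K$ with a compactness argument. Define
\[
F(x, L) := \int_{L \times L} d_X(u \cdot x, v \cdot x) \dd m_L(u) \dd m_L(v).
\]
Suppose the claim fails. Then there are points $x_n \in X$ and non-trivial closed subgroups $L_n \leq K$ with $F(x_n, L_n) \to 0$. By compactness of $X$, pass to a subsequence with $x_n \to x$.

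The first reduction is to replace each $L_n$ by a smaller, better-behaved subgroup. If $L_n$ has positive dimension, it contains a circle subgroup. Otherwise it is finite and non-trivial, so it contains a cyclic subgroup of prime order. However, passing to a subgroup need not decrease $F$, so I do not want to pass to subgroups naively. Instead I use the structure of $L_n$ directly.

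Since $K$ is a compact Lie group, the space of closed subgroups of $K$ is compact in the Chabauty (Hausdorff) topology. The Haar measures then converge along subsequences: after passing to a further subsequence, $L_n \to L$ in the Hausdorff sense and $m_{L_n} \to m$ weak-* for some probability measure $m$ supported on $L$. The limit measure $m$ is invariant under translation by $L$, since $m_{L_n}$ is invariant under $L_n$ and $L_n \to L$. Hence $m = m_L$.

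By joint continuity of $(u, v, y) \mapsto d_X(u \cdot y, v \cdot y)$ on the compact space $K \times K \times X$, the functions $(u, v) \mapsto d_X(u \cdot x_n, v \cdot x_n)$ converge uniformly to $(u, v) \mapsto d_X(u \cdot x, v \cdot x)$. Therefore $F(x_n, L_n) \to F(x, L)$, and so $F(x, L) = 0$. This forces $d_X(u \cdot x, v \cdot x) = 0$ for $m_L \otimes m_L$-almost every pair, and by continuity and full support of Haar measure on $L$, for all $u, v \in L$. Freeness of the action then gives $L = \{0\}$.

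The main obstacle is ruling out this degenerate limit $L_n \to \{0\}$, since non-trivial subgroups can a priori shrink. This is exactly where the no small subgroups property enters. Fix an identity neighborhood $V$ of $K$ containing no non-trivial subgroup. For large $n$ the sets $L_n$ lie in $V$ if $L_n \to \{0\}$ in the Hausdorff sense, which contradicts non-triviality of $L_n$.

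So I will make sure the Hausdorff limit statement is correct: the subgroups $L_n$ converge to $L$ in the sense that every limit of points $u_n \in L_n$ lies in $L$, and every point of $L$ is such a limit. Given $L = \{0\}$, the first part yields $L_n \subseteq V$ eventually, by compactness of $K$. Alternatively, I could avoid Chabauty entirely by working only with limit points: choose $u_n \in L_n \setminus V$, extract $u_n \to u \neq 0$, and check that the weak-* limit $m$ of $m_{L_n}$ is invariant under the closed group generated by the limit points. Either way one obtains a non-trivial $L$ with $F(x, L) = 0$, which contradicts freeness.
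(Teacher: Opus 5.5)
Your argument is correct, but it takes a different route from the paper.

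\textbf{Your route.} You argue by contradiction and compactness. You extract $x_n \to x$, a Hausdorff (Chabauty) limit $L_n \to L$ of the subgroups, and a weak-* limit $m$ of the Haar measures $m_{L_n}$. You correctly identify $m$ as $m_L$ by approximating each $u \in L$ with points $u_n \in L_n$ and using uniform continuity of test functions. Joint uniform continuity of $(u,v,y) \mapsto d_X(u\cdot y, v\cdot y)$ together with weak-* convergence of $m_{L_n} \otimes m_{L_n}$ then gives $F(x,L)=0$. Full support and freeness force $L = \{0\}$, and no small subgroups excludes this, since $L_n \to \{0\}$ would eventually put the non-trivial subgroups $L_n$ inside $V$.

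Your alternative also works and avoids the space of subgroups entirely. There you take $u_n \in L_n \setminus V$ with limit $u^* \neq 0$, note that the limit measure is $u^*$-invariant, so its support is $u^*$-invariant, and contradict freeness.

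\textbf{The paper's route.} The paper's proof is direct and quantitative. It chooses $V$ with $VV^{-1}$ contained in a neighborhood $U$ with no non-trivial subgroups. Freeness and compactness of $(K \setminus V) \times X$ then give $d_X(u\cdot x, x) \geq 2\eta$ for all $u \notin V$ and all $x$. The key step is the estimate $m_L(L \cap V) \leq 1/2$ for every non-trivial closed $L$. This holds because a subset $A \subseteq L$ of Haar measure greater than $1/2$ satisfies $AA^{-1} = L$, which would put $L$ inside $U$. Integrating, after translating by $v$, yields the bound $\eta$.

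\textbf{Comparison.} The paper's argument is more elementary. It needs no Chabauty topology, no continuity of Haar measure in the subgroup, and no convergence of product measures, and it produces an explicit $\eta$ in terms of $V$ and the action. Your argument is softer and non-constructive, but conceptually transparent. Your opening digression about passing to circle or prime-order subgroups plays no role and can be dropped.
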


\begin{proof}
    As a Lie group, $K$ has the no small subgroups property, so there exists an identity neighborhood $U \subseteq K$ containing no non-trivial subgroup. 
    Fix an identity neighborhood $V \subseteq K$ such that $V V^{-1} \subseteq U$. 
    Since the action is free, the map 
    \[
    K \times X \to \R, \quad (u, x) \mapsto d_X(u \cdot x, x) 
    \]
    is strictly positive away from $u = e_K$.
    Because this map is continuous and the set $(K \setminus V) \times X$ is compact, there thus exists $\eta > 0$ such that 
    \[
    d_X (u \cdot x, x) \geq 2 \eta 
    \]
    for all $u \in K \setminus V$ and $x \in X$. 

    Let $L \leq K$ be a non-trivial closed subgroup. 
    We claim that 
    \[
    m_L(L \cap V) \leq 1/2. 
    \]
    Indeed, suppose that $m_L(L \cap V) > 1/2$. 
    Then for every $u \in L$, the sets $L \cap V$ and $u (L \cap V)$ both have measure greater than $1/2$, so they must intersect. 
    Therefore $u \in (L \cap V)(L \cap V)^{-1}$, and so   
    \[
    (L \cap V) (L \cap V)^{-1} = L. 
    \]
    It follows that
    \[
    L \subseteq V V^{-1} \subseteq U, 
    \]
    contradicting the choice of $U$.
    This proves the claim, so $m_L(L \setminus V) \geq 1/2$. 

    Now fix $x \in X$ and $v \in L$. 
    By making the substitution $u \mapsto uv$ using the invariance of $m_L$, we obtain 
    \[
    \int_L d_X(u \cdot x, v \cdot x) \dd m_L(u) 
    = \int_L d_X(u \cdot (v \cdot x), v \cdot x) \dd m_L(u)
    \geq 2 \eta \ m_L(L \setminus V) \geq \eta. 
    \]
    Integrating over $v \in L$, we get the claimed inequality. 
\end{proof}

We are now ready to prove the averaged no small subnilmanifolds lemma. 

\begin{proof}[Proof of \cref{lem:no_small_subnilmanifolds_averaged}]
    Let $p \colon G \to X = G/\Gamma$ be the quotient map.
    We argue by induction on the step $s$ of the nilmanifold $X$. 
    
    First suppose that $s = 1$. 
    Then $X = G/\Gamma$ is a compact abelian Lie group. 
    Consider the action of $X$ on itself by translations, $X \times X \to X, (u, x) \mapsto u x$. 
    Applying \cref{lem:technical_lemma_cpt_abelian_Lie_action} with this action, we obtain $\eta > 0$ such that for every $a \in X$, and every non-trivial closed subgroup $L \leq X$, 
    \[
    \int_{L \times L} d_X(u a, v a) \dd m_L(u) \dd m_L(v) \geq \eta. 
    \]
    Let $Y \subseteq X$ be a non-trivial subnilmanifold, and write $Y = H \cdot a = p(H) a$ for some closed subgroup $H \leq G$, and a point $a \in X$. 
    Since $Y$ is closed,  
    \[
    L := p(H) = Y a^{-1}  
    \]
    is a closed subgroup of $X$, and $L$ is non-trivial because $Y$ is non-trivial. 
    Moreover, the pushforward of $m_L$ under the map $u \mapsto ua$ is invariant under all translations by elements of $L$, and hence equals the Haar measure $\nu$ of $Y$. 
    Therefore, 
    \[
    \int_{Y \times Y} d_X (x,y) \dd \nu(x) \dd \nu(y) 
    = \int_{L \times L} d_X(u a, v a) \dd m_L(u) \dd m_L(v) \geq \eta. 
    \]
    This proves the $1$-step case.

    Now suppose that $s \geq 2$ and that the statement has been proved for $(s-1)$-step nilmanifolds. 
    Let 
    \[
    \pi := \pi_{s-1} \colon X \to Z_{s-1} =: Z
    \]
    be the last projection in the tower of $X$, and let $K := K_s$ be the last structure group; see \cref{sec:nilmanifold_tower}.
    Then $Z$ is an $(s-1)$-step nilmanifold, $K$ is a compact abelian Lie group acting freely and continuously on $X$, the fibers of $\pi$ are exactly the $K$-orbits, and the $K$-action commutes with all translations by elements of $G$ on $X$. 

    Fix a compatible metric $d_Z$ on $Z$.
    Without loss of generality, assume that $d_Z \leq 1$. 
    By the induction hypothesis, there exists $\eta_1 > 0$ such that for every non-trivial subnilmanifold $W \subseteq Z$, 
    \begin{equation}
        \label{eq:def_eta_1} 
        \int_{W \times W} d_Z(z, w) \dd\omega(z) \dd\omega(w) \geq \eta_1
    \end{equation}
    where $\omega$ is the Haar measure on $W$. 
    By compactness, $\pi$ is uniformly continuous, so there exists $\delta > 0$ such that 
    \begin{equation}
        \label{eq:def_delta} 
        d_X(x,y) < \delta \quad \implies \quad d_Z(\pi(x), \pi(y)) < \eta_1/2 \qquad \text{for all } x,y \in X. 
    \end{equation}

    Applying \cref{lem:technical_lemma_cpt_abelian_Lie_action} to the action of $K$ on $X$, choose $\eta_2 > 0$ such that for every non-trivial closed subgroup $L \leq K$, and every $a \in X$,  
    \begin{equation}
        \label{eq:def_eta_2} 
        \int_{L \times L} d_X(u \cdot a, v \cdot a) \dd m_L(u) \dd m_L(v) \geq \eta_2,
    \end{equation}
    where $m_L$ is the Haar measure of $L$. 
    Let 
    \[
    \eta := \min\{\delta \eta_1 / 2, \eta_2\}. 
    \]

    Let $Y \subseteq X$ be a non-trivial subnilmanifold. 
    Then  
    \[
    Y = H \cdot a  
    \]
    for some closed subgroup $H \leq G$ and a point $a \in X$.
    By \cref{lem:algebraic_image_subnilmanifold}, $\pi(Y)$ is a subnilmanifold of $Z$, and $\pi_* \nu$ is its Haar measure.  
    We distinguish two cases: either $\pi(Y)$ is non-trivial, or $\pi(Y)$ is a singleton. 

    Suppose that $\pi(Y)$ is non-trivial. 
    Since $\pi_* \nu$ is the Haar measure on $\pi(Y)$, our choice of $\eta_1$ in \eqref{eq:def_eta_1} implies that 
    \begin{equation}
        \label{eq:eta_1_bound} 
        \begin{aligned}
        \eta_1 &\leq \int_{\pi(Y) \times \pi(Y)} d_Z(z, w) \dd (\pi_*\nu)(z) \dd (\pi_*\nu)(w) \\
        &= \int_{Y \times Y} d_Z(\pi(x), \pi(y)) \dd\nu(x)\dd\nu(y). 
        \end{aligned}
    \end{equation}
    We claim that for every $x, y \in Y$, 
    \[
    d_Z(\pi(x), \pi(y)) \leq \eta_1/2 + \frac{1}{\delta} d_X(x,y). 
    \]
    Indeed, if $d_X(x,y) < \delta$, this follows from \eqref{eq:def_delta}, and if $d_X(x,y) \geq \delta$, then 
    \[
    d_Z(\pi(x), \pi(y)) \leq 1 \leq \frac{1}{\delta} d_X(x,y).
    \]
    Plugging this inequality into \eqref{eq:eta_1_bound} gives
    \[
    \eta_1 \leq \eta_1/2 + \frac{1}{\delta} \int_{Y \times Y} d_X(x,y) \dd\nu(x) \dd\nu(y). 
    \]
    After rearranging this inequality, we see that  
    \[
    \int_{Y \times Y} d_X(x,y) \dd\nu(x) \dd\nu(y) \geq \delta \eta_1 / 2 \geq \eta. 
    \]

    It remains to consider the case where $\pi(Y)$ is a singleton. 
    Then $Y$ is contained in a single fiber of $\pi$, equivalently in a single $K$-orbit.  
    Since $a \in Y$, we get that $Y \subseteq K \cdot a$. 
    Define 
    \[
    L := \{ u \in K : u \cdot a \in Y \},  
    \]
    so that $Y = L \cdot a$. 
    The set $L$ is closed in $K$, because $Y$ is closed and the $K$-action is continuous. 
    We claim that $L$ is a subgroup of $K$. 
    Let $u, v \in L$. 
    Then there exist $h_1, h_2 \in H$ such that 
    \[
    h_1 \cdot a = u \cdot a, \quad h_2 \cdot a = v \cdot a.  
    \]
    Since the $K$-action commutes with all translations by elements of $G$, we can rewrite the second identity to get $v^{-1} \cdot a = h_2^{-1} \cdot a$, and hence 
    \[
    u v^{-1} \cdot a 
    = u \cdot (h_2^{-1} \cdot a)  
    = h_2^{-1} \cdot (u \cdot a) 
    = h_2^{-1} h_1 \cdot a \in Y.
    \]
    Therefore, $u v^{-1} \in L$ so that $L$ is a subgroup of $K$. 
    It is non-trivial, because $Y$ is non-trivial. 

    Next, we claim that $\nu$ is the image of the Haar probability measure $m_L$ on $L$ under the map 
    \[
    \phi \colon L \to Y, \quad u \mapsto u \cdot a.  
    \]
    Let $h \in H$. 
    Since $h \cdot a \in Y = L \cdot a$, there exists $v \in L$ such that $h \cdot a = v \cdot a$. 
    Then for each $u \in L$, 
    \[
    h \cdot \phi(u) = h \cdot (u \cdot a) = u \cdot (h \cdot a) = uv \cdot a = \phi(v u).
    \] 
    By the invariance of $m_L$, it follows that $\phi_* m_L$ is invariant under all translations by elements of $H$. 
    Hence $\phi_* m_L = \nu$ by uniqueness of the Haar measure on $Y$. 
    Therefore, the choice of $\eta_2$ in \eqref{eq:def_eta_2} implies that
    \[
    \int_{Y \times Y} d_X(x,y) \dd\nu(x) \dd\nu(y)
    = \int_{L \times L} d_X(u \cdot a, v \cdot a) \dd m_L(u) \dd m_L(v) 
    \geq \eta_2 \geq \eta.
    \]
    This completes the induction step. 
\end{proof}

\section{Local rigidity of self-joinings}
\label{sec:proof_local_rigidity}

In this section, we further study \cref{prop:ergodic_joinings_nilsystems_are_nilsystems}.  
We first show that for rotations on compact abelian groups, a stronger global rigidity statement holds, and we consider an example showing that this global rigidity fails for nilsystems of higher step.  
Finally, we prove \cref{prop:rigidity_self_joinings} and deduce a first consequence.  

\subsection{The case of group rotations}

Recall that \cref{prop:rigidity_self_joinings} says that any ergodic self-joining of a nilsystem sufficiently close to the diagonal joining is the graph joining of an automorphism. 
For rotations on compact abelian groups, a stronger global version of this statement is true: 
every ergodic self-joining is the graph joining of an automorphism. 
Throughout this subsection, abelian groups are written additively. 

\begin{proposition}[Global rigidity for group rotations]
    Let $(K, m_K, R)$ be an ergodic rotation on a metrizable compact abelian group, where 
    \[
    R \colon K \to K, \quad u \mapsto a + u
    \]
    for some $a \in K$; see \cref{eg:lie_group_rotations}.  
    Then every ergodic self-joining of $(K, m_K, R)$ is the graph joining of an automorphism.  
\end{proposition}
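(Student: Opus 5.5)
Let $\lambda$ be an ergodic self-joining of $(K, m_K, R)$. The plan is to show that $\lambda$ is the Haar measure of a coset of a closed subgroup of $K \times K$, and then that this coset is the graph of a translation. Since $K$ need not be a Lie group, I will not use \cref{prop:ergodic_joinings_nilsystems_are_nilsystems}. Instead I will work directly on the compact abelian group $K \times K$ with the rotation by $(a,a)$.

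The first step is to identify the invariant ergodic measures of this rotation. Let $H := \overline{\{(na, na) : n \in \Z\}}$, a closed subgroup of $K \times K$. Every orbit closure of $R \times R$ is a coset $H + z$, and the rotation by $(a,a)$ restricted to $H + z$ is uniquely ergodic with the translated Haar measure of $H$ as its invariant measure. This is the standard fact that a rotation of a compact abelian group by an element generating a dense subgroup is uniquely ergodic; it can be checked with characters. Choose $z$ generic for $\lambda$, as in the proof of \cref{cor:ergodic_measure_nilsystem_is_haar_subnilsystem}. Then $\lambda = m_H * \delta_z$ and $\supp \lambda = H + z$.

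The second step is to describe $H$. Since $(K, m_K, R)$ is ergodic, $\{na\}$ is dense in $K$, so $H = \{(u,u) : u \in K\}$, the diagonal subgroup. Indeed $H$ contains all $(na, na)$, and the diagonal is closed. Write $z = (z_1, z_2)$. Then
\[
\supp\lambda = \{(u + z_1, u + z_2) : u \in K\} = \{(x, x + b) : x \in K\}, \qquad b := z_2 - z_1,
\]
and $\lambda$ is the pushforward of $m_K$ under $x \mapsto (x, x+b)$. This is exactly $(\Id_K, \phi)_* m_K$ with $\phi(x) = x + b$. The map $\phi$ commutes with $R$ because $K$ is abelian, and it preserves $m_K$, so it is an automorphism of $(K, m_K, R)$. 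Hence $\lambda$ is the graph joining of $\phi$.

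The only step requiring care is the first: that every $R \times R$-invariant ergodic measure is the Haar measure on a coset of the orbit-closure subgroup. This holds for metrizable compact abelian groups that are not Lie groups, via Fourier coefficients. If $\chi$ is a character of $K \times K$, invariance gives $\hat\lambda(\chi)(1 - \chi(a,a)) = 0$. So $\hat\lambda(\chi) = 0$ unless $\chi$ is trivial on $H$. The remaining characters are those of the form $\chi(x,y) = \psi(y - x)$ with $\psi \in \wh K$. By ergodicity, the function $(x,y) \mapsto \psi(y-x)$, which is invariant, is $\lambda$-a.e. constant. Therefore $y - x$ is $\lambda$-a.e. equal to a constant $b$, since characters separate points. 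Combined with the marginal condition, this gives $\lambda = (\Id, \Id + b)_* m_K$ directly. This route avoids orbit closures altogether, so I would present it as the actual proof.
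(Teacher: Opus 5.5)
Your proposal is correct. Your first sketch is essentially the paper's proof. The paper likewise writes $\lambda$ as a translate of the Haar measure of $H=\overline{\{(na,na):n\in\Z\}}$, quoting the classification of ergodic invariant measures of group rotations from Host--Kra rather than deriving it from generic points and unique ergodicity of minimal rotations. It then uses density of $\{na\}$ to identify $H$ with the diagonal, and reads off the graph of the rotation by $z_2-z_1$.

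The character argument you would actually present is a genuinely different and more elementary route, because it never needs to know what the invariant measures of $R\times R$ look like. It only uses that $(x,y)\mapsto\psi(y-x)$ is $R\times R$-invariant for each $\psi\in\widehat K$, and hence $\lambda$-a.e.\ constant by ergodicity of $\lambda$. It is even shorter than you wrote it. The vanishing of $\widehat\lambda(\chi)$ for $\chi$ nontrivial on $H$ is never used, and neither is the identification of the characters trivial on $H$, which is the only place density of $\{na\}$ enters. Ergodicity of $R$ is automatic anyway, since the marginals of an ergodic joining are ergodic.

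The one step to spell out is the passage from ``$\psi(y-x)$ is a.e.\ constant for each $\psi$'' to ``$y-x$ is a.e.\ constant''. You can use that $\widehat K$ is countable because $K$ is metrizable. Alternatively, observe that each $\psi$ is then constant on the support of the pushforward of $\lambda$ under $(x,y)\mapsto y-x$, so this support is a single point $b$, since characters separate points.

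As for what each route buys: the paper's version mirrors the nilsystem picture used throughout the chapter, where an ergodic joining is the Haar measure of an orbit closure. Yours isolates the actual reason global rigidity holds for rotations, namely the continuous invariant difference map $(x,y)\mapsto y-x$ into $K$. That map has no counterpart for non-abelian nilsystems such as the Heisenberg example.
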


\begin{proof}
    Let $\lambda$ be an ergodic self-joining of $(K, m_K, R)$.  
    Then $\lambda$ is an ergodic invariant measure of the product rotation $R \times R$ on $K \times K$. 
    By standard results, see for example \citep[Chapter 4, Proposition 5]{host2018nilpotent}, $\lambda$ is the image under some rotation of the Haar measure of the closed subgroup 
    \[
    H := \overline{\{(n a, n a) : n \in \Z\}} \leq K \times K.  
    \]
    Since $R$ is ergodic, the set $\{n a: n \in \Z\}$ is dense in $K$; see \citep[Theorem 4.14]{einsiedler2010ergodic}. 
    Hence 
    \[
    H = \{(u, u) : u \in K\}.  
    \]
    The diagonal embedding of $K$ into $K \times K$ is a topological group isomorphism onto $H$. 
    Therefore the Haar measure on $H$ is $m_H = (\Id_K, \Id_K)_* m_K$. 
    Pick $(u,v) \in K \times K$ such that $\lambda$ is the image of $m_H$ under the rotation by $(u,v)$, and let $w := v - u$. 
    Using the invariance of $m_K$, we obtain 
    \[
    \lambda = (R_u, R_v)_* m_K = (\Id_K, R_w)_* m_K,
    \]
    where $R_u$, $R_v$, and $R_w$ denote the rotations by $u$, $v$, and $w$ on $K$ respectively. 
    Hence $\lambda$ is the graph joining of the rotation $R_w$. 
    Since $K$ is abelian, $R_w$ is an automorphism of $(K, m_K, R)$. 
\end{proof}

The global rigidity statement for group rotations does not extend to nilsystems. 
A counterexample already arises among $2$-step nilsystems, as the following example demonstrates. 

\begin{example}
    As in \cref{eg:heisenberg_nilsystem}, let $G = \R^3$ with multiplication law
    \[
    (x,y,z) \cdot (x',y',z') = (x + x', y + y', z + z' + x y'),   
    \] 
    let $\Gamma = \Z^3 \leq G$, and let $X = G/\Gamma$. 
    Choose $\tau = (a, b, 0) \in G$ such that $1$, $a$, and $b$ are linearly independent over $\Q$. 
    If $\mu$ denotes the Haar measure on $X$ and $T$ is the translation by $\tau$, then $(X, \mu, T)$ is the ergodic Heisenberg nilsystem. 
    Let 
    \[
    C := \{(0, 0, t) : t \in \R\} \leq G 
    \]
    be the center of $G$. 
    Choose $r \in \R$ such that $1, a, b, r a$ are linearly independent over $\Q$, and define $h := (0, r, 0) \in G$. 
    Consider the subset 
    \[
    Y := \{(g\Gamma, h g c \Gamma) : g \in G, c \in C\} \subseteq X \times X.  
    \]
    Intuitively, $Y$ is obtained by taking the graph of the translation by $h$ on $X$, and thickening it in the central direction $C$. 
    We claim that $Y$ is a subnilmanifold of $X \times X$. 
    Let 
    \[
    H := \{(g, h g h^{-1} c) : g \in G, c \in C\} \leq G \times G.  
    \]
    Since $C$ is closed and central, $H$ is a closed subgroup of $G \times G$, and $Y = H \cdot (e_X, h\Gamma)$. 
    Moreover, the map  
    \[
    \phi \colon X \times \T \to Y, \quad (g\Gamma, t \bmod 1) \mapsto (g\Gamma, hg (0, 0, t) \Gamma)  
    \]
    is a well-defined homeomorphism, so $Y$ is compact and hence closed. 
    Therefore $Y$ is a subnilmanifold of $X \times X$. 
    Let $\lambda$ denote its Haar measure. 

    Note that 
    \[
    \tau h = (a, b+ r, r a) = h \tau (0, 0, r a)  
    \]
    from which it follows that 
    \[
    (T \times T) \circ \phi = \phi \circ (T \times R_{r a}), 
    \]
    where $R_{r a}$ is the rotation by $r a$ on $\T = \R/\Z$. 
    By the choice of $r$, the translation $T \times R_{r a}$ is ergodic on the nilmanifold $X \times \T$. 
    Hence $(Y, \lambda, T \times T)$ is ergodic. 

    Since $\lambda$ is $T \times T$-invariant, its coordinate projections are $T$-invariant probability measures on $X$, so they agree with $\mu$ by unique ergodicity of ergodic nilsystems. 
    Therefore, $\lambda$ is an ergodic self-joining of $(X, \mu, T)$. 

    It remains to verify that $\lambda$ is not the graph joining of an automorphism. 
    Indeed, every measure-theoretic automorphism of $(X, \mu, T)$ agrees almost everywhere with a topological automorphism; see \cref{cor:factors_between_nilsystem_are_topological}.
    Hence, if $\lambda$ were the graph joining of an automorphism, then $Y = \supp(\lambda)$ would be the graph of a continuous map from $X$ to $X$.
    In particular, $Y$ would be homeomorphic to $X$. 
    Since $Y$ is homeomorphic to $X \times \T$, this is impossible. 
\end{example}

\subsection{Proof of local rigidity of self-joinings}

We now turn to the proof of \cref{prop:rigidity_self_joinings}, for which we need the following lemma. 
Background on the disintegration of measures can be found in \cite[Section 8.6]{eisner2025journey}.

\begin{lemma}
    \label[lemma]{lem:fibers_joining_are_subnilmanifolds}
    Let $(X = G/\Gamma, \mu, T)$ be an ergodic nilsystem, let $\lambda \in J_e(X, X)$ be an ergodic self-joining, and let $Y := \supp(\lambda)$. 
    For each $x \in X$, define the fiber
    \[
    Y_x := Y \cap (\{x\} \times X).  
    \]
    Then the sets $Y_x$ are non-empty, pairwise homeomorphic subnilmanifolds of $X \times X$. 
    Moreover, if $\lambda_x$ denotes the Haar measure on $Y_x$ for each $x$, then  
    \[
    \lambda = \int_X \lambda_x \dd\mu(x) 
    \]
    is the disintegration of $\lambda$ with respect to the first coordinate. 
\end{lemma}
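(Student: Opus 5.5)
The plan is to reduce everything to the algebraic description of $Y$ from \cref{prop:ergodic_joinings_nilsystems_are_nilsystems}, and then exploit the transitivity of the acting group $H$ on $Y$. By that proposition there are a closed subgroup $H \leq G \times G$ with $(\tau,\tau) \in H$ and a point $z \in X \times X$ such that $Y = H \cdot z$ and $\lambda$ is the Haar measure of $Y$. In particular $Y = \supp(\lambda)$, since Haar measure on a subnilmanifold has full support. Let $\mathrm{pr}_1 \colon G \times G \to G$ and $r_1 \colon X \times X \to X$ denote the first coordinate projections. By \cref{lem:algebraic_image_subnilmanifold}, $r_1(Y) = \overline{\mathrm{pr}_1(H)} \cdot r_1(z)$ is a subnilmanifold of $X$. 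Moreover $(r_1)_*\lambda = \mu$ has full support and $r_1(Y)$ is compact, so $r_1(Y) = X$. Thus every fiber $Y_x$ is non-empty. Since $H$ acts transitively on $Y$, the group $\mathrm{pr}_1(H)$ already acts transitively on $X$, as $r_1(h \cdot z) = \mathrm{pr}_1(h) \cdot r_1(z)$.

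\emph{Fibers are subnilmanifolds.} Fix $x \in X$ and a point $(x,y) \in Y_x$. Set
\[
H_x := \{(g,g') \in H : g \cdot x = x\},
\]
which is a closed subgroup of $H$, hence of $G \times G$. If $(x,y') \in Y_x$, transitivity gives $h \in H$ with $h \cdot (x,y) = (x,y')$, and comparing first coordinates shows $h \in H_x$. Conversely $H_x \cdot (x,y) \subseteq Y \cap (\{x\}\times X)$. Hence $Y_x = H_x \cdot (x,y)$. This set is closed, being the intersection of $Y$ with a closed set, so it is a subnilmanifold. For $x, x' \in X$, choose $h = (h_1,h_2) \in H$ with $h_1 \cdot x = x'$. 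Then translation by $h$ is a homeomorphism of $X \times X$ mapping $Y_x$ onto $Y_{x'}$, and $h H_x h^{-1} = H_{x'}$. Consequently $h_*\lambda_x$ is $H_{x'}$-invariant, so by uniqueness of Haar measure
\[
h_* \lambda_x = \lambda_{x'} \quad \text{whenever } h \in H,\ \mathrm{pr}_1(h)\cdot x = x'.
\]

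\emph{Disintegration.} Fix a base point $x_0$. The orbit map $H \to X$, $h \mapsto \mathrm{pr}_1(h)\cdot x_0$, identifies $X$ with $H/H_{x_0}$. I would take a Borel section $\sigma \colon X \to H$ of this map; such sections exist for quotients of Polish (here Lie) groups by closed subgroups, and local smooth sections exist for Lie groups. Then $\lambda_x = \sigma(x)_* \lambda_{x_0}$, so $x \mapsto \int f \dd\lambda_x$ is Borel for every $f \in C(X \times X)$. Define
\[
\lambda' := \int_X \lambda_x \dd\mu(x).
\]
This is a probability measure on $Y$. For $h = (h_1,h_2) \in H$, the relation $h_*\lambda_x = \lambda_{h_1 x}$ together with the $h_1$-invariance of $\mu$ gives $h_*\lambda' = \lambda'$. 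By uniqueness of Haar measure on $Y$, $\lambda' = \lambda$. Since each $\lambda_x$ is concentrated on $\{x\} \times X$ and the map $x \mapsto \lambda_x$ is measurable, uniqueness of disintegration identifies this as the disintegration of $\lambda$ over the first coordinate.

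The main obstacle I expect is the measurability of $x \mapsto \lambda_x$, which requires the Borel section. Everything else is a direct consequence of the transitivity of $H$ and uniqueness of Haar measures.
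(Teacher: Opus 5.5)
Your proposal is correct and follows essentially the same route as the paper. You realize $Y = H\cdot z$ via \cref{prop:ergodic_joinings_nilsystems_are_nilsystems}, write each fiber as an orbit of the first-coordinate stabilizer $H \cap (g\Gamma g^{-1}\times G)$, move fibers onto one another by elements of $H$, obtain measurability from a Borel section of $H \to X$, and identify $\int_X \lambda_x \dd\mu(x)$ with $\lambda$ by uniqueness of the $H$-invariant measure on $Y$. Your explicit verifications that $Y_x = H_x\cdot(x,y)$ and that $hH_xh^{-1} = H_{x'}$ (which justifies $h_*\lambda_x = \lambda_{x'}$) spell out steps the paper states more briefly.
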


\begin{proof} 
    Let $\pi_1 \colon X \times X \to X$ be the first coordinate projection. 
    By \cref{prop:ergodic_joinings_nilsystems_are_nilsystems}, there exist a closed subgroup $H \leq G \times G$ and a point $z_0=(x_0,y_0) \in X \times X$ such that 
    \[
    Y = H \cdot z_0
    \] 
    is a subnilmanifold of $X \times X$ and $\lambda$ is its Haar measure. 
    Since $\lambda$ is a joining, $(\pi_1)_*\lambda = \mu$. 
    In particular, $\pi_1(Y) \subseteq X$ has full measure, and since it is closed, $\pi_1(Y) = X$.   
    Hence the restriction of $\pi_1$ to $Y$ is surjective, and every fiber $Y_x$ is non-empty. 

    Write $x_0 = g_0\Gamma$, and define  
    \[
    H_0 := H \cap ((g_0\Gamma g_0^{-1}) \times G).
    \] 
    Then $Y_{x_0}=H_0 \cdot z_0$, so $Y_{x_0}$ is a subnilmanifold of $X \times X$. 
    Given $x \in X$, choose $h = (h_1,h_2) \in H$ such that $h_1 \cdot x_0=x$. 
    This is always possible since $\pi_1(Y) = X$. 
    Then 
    \[
    Y_x = h \cdot Y_{x_0},
    \] 
    so the fibers $Y_x$ are pairwise homeomorphic subnilmanifolds.  

    It remains to prove the claimed disintegration identity.  
    For $x \in X$, let $\lambda_x$ be the Haar probability measure on $Y_x$. 
    If $h = (h_1,h_2) \in H$, then as above $h$ maps $Y_x$ onto $Y_{h_1 \cdot x}$. 
    It follows that 
    \[
    h_* \lambda_x = \lambda_{h_1 \cdot x}, 
    \]
    since the pushforward is a translation-invariant probability measure on $Y_{h_1 \cdot x}$.

    We first show that the map $x \mapsto \lambda_x$ is measurable. 
    Indeed, consider the continuous surjection 
    \[
    H \to X, \quad h \mapsto \pi_1(h \cdot z_0). 
    \] 
    By standard results, this map admits a measurable section $\sigma \colon X \to H$; see for example \citep[Lemma 7]{baggett1980functional}. 
    Then $Y_x = \sigma(x) \cdot Y_{x_0}$, so by the above observation,  
    \[
    \lambda_x = \sigma(x)_* \lambda_{x_0}. 
    \]
    Therefore $x \mapsto \int_{Y_x} f \dd\lambda_x = \int_{Y_{x_0}} f(\sigma(x) \cdot y) \dd\lambda_{x_0}(y)$ is measurable for every $f \in C(Y)$, and so $x \mapsto \lambda_x$ is measurable. 

    Define the probability measure $\widetilde{\lambda}$ on $Y$ by setting  
    \[ 
    \int_Y f \dd\widetilde{\lambda} := \int_X \int_{Y_x} f \dd\lambda_x \dd\mu(x) \quad \text{for } f \in C(Y). 
    \] 
    We claim that $\widetilde{\lambda}$ is invariant under all translations by elements of $H$. 
    Let $h = (h_1,h_2) \in H$. Using the fact that $h_* \lambda_x = \lambda_{h_1 \cdot x}$ and the invariance of $\mu$ under the translation $x \mapsto h_1 \cdot x$, we compute 
    \begin{align*} 
        \int_Y f(h\cdot z) \dd\widetilde{\lambda}(z) &= \int_X \int_{Y_x} f(h\cdot z) \dd\lambda_x(z) \dd\mu(x) \\ 
        &= \int_X \int_{Y_{h_1\cdot x}} f(z) \dd\lambda_{h_1\cdot x}(z) \dd\mu(x) \\ 
        &= \int_X \int_{Y_x} f(z) \dd\lambda_x(z) \dd\mu(x)  
        = \int_Y f \dd\widetilde{\lambda} 
    \end{align*}
    for every $f \in C(Y)$. 
    Hence $\widetilde{\lambda}$ is an $H$-invariant probability measure on $Y = H \cdot z_0$, so $\widetilde{\lambda} = \lambda$ by uniqueness of the Haar measure. 
    Therefore,
    \[
    \lambda = \int_X \lambda_x \dd\mu(x), 
    \] 
    concluding the proof.  
\end{proof}

\begin{proof}[Proof of \cref{prop:rigidity_self_joinings}.]
    Let $d_X$ be a compatible metric on $X$, and define the product metric 
    \begin{equation}
        \label{eq:def_product_metric} 
        d_{X \times X}((x,y), (x',y')) := d_X(x,x') + d_X(y,y'). 
    \end{equation}
    Let $\eta > 0$ be the constant obtained from applying \cref{lem:no_small_subnilmanifolds_averaged} to the nilmanifold $X \times X$ with metric $d_{X \times X}$. 

    Define 
    \[
    U := \left\{ \lambda \in J_e(X, X) : \int_{X \times X} d_X(x,y) \dd\lambda(x,y) < \eta/2 \right\}.
    \]
    Since $d_X$ is continuous on $X \times X$, $U$ is open in $J_e(X, X)$ by definition of the weak-* topology. 
    Moreover, 
    \[
    \int_{X \times X} d_X (x,y) \dd \mu_\Delta(x,y) 
    = \int_X d_X(x,x) \dd\mu(x) = 0, 
    \]
    so $U$ is an open neighborhood of $\mu_\Delta$ in $J_e(X, X)$. 
    It remains to prove that every $\lambda \in U$ is the graph joining of an automorphism of $(X, \mu, T)$. 

    Let $\lambda \in U$.
    By \cref{prop:ergodic_joinings_nilsystems_are_nilsystems}, $\lambda$ is the Haar measure of a $T \times T$-invariant subnilmanifold 
    \[
    Y = H \cdot (x_0, y_0) 
    \]
    of $X \times X$, where $H \leq G \times G$ is a closed subgroup and $(x_0, y_0) \in X \times X$. 
    Let 
    \[
    \pi_1 \colon X \times X \to X 
    \]
    be the first coordinate projection. 
    The restriction $\restrict{\pi_1}{Y}$ is surjective. 
    Indeed, since $\mu = (\pi_1)_*\lambda$, the closed set $\pi_1(Y)$ has full measure in $X$, and since $\mu$ has full support it follows that $\pi_1(Y) = X$.  

    Next, we show that $\restrict{\pi_1}{Y}$ is injective. 
    For $x \in X$, define the fiber 
    \[
    Y_x :=  Y \cap (\{x\} \times X).
    \]
    By \cref{lem:fibers_joining_are_subnilmanifolds}, the fibers $Y_x$ are pairwise homeomorphic subnilmanifolds of $X \times X$.  
    Suppose that $\restrict{\pi_1}{Y}$ is not injective. 
    Then $Y_x$ is non-trivial for some, and hence for every $x \in X$.  
    Thus, if $\lambda_x$ denotes the Haar measure of $Y_x$, then our choice of $\eta$ implies that 
    \begin{equation}
        \label{eq:eta_bound} 
        \int_{Y_x \times Y_x} d_{X \times X}(z, z') \dd\lambda_x(z) \dd\lambda_x(z') \geq \eta,  
    \end{equation}
    for every $x \in X$. 
    On the other hand, for $z = (x,y)$ and $z' = (x,y') \in Y_x$, we have by definition \eqref{eq:def_product_metric} that 
    \[
    d_{X \times X} (z,z') = d_X(y,y') \leq d_X(x,y) + d_X(x,y').  
    \]
    Plugging this into \eqref{eq:eta_bound}, we get for all $x \in X$ that  
    \[
    \eta \leq \int_{Y_x \times Y_x} (d_X(x,y)  + d_X(x,y')) \dd\lambda_x(x,y) \dd\lambda_x(x,y') 
    = 2 \int_{Y_x} d_X(x,y) \dd\lambda_x(x, y). 
    \]
    Integrating over $x$, and using the disintegration from \cref{lem:fibers_joining_are_subnilmanifolds}, it follows that 
    \[
    \eta/2  
    \leq \int_X \int_{Y_x} d_X(x,y) \dd\lambda_x(x,y) \dd\mu(x)
    = \int_Y d_X(x,y) \dd\lambda(x,y). 
    \]
    This contradicts the choice of $\lambda \in U$, so $\restrict{\pi_1}{Y}$ must be injective.  

    We have established that $\restrict{\pi_1}{Y} \colon Y \to X$ is a continuous bijection. 
    By compactness of $Y$, it must therefore be a homeomorphism. 
    By symmetry, we may apply the same argument to the second coordinate in order to conclude that the restriction of the second coordinate projection $\restrict{\pi_2}{Y} \colon Y \to X$ is a homeomorphism as well. 
    Define 
    \[
    \phi := \restrict{\pi_2}{Y} \circ (\restrict{\pi_1}{Y})^{-1} \colon X \to X. 
    \]
    Then $\phi$ is a homeomorphism, and since $\restrict{\pi_1}{Y}$ and $\restrict{\pi_2}{Y}$ are continuous factor maps, we have that 
    \[
    \phi \circ T 
    = \restrict{\pi_2}{Y} \circ (\restrict{\pi_1}{Y})^{-1} \circ T
    = \restrict{\pi_2}{Y} \circ (T \times T) \circ (\restrict{\pi_1}{Y})^{-1} 
    = T \circ \phi. 
    \]
    Since $(\restrict{\pi_1}{Y})_* \lambda = \mu$, it follows that $\lambda = ((\restrict{\pi_1}{Y})^{-1})_* \mu$. 
    Therefore 
    \[
    \phi_* \mu 
    = (\restrict{\pi_2}{Y})_* ((\restrict{\pi_1}{Y})^{-1})_* \mu = (\restrict{\pi_2}{Y})_* \lambda = \mu,   
    \]
    so $\phi$ is an automorphism of $(X, \mu, T)$. 
    Finally,
    \[
    \lambda = ((\restrict{\pi_1}{Y})^{-1})_* \mu 
    = (\Id_X, \restrict{\pi_2}{Y} \circ (\restrict{\pi_1}{Y})^{-1})_* \mu = (\Id_X, \phi)_* \mu, 
    \]
    so $\lambda$ is precisely the graph joining of $\phi$. 
\end{proof}

\subsection{Local rigidity for factor maps}

We conclude this chapter with a consequence of \cref{prop:rigidity_self_joinings}. 
It says that factor maps onto an ergodic nilsystem are locally rigid modulo automorphisms.
We will not use this result later. 

\begin{corollary}
    \label[corollary]{cor:rigidty_factor_maps}
    Let $(Y, \nu, S)$ be an ergodic nilsystem.  
    Then there exists $\delta > 0$ such that for every ergodic system $(X, \mu, T)$, if $\pi, \phi \colon X \to Y$ are factor maps satisfying
    \[
    d((\pi, \phi)_* \mu, \nu_\Delta) < \delta,
    \] 
    then there exists an automorphism $\psi$ of $(Y, \nu, S)$ such that $\phi = \psi \circ \pi$ $\mu$-almost everywhere. 
\end{corollary}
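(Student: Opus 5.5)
The plan is to reduce directly to \cref{prop:rigidity_self_joinings}, taking $\delta$ to be the radius supplied by that proposition for the ergodic nilsystem $(Y, \nu, S)$. With a compatible metric $d$ on $\pr(Y \times Y)$ fixed, choose $\delta > 0$ so that every $\lambda \in J_e(Y,Y)$ with $d(\lambda, \nu_\Delta) < \delta$ is the graph joining of an automorphism of $(Y, \nu, S)$. This $\delta$ depends only on $Y$, as the statement requires.

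Now let $(X, \mu, T)$ be ergodic and let $\pi, \phi \colon X \to Y$ be factor maps with $d((\pi,\phi)_*\mu, \nu_\Delta) < \delta$. Set $\lambda := (\pi, \phi)_* \mu$. We first check that $\lambda \in J_e(Y,Y)$.
\begin{itemize}
\item Its marginals are $\pi_*\mu = \nu$ and $\phi_*\mu = \nu$.
\item It is $S \times S$-invariant, because $(\pi,\phi) \circ T = (S \times S) \circ (\pi, \phi)$ holds $\mu$-a.e.
\item It is ergodic, because $(Y \times Y, \lambda, S \times S)$ is a factor of the ergodic system $(X, \mu, T)$ via $(\pi,\phi)$, and factors of ergodic systems are ergodic.
\end{itemize}
Hence \cref{prop:rigidity_self_joinings} gives an automorphism $\psi$ of $(Y,\nu,S)$ with $\lambda = (\Id_Y, \psi)_*\nu$.

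It remains to show $\phi = \psi \circ \pi$ $\mu$-a.e. The measure $\lambda$ is concentrated on the graph $\{(y, \psi(y))\}$, which is measurable because $\psi$ is measurable on the standard space $Y$. Its preimage under $(\pi,\phi)$ is $\{x : \phi(x) = \psi(\pi(x))\}$, and so this set has full $\mu$-measure. We could instead argue with functions: for $f \in L^2(\nu)$, the integral $\int |f\circ\phi - f\circ\psi\circ\pi|^2 \dd\mu$ equals $\int |f(y') - f(\psi(y))|^2 \dd\lambda(y,y')$, which vanishes. Applying this to a countable separating family of functions gives the equality almost everywhere.

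There is no real obstacle here. The only points needing care are the ergodicity of $\lambda$, which we get from the factor argument, and the measurability of the graph of $\psi$. The latter can also be avoided by using \cref{cor:factors_between_nilsystem_are_topological} to take $\psi$ continuous, so that its graph is closed.
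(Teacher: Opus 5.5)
Your proposal is correct and follows essentially the same route as the paper: take $\delta$ from \cref{prop:rigidity_self_joinings}, note that $(\pi,\phi)_*\mu \in J_e(Y,Y)$ because $(X,\mu,T)$ is ergodic, and read off $\phi = \psi \circ \pi$ $\mu$-a.e.\ from the fact that $(\pi,\phi)_*\mu$ is the graph joining of $\psi$. Your additional checks (the marginals, the invariance, and the measurability of the graph or the $L^2$ version of the last step) only fill in details that the paper leaves implicit.
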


Here $d$ denotes a fixed compatible metric on $\Pr(Y \times Y)$, and $\nu_\Delta$ is the diagonal self-joining of $\nu$. 

\begin{proof}
    Applying \cref{prop:rigidity_self_joinings} to the ergodic nilsystem $(Y, \nu, S)$, we obtain $\delta > 0$ such that every $\lambda \in J_e(Y, Y)$ with $d(\lambda, \nu_\Delta) < \delta$ is the graph joining of an automorphism of $(Y, \nu, S)$. 

    Now let $(X, \mu, T)$ be an ergodic system, and let $\pi, \phi \colon X \to Y$ be factor maps with $d((\pi, \phi)_*\mu, \nu_\Delta) < \delta$. 
    Let $\lambda := (\pi, \phi)_* \mu$. 
    Since $(X, \mu, T)$ is ergodic, $\lambda \in J_e(Y, Y)$.
    Hence $\lambda$ is the graph joining of an automorphism $\psi$.  
    It follows for $\mu$-almost every $x \in X$ that $(\pi(x), \phi(x))$ lies on the graph of $\psi$, equivalently that $\phi(x) = \psi(\pi(x))$.   
\end{proof}

\chapter{Factor-closure of pro-nilsystems}
\label{chap:factor_closure}

In this chapter we prove the central theorem of this thesis as announced in the introduction: the class of ergodic pro-nilsystems is closed under taking factors.  
We state it here in the form and notation used throughout the proof. 

\begin{theorem}[Factor-closure of ergodic pro-nilsystems]
    \label{thrm:main}
    Let $s \geq 1$, and let $(X, \mu, T)$ be an ergodic $s$-step pro-nilsystem. 
    Suppose that 
    \[
    \pi \colon (X, \mu, T) \to (Y, \nu, S) 
    \]
    is a measure-theoretic factor map. 
    Then $(Y, \nu, S)$ is an $s$-step pro-nilsystem. 
\end{theorem}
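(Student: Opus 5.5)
The plan follows the two-step strategy sketched in the introduction. \textbf{Step 1 (tower and reduction).} Write $X=\varprojlim X_n$ with ergodic $s$-step nilsystems $X_n$ and, by \cref{cor:factors_between_nilsystem_are_topological}, topological connecting maps $\alpha_n$. Each $X_n$ carries its tower $Z_i(X_n)$ from \cref{sec:tower_of_factors_nilsystem}. Since algebraic factor maps (\cref{prop:factor_maps_ergodic_nilsystems_are_algebraic}, in reduced form) come from surjective homomorphisms $\phi$, they satisfy $\phi(G_{i+1})=G'_{i+1}$, so $\alpha_n$ descends to maps $Z_i(X_{n+1})\to Z_i(X_n)$. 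Set $Z_i:=\varprojlim_n Z_i(X_n)$. Then $Z_i$ is an ergodic $i$-step pro-nilsystem, and $Z_i\to Z_{i-1}$ is an inverse limit of extensions by $K_i(X_n)$; this should be an extension by the compact abelian group $\varprojlim_n K_i(X_n)$. Let $W_i$ be the factor generated by $Y$ and $Z_i$, so that $W_s=X$ and $W_0=Y$. By \cref{lem:induced_group_ext}, applied to the square $W_i\to W_{i-1}$ over $Z_i\to Z_{i-1}$, the map $W_i\to W_{i-1}$ is an extension by a closed subgroup $H_i$ of the $i$-th structure group. By downward induction it then suffices to prove the following claim.

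\emph{Claim:} if $W$ is an ergodic $s$-step pro-nilsystem and $W\to V$ is an extension by a compact abelian group $K$, then $V$ is an $s$-step pro-nilsystem. Each $W_i$ is a factor generated by $Y$ and a pro-nilsystem, hence the induction needs the claim at every level, and it starts from $W_s=X$.

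\textbf{Step 2 (the claim).} Let $W=\varprojlim W_n$ with factor maps $p_n$, and let $V_u$, $u\in K$, be the vertical rotations. For each $n$ consider the ergodic self-joining $\lambda_{n,u}:=(p_n,p_n\circ V_u)_*\mu$ of $W_n$; it is ergodic because $V_u$ is an automorphism of the ergodic system $W$. The map $u\mapsto\lambda_{n,u}$ is weak-* continuous, by strong continuity of the Koopman representation. Hence there is an open neighbourhood $O_n$ of $0$ with $\lambda_{n,u}$ in the neighbourhood given by \cref{prop:rigidity_self_joinings}. For $u\in O_n$ there is then an automorphism $\psi_u$ of $W_n$ with $p_n\circ V_u=\psi_u\circ p_n$. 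The set $K'_n$ of $u$ for which such a $\psi_u$ exists is a subgroup: it is closed under addition by composing the automorphisms. It contains an open set, so it is an open subgroup and hence of finite index. Pick coset representatives $u_1,\dots,u_k$ and let $\widetilde W_n$ be the factor generated by $p_n\circ V_{u_j}$ for $j=1,\dots,k$. This is a finite joining of copies of $W_n$, which is ergodic and, by \cref{prop:ergodic_joinings_nilsystems_are_nilsystems} applied iteratively, a nilsystem. Now every $V_u$ descends to $\widetilde W_n$, since $\widetilde W_n$ is the factor generated by $p_n\circ V_u$ over all $u\in K$. The factors $\widetilde W_n$ increase and generate $W$. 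Finally let $V_n$ be the factor of $\widetilde W_n$ corresponding to the $K$-invariant functions. Since $K$ acts on $L^2(\widetilde W_n)$, averaging over $K$ gives $\E(f\mid V)$ from $f\in L^2(\widetilde W_n)$. Therefore $\bigvee_n V_n=V$, and \cref{thrm:factor_ergodic_nilsystem_is_nilsystem} makes each $V_n$ an $s$-step nilsystem. \cref{lem:inverse_limit_criterion} then gives $V=\varprojlim V_n$.

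\textbf{Main obstacles.} The hardest step is the descent of $V_u$ to a finite-index subgroup, in particular checking that $K'_n$ is really a subgroup and that the joining $\widetilde W_n$ stays a nilsystem. The other delicate point is Step 1: verifying that the inverse limit of the towers gives genuine group extensions, so that \cref{lem:induced_group_ext} applies, and that the limit of the $K_i(X_n)$ is the correct structure group.
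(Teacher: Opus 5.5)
Your proposal is correct and follows essentially the same route as the paper. The paper also builds the pro-nilsystem tower from reduced-form algebraic connecting maps (the structure-group homomorphisms you defer are exactly the $\xi_{n,i}$ of \cref{lem:tower_morphism}). It then uses \cref{lem:induced_group_ext} with downward induction through the factors $W_i$, and handles the group-extension case by local rigidity, a finite-index subgroup, an enlarged finite self-joining and averaging over $K$.

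The only cosmetic differences are these. You define $\widetilde W_n$ as the factor generated by all $p_n\circ V_u$, $u\in K$, which equals the finite join over coset representatives and gives $V_u$-invariance and monotonicity in $n$ directly; the paper instead uses nested representative sets $R_n\ni 0$. You should also say explicitly that $K'_n$ is closed under inverses, via $\psi_u^{-1}$.
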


Since factors of ergodic systems are ergodic, it follows that $(Y, \nu, S)$ is automatically an ergodic $s$-step pro-nilsystem. 
Thus, the theorem indeed says that the class of ergodic $s$-step pro-nilsystems is closed under taking factors. 

This theorem is an immediate consequence of the Host--Kra structure theorem, which was discussed in the introduction. 
Indeed, the structure theorem identifies the ergodic $s$-step pro-nilsystems precisely as the ergodic systems of order $s$, and this latter class is known to be closed under taking factors; see \citep[Chapter 9, Proposition 17]{host2018nilpotent}.
The goal of the present chapter is to give a proof which does not use the Host--Kra structure theorem. 
Instead, we prove the theorem directly using only the theory of nilsystems developed in \cref{chap:niltheory}, and the local rigidity result from \cref{chap:rigidity_result}, whose proof also relied only on intrinsic nilsystem arguments.  
For single nilsystems, the corresponding statement is classical: every factor of an ergodic $s$-step nilsystem is again an ergodic $s$-step nilsystem; see \cref{thrm:factor_ergodic_nilsystem_is_nilsystem}. 

The chapter is organized as follows. 
\cref{sec:proof_strategy} contains an outline of the argument. 
\cref{sec:proof_special_case_main_thrm} proves the special case where the factor map is an extension by a compact abelian group. 
\cref{sec:pronilsystem_tower} constructs the tower of factors of a pro-nilsystem, and \cref{sec:proof_main_thrm} combines these two ingredients to prove \cref{thrm:main}. 
Finally, in \cref{sec:topological_factor_closure} we derive the analogous factor-closure result for transitive topological pro-nilsystems. 

\section{Proof strategy}
\label{sec:proof_strategy}

The proof of \cref{thrm:main} has two main steps. 
First, we reduce the theorem to the case where the factor map is an extension by a compact abelian group. 
Then, we prove this special case using the local rigidity result from \cref{chap:rigidity_result}. 

For the first step, we begin by passing the tower of factors of a nilsystem from \cref{sec:tower_of_factors_nilsystem} to the inverse limit. 
More precisely, in \cref{sec:pronilsystem_tower} we show that every ergodic $s$-step pro-nilsystem $X$ admits a tower of factors 
\[
X = Z_s \longrightarrow Z_{s-1} \longrightarrow \cdots \longrightarrow Z_1 \longrightarrow Z_0 = \{*\}, 
\]
where each map $Z_i \to Z_{i-1}$ is an extension by a compact abelian group. 
Given a factor map $\pi \colon X \to Y$, we let $W_i$ be the smallest factor of $X$ containing both $Y$ and $Z_i$ as factors. 
Since the factors $Z_i$ are increasing, so are the factors $W_i$, hence they form a tower 
\[
X = W_s \longrightarrow W_{s-1} \longrightarrow \cdots \longrightarrow W_1 \longrightarrow W_0 = Y. 
\]
It therefore suffices to show that each $W_i$ is an $s$-step pro-nilsystem. 
We do this by downward induction. 
For $i = s$, $W_s = X$ is an $s$-step pro-nilsystem by assumption. 
By \cref{lem:induced_group_ext}, each map $W_i \to W_{i-1}$ is still an extension by a compact abelian group. 
Thus the induction step is reduced to the special case where the factor map is a compact abelian group extension, which is recorded in \cref{thrm:main_special_case}. 

To prove this special case, suppose that $\pi \colon (X, \mu, T) \to (Y, \nu, S)$ is an extension by the compact abelian group $K$. 
Choose an inverse limit 
\[
(X, \mu, T) = \varprojlim (X_n, \mu_n, T_n), 
\]
where each $X_n$ is an ergodic $s$-step nilsystem. 
Let $p_n \colon X \to X_n$ denote the associated factor maps. 
We would like to use the finite-stage nilsystems $X_n$ to construct an inverse limit for $Y$. 
The main obstruction is that the given inverse limit for $X$ need not be compatible with the group extension structure. 
More concretely, the vertical rotations $V_u$ for $u \in K$ do not in general descend to automorphisms of the systems $X_n$. 
Equivalently, the factors $X_n$ need not be invariant under all vertical rotations. 

This is the point where the local rigidity result comes in. 
For $u \in K$, define the probability measure 
\[
\lambda_u^{(n)} := (p_n, p_n \circ V_u)_* \mu
\]
on $X_n \times X_n$. 
This is an ergodic self-joining of $X_n$, and $V_u$ descends to an automorphism of $X_n$ precisely when $\lambda_u^{(n)}$ is the graph joining of an automorphism. 
The map $u \mapsto \lambda_u^{(n)}$ is continuous and maps $0$ to the diagonal joining. 
Therefore, \cref{prop:rigidity_self_joinings} shows that $\lambda_u^{(n)}$ is the graph joining of an automorphism for all $u$ in some open subgroup $H_n \leq K$. 
By compactness, the quotient $K/H_n$ is finite. 
Thus we may choose a finite set $R_n$ of representatives for this quotient.
After enlarging $X_n$ to the finite self-joining generated by the factors $V_r^{-1}(\mcx_n)$ for $r \in R_n$, we obtain a  new increasing sequence of $s$-step nilsystem factors $\wt X_n$ which are invariant under all vertical rotations. 

Finally, let $W_n$ be the largest common factor of $Y$ and $\wt X_n$. 
These are ergodic $s$-step nilsystems, and an averaging argument over $K$, using the invariance of the systems $\wt X_n$ under the vertical rotations, shows that $Y = \varprojlim W_n$. 
Hence $Y$ is an $s$-step pro-nilsystem. 

\section{The compact abelian group extension case}
\label{sec:proof_special_case_main_thrm}

We now begin with the proof of \cref{thrm:main} in the special case where the factor map is an extension by a compact abelian group. 

\begin{theorem}[Compact abelian group extension case]
    \label{thrm:main_special_case} 
    Let $s \geq 1$, and let $(X, \mu, T)$ be an ergodic $s$-step pro-nilsystem. 
    Suppose that 
    \[
    \pi \colon (X, \mu, T) \to (Y, \nu, S)
    \]
    is a factor map which is an extension by a compact abelian group $K$. 
    Then $(Y, \nu, S)$ is an $s$-step pro-nilsystem. 
\end{theorem}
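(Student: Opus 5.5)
We follow the outline in \cref{sec:proof_strategy}. Fix an inverse limit $(X, \mu, T) = \varprojlim (X_n, \mu_n, T_n)$ with each $X_n$ an $s$-step nilsystem and factor maps $p_n \colon X \to X_n$. Each $X_n$ is ergodic as a factor of $X$. Let $V_u$, $u \in K$, be the vertical rotations of the extension $\pi$.

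\emph{Step 1: joinings and the subgroups $H_n$.} For $u \in K$ set $\lambda_u^{(n)} := (p_n, p_n \circ V_u)_*\mu$. Since $V_u$ commutes with $T$ and preserves $\mu$, this is a self-joining of $X_n$. It is ergodic because it is a factor of the ergodic system $X$ via $(p_n, p_n \circ V_u)$.

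The map $u \mapsto \lambda_u^{(n)}$ is continuous into $\pr(X_n \times X_n)$. To see this, for $f, g \in C(X_n)$ write
\[
\int f \otimes g \dd\lambda_u^{(n)} = \langle f\circ p_n, U_u(g\circ p_n)\rangle ,
\]
which is continuous in $u$ by strong continuity of the Koopman representation. Such product functions span a dense subspace of $C(X_n\times X_n)$. Moreover, $\lambda_0^{(n)}$ is the diagonal joining.

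By \cref{prop:rigidity_self_joinings}, the set $O_n := \{u : \lambda_u^{(n)} \text{ is a graph joining of an automorphism}\}$ contains an open neighbourhood of $0$. We check that $O_n$ is a subgroup. Note that $u \in O_n$ if and only if $p_n^{-1}(\mcx_n)$ and $V_u^{-1}p_n^{-1}(\mcx_n)$ coincide mod $\mu$. Indeed, the graph property gives $p_n\circ V_u = \phi_u\circ p_n$, and symmetry of the argument gives the reverse inclusion. This condition is closed under composition and inverses. Hence $H_n := O_n$ is an open subgroup, so $K/H_n$ is finite.

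\emph{Step 2: enlarging the finite stages.} Choose representatives $R_n$ of $K/H_n$ and put
\[
\wt{\mcx}_n := \bigvee_{r\in R_n} V_r^{-1}p_n^{-1}(\mcx_n).
\]
This $\sigma$-algebra is $V_u$-invariant for every $u \in K$, since each $V_u$ permutes the cosets and $V_h$ for $h \in H_n$ preserves $p_n^{-1}(\mcx_n)$. The corresponding factor $\wt X_n$ is the system given by the image of $\mu$ under $(p_n\circ V_r)_{r\in R_n}$, an ergodic joining of finitely many copies of $X_n$. By \cref{prop:ergodic_joinings_nilsystems_are_nilsystems}, applied inductively or directly to the finite product, it is an $s$-step nilsystem.

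The inclusion $\wt{\mcx}_n \subseteq \wt{\mcx}_{n+1}$ is not automatic, because $H_{n+1}$ may be smaller than $H_n$. I would fix this by replacing $H_n$ with $H_1\cap\dots\cap H_n$, which is still open, and choosing $R_n$ compatibly. Alternatively, define $\wt{\mcx}_n$ as the smallest $K$-invariant $\sigma$-algebra containing $p_n^{-1}(\mcx_n)$; this is monotone in $n$ and equals the finite join above. Since these $\sigma$-algebras contain the $p_n^{-1}(\mcx_n)$, they still generate $\mcx$.

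\emph{Step 3: averaging down to $Y$.} Let $\mcw_n := \wt{\mcx}_n \cap \pi^{-1}(\mcy)$, which is increasing in $n$. The factor $W_n$ is a factor of the nilsystem $\wt X_n$, so by \cref{thrm:factor_ergodic_nilsystem_is_nilsystem} it is an $s$-step nilsystem.

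To show $\bigvee_n \mcw_n =_\mu \pi^{-1}(\mcy)$, take $f\in L^2(\pi^{-1}\mcy)$. By (ii'') choose $f_n := \E(f\mid\wt{\mcx}_n)$ with $f_n \to f$. Then $\E(f_n\mid\pi^{-1}\mcy) = \int_K U_u f_n\dd m_K(u)$. Since $\wt{\mcx}_n$ is $K$-invariant, each $U_uf_n$ is $\wt{\mcx}_n$-measurable. The Bochner integral therefore lies in $L^2(\wt{\mcx}_n)$, and it is also $\pi^{-1}\mcy$-measurable, so it lies in $L^2(\mcw_n)$. It converges to $\E(f\mid\pi^{-1}\mcy)=f$. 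Applying \cref{lem:inverse_limit_criterion} then gives $Y=\varprojlim W_n$.

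I expect the main obstacle to be Step 1. The delicate parts are the continuity of $u\mapsto\lambda^{(n)}_u$ and, especially, the passage from ``graph joining for $u$ near $0$'' to an honest open subgroup, which requires the $\sigma$-algebra characterisation of the graph property. Step 2's bookkeeping is also needed to ensure monotonicity.
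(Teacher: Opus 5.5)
Your proposal is correct and follows essentially the same route as the paper: you obtain open subgroups $H_n$ from local rigidity and the continuity of $u\mapsto\lambda_u^{(n)}$, enlarge to the $K$-invariant finite self-joinings $\wt X_n$, and average over $K$ to show that the largest common factors $W_n$ generate $Y$. The differences are only cosmetic. You verify the subgroup property through the $\sigma$-algebra characterization rather than by composing automorphisms. Your description of $\wt{\mcx}_n$ as the smallest $K$-invariant $\sigma$-algebra containing $p_n^{-1}(\mcx_n)$ shows that monotonicity holds for any choice of coset representatives, whereas the paper enforces it by using the nested sets $R_1\cup\dots\cup R_n\cup\{0\}$. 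One small correction: $\mcw_n$ should be formed from the $\mu$-completions, as in the paper.
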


\begin{proof}
    By definition of a compact abelian group extension, see \cref{sec:group_extensions_measurable}, we may assume that 
    \[
    (X, \mu, T) = (Y \times K, \nu \times m_K, T_\rho), 
    \] 
    where $m_K$ is the Haar measure on $K$, $\rho \colon Y \to K$ is a measurable map, $T_\rho(y, g) = (Sy, \rho(y) + g)$ for $(y, g) \in Y \times K$, and $\pi$ is the coordinate projection onto $Y$.   
    Recall that for $u \in K$, the vertical rotation 
    \[
    V_u \colon X \to X, \quad V_u(y, g) = (y, u + g)  
    \]
    is an automorphism of $(X, \mu, T)$. 
    These vertical rotations satisfy $V_u \circ V_v = V_{u + v}$ and $V_u^{-1} = V_{-u}$ almost everywhere, for all $u, v \in K$. 

    Fix an inverse limit  
    \[
    (X, \mu, T) = \varprojlim (X_n, \mu_n, T_n)
    \]
    where each $(X_n, \mu_n, T_n)$ is an ergodic $s$-step nilsystem, and let $p_n \colon X \to X_n$ denote the associated factor maps. 

    For each $n \in \N$ and each $u \in K$, define a probability measure on $X_n \times X_n$ by 
    \begin{equation}
        \label{eq:def_lambda_u^n}    
        \lambda_u^{(n)} := (p_n, p_n \circ V_u)_* \mu.  
    \end{equation}
    Since $p_n$ is a factor map and $V_u$ is an automorphism, $\lambda_u^{(n)}$ is a self-joining of $(X_n, \mu_n, T_n)$. 
    Moreover, the map $(p_n, p_n \circ V_u)$ realizes the system $(X_n \times X_n, \lambda_u^{(n)}, T_n \times T_n)$ as a factor of the ergodic system $(X, \mu, T)$, and hence it is ergodic. 
    Therefore, 
    \[
    \lambda_u^{(n)} \in J_e(X_n, X_n). 
    \]
    For each $n \in \N$, define
    \[
    H_n := \{
        u \in K : \lambda_u^{(n)} \text{ is the graph joining of an automorphism of } X_n
    \}. 
    \] 
    We claim that $H_n$ is a subgroup of $K$. 
    First, $\lambda_0^{(n)} = (p_n, p_n)_* \mu$ is the diagonal joining, which is the graph of the identity map, so that $0 \in H_n$.   
    Next, let $u, v \in H_n$, and choose automorphisms $\phi_u$ and $\phi_v$ of $(X_n, \mu_n, T_n)$ such that 
    \[
    p_n \circ V_u = \phi_u \circ p_n 
    \quad \text{and} \quad 
    p_n \circ V_v = \phi_v \circ p_n \quad \mu\aee
    \]
    Then
    \[
    p_n \circ V_{u + v} = p_n \circ V_u \circ V_v = \phi_u \circ p_n \circ V_v = \phi_u \circ \phi_v \circ p_n \quad \mu\aee
    \]
    so that $\lambda_{u+v}^{(n)}$ is the graph joining of the automorphism $\phi_u \circ \phi_v$. 
    Thus $u + v \in H_n$. 
    Similarly, 
    \[
    \phi_u^{-1} \circ p_n = \phi_u^{-1} \circ p_n \circ V_u \circ V_{-u} = \phi_u^{-1} \circ \phi_u \circ p_n \circ V_{-u} = p_n \circ V_{-u} \quad \mu\aee, 
    \]
    so $\lambda_{-u}^{(n)}$ is the graph joining of the inverse automorphism $\phi_u^{-1}$, and thus $-u \in H_n$. 
    This proves the claim. 

    We next prove that $H_n$ is open in $K$. 
    Recall from \cref{chap:rigidity_result} that $J_e(X_n, X_n)$ is equipped with the weak-* topology under the identification with a subspace of the dual of $C(X_n \times X_n)$. 
    We first show that the map 
    \[
    K \to J_e(X_n, X_n), \quad u \mapsto \lambda_u^{(n)} 
    \]
    is continuous. 
    Let $u_i \to u$ in $K$, and let $f, g \in C(X_n)$. 
    Writing $F: = f \circ p_n$ and $G := g \circ p_n$, we want to compute the limit of
    \begin{equation}
        \label{eq:weak_*_cont_check} 
        \int_{X_n \times X_n} f(x) g(y) \dd\lambda_{u_i}^{(n)}(x, y) 
        = \int_X F \cdot (G \circ V_{u_i}) \dd\mu.  
    \end{equation}
    Recall from \cref{sec:group_extensions_measurable} that the Koopman representation of $K$ on $L^2(X, \mu)$ associated to the vertical rotations is strongly continuous. 
    Hence $G \circ V_{u_i} \to G \circ V_u$ in $L^2(X, \mu)$.  
    Therefore, 
    \[
    \int_X F \cdot (G \circ V_{u_i}) \dd\mu \to \int_X F \cdot (G \circ V_u) \dd\mu.  
    \]
    Plugging this into \eqref{eq:weak_*_cont_check}, we see that  
    \[
    \int_{X_n \times X_n} f(x) g(y) \dd\lambda_{u_i}^{(n)}(x, y) 
    \to \int_{X_n \times X_n} f(x) g(y) \dd\lambda_u^{(n)}(x,y). 
    \]
    Since the linear span of the functions $(x,y) \mapsto f(x) g(y)$ with $f, g \in C(X_n)$ is dense in $C(X_n \times X_n)$ by Stone--Weierstrass, this shows that $\lambda_{u_i}^{(n)} \to \lambda_u^{(n)}$ in the weak-* topology. 
    By \cref{prop:rigidity_self_joinings}, there exists an open neighborhood 
    \[
    U_n \subseteq J_e(X_n, X_n) 
    \]
    of the diagonal joining such that every element of $U_n$ is the graph joining of an automorphism of $X_n$.   
    Since $\lambda_0^{(n)}$ is the diagonal joining, the preimage of $U_n$ under the continuous map $u \mapsto \lambda_u^{(n)}$ is an open neighborhood of $0$ contained in $H_n$. 
    Therefore $H_n$ is an open subgroup. 

    Since $H_n$ is an open subgroup of the compact group $K$, the quotient $K/H_n$ is finite. 
    For each $n \in \N$, fix a finite set $R_n \subseteq K$ of representatives for this quotient, so that   
    \[
    R_n + H_n = K.  
    \]
    Replacing $R_n$ by $R_1 \cup \cdots \cup R_n \cup \{0\}$ if necessary, we may assume that $0 \in R_n$ for all $n$ and that $R_n \subseteq R_m$ whenever $n \leq m$. 

    For each $n \in \N$ and each $u \in H_n$, choose an automorphism $V_u^{(n)}$ of $(X_n, \mu_n, T_n)$ such that $\lambda_u^{(n)}$ is the graph joining of $V_u^{(n)}$. 
    By definition \eqref{eq:def_lambda_u^n} of $\lambda_u^{(n)}$, this means that 
    \[
    V_u^{(n)} \circ p_n = p_n \circ V_u \quad \mu\aee 
    \]
    In particular, the sub-$\sigma$-algebra $p_n^{-1}(\mcx_n)$ is invariant under the vertical rotations $V_u$ for $u \in H_n$. 

    We now enlarge the factor $X_n$ to obtain a factor $\wt X_n$ which is invariant under all vertical rotations. 
    For each $n \in \N$, define 
    \[
    \wt X_n := X_n^{R_n} = \prod_{r \in R_n} X_n,  
    \qquad
    q_n \colon X \to \wt X_n, \quad q_n(x) := (p_n(V_r x))_{r \in R_n}. 
    \]
    Let $\wt \mu_n := (q_n)_* \mu$, and let $\wt T_n$ be the transformation on $\wt X_n$ obtained by applying $T_n$ componentwise.
    Since $V_r$ commutes with $T$ for each $r$, and since $p_n$ intertwines $T$ and $T_n$, the map 
    \[
    q_n \colon (X, \mu, T) \to (\wt X_n, \wt \mu_n, \wt T_n) 
    \]
    is a factor map. 
    In particular, the system $(\wt X_n, \wt \mu_n, \wt T_n)$ is ergodic. 
    Each coordinate projection of $\wt \mu_n$ is equal to $(p_n \circ V_r)_* \mu = \mu_n$, so $\wt \mu_n$ is an ergodic $|R_n|$-fold self-joining of the nilsystem $(X_n, \mu_n, T_n)$. 
    Inductively applying \cref{prop:ergodic_joinings_nilsystems_are_nilsystems} then shows that $(\wt X_n, \wt \mu_n, \wt T_n)$ is isomorphic to an ergodic $s$-step nilsystem. 

    The sub-$\sigma$-algebra corresponding to the factor $\wt X_n$ is 
    \[
    q_n^{-1} (\wt \mcx_n)
    = \bigvee_{r \in R_n} V_r^{-1}(p_n^{-1}(\mcx_n)).
    \]
    We claim that this is invariant under all vertical rotations. 
    Let $u \in K$. 
    For each $r \in R_n$, choose $s(r) \in R_n$ and $h(r) \in H_n$ such that $r + u = s(r) + h(r)$. 
    Then 
    \[
    V_u^{-1}(V_r^{-1}(p_n^{-1}(\mcx_n))) 
    = V_{r + u}^{-1}(p_n^{-1}(\mcx_n))
    = V_{s(r)}^{-1}(V_{h(r)}^{-1}(p_n^{-1}(\mcx_n)))
    =_\mu V_{s(r)}^{-1}(p_n^{-1}(\mcx_n)),
    \]
    where in the last equality, we used the invariance of $p_n^{-1}(\mcx_n)$ under translations by elements of $H_n$. 
    Therefore,
    \[
    V_u^{-1}(q_n^{-1}(\wt \mcx_n)) 
    =_\mu \bigvee_{r \in R_n} V_{s(r)}^{-1}(p_n^{-1}(\mcx_n))
    \subseteq_\mu q_n^{-1}(\wt \mcx_n).
    \]
    Since $V_u$ is invertible, the converse inclusion also holds, so 
    \[
    V_u^{-1}(q_n^{-1}(\wt \mcx_n)) =_\mu q_n^{-1}(\wt \mcx_n) \quad \text{for all } u \in K. 
    \]
    Since $0 \in R_n$, the factor $\wt X_n$ lies above $X_n$. 
    Moreover, since both the factors $X_n$ and the sets $R_n$ are increasing in $n$, the sub-$\sigma$-algebras $q_n^{-1}(\wt \mcx_n)$ are increasing in $n$. 

    For each $n \in \N$, define 
    \[
    \mcw_n := \overline{\pi^{-1}(\mcy)}^\mu \cap \overline{q_n^{-1}(\wt \mcx_n)}^\mu, 
    \]
    where the bar denotes $\mu$-completion. 
    Let $W_n$ be the corresponding factor of $X$. 
    This is the largest common factor of $Y$ and $\wt X_n$. 
    Since $W_n$ is a factor of the ergodic $s$-step nilsystem $\wt X_n$, it is isomorphic to an $s$-step nilsystem by \cref{thrm:factor_ergodic_nilsystem_is_nilsystem}. 
    Moreover, the $\sigma$-algebras $\mcw_n$ are increasing in $n$, because the sequence $q_n^{-1}(\wt \mcx_n)$ is increasing.

    We now show that the increasing family of sub-$\sigma$-algebras $(\mcw_n)_{n \in \N}$ generates $\pi^{-1}(\mcy)$ modulo $\mu$. 
    Let $f \in L^2(X, \pi^{-1}(\mcy), \mu)$, and let $\epsilon > 0$.
    Since $X = \varprojlim X_i$ and $\wt \mcx_n$ lies above $X_n$, there exist $n_0 \in \N$ and $g \in L^2(X, q_{n_0}^{-1}(\wt \mcx_{n_0}), \mu)$ such that $\| f - g \|_{L^2(\mu)} < \epsilon$; see \cref{sec:inverse_limits}.  
    Set 
    \[
    h := \E_\mu (g \mid \pi^{-1}(\mcy)).  
    \]
    By the averaging formula from \cref{sec:group_extensions_measurable}, 
    \[
    h(x) = \int_K g(V_u x) \dd m_K(u) \quad \text{for } \mu\aee \ x \in X. 
    \]
    By strong continuity of the Koopman representation, the map $u \mapsto g \circ V_u$ is continuous from $K$ to $L^2(X,\mu)$. 
    An application of Fubini's theorem shows that $h$ coincides with the $L^2(X,\mu)$-valued Bochner integral $\int_K (g \circ V_u) \dd m_K(u)$. 
    Since $q_{n_0}^{-1}(\wt \mcx_{n_0})$ is invariant under all vertical rotations, each $g \circ V_u$ lies in the closed subspace $L^2(X, q_{n_0}^{-1}(\wt \mcx_{n_0}), \mu)$.
    Thus $h \in L^2(X, q_{n_0}^{-1}(\wt \mcx_{n_0}), \mu)$. 
    On the other hand, $h$ is $\pi^{-1}(\mcy)$-measurable by construction. 
    Therefore, $h \in L^2(X, \mcw_{n_0}, \mu)$.  
    Moreover, 
    \[
    \| f - h \|_{L^2(\mu)} 
    = \| \E_\mu( f - g \mid \pi^{-1}(\mcy)) \|_{L^2(\mu)} 
    \leq \| f - g \|_{L^2(\mu)} 
    < \epsilon.
    \]
    Since $f \in L^2(X, \pi^{-1}(\mcy), \mu)$ and $\epsilon > 0$ were arbitrary, it follows that $\pi^{-1}(\mathcal{Y}) \subseteq_\mu \bigvee_{n \in \N} \mcw_n$. 
    The reverse inclusion is immediate from the definition of $\mcw_n$. 
    Hence 
    \[
    \pi^{-1}(\mathcal{Y}) =_\mu \bigvee_{n \in \N} \mcw_n.  
    \]
    By the inverse limit criterion for increasing families of factors in \cref{lem:inverse_limit_criterion}, $(Y, \nu, S)$ is the inverse limit of the ergodic $s$-step nilsystems $W_n$.   
    Hence $(Y, \nu, S)$ is an $s$-step pro-nilsystem.  
\end{proof}

\section{The tower of factors of a pro-nilsystem}
\label{sec:pronilsystem_tower}

Recall from \cref{sec:tower_of_factors_nilsystem} that a nilsystem $(X, \mu, T)$ admits a natural tower of factors 
\[
X = Z_s \longrightarrow Z_{s-1} \longrightarrow \cdots \longrightarrow Z_1 \longrightarrow Z_0 = \{*\}, 
\]
where for $i = 1, \dots, s$, the map $Z_i \to Z_{i-1}$ is an extension by the structure group $K_i$. 
In this section, we pass this construction to the inverse limit and obtain an analogous tower for an arbitrary ergodic $s$-step pro-nilsystem. 

The key idea is that a factor map between ergodic nilsystems in reduced form induces a homomorphism between their respective towers. 

\begin{lemma}
    \label[lemma]{lem:tower_morphism}
    Let $(X = G/\Gamma, \mu, T)$ and $(X' = G'/\Gamma', \mu', T')$ be ergodic $s$-step nilsystems in reduced form, and let 
    \[
    X = Z_s \xrightarrow{\pi_{s-1}} Z_{s-1} \xrightarrow{\pi_{s-2}} \cdots \xrightarrow{\pi_0} Z_0 = \{*\} 
    \]
    and 
    \[
    X' = Z_s' \xrightarrow{\pi_{s-1}'} Z_{s-1}' \xrightarrow{\pi_{s-2}'} \cdots \xrightarrow{\pi_0'} Z_0' = \{*\} 
    \]
    be their respective towers of factors. 
    Suppose that $\alpha \colon (X, \mu, T) \to (X', \mu', T')$ is a factor map. 
    Then there exist topological factor maps 
    \[
    \alpha_i \colon Z_i \to Z_i' \quad \text{for } i = 0, \dots, s 
    \]
    with $\alpha_s = \alpha$ almost everywhere, making the diagram 
    \[\begin{tikzcd}
        {X = Z_s} & {Z_{s-1}} & \cdots & {Z_1} & {Z_0 = \{*\}} \\
        {X' = Z_s'} & {Z_{s-1}'} & \cdots & {Z_1'} & {Z_0' = \{*\}}
        \arrow["{\pi_{s-1}}", from=1-1, to=1-2]
        \arrow["{\alpha_s}", from=1-1, to=2-1]
        \arrow["{\pi_{s-2}}", from=1-2, to=1-3]
        \arrow["{\alpha_{s-1}}", from=1-2, to=2-2]
        \arrow["{\pi_1}", from=1-3, to=1-4]
        \arrow["{\pi_0}", from=1-4, to=1-5]
        \arrow["{\alpha_1}", from=1-4, to=2-4]
        \arrow["{\alpha_0}", from=1-5, to=2-5]
        \arrow["{\pi_{s-1}'}"', from=2-1, to=2-2]
        \arrow["{\pi_{s-2}'}"', from=2-2, to=2-3]
        \arrow["{\pi_1'}"', from=2-3, to=2-4]
        \arrow["{\pi_0'}"', from=2-4, to=2-5]
    \end{tikzcd}\]
    commute. 
    Moreover, if $K_i$ and $K_i'$ denote the structure groups of $X$ and $X'$ respectively, then for $i = 1, \dots, s$, there is a continuous surjective homomorphism $\xi_i \colon K_i \to K_i'$ such that 
    \[
    \alpha_i (u \cdot z) = \xi_i(u) \cdot \alpha_i(z) 
    \]
    for all $z \in Z_i$ and $u \in K_i$. 
\end{lemma}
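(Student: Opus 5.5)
By \cref{prop:factor_maps_ergodic_nilsystems_are_algebraic}, after modifying $\alpha$ on a null set we may assume that $\alpha$ is algebraic. That is,
\[
\alpha(g\Gamma) = a\phi(g)\Gamma',
\]
where $\phi \colon G \to G'$ is a continuous surjective homomorphism with $\phi(\Gamma) \subseteq \Gamma'$, and $a \in G'$ satisfies $\phi(\tau) = a^{-1}\tau' a$.

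The key algebraic input is that $\phi$ respects the lower central series. Since $\phi$ is a surjective homomorphism, induction on $i$ gives $\phi(G_i) = G'_i$: indeed $\phi([G,G_i])$ is generated by the commutators $[\phi(g),\phi(h)]$, and these generate $[G', G'_i]$. In particular $\phi(G_{i+1}\Gamma) \subseteq G'_{i+1}\Gamma'$. We work with the description $Z_i \cong G/(G_{i+1}\Gamma)$ from \cref{sec:tower_of_factors_nilsystem} and define
\[
\alpha_i(gG_{i+1}\Gamma) := a\phi(g)G'_{i+1}\Gamma'.
\]
This is well defined by the inclusion above. It is continuous, because it is induced from the continuous map $g \mapsto a\phi(g)$ on $G$ and the quotient map is open. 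It is surjective because $\phi$ is surjective.

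Next we verify the required properties of $\alpha_i$. The intertwining relation follows exactly as in the algebraic factor map computation: $a\phi(\tau g) = \tau' a\phi(g)$. Commutativity of the diagram, $\pi'_{i-1}\circ\alpha_i = \alpha_{i-1}\circ\pi_{i-1}$, is immediate, since both sides send $gG_{i+1}\Gamma$ to $a\phi(g)G'_i\Gamma'$. For $i = s$ we have $\alpha_s = \alpha$, and $\alpha_0$ is the trivial map. Each $\alpha_i$ is measure-preserving by unique ergodicity, since $Z_i'$ is an ergodic nilsystem, being a factor of $X'$. Hence each $\alpha_i$ is a topological factor map.

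For the structure groups we use $K_i \cong G_i/(G_i\cap G_{i+1}\Gamma)$ and define
\[
\xi_i(g(G_i\cap G_{i+1}\Gamma)) := \phi(g)(G'_i\cap G'_{i+1}\Gamma').
\]
This is well defined because $\phi(G_i) \subseteq G'_i$ and $\phi(G_{i+1}\Gamma) \subseteq G'_{i+1}\Gamma'$. It is a continuous homomorphism, and it is surjective because $\phi(G_i) = G'_i$.

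It remains to check equivariance. For $u$ represented by $g \in G_i$, we compute
\[
\alpha_i(u\cdot hG_{i+1}\Gamma) = a\phi(g)\phi(h)G'_{i+1}\Gamma'.
\]
The desired right-hand side is $\phi(g)a\phi(h)G'_{i+1}\Gamma'$, so we need to move $a$ past $\phi(g)$. Here we use that $\phi(g) \in G'_i$ is central modulo $G'_{i+1}$. Hence $a\phi(g) = \phi(g)ac$ with $c \in G'_{i+1}$, and $c$ can be absorbed because $G'_{i+1}$ is normal. This gives the claimed identity $\alpha_i(u\cdot z) = \xi_i(u)\cdot\alpha_i(z)$.

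The main obstacle is the first step, namely obtaining the algebraic form of $\alpha$, which depends on the reduced-form hypothesis through \cref{prop:factor_maps_ergodic_nilsystems_are_algebraic}. Once that is in place, the rest is bookkeeping with the equality $\phi(G_i) = G'_i$ and the centrality of $G'_i$ modulo $G'_{i+1}$.
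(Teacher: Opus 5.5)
Your proposal is correct and follows essentially the same route as the paper: you use \cref{prop:factor_maps_ergodic_nilsystems_are_algebraic} to make $\alpha$ algebraic, then descend it to $Z_i \cong G/(G_{i+1}\Gamma)$ via $\phi(G_{i+1}) = G'_{i+1}$, and finally use that $G'_i$ is central modulo $G'_{i+1}$ to get the equivariance for $\xi_i$. You also spell out two details that the paper leaves implicit: the induction giving $\phi(G_i) = G'_i$, and the absorption of the commutator $c \in G'_{i+1}$ by normality.
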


\begin{proof}
    Suppose that $T$ and $T'$ are the translations by $\tau \in G$ and $\tau' \in G'$ respectively. 
    By the reduced form assumption and \cref{prop:factor_maps_ergodic_nilsystems_are_algebraic}, we may assume after modifying $\alpha$ on a null set that 
    \[
    \alpha(g\Gamma) = a \phi(g) \Gamma' \quad \text{for } g\Gamma \in X,  
    \]
    where $\phi \colon G \to G'$ is a continuous surjective group homomorphism with $\phi(\Gamma) \subseteq \Gamma'$, and $a \in G'$ satisfying $\phi(\tau) = a^{-1} \tau' a$. 

    Since $\phi$ is surjective, $\phi(G_{i+1}) = G'_{i+1}$ for $i = 0, \dots, s$. 
    Using the identifications $Z_i \cong G/(G_{i+1} \Gamma)$ and $Z_i' \cong G'/(G_{i+1}' \Gamma')$ from \cref{sec:tower_of_factors_nilsystem}, it follows that $\alpha$ descends to the maps
    \[
    \alpha_i \colon Z_i \to Z_i', 
    \quad g G_{i+1}\Gamma \mapsto a \phi(g) G_{i+1}'\Gamma'. 
    \]
    Each $\alpha_i$ is continuous, surjective and intertwines the induced translations by $\tau$ and $\tau'$ on $Z_i$ and $Z_i'$. 
    Hence it is a topological factor map, and by unique ergodicity also a measure-theoretic factor map. 

    We claim that these factor maps make the above diagram commute. 
    Indeed, for $i = 0, \dots, s-1$, and $g G_{i+2} \Gamma \in Z_{i+1}$, 
    \[
    \alpha_i (\pi_i(g G_{i+2} \Gamma)) 
    = \alpha_i(g G_{i+1} \Gamma)
    = a \phi(g) G_{i+1}' \Gamma'
    = \pi_i'(a \phi(g) G_{i+2}' \Gamma')
    = \pi_i'(\alpha_{i+1}(g G_{i+2} \Gamma)), 
    \]
    so $\alpha_i \circ \pi_i = \pi_i' \circ \alpha_{i+1}$.

    For the structure groups, we use the identifications $K_i \cong G_i/(G_i \cap G_{i+1} \Gamma)$ and $K_i' \cong G_i'/(G_i' \cap G_{i+1}' \Gamma')$. 
    Since $\phi(G_i \cap G_{i+1} \Gamma) \subseteq G_i' \cap G_{i+1}'\Gamma'$, the homomorphism $\phi$ descends, for $i = 1, \dots, s$, to a continuous surjective group homomorphism
    \[
    \xi_i \colon K_i \to K_i', \quad 
    g (G_i \cap G_{i+1} \Gamma) \mapsto \phi(g) (G_i' \cap G_{i+1}' \Gamma'). 
    \]
    For $u = g(G_i \cap G_{i+1} \Gamma) \in K_i$ and $z = h G_{i+1} \Gamma \in Z_i$, we compute
    \[
    \alpha_i (u \cdot z)
    = \alpha_i(g h G_{i+1} \Gamma)
    = a \phi(g) \phi(h) G_{i+1}' \Gamma'
    = \phi(g) a \phi(h) G_{i+1}' \Gamma'
    = \xi_i(u) \cdot \alpha_i(z). 
    \]
    Here we used that $\phi(g) \in \phi(G_i) = G_i'$ is central modulo $G_{i+1}'$. 
\end{proof}

We are now ready to construct the tower of factors for a pro-nilsystem. 

\begin{proposition}
    \label[proposition]{prop:tower_of_factors_pronilsystem}
    Let $s \geq 1$, and let $(X, \mu, T)$ be an ergodic $s$-step pro-nilsystem. 
    Then there exists a tower of factors 
    \[
    X = Z_s \xrightarrow{\pi_{s-1}} Z_{s-1} \xrightarrow{\pi_{s-2}} \cdots \xrightarrow{\pi_1} Z_1 \xrightarrow{\pi_0} Z_0 = \{*\},  
    \]
    such that for $i = 1, \dots, s$, $Z_i$ is an $i$-step pro-nilsystem, and the map $Z_i \to Z_{i-1}$ is an extension by a compact abelian group $K_i$.  
\end{proposition}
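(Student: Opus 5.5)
Fix an inverse limit $(X,\mu,T)=\varprojlim(X_n,\mu_n,T_n)$ of ergodic $s$-step nilsystems with connecting maps $\alpha_n\colon X_{n+1}\to X_n$. By \cref{lem:reduced_form}, I will present each $X_n$ in reduced form. Using \cref{prop:factor_maps_ergodic_nilsystems_are_algebraic}, I will then replace each $\alpha_n$, on a null set, by an algebraic factor map. This does not change the inverse limit. For each $n$ let
\[
X_n=Z_s^{(n)}\to\cdots\to Z_0^{(n)}=\{*\}
\]
be the tower from \cref{sec:tower_of_factors_nilsystem}, with structure groups $K_i^{(n)}$. Applying \cref{lem:tower_morphism} to $\alpha_n$ gives topological factor maps $\alpha_{n,i}\colon Z_i^{(n+1)}\to Z_i^{(n)}$ that commute with the tower maps. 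It also gives continuous surjective homomorphisms $\xi_{n,i}\colon K_i^{(n+1)}\to K_i^{(n)}$ that intertwine the vertical actions. One point to check is that the lemma's maps, being given by the explicit formula $gG_{i+1}\Gamma\mapsto a\phi(g)G'_{i+1}\Gamma'$, compose correctly from $n+2$ to $n$. Since we only compose consecutive maps to form inverse systems, this is automatic.

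Next I will set $Z_i:=\varprojlim_n Z_i^{(n)}$. I will take this first as a topological inverse limit of uniquely ergodic $i$-step nilsystems, and then, by \cref{lem:dynamical_prop_inverse_limit}(iii), also as a measure-theoretic one. So $Z_i$ is an $i$-step pro-nilsystem. The tower maps $\pi^{(n)}_{i-1}$ are compatible, so they induce a topological factor map $\pi_{i-1}\colon Z_i\to Z_{i-1}$. For $i=s$ we get $Z_s=\varprojlim X_n\cong X$ measure-theoretically, via \cref{lem:topological_model_inverse_limit}. For $i=0$ we get the trivial system.

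To show $Z_i\to Z_{i-1}$ is a compact abelian group extension, I will set $K_i:=\varprojlim_n K_i^{(n)}$ along the maps $\xi_{n,i}$. This is a compact metrizable abelian group, and it acts continuously on $Z_i$ coordinatewise, $u\cdot(z_n)_n=(u_n\cdot z_n)_n$. The action is well defined by the intertwining property in \cref{lem:tower_morphism}. It commutes with $T$ because each finite-stage action commutes with $T_i^{(n)}$. It is free because each finite-stage action is free.

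The main point is that the fibers of $\pi_{i-1}$ are exactly the $K_i$-orbits. Take $z,z'\in Z_i$ with the same image. Then for each $n$ the set
\[
C_n=\{u\in K_i^{(n)}: u\cdot z_n=z'_n\}
\]
is nonempty by \cref{lem:structure_group_action}. By freeness it is a single point. The intertwining identity shows that $\xi_{n,i}$ maps $C_{n+1}$ into $C_n$. Hence these points form an element $u\in K_i$ with $u\cdot z=z'$. In the non-free case one would use compactness of the nonempty closed sets $C_n$ instead, but freeness makes this unnecessary.

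So $\pi_{i-1}$ is a topological extension by $K_i$. By \cref{lem:topological_group_extension_is_measurable} applied to the unique invariant measure on $Z_{i-1}$, it is also a measure-theoretic extension by $K_i$. The lifted measure coincides with the unique invariant measure on $Z_i$ by unique ergodicity.

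I expect the main obstacle to be the bookkeeping in making the algebraic representatives of the $\alpha_n$ consistent, so that the topological inverse limits are honest. Checking freeness and orbit equality is routine.
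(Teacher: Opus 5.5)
Your proposal is correct and follows essentially the same route as the paper: reduced presentations, \cref{lem:tower_morphism} applied to consecutive connecting maps, topological inverse limits of the finite-stage towers and of the structure groups, the freeness argument identifying fibers with $K_i$-orbits, and finally \cref{lem:topological_group_extension_is_measurable} together with unique ergodicity. The only detail you leave implicit is surjectivity of $\pi_{i-1}\colon Z_i\to Z_{i-1}$. The paper gets it from the fact that an inverse limit of surjections between compact spaces is surjective; alternatively, the image is a nonempty closed invariant subset of the minimal system $Z_{i-1}$.
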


\begin{proof}
    Choose an inverse limit 
    \[
    (X, \mu, T) = \varprojlim (X_n, \mu_n, T_n) 
    \]
    where each $(X_n, \mu_n, T_n)$ is an ergodic $s$-step nilsystem. 
    By \cref{lem:reduced_form}, we may assume that each $X_n$ is in reduced form. 
    Let $\alpha_n \colon X_{n+1} \to X_n$ denote the connecting factor maps. 

    For each $n \in \N$, let 
    \[
    X_n = Z_{n,s} \xrightarrow{\pi_{n,s-1}} Z_{n,s-1} \xrightarrow{\pi_{n,s-2}} \cdots \xrightarrow{\pi_{n,1}} Z_{n,1} \xrightarrow{\pi_{n,0}} Z_{n,0} = \{*\}
    \]
    be the tower of factors of $X_n$ from \cref{sec:tower_of_factors_nilsystem}, with structure groups $K_{n,i}$ for $i = 1, \dots, s$. 
    Each $(Z_{n,i}, \mu_{n,i}, T_{n,i})$ is an $i$-step nilsystem, and it is ergodic as a factor of the ergodic system $X_n$, hence uniquely ergodic by \cref{cor:equivalence_ergodicity_minimality_etc_nilsystems}.  
    Applying \cref{lem:tower_morphism} to the factor map $\alpha_n \colon X_{n+1} \to X_n$, we obtain for each $i = 0, \dots, s$ a topological factor map 
    \[
    \alpha_{n, i} \colon Z_{n+1, i} \to Z_{n,i}, 
    \]
    with $\alpha_{n,s} = \alpha_n$ almost everywhere, satisfying $\alpha_{n,i} \circ \pi_{n+1, i} = \pi_{n,i} \circ \alpha_{n, i+1}$. 
    Moreover, we obtain for $i = 1, \dots, s$ a continuous surjective homomorphism
    \[
    \xi_{n,i} \colon K_{n+1, i} \to K_{n, i}
    \]
    such that 
    \[
    \alpha_{n,i} (u \cdot z) = \xi_{n,i}(u) \cdot \alpha_{n,i}(z)
    \] 
    for each $u \in K_{n+1, i}$ and each $z \in Z_{n+1, i}$.  

    Fix $i = 0, \dots, s$. 
    Since the connecting factor maps $\alpha_{n,i}$ are topological factor maps, we may take the topological inverse limit 
    \[
    Z_i = \left\{
        (z_n)_{n \in \N} \in \prod_{n \in \N} Z_{n,i} : \alpha_{n,i}(z_{n+1}) = z_n \text{ for all } n \in \N
    \right\}, 
    \] 
    equipped with the transformation $T_i((z_n)_{n \in \N}) = (T_{n,i} z_n)_{n \in \N}$. 
    Since each system $Z_{n,i}$ is uniquely ergodic, \cref{lem:dynamical_prop_inverse_limit} implies that $(Z_i, T_i)$ is uniquely ergodic. 
    Let $\mu_i$ be its unique invariant probability measure. 
    Again by \cref{lem:dynamical_prop_inverse_limit},
    \[
    (Z_i, \mu_i, T_i) = \varprojlim (Z_{n,i}, \mu_{n,i}, T_{n,i}) 
    \]
    in the measure-theoretic sense. 
    In particular, for $i = 1, \dots, s$, $(Z_i, \mu_i, T_i)$ is an ergodic $i$-step pro-nilsystem. 
    Since $Z_{n,0} = \{*\}$ for all $n$, we have $Z_0 = \{*\}$. 
    Moreover, for $i = s$, since $\alpha_{n,s} = \alpha_n$ almost everywhere, we can assume that $Z_s = X$.  

    For $i = 0, \dots, s-1$, define 
    \[
    \pi_i \colon Z_{i+1} \to Z_i, \quad 
    (z_n)_{n \in \N} \mapsto (\pi_{n,i}(z_n))_{n \in \N}. 
    \]
    This map is well-defined since for $n \in \N$ and $z = (z_n)_{n \in \N} \in Z_{i+1}$, 
    \[
    \alpha_{n,i} (\pi_{n+1, i} (z_{n+1}))   
    = \pi_{n,i}(\alpha_{n,i+1}(z_{n+1}))
    = \pi_{n,i}(z_n), 
    \]
    so that $\pi_i(z) \in Z_i$.
    The map $\pi_i$ is continuous and satisfies $\pi_i \circ T_{i+1} = T_i \circ \pi_i$. 
    It is also surjective since $\pi_{n,i}$ is surjective for each $n$; see for example \citep[Theorem 29.13]{willard2012general}.
    We conclude that $\pi_i \colon Z_{i+1} \to Z_i$ is a topological factor map, and by unique ergodicity also a measure-theoretic factor map. 

    So far we have constructed the tower of factors 
    \[
    X = Z_s \xrightarrow{\pi_{s-1}} Z_{s-1} \xrightarrow{\pi_{s-2}} \cdots \xrightarrow{\pi_1} Z_1 \xrightarrow{\pi_0} Z_0 = \{*\},  
    \]
    where for $i = 1, \dots, s$, $Z_i$ is an $i$-step pro-nilsystem.
    It remains to show that the factor maps $\pi_i$ are compact abelian group extensions.

    For $i = 1, \dots, s$ recall that the structure group $K_{n,i}$ is a compact abelian Lie group for each $n$. 
    Define
    \[
    K_i := \varprojlim K_{n, i} = \left\{
        (u_n)_{n \in \N} \in \prod_{n \in \N} K_{n,i} : \xi_{n,i}(u_{n+1}) = u_n \text{ for all } n \in \N 
    \right\}.  
    \]
    This is a closed subgroup of the product, hence a compact abelian group. 
    The group $K_i$ acts componentwise on $Z_i$: 
    for $u = (u_n)_{n \in \N} \in K_i$ and $z = (z_n)_{n \in \N} \in Z_i$, let 
    \[
    u \cdot z := (u_n \cdot z_n)_{n \in \N}. 
    \]
    This is well-defined, since for all $n \in \N$, 
    \[
    \alpha_{n,i}(u_{n+1} \cdot z_{n+1}) 
    = \xi_{n,i}(u_{n+1}) \cdot \alpha_{n,i}(z_{n+1})
    = u_n \cdot z_n.
    \]
    The action is continuous, and it is free and commutes with $T_i$, because for each $n$, the $K_{n,i}$-action on $Z_{n,i}$ is free and commutes with $T_{n,i}$.  

    We claim that the fibers of $\pi_{i-1}$ are exactly the $K_i$-orbits. 
    First, since the fibers of $\pi_{n,i-1}$ are the $K_{n,i}$-orbits, 
    \[
    \pi_{i-1}(u \cdot z) 
    = (\pi_{n,i-1}(u_n \cdot z_n))_{n \in \N}
    = (\pi_{n,i-1}(z_n))_{n \in \N}
    = \pi_{i-1}(z),  
    \]
    so $\pi_{i-1}$ is constant on the $K_i$-orbits. 
    Conversely, let $z = (z_n)_{n \in \N}$ and $z' = (z_n')_{n \in \N} \in Z_i$ with $\pi_{i-1}(z) = \pi_{i-1}(z')$. 
    For each $n$, there exists $u_n \in K_{n,i}$ with $u_n \cdot z_n = z_n'$ and $u_n$ is unique by freeness of the $K_{n,i}$-action. 
    Then, for every $n$, 
    \[
    \xi_{n,i}(u_{n+1}) \cdot z_n 
    = \xi_{n,i}(u_{n+1}) \cdot \alpha_{n,i}(z_{n+1})
    = \alpha_{n,i}(u_{n+1} \cdot z_{n+1})
    = \alpha_{n,i}(z_{n+1}')
    = z_n'
    = u_n \cdot z_n 
    \]
    so $\xi_{n,i}(u_{n+1}) = u_n$ by freeness. 
    Hence $u := (u_n)_{n \in \N} \in K_i$ and $u \cdot z = z'$, that is, $z$ and $z'$ lie in the same $K_i$-orbit.     

    We conclude for $i = 1, \cdots, s$ that the factor map $\pi_{i-1} \colon Z_i \to Z_{i-1}$ is an extension by the compact abelian group $K_i$ in the topological sense. 
    Finally, by \cref{lem:topological_group_extension_is_measurable} and unique ergodicity, it is therefore also an extension by $K_i$ in the measure-theoretic sense. 
\end{proof}

\section{Proof of the factor-closure theorem}
\label{sec:proof_main_thrm}

We now combine \cref{thrm:main_special_case} and \cref{prop:tower_of_factors_pronilsystem} to prove \cref{thrm:main}. 
Our argument generalizes the proof of the factor-closure of ergodic nilsystems, given in \citep[Chapter 13, Theorem 11]{host2018nilpotent}, to the setting of pro-nilsystems. 

\begin{proof}[Proof of \cref{thrm:main}]
    By \cref{prop:tower_of_factors_pronilsystem}, there is a tower of factors 
    \[
    X = Z_s \longrightarrow Z_{s-1} \longrightarrow \cdots \longrightarrow Z_1 \longrightarrow Z_0 = \{*\},  
    \]
    where, for $i = 1, \dots, s$, the map $Z_i \to Z_{i-1}$ is an extension by a compact abelian group $K_i$. 
    For $i = 0, \dots, s$, let $p_i \colon X \to Z_i$ denote the composition of the maps in the tower. 
    Thus $p_s$ is the identity map on $X$, and $p_0$ is the factor map onto the trivial system.  
    Define the invariant sub-$\sigma$-algebra
    \[
    \mcw_i := \pi^{-1}(\mcy) \vee p_i^{-1}(\mcz_i),
    \]
    and let $W_i$ be the corresponding factor of $X$. 
    Thus $W_i$ is the smallest factor of $X$ containing both $Y$ and $Z_i$ as factors. 
    In particular, $\mcw_s =_\mu \mcx$ and $\mcw_0 =_\mu \pi^{-1}(\mcy)$.   
    Hence $W_s$ is isomorphic to $X$ and $W_0$ is isomorphic to $Y$. 

    By the structure of the tower, the factors $Z_i$ of $X$ are increasing, so $\mcw_{i-1} \subseteq_\mu \mcw_i$ for all $i$.  
    Thus the factors $W_i$ form a tower 
    \[
    X \cong W_s \longrightarrow W_{s-1} \longrightarrow 
    \cdots \longrightarrow W_1 \longrightarrow W_0 \cong Y. 
    \]
    We prove by downward induction on $i$ that each $W_i$ is an $s$-step pro-nilsystem. 
    For $i = s$, this is true because $W_s \cong X$ is an $s$-step pro-nilsystem by assumption. 

    Let $1 \leq i \leq s$, and assume that $W_i$ is an $s$-step pro-nilsystem. 
    Note that $W_i$ is ergodic as a factor of the ergodic system $X$. 
    We claim that the factor map 
    \[
    W_i \to W_{i-1} 
    \]
    is an extension by a compact abelian group. 
    Indeed, since $p_{i-1}^{-1}(\mcz_{i-1}) \subseteq_\mu p_i^{-1}(\mcz_i)$, we obtain 
    \[
    \mcw_i = \pi^{-1}(\mcy) \vee p_i^{-1}(\mcz_i)
    =_\mu \mcw_{i-1} \vee p_i^{-1}(\mcz_i). 
    \]
    The inclusions $p_i^{-1}(\mcz_i) \subseteq_\mu \mcw_i$ and $p_{i-1}^{-1}(\mcz_{i-1}) \subseteq_\mu \mcw_{i-1}$ induce the following commutative diagram of factor maps.  
    \[\begin{tikzcd}
        {W_i} & {Z_i} \\
        {W_{i-1}} & {Z_{i-1}}
        \arrow[from=1-1, to=1-2]
        \arrow[from=1-1, to=2-1]
        \arrow[from=1-2, to=2-2]
        \arrow[from=2-1, to=2-2]
    \end{tikzcd}\]
    Since $Z_i \to Z_{i-1}$ is an extension by the compact abelian group $K_i$, \cref{lem:induced_group_ext} applies, showing that $W_i \to W_{i-1}$ is an extension by a closed subgroup of $K_i$.  
    By the induction hypothesis, $W_i$ is an $s$-step pro-nilsystem, so \cref{thrm:main_special_case} applied to the extension $W_i \to W_{i-1}$ shows that $W_{i-1}$ is an $s$-step pro-nilsystem. 

    By downward induction, $W_0$ is an $s$-step pro-nilsystem. 
    Since $W_0$ is isomorphic to $(Y, \nu, S)$, this completes the proof. 
\end{proof}

\section{Factor-closure of topological pro-nilsystems}
\label{sec:topological_factor_closure}

We finish this chapter by deriving the following topological analogue of \cref{thrm:main}. 

\begin{theorem}[Topological factor-closure]
    \label{thrm:topological_version_main}
    Let $s \geq 1$, and let $(X, T)$ be a transitive topological $s$-step pro-nilsystem. 
    Suppose that 
    \[
    \pi \colon (X, T) \to (Y, S) 
    \]
    is a topological factor map. 
    Then $(Y, S)$ is a topological $s$-step pro-nilsystem. 
\end{theorem}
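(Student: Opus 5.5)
The plan is to deduce \cref{thrm:topological_version_main} from the measure-theoretic \cref{thrm:main}, and then upgrade the resulting measure-theoretic isomorphism to a topological one using \cref{prop:factors_between_pronilsystems_are_topological} and \cref{lem:isomorphism_upgrade_crit}.

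First, collect the dynamical properties of $X$. Since $(X,T)$ is a topological pro-nilsystem, it is distal by \cref{lem:nilsystems_are_distal} and \cref{lem:dynamical_prop_inverse_limit}. Transitivity therefore gives minimality, and minimality gives unique ergodicity, as noted after \cref{eg:one_step_pronilsystems}. Let $\mu$ be the unique invariant measure on $X$. By \cref{lem:topological_model_inverse_limit}(ii), $(X,\mu,T)$ is an ergodic measure-theoretic $s$-step pro-nilsystem.

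Next, pass these properties to $Y$. Minimality and unique ergodicity descend to topological factors, so $(Y,S)$ is minimal and uniquely ergodic with unique invariant measure $\nu=\pi_*\mu$. Distality also passes to factors of distal minimal systems: this is a standard fact, and it also follows from the graph-joining argument used in \cref{lem:isomorphism_upgrade_crit}. I would check distality of $Y$ carefully, since it is exactly what the upgrade lemma needs. Now \cref{thrm:main} shows that $(Y,\nu,S)$ is an ergodic $s$-step pro-nilsystem.

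By \cref{lem:topological_model_inverse_limit}(i), there is a uniquely ergodic topological $s$-step pro-nilsystem $(Y',S')$ with invariant measure $\nu'$, together with a measure-theoretic isomorphism $\theta\colon (Y,\nu,S)\to(Y',\nu',S')$. The task is to realize $\theta$ by a homeomorphism. I would not apply \cref{prop:factors_between_pronilsystems_are_topological} to $\theta$ directly, because $Y$ is not yet known to carry a pro-nilsystem model. Instead, apply that proposition to the measure-theoretic factor map $\theta\circ\pi\colon (X,\mu,T)\to(Y',\nu',S')$. Both sides here are canonical pro-nilsystem models, so $\theta\circ\pi$ agrees $\mu$-a.e.\ with a topological factor map $\beta\colon X\to Y'$.

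It then remains to show that $\beta$ factors continuously through $\pi$, giving $\gamma\colon Y\to Y'$ with $\beta=\gamma\circ\pi$. The plan is to mimic the proof of \cref{prop:factors_between_pronilsystems_are_topological}:

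1. Consider the set $Z:=\{(\pi(x),\beta(x)) : x\in X\}\subseteq Y\times Y'$. It is a compact, invariant topological factor of $X$, hence minimal, distal and uniquely ergodic.
2. Its invariant measure is $(\pi,\beta)_*\mu$. Since $\beta=\theta\circ\pi$ almost everywhere, this measure equals the graph joining $(\mathrm{Id},\theta)_*\nu$.
3. The first projection $r_1\colon Z\to Y$ is therefore a topological factor map that is a measure-theoretic isomorphism. Since $Z$ is distal and uniquely ergodic, \cref{lem:isomorphism_upgrade_crit} makes $r_1$ a homeomorphism.
4. The same argument applies to the second projection $r_2\colon Z\to Y'$, because $\theta$ is an isomorphism. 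Hence $\gamma:=r_2\circ r_1^{-1}$ is a topological isomorphism $Y\cong Y'$, and $Y$ is a topological $s$-step pro-nilsystem.

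The main obstacle is the distality of $Z$, which \cref{lem:isomorphism_upgrade_crit} requires. $Z$ is a factor of the distal minimal system $X$, so I would first try to derive its distality from the general fact that factors of distal systems are distal. The alternative is to embed $Z$ in $Y\times Y'$ and use distality of $Y'$ (a pro-nilsystem) together with distality of $Y$ established in the earlier step.
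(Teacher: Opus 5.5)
Your proposal is correct and follows the paper's proof essentially step for step. You deduce from \cref{thrm:main} that $(Y,\nu,S)$ is a pro-nilsystem, apply \cref{prop:factors_between_pronilsystems_are_topological} to $\theta\circ\pi$ rather than to $\theta$, and upgrade both coordinate projections of $Z=(\pi,\beta)(X)$ via \cref{lem:isomorphism_upgrade_crit}.

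The distality of $Z$, which you flag as the main obstacle, is immediate because $Z$ is a topological factor of the distal system $X$; this is exactly how the paper handles it. So your separate check of distality of $Y$ is unnecessary. Also, the argument of \cref{lem:isomorphism_upgrade_crit} uses distality as a hypothesis rather than proving it, so it cannot serve as the justification you suggest for $Y$.
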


This theorem is a consequence of the topological structure theorem by \citet{host2010nilsequences}. 
In their terminology, the transitive topological $s$-step pro-nilsystems are precisely the systems of order $s$, and the class of systems of order $s$ is closed under taking factors.  
Thus a factor of a transitive topological $s$-step pro-nilsystem is indeed again an $s$-step pro-nilsystem.  

We give a different proof. 
The idea is to combine \cref{thrm:main} with the correspondence between ergodic pro-nilsystems and their canonical topological models discussed in \cref{sec:pronilsystems}. 
The main additional ingredient is the criterion from \cref{lem:isomorphism_upgrade_crit} for upgrading measure-theoretic isomorphisms to topological ones. 

\begin{proof}[Proof of \cref{thrm:topological_version_main}]
    By the discussion in \cref{sec:pronilsystems}, the transitive topological $s$-step pro-nilsystem $(X, T)$ is distal and uniquely ergodic. 
    Let $\mu$ be its unique invariant probability measure, and set $\nu := \pi_* \mu$. 
    By \cref{lem:topological_model_inverse_limit}, the measure-preserving system $(X, \mu, T)$ is an ergodic $s$-step pro-nilsystem.  

    Applying \cref{thrm:main} to the factor map $\pi \colon (X, \mu, T) \to (Y, \nu, S)$, we conclude that $(Y, \nu, S)$ is an ergodic $s$-step pro-nilsystem. 
    By \cref{lem:topological_model_inverse_limit}, we may choose a uniquely ergodic topological model: there exists a uniquely ergodic, hence transitive, topological $s$-step pro-nilsystem $(\wt Y, \wt S)$ with unique invariant measure $\wt \nu$ and a measure-theoretic isomorphism
    \[
    \Phi \colon (Y, \nu, S) \to (\wt Y, \wt \nu, \wt S). 
    \]
    
    By \cref{prop:factors_between_pronilsystems_are_topological}, the composition $\Phi \circ \pi$ agrees almost everywhere with a topological factor map. 
    More precisely, there exists a topological factor map 
    \[
    \wt \pi \colon (X, T) \to (\wt Y, \wt S).  
    \]
    with $\wt \pi = \Phi \circ \pi$ $\mu$-almost everywhere.  

    Consider the closed $S \times \wt S$-invariant subset 
    \[
    Z := (\pi, \wt \pi)(X) \subseteq Y \times \wt Y.  
    \]
    Since $(X, T)$ is distal and uniquely ergodic, the factor system $(Z, S \times \wt S)$ is also distal and uniquely ergodic. 
    Let 
    \[
    \lambda := (\pi, \wt \pi)_* \mu 
    \] 
    be the unique $S \times \wt S$-invariant probability measure on $Z$.
    Let 
    \[
    p \colon Z \to Y, \quad \wt p \colon Z \to \wt Y 
    \]
    be the coordinate projections. 
    Then $p$ and $\wt p$ are both topological factor maps. 
    Moreover, since $\wt \pi = \Phi \circ \pi$ holds $\mu$-almost everywhere, we have that 
    \[
    \lambda = (\pi, \wt \pi)_*\mu = (\Id_Y, \Phi)_* \nu. 
    \]
    Thus $\lambda$ is the graph joining of the measure-theoretic isomorphism $\Phi$.
    It follows that both 
    \[
    p \colon (Z, \lambda, S \times \wt S) \to (Y, \nu, S), \quad \wt p \colon (Z, \lambda, S \times \wt S) \to (\wt Y, \wt \nu, \wt S) 
    \]
    are measure-theoretic isomorphisms.

    By \cref{lem:isomorphism_upgrade_crit}, $p$ and $\wt p$ are topological isomorphisms. 
    Therefore 
    \[
    \wt p \circ p^{-1} \colon (Y, S) \to (\wt Y, \wt S) 
    \]
    is a topological isomorphism.
    We conclude that $(Y, S)$ is a topological $s$-step pro-nilsystem. 
\end{proof}

\chapter{The weak structure theorem}
\label{chap:weak_structure_thrm}

In this chapter, as announced in the introduction, we give a detailed proof of the weak structure theorem using a form of the inverse theorem for the Gowers norms, following the argument described by Tao in \citep{tao2015weak}. 

\begin{theorem}[Weak structure theorem]
    \label{thrm:weak_structure_theorem}
    Let $s \geq 1$. 
    Every ergodic measure-preserving system of order $s$ is a factor of an ergodic $s$-step pro-nilsystem.  
\end{theorem}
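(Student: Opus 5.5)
Our plan follows Tao's route: show that in an ergodic system $(X,\mu,T)$ of order $s$, every function in $L^\infty(X,\mu)$ is approximable by functions that factor through nilsystems, and then assemble these nilsystems into one pro-nilsystem admitting $X$ as a factor. First, we use the parallelepiped description of $\|\cdot\|_{U^{s+1}(X)}$. For $f\in L^\infty$, consider the dual function $\mathcal{D}f$, the limit of averages over dynamical parallelepipeds of the $2^{s+1}-1$ non-identity vertices, so that $\langle f,\mathcal{D}f\rangle=\|f\|_{U^{s+1}}^{2^{s+1}}$. Because $\|\cdot\|_{U^{s+1}}$ is a norm on an order $s$ system, the linear span of dual functions is dense in $L^2(X,\mu)$: a function orthogonal to every $\mathcal{D}g$ has $\langle f,\mathcal{D}f\rangle=0$, hence vanishes. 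It therefore suffices to handle dual functions, and in fact finitely many points of a countable dense family.

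Second, we approximate dual functions by nilsequences, which is the heart of the matter. By Birkhoff we choose a generic point $x_0$ for a countable algebra of functions, including products of shifts of the relevant $f$'s. Along $x_0$, the sequence $n\mapsto \mathcal{D}f(T^nx_0)$ is a limit of finite averages of products of shifts of $f$ evaluated on the orbit. We apply the inverse theorem for the Gowers norms from \cref{chap:appendix} in its dual form, obtained via a Hahn--Banach or energy-increment decomposition: on $[N]$, any averaged dual function is within $\epsilon$ in $\ell^2$ of a bounded-complexity $s$-step nilsequence $F(g(n)\Gamma)$. Since the complexity is bounded uniformly in $N$, compactness lets us extract a single nilmanifold and a limiting polynomial sequence. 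We then convert the polynomial orbit into a linear orbit of a translation on a larger $s$-step nilsystem, by the standard lift to $G^{\Poly}$, and use equidistribution and factorization results to select an ergodic subnilsystem. The outcome should be that $\mathcal{D}f(T^nx_0)$ is $\epsilon$-close in density to $\Phi(\tau^n y_0)$ with $\Phi$ continuous on an ergodic $s$-step nilsystem $(N,\tau)$.

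Third, we build the factor. For a countable dense family $(f_k)$ and tolerances $\epsilon_j\to0$, we obtain nilsystems $(N_{k,j},\tau_{k,j},y_{k,j})$. The point $(x_0,(y_{k,j}))$ in $X\times\prod N_{k,j}$ has an orbit whose empirical measures accumulate at a joining $\lambda$. We take the product of finitely many nilsystems at a time, restrict to orbit closures, which are ergodic subnilsystems by \cref{thrm:orbit_closures_nilsystems_are_nilsystems}, and pass to the inverse limit; this yields an ergodic $s$-step pro-nilsystem $P$. The projection of $\lambda$ to $P$ is its Haar measure, and the density closeness says that each $\mathcal{D}f_k\otimes 1$ lies in the $L^2(\lambda)$-closure of functions of the $P$-coordinate. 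By the first step, all of $L^2(X)$ is then $P$-measurable in the joining. Moreover, $\lambda$ can be taken ergodic, through an ergodic decomposition or genericity choice. The factor criterion of \cref{chap:factor_criterion_appendix} then shows that $X$ is a factor of $(P\times X,\lambda)\cong P$.

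The main obstacle is the second step: getting the inverse theorem to produce an approximation of the dual function itself, rather than a mere correlation, along a single orbit, uniformly in $N$, and with a limiting object that is a genuine ergodic nilsystem orbit rather than a polynomial nilsequence. We would first try an energy-increment iteration using the inverse theorem, then compactness in the space of bounded-complexity nilsequences. After that we would handle the remaining approximation errors, which come from the generic point and from non-equidistribution, by passing to subnilmanifolds.
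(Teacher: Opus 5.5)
Your outline has the same architecture as the paper's proof: density of dual functions, nilsequence approximation along one generic orbit, compactness, linearization, orbit closures, an inverse limit of products, and a factor criterion. However, the step you flag as the main obstacle contains a genuine gap. Compactness in the space of bounded-complexity nilsequences cannot by itself transfer the approximation that the inverse theorem provides.

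\cref{cor:dual_approx_nilsequence} gives you, for each $N$, a nilsequence $\psi_N$ with $\E_{h\in[N]}|\mathcal{D}_{s+1,N}f(T^hx_0)-\psi_N(h)|^2\le\epsilon$. This is a single average over the full window. A pointwise limit $\psi$ of the $\psi_N$ (after normalizing Taylor coefficients as in \cref{lem:compactness_taylor_coefficients}) inherits no approximation property from this bound:
\begin{itemize}
\item Pointwise convergence only sees finitely many $h$ at a time.
\item The window bound allows the approximation to fail completely on $[1,\epsilon N/4]$, and this interval contains any fixed $[H]$ once $N$ is large.
\end{itemize}
So nothing forces $\psi$ to be close to $\mathcal{D}f(T^hx_0)$ in density, which is what your third step needs. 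Energy increments do not address this; they are already packaged in \cref{lem:antiuniform_approx_nilsequence}. Equidistribution or factorization results do not address it either. Relatedly, $\mathcal{D}f(T^nx_0)$ is not a pointwise limit of finite averages along the orbit, since $\mathcal{D}_{s+1,N}f\to\mathcal{D}f$ only in $L^2(\mu)$.

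The missing idea is uniformity over initial segments, which the paper obtains as follows.
\begin{itemize}
\item It chooses scales $N_m$ with $\|\mathcal{D}f_n-\mathcal{D}_{s+1,N_m}f_n\|_{L^2(\mu)}^2\le 2^{-100(n+m)}$ and transfers these errors to orbits with the maximal ergodic theorem.
\item It defines bad sets $B_{n,m,k}$: the points $y$ for which no complexity-$M_{n,k}$ nilsequence approximates $h\mapsto\mathcal{D}_{s+1,N_m}f_n(T^hy)$ on \emph{every} $[H]$ with $H\le 2^{-10k}N_m$.
\item It proves $\mu(B_{n,m,k})\lesssim 2^{-50(n+k)}$. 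Shifting a full-window approximation by $r$ works at $T^ry$ for all but few $r$, by the discrete Hardy--Littlewood maximal inequality, and $T$-invariance turns this into the measure bound.
\item Fatou's lemma then yields one point $x$ outside $B_{n,m_j,k}$ along a subsequence of scales and for all $k\ge K_x$.
\end{itemize}
Only at this point does the compactness limit satisfy $\E_{h\in[H]}|\mathcal{D}f_n(T^hx)-\psi_{n,k}(h)|^2\lesssim 2^{-20(n+k)}$ for every $H$.

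There are also three lesser points.
\begin{itemize}
\item Empirical measures of $(x_0,(y_{k,j}))$ in $X\times\prod N_{k,j}$ require a topology on $X$ in which the $\mathcal{D}f_k$ are continuous. You must first pass to the shift model $x\mapsto(\mathcal{D}f_n(T^hx))_{n,h}$, as in \cref{prop:polynomial_factor_criterion}. After that, your joining argument is a valid substitute for the moment identities. Ergodicity of $\lambda$ is then unnecessary, because the coordinate projection from the joining to the uniquely ergodic $P$ is an isomorphism.
\item No factorization theorem is needed: orbit closures already give uniquely ergodic subnilsystems by \cref{thrm:orbit_closures_nilsystems_are_nilsystems}.
\item Your density argument in the first step only handles bounded $f$. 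The $L^2$ statement is the Host--Kra result that the paper cites.
\end{itemize}
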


We briefly recall the terminology used in this statement, the details of which can be found in \citep[Chapter 9]{host2018nilpotent} and \citep[Chapter 15]{eisner2025journey}. 
Let $(X, \mu, T)$ be a measure-preserving system and let $f \in L^\infty(X, \mu)$. 
The Gowers--Host--Kra seminorms can be defined for $s \geq 1$ by 
\[
\| f \|_{U^s(X)}^{2^s} := 
\lim_{N \to \infty} \frac{1}{N^s} \sum_{h_1, \dots, h_s \in [N]} \int_X \prod_{\omega \in \{0, 1\}^s}  
\mcc^{|\omega|} T^{\omega_1 h_1 + \dots + \omega_s h_s} f \dd\mu, 
\]
where $[N] := \{1, \dots, N\}$, $\mcc$ is the complex conjugate map, and $|\omega| = \omega_1 + \cdots + \omega_s$. 
The system is said to be of order $s$ if the seminorm $\| \cdot \|_{U^{s+1}(X)}$ is a norm, that is, if $\| f \|_{U^{s + 1}(X)} > 0$ for every non-zero $f \in L^\infty(X, \mu)$. 

We now begin by introducing the dynamical dual functions and formulating the order $s$ condition in terms of them.  
We then combine this formulation with the results from the inverse theory for the Gowers norms collected in \cref{chap:appendix} to prove \cref{thrm:weak_structure_theorem}. 
We conclude the chapter by explaining how \cref{thrm:weak_structure_theorem} combines with \cref{thrm:main} to recover the Host--Kra structure theorem. 

\section{Dynamical dual functions}

The dynamical dual functions form the bridge between the order $s$ condition and the inverse theorem for the Gowers norms. 
The content of this section is based on \citep[Chapters 8, 9]{host2018nilpotent}. 

Let $(X, \mu, T)$ be a measure-preserving system, let $s \geq 1$, and let $f \in L^\infty(X, \mu)$. 
For $N \geq 1$, define 
\[
\mcd_{s,N} f := \frac{1}{N^s} 
\sum_{h_1, \dots, h_s \in [N]} \prod_{\omega \in \{0, 1\}^s \setminus \{0\}} \mcc^{|\omega|} T^{\omega_1 h_1 + \dots + \omega_s h_s} f. 
\]
By \citep[Chapter 8, Theorem 28]{host2018nilpotent}, the functions $\mcd_{s,N} f$ converge in $L^2(X, \mu)$ as $N \to \infty$. 
Importantly, the proof of this fact does not depend on the Host--Kra structure theorem. 
We denote the limit by 
\[
\mcd_s f := \lim_{N \to \infty} \mcd_{s,N} f, 
\]
and call $\mcd_s f$ the dynamical dual function associated to $f$.
Since $\| \mcd_{s,N} f \|_{L^\infty(\mu)} \leq \| f \|_{L^\infty(\mu)}^{2^s - 1}$ for each $N \geq 1$, we have that 
\[
\| \mcd_s f \|_{L^\infty(\mu)} \leq \| f \|_{L^\infty(\mu)}^{2^s - 1}.  
\]
The dual functions are related to the Gowers--Host--Kra seminorms by the identity
\[
\int_X f \cdot \mcd_s f \dd\mu = \| f \|_{U^s(X)}^{2^s},  
\]
for every $f \in L^\infty(X, \mu)$. 

The important point for us is that if $(X, \mu, T)$ is of order $s$, then the dynamical dual functions $\mcd_{s+1} f$ for $f \in L^\infty(X, \mu)$ generate the system in the sense of the following proposition; see \citep[Chapter 9, Proposition 17]{host2018nilpotent} for a proof. 

\begin{proposition}
    \label[proposition]{prop:dual_functions_generate}
    Let $s \geq 1$, and let $(X, \mu, T)$ be a measure-preserving system of order $s$. 
    Then 
    \[
   \Span \{\mcd_{s+1} f : f \in L^\infty(X, \mu)\}
    \]
    is dense in $L^2(X, \mu)$. 
\end{proposition}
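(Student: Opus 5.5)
The plan is to show that the orthogonal complement of $\mathcal{S} := \Span\{\mcd_{s+1} f : f \in L^\infty(X,\mu)\}$ in $L^2(X,\mu)$ is trivial. Since $L^\infty$ is dense in $L^2$, it suffices to treat $g \in L^2$ orthogonal to every $\mcd_{s+1} f$. I would argue by contradiction, perturbing $g$ suitably and using the order $s$ assumption that $\|\cdot\|_{U^{s+1}(X)}$ is a norm on $L^\infty(X,\mu)$. The key identity is $\int_X f\cdot \mcd_{s+1} f \dd\mu = \|f\|_{U^{s+1}(X)}^{2^{s+1}}$. A more flexible multilinear version comes from the same limit defining $\mcd_{s+1}$: for bounded functions $f_\omega$, $\omega\in\{0,1\}^{s+1}$, one has the Gowers--Cauchy--Schwarz inequality for the cube inner product $\langle (f_\omega)_\omega\rangle \leq \prod_\omega \|f_\omega\|_{U^{s+1}}$. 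I would take both as known from the Host--Kra theory of seminorms in \citep[Chapters 8, 9]{host2018nilpotent}, which does not rely on the structure theorem.

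First, I would reduce to $g\in L^\infty$. Let $g \perp \mathcal{S}$ with $g\neq 0$. Since $\mcd_{s+1}$ is not linear, $\mathcal{S}$ is not obviously invariant under the natural operations. The key observation is that for bounded $f$ and $t\in\C$, the function $\mcd_{s+1}(f+tg')$ is a polynomial in $t,\bar t$ whose coefficients are multilinear dual functions $\mcd_{s+1}(f_\omega)_{\omega\neq 0}$, where each entry is $f$ or $g'$. Taking finite differences in $t$ and $\bar t$ then shows that every multilinear dual function $\mcd((f_\omega))$ with bounded entries lies in the closure of $\mathcal{S}$. In particular $g$ is orthogonal to all of them. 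Approximating $g$ by bounded $g'$ in $L^2$, and using that multilinear dual functions are uniformly bounded, I would get $\langle g', \mcd_{s+1}(g')\rangle$ small as the approximation improves. The precise statement needed is that the map $f\mapsto \mcd_{s+1}f$ extends continuously from $L^{2^{s+1}-1}$, or from $L^\infty$ with a truncation argument. This holds because $\|\mcd_{s+1} f\|_{L^1}$ can be controlled using $\|f\|_{L^{2^{s+1}-1}}$ by H\"older inside the averages.

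Next, I would pair $g$ with itself. Truncating, $g_M := g\ind{|g|\le M}$ is bounded, and $\int \bar g\,\mcd_{s+1}(g_M)\dd\mu = 0$ for every $M$. Writing $\bar g = \bar g_M + \bar g\ind{|g|>M}$ gives $\|g_M\|_{U^{s+1}}^{2^{s+1}} = -\int \overline{g\ind{|g|>M}}\,\mcd_{s+1}(g_M)\dd\mu$. The right-hand side is problematic because $\|\mcd_{s+1} g_M\|_\infty$ grows like $M^{2^{s+1}-1}$. I would therefore replace $\mcd_{s+1}(g_M)$ by the multilinear dual with one slot equal to $g_M$ and the remaining slots bounded. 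Such a function is again in the closure of $\mathcal{S}$ by the polarization step above. This yields $\|g_M\|_{U^{s+1}}$ controlled by a tail quantity tending to $0$. Since $\|\cdot\|_{U^{s+1}}\le \|\cdot\|_{L^{2^{s+1}}}$-type bounds hold, the conclusion would be $\|g\|_{U^{s+1}} = 0$ along a limit, hence $g=0$ by the order $s$ hypothesis, giving the contradiction.

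The main obstacle is the unboundedness of a general $L^2$ element $g$. The clean argument works when $g\in L^\infty$: then $0=\langle g,\mcd_{s+1}g\rangle=\|g\|_{U^{s+1}}^{2^{s+1}}$ gives $g=0$ immediately. For general $g$, I would first try to show that the closure of $\mathcal{S}$ is a closed $T$-invariant subalgebra-like space. Its orthogonal complement is then $T$-invariant and closed under conjugation. One can then use conditional expectation onto the factor generated by $\mathcal{S}$, namely the $\sigma$-algebra generated by dual functions. If $\mathcal{Z}$ denotes that factor, any bounded $f$ with $\E(f\mid\mathcal{Z})=0$ is orthogonal to $\mcd_{s+1}f$, so $\|f\|_{U^{s+1}}=0$ and $f=0$. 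Hence $\mathcal{Z}$ is everything. It then remains to show that the span of the dual functions is dense in $L^2(\mathcal{Z})$, which is the genuinely delicate step, handled via products of dual functions being approximable by dual functions.
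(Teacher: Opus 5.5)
Several of your ingredients are correct. The bounded case works. The polarization step places every multilinear dual function with bounded entries in $\mathcal{S}$ itself. The $\sigma$-algebra argument shows that the dual functions generate $\mcx$ modulo $\mu$: apply the bounded case to $\ind{A}-\E_\mu(\ind{A}\mid\mcz)$.

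\textbf{The gap.} The passage from bounded to arbitrary $g\in L^2(X,\mu)$, which is the whole content of the proposition, is never achieved.
\begin{itemize}
\item In your approximation step you only get $\|g'\|_{U^{s+1}(X)}^{2^{s+1}}\le\|g-g'\|_{L^2}\,\|\mcd_{s+1}g'\|_{L^2}$. The second factor is controlled only by $\|g'\|_{L^\infty}^{2^{s+1}-1}$, which blows up as $g'\to g$ when $g$ is unbounded. Multilinear duals are uniformly bounded only when all their entries are, not when the entries are $g'$ itself.
\item The estimate $\|\mcd_{s+1}f\|_{L^1}\le\|f\|_{L^{2^{s+1}-1}}^{2^{s+1}-1}$ does not rescue this. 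Here $g$ is only in $L^2$, and pairing against $g-g'\in L^2$ requires $L^2$ control of $\mcd_{s+1}g'$.
\item In the truncation step, putting $g_M$ in one slot and bounded functions in the others no longer produces $\|g_M\|_{U^{s+1}(X)}^{2^{s+1}}$. Gowers--Cauchy--Schwarz only bounds such cube inner products from above, so nothing forces $g_M$ to be small. Even $\|g_M\|_{U^{s+1}(X)}\to 0$ would not give $g=0$, since the order $s$ hypothesis concerns each individual bounded function.
\item Your last paragraph reduces everything to density of the span in $L^2(\mcz)=L^2(X,\mu)$, which is the original statement. The proposed mechanism, that products of dual functions are approximable by linear combinations of dual functions, is asserted without argument and is no easier than the proposition.
\end{itemize}

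\textbf{The missing idea.} Pass to the cube measure $\mu^{[s+1]}$ on $X^{\{0,1\}^{s+1}}$, as Host and Kra do. The paper gives no proof of its own and cites \citep[Chapter 9, Proposition 17]{host2018nilpotent}. Write $x_*:=(x_\omega)_{\omega\neq 0}$. For bounded $f_\omega$ one has $\int_X g\cdot\mcd_{s+1}(f_\omega)_{\omega\neq 0}\dd\mu=\int g(x_0)\prod_{\omega\neq 0}\mcc^{|\omega|}f_\omega(x_\omega)\dd\mu^{[s+1]}(x)$. Since all marginals of $\mu^{[s+1]}$ equal $\mu$, the right-hand side makes sense for every $g\in L^2(X,\mu)$, with no truncation. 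Combined with your polarization step, $g\perp\mathcal{S}$ gives $\E_{\mu^{[s+1]}}(g(x_0)\mid x_*)=0$.

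The structural input you then need comes from Host--Kra theory, not from the seminorm identities you list. It is that each dual function evaluated at $x_0$ agrees $\mu^{[s+1]}$-almost everywhere with a function of $x_*$: dual functions are $\mcz_s$-measurable, and on $\mcz_s$ vertex $0$ is determined by the others. This property passes to the unital, conjugation-closed algebra generated by the dual functions. So every $\phi$ in that algebra satisfies $\phi(x_0)=\Phi(x_*)$ almost everywhere for some $\Phi$, whence $\int_X g\overline{\phi}\dd\mu=\int g(x_0)\overline{\Phi(x_*)}\dd\mu^{[s+1]}(x)=0$. Your $\sigma$-algebra argument shows the dual functions generate $\mcx$, so this algebra is dense in $L^2(X,\mu)$, and hence $g=0$.

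Put differently, the truncation you wanted is legitimate only because the kernel of $g\mapsto\E_{\mu^{[s+1]}}(g(x_0)\mid x_*)$ is stable under multiplication by bounded functions determined by the other vertices. It is this cube-measure property that handles unbounded $g$.
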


The finite approximations to the dynamical dual functions have the following anti-uniformity property; see \cref{chap:appendix} for the notions of nilsequences, complexity and the Gowers norms $\| \cdot \|_{U^{s+1}[N]}$ used below. 
Throughout the rest of this chapter, we write $A \lesssim_{\alpha_1, \dots \alpha_k} B$ to mean that $A \leq C B$ for a constant $C = C(\alpha_1, \dots, \alpha_k) > 0$ depending only on $\alpha_1, \dots, \alpha_k$. 
If no subscript is written, the implicit constant is absolute. 

\begin{lemma}
    \label[lemma]{lem:anti_uniformity_dual_functions}
    Let $s \geq 1$, and let $(X, \mu, T)$ be a measure-preserving system. 
    Suppose that $f \in L^\infty(X, \mu)$ satisfies $\| f \|_{L^\infty(\mu)} \leq 1$. 
    Then for $\mu$-almost every $x \in X$, every $N \geq 1$, and every $1$-bounded function $g \colon [N] \to \C$,
    \[
    | \E_{h \in [N]} \mcd_{s+1,N} f (T^h x) \overline{g(h)} | \lesssim_s \| g \|_{U^{s+1}[N]}. 
    \]
\end{lemma}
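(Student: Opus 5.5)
The plan is to unfold $\mcd_{s+1,N} f(T^h x)$ as an average over the cube parameters. The correlation on the left then becomes a Gowers-type inner product of $2^{s+1}$ functions on the integers, with $\overline{g}$ at the vertex $0$ and functions of the form $n \mapsto f(T^n x)$ at every other vertex. We then conclude by a Gowers--Cauchy--Schwarz inequality, in which only the norm of $g$ survives.

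\textbf{Choice of good points.} Fix a representative of $f$ and let $x$ range over the full-measure set where $|f(T^n x)| \le 1$ for all $n \in \Z$. This is a countable intersection of full-measure sets, since $T$ preserves $\mu$. For such $x$ put $F(n) := f(T^n x)$, a $1$-bounded function on $\Z$. Writing $h' = (h_1, \dots, h_{s+1})$, we get, pointwise in $h$,
\[
\mcd_{s+1,N} f(T^h x) = \E_{h' \in [N]^{s+1}} \prod_{\omega \neq 0} \mcc^{|\omega|} F(h + \omega \cdot h').
\]
Hence the quantity to bound is
\[
\E_{h \in [N]} \E_{h' \in [N]^{s+1}} \overline{g(h)} \prod_{\omega \neq 0} \mcc^{|\omega|} F(h + \omega \cdot h').
\]
All arguments lie in $[1, (s+2)N]$, so we may replace $F$ by $F \cdot \ind{[1,(s+2)N]}$.

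\textbf{Cauchy--Schwarz in the cube variables.} We apply Cauchy--Schwarz $s+1$ times, once in each variable $h_i$, in the usual way. At step $i$ we separate the factors whose vertex has $\omega_i = 0$ from those with $\omega_i = 1$. We bound the latter trivially by $1$, using $1$-boundedness of $F$, and duplicate the remaining variable. After all $s+1$ steps, the $2^{s+1}$-th power of the correlation is bounded by a cube average
\[
\E_{h \in [N]} \E_{a, b \in [N]^{s+1}} \prod_{\omega \in \{0,1\}^{s+1}} \mcc^{|\omega|} \tilde g\bigl(h + \omega \cdot (a - b)\bigr),
\]
up to a constant depending on $s$. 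Here $\tilde g := g \ind{[N]}$ extended by zero, and the constant accounts for the ranges in which the shifted variables are allowed to vary. This is the standard proof of the Gowers--Cauchy--Schwarz inequality, carried out on intervals rather than cyclic groups.

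\textbf{Main obstacle: comparison of normalizations.} The expression above must now be compared with $\|g\|_{U^{s+1}[N]}^{2^{s+1}}$ as defined in \cref{chap:appendix}. That norm is presumably defined by embedding $[N]$ into $\Z/N'\Z$ with $N' \asymp_s N$ and normalizing by $\|\ind{[N]}\|_{U^{s+1}(\Z/N'\Z)}$. To compare, we will do the following.
\begin{enumerate}[(i)]
\item Take $N' > (s+3)N$, so that no wraparound occurs.
\item Write the differences $a_i - b_i$ as new variables $d_i$ ranging over $[-N, N]$. The number of representations of each $d_i$ is at most $N$, so the average is dominated by $O_s(1)$ times the full average over $(h, d) \in (\Z/N'\Z)^{s+2}$. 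This average is exactly $\|\tilde g\|_{U^{s+1}(\Z/N'\Z)}^{2^{s+1}}$, up to the factor $(N'/N)^{s+2} = O_s(1)$.
\end{enumerate}
The delicate point is that this domination step needs a nonnegative integrand. The full cube sum is nonnegative, because it is a $U^{s+1}$-norm, but the restricted one need not be. We will handle this by arranging the last Cauchy--Schwarz step so that the remaining $d$-weights are positive-definite. If that fails, we will instead remove the constraint $h_i \in [N]$ beforehand by majorizing $\ind{[N]}$ with a Fej\'er kernel. Its Fourier coefficients are nonnegative and sum to $O(1)$, and the resulting linear phases can be absorbed into vertex functions, since for $s \ge 1$ the $U^{s+1}$-norm is invariant under multiplication by linear phases.

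Finally, $\|\ind{[N]}\|_{U^{s+1}(\Z/N'\Z)} \gtrsim_s 1$, which gives the bound $\lesssim_s \|g\|_{U^{s+1}[N]}$, uniformly in $N$ and in $x$ from the good set.
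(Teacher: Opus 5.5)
Your argument is correct in substance, but it is organized differently from the paper's.

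\textbf{The paper's route.} The paper extends $g$ by zero to $\Z/\wt N\Z$ with $\wt N = 2^{s+1}N$. It then asserts, in one line, that the $[N]$-average is at most $O_s(1)$ times a full cyclic cube average, with $\overline{\wt g}$ at the vertex $0$ and suitable $1$-bounded functions $F_\omega$ at the other vertices. Finally it quotes the cyclic Gowers--Cauchy--Schwarz inequality together with $\|\wt g\|_{U^{s+1}(\Z/\wt N\Z)} \le \|g\|_{U^{s+1}[N]}$.

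\textbf{Your route.} You run the Cauchy--Schwarz iteration on $\Z$ with the cutoffs in place. The constraints $h_i \in [N]$ thereby become explicit weights on the differences $d_i$, and you pass to a cyclic group only at the end.

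\textbf{What each buys.} The paper's version is shorter because it uses the cyclic inequality as a black box. However, its one-line reduction is exactly where the difficulty you flagged lives. The constraint $h_i \in [N]$ is not a function of a single vertex, so it cannot simply be built into the $F_\omega$. Expanding $\ind{[N]}(h_i)$ in a Fourier series instead would lose a factor of order $(\log N)^{s+1}$. Your Cauchy--Schwarz-first argument is what actually justifies that reduction with only an $O_s(1)$ loss.

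\textbf{Tightening the last step.} Commit to your first option and drop the second.
\begin{enumerate}[(i)]
\item No arranging is needed. The weights your iteration produces are $\prod_i w(d_i)$ with $w(d) = (N-|d|)_+ = (\ind{[N]} * \ind{-[N]})(d)$, and this is automatically positive definite on $\Z/N'\Z$ once $N' \ge 2N$. Writing $w(d) = \sum_\xi \hat w(\xi) e^{2\pi i \xi d/N'}$, one has $\hat w(\xi) = \frac{1}{N'}\bigl|\sum_{a \in [N]} e^{-2\pi i \xi a/N'}\bigr|^2 \ge 0$ and $\sum_\xi \hat w(\xi) = w(0) = N$.
\item Each factor $e^{2\pi i \xi_i d_i/N'}$ splits into linear phases at the vertices $0$ and $e_i$ (the $i$-th unit vector). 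Each resulting cube sum of modulated copies of $\wt g$ is then bounded by the cyclic Gowers--Cauchy--Schwarz inequality plus modulation invariance, which is valid since $s+1 \ge 2$.
\item Summing the nonnegative coefficients gives exactly the domination, with factor $(N'/N)^{s+2}$, that you wanted.
\end{enumerate}
By contrast, majorizing $\ind{[N]}$ \emph{before} Cauchy--Schwarz is not legitimate, since the sum is signed at that stage. Incidentally, the restricted cube sum is itself nonnegative: substituting $x = h - \sum_i b_i$ writes it as a sum over $x$ of $2^{s+1}$-th powers of box norms of $c \mapsto \wt g(x + c_1 + \dots + c_{s+1})$ on $[N]^{s+1}$. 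But positivity of the total is not what the domination step needs.

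\textbf{Normalization.} The final step uses $\|\ind{[N]}\|_{U^{s+1}(\Z/N'\Z)} \le 1$, not a lower bound. You should also take $N' \ge 2^{s+1}N$ to match the paper's definition of $\|\cdot\|_{U^{s+1}[N]}$; this choice already rules out wraparound.
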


\begin{proof}
    Since $\| f \|_{L^\infty(\mu)} \leq 1$, there is a set of full measure on which $|f(T^h x)| \leq 1$ for every $h \in \Z$. 
    Fix any $x$ in this set. 
    Let $N \geq 1$ and let $g \colon [N] \to \C$ be $1$-bounded. 
    Expanding the definition of $\mcd_{s+1, N} f$, the expression we want to estimate is 
    \[
    \left| \frac{1}{N^{s + 2}} \sum_{h, h_1, \dots, h_{s+1} \in [N]} \overline{g(h)} \prod_{\omega \in \{0, 1\}^{s+1} \setminus \{0\}} \mcc^{|\omega|} f(T^{h + \omega_1 h_1 + \cdots + \omega_{s+1} h_{s+1}} x)\right|. 
    \]

    Let $\wt N := 2^{s+1} N$, and let $\wt g \colon \Z/\wt N \Z \to \C$ be the extension of $g$ by $0$. 
    Then the above expression is bounded, up to a constant depending only on $s$, by 
    \[
    \left| \frac{1}{\wt N^{s+2}} \sum_{h, h_1, \dots, h_{s+1} \in \Z/\wt N \Z} \overline{\wt g(h)} \prod_{\omega \in \{0, 1\}^{s+1} \setminus \{0\}} F_\omega(h + \omega_1 h_1 + \dots + \omega_{s+1} h_{s+1})\right|, 
    \]
    for some appropriate $1$-bounded functions $F_\omega \colon \Z/ \wt N \Z \to \C$. 
    By the usual Gowers--Cauchy--Schwarz inequality, see for example \citep[Appendix B.12]{green2010linear}, this is bounded by 
    \[
    \| \wt g \|_{U^{s+1}(\Z/\wt N \Z)}.  
    \]
    But by the definition of the Gowers norm on $[N]$, $\| \wt g \|_{U^{s+1}(\Z/\wt N \Z)} \leq \|g \|_{U^{s+1}[N]}$.  
    Hence, we conclude that 
    \[
    | \E_{h \in [N]} \mcd_{s+1,N} f (T^h x) \overline{g(h)} | \lesssim_s \| g \|_{U^{s+1}[N]}. 
    \]
\end{proof}

Combining this lemma with \cref{lem:antiuniform_approx_nilsequence}, we immediately obtain the following corollary. 

\begin{corollary}
    \label[corollary]{cor:dual_approx_nilsequence} 
    Let $s \geq 1$ and $\epsilon > 0$, let $(X, \mu, T)$ be a measure-preserving system, and let $f \in L^\infty(X, \mu)$ with $\| f \|_{L^\infty(\mu)} \leq 1$. 
    Then there exists $M = M(s, \epsilon)$, such that for $\mu$-almost every $x \in X$ and every $N \geq 1$, there exists a $1$-bounded nilsequence $\psi$ of degree $s$ and complexity $M$ satisfying 
    \[
    \E_{h \in [N]} |\mcd_{s+1, N} f (T^h x) - \psi(h)|^2 \leq \epsilon.
    \]
\end{corollary}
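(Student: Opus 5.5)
The corollary follows by feeding \cref{lem:anti_uniformity_dual_functions} into \cref{lem:antiuniform_approx_nilsequence}. I take the latter to be the standard consequence of the inverse theorem, proved by an energy-increment (or Hahn--Banach) argument. It should say the following: for every $s \geq 1$, $\epsilon > 0$ and $C > 0$ there is $M = M(s, \epsilon, C)$ such that, whenever $F \colon [N] \to \C$ is $1$-bounded and satisfies $|\E_{h \in [N]} F(h) \overline{g(h)}| \leq C \|g\|_{U^{s+1}[N]}$ for all $1$-bounded $g \colon [N] \to \C$, there is a $1$-bounded nilsequence $\psi$ of degree $s$ and complexity at most $M$ with $\E_{h \in [N]} |F(h) - \psi(h)|^2 \leq \epsilon$.

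First, fix the full-measure set $X_0$ provided by \cref{lem:anti_uniformity_dual_functions}. On $X_0$ we also want $|f(T^h x)| \leq 1$ for all $h \in \Z$. This can be arranged by intersecting countably many full-measure sets, since $T$ is measure-preserving. For $x \in X_0$ and $N \geq 1$, define $F_{x,N}(h) := \mcd_{s+1,N} f(T^h x)$ for $h \in [N]$.

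The function $F_{x,N}$ is $1$-bounded. Indeed, it is an average of products of $2^{s+1}-1$ values $\mcc^{|\omega|} f(T^{h + \omega\cdot \mathbf h} x)$, each of modulus at most $1$ on $X_0$. Here one uses that $(\mcd_{s+1,N} f)(T^h x)$ is computed pointwise by the defining finite sum, which is legitimate because $\mcd_{s+1,N} f$ is a finite combination of compositions with powers of $T$.

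By \cref{lem:anti_uniformity_dual_functions}, $F_{x,N}$ satisfies the anti-uniformity hypothesis with a constant $C = C(s)$ depending only on $s$. Applying \cref{lem:antiuniform_approx_nilsequence} with this $C$ gives $M = M(s, \epsilon, C(s)) = M(s, \epsilon)$, independent of $x$, $N$ and $f$. It also produces the desired nilsequence $\psi$, which may depend on $x$ and $N$.

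The only point needing care is uniformity: the complexity bound $M$ must not depend on $N$, $x$ or $f$. This holds because the constant in \cref{lem:anti_uniformity_dual_functions} depends only on $s$, and the approximation lemma's bound depends only on $s$, $\epsilon$ and that constant.
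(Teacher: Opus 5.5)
Your proposal is correct and is the same argument as the paper's, which obtains the corollary immediately by feeding \cref{lem:anti_uniformity_dual_functions} (constant depending only on $s$) into \cref{lem:antiuniform_approx_nilsequence}. Your additional checks, namely that $h \mapsto \mcd_{s+1,N} f(T^h x)$ is $1$-bounded on a full-measure set and that $M$ does not depend on $x$, $N$ or $f$, spell out points the paper leaves implicit.
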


\section{Proof of the weak structure theorem}

We are now ready to prove \cref{thrm:weak_structure_theorem}. 

\begin{proof}[Proof of \cref{thrm:weak_structure_theorem}]
    Let $(X, \mu, T)$ be an ergodic measure-preserving system of order $s$.
    Since $s$ remains fixed throughout the proof, for $f \in L^\infty(X, \mu)$ and $N \geq 1$, we write  
    \[
    \mcd_N f := \mcd_{s + 1, N} f , \quad \mcd f  := \mcd_{s+1} f
    \]
    for the dynamical dual functions from the previous section. 

    Since $(X, \mu)$ is a standard probability space by assumption, $L^2(X, \mu)$ is separable. 
    Combining this with \cref{prop:dual_functions_generate}, we may choose a sequence $(f_n)_{n \geq 1}$ in $L^\infty(X,\mu)$ such that 
    \[
    \Span\{\mcd f_n : n \geq 1\}
    \]
    is dense in $L^2(X,\mu)$.
     In particular,
    \[
    \Span\{T^h \mcd f_n : n \geq 1,\ h \in \Z\}
    \]
    is dense in $L^2(X,\mu)$.
    Since rescaling a function only results in a rescaling of the dynamical dual function, we may assume without loss of generality that $\| f_n \|_{L^\infty(\mu)} \leq 1$ for each $n \geq 1$. 
    Therefore, by \cref{prop:polynomial_factor_criterion}, it suffices to construct an ergodic $s$-step pro-nilsystem $(Y,\nu,S)$ and a sequence of functions $\wt f_n \in L^\infty(Y,\nu)$ such that for every $d \geq 1$, all $n_1,\dots,n_d \geq 1$, all $h_1,\dots,h_d \in \Z$, and every $P \in \Q(i)[z_1,\dots,z_d,\overline{z_1},\dots,\overline{z_d}]$, 
    \begin{equation}
    \begin{aligned}
        \label{eq:weak_structure_moment_identity}
        &\int_Y P(S^{h_1}\wt f_{n_1}, \dots, S^{h_d}\wt f_{n_d},
        \overline{S^{h_1}\wt f_{n_1}}, \dots, \overline{S^{h_d}\wt f_{n_d}}) \dd\nu \\
        &\qquad =
        \int_X P(T^{h_1} \mcd f_{n_1}, \dots, T^{h_d} \mcd f_{n_d},
        \overline{T^{h_1} \mcd f_{n_1}}, \dots, \overline{T^{h_d} \mcd f_{n_d}}) \dd\mu.
    \end{aligned}
    \end{equation}

    The idea is to find a point $x \in X$ and a family of nilsequences, which approximate the orbit of each dual function $\mcd f_n$ along this point.
    Choose an increasing sequence $(N_m)_{m \geq 1}$ with $N_m \geq m$ such that 
    \[
    \|\mcd f_n - \mcd_{N_m} f_n\|_{L^2(\mu)}^2 \leq 2^{-100(n+m)} 
    \]
    for all $m \geq n \geq 1$. 
    Then the function 
    \[
    \sum_{m=1}^\infty \sum_{n=1}^m 2^{50(n+m)} |\mcd f_n - \mcd_{N_m} f_n |^2
    \]
    belongs to $L^1(X,\mu)$.
    By the maximal ergodic theorem, see for example \citep[Corollary 7.47]{eisner2025journey}, there exists a set of full measure $\Omega_0 \subseteq X$, such that for every $x \in \Omega_0$ there is a constant $C_x > 0$ satisfying   
    \begin{equation}
        \label{eq:weak_structure_finite_dual_error}
        \E_{h \in [H]} |\mcd f_n (T^h x) - \mcd_{N_m}f_n (T^h x)|^2 \leq C_x 2^{-50(n+m)}
    \end{equation}
    for every $H \geq 1$ and all $m \geq n \geq 1$.

    Next, fix $m \geq n \geq 1$ and $k \geq 1$.
    Let $M_{n,k}$ be the complexity bound obtained by applying \cref{cor:dual_approx_nilsequence} with $s$ and $\epsilon = 2^{-100(n+k)}$.
    Define $B_{n,m,k}$ to be the set of points $y \in X$ for which there does not exist a $1$-bounded nilsequence $\psi$ of degree $s$ and complexity $M_{n,k}$ such that
    \begin{equation}
        \label{eq:weak_structure_good_short_intervals}
        \sup_{1 \leq H \leq 2^{-10k}N_m}
        \E_{h \in [H]} |\mcd_{N_m} f_n(T^h y) - \psi(h)|^2
        \leq 2^{-20(n+k)}.
    \end{equation}
    If $2^{-10k}N_m < 1$, the supremum is taken over the empty set, and we let $B_{n,m,k} = \emptyset$.
    In either case, $B_{n,m,k}$ is measurable. 
    To see this, let $L := \lfloor 2^{-10k}N_m \rfloor$, and let $\mathcal{N} \subseteq \C^L$ be the set consisting of points 
    \[
    (\psi(1), \dots, \psi(L))
    \]
    where $\psi$ ranges over all $1$-bounded nilsequences of degree $s$ and complexity $M_{n,k}$. 
    This set is compact.
    Indeed, by the complexity bound we only need to consider nilsequences on a finite collection of filtered nilmanifolds. 
    By taking the Cartesian product, we can therefore restrict to a single fixed nilmanifold, where we can apply Arzelà--Ascoli and \cref{lem:compactness_taylor_coefficients} to prove that $\mathcal{N}$ is closed. 
    Since it is also bounded by $1$-boundedness of the nilsequences, it is compact.
    Therefore, the set of $z = (z_1, \dots, z_L) \in \C^L$ for which there exists some $w \in \mathcal{N}$ such that 
    \[
    \max_{1 \leq H \leq L} \E_{h \in [H]} |z_h - w_h|^2 \leq 2^{-20(n+k)} 
    \]
    is closed. 
    Its preimage under the measurable map 
    \[
    y \mapsto (\mcd_{N_m} f_n(T y), \dots, \mcd_{N_m} f_n(T^L y)) 
    \]
    is precisely $X \setminus B_{n,m,k}$, so that $B_{n,m,k}$ is measurable. 
        
    We claim that
    \begin{equation}
        \label{eq:weak_structure_bad_set_bound}
        \mu(B_{n,m,k}) \lesssim 2^{-50(n+k)}.
    \end{equation}
    To see this, we may assume that $B_{n,m,k} \neq \emptyset$, so that $N_m \geq 2^{10k} \geq 2^{10}$. 
    By \cref{cor:dual_approx_nilsequence}, for $\mu$-almost every $y \in X$ there exists a $1$-bounded nilsequence $\psi_y$ of degree $s$ and complexity $M_{n,k}$ such that
    \[
    \E_{h \in [N_m]} |\mcd_{N_m} f_n(T^h y) - \psi_y(h)|^2 \leq 2^{-100(n+k)}.
    \]
    For any such $y \in X$, define
    \[
    a_y(h) :=
    \begin{cases}
    |\mcd_{N_m} f_n(T^h y) - \psi_y(h)|^2 & \text{if } h \in [N_m],\\
    0 & \text{if } h \in \Z \setminus [N_m].
    \end{cases}
    \]
    Set $R := \lfloor (1-2^{-10k}) N_m \rfloor$, and note that $R \geq N_m/2$.  
    Now if $r$ is an integer with $1 \leq r \leq R$ such that 
    \[
    \sup_{1 \leq H \leq 2^{-10k}N_m}  \E_{h \in [H]} a_y(h +r) \leq 2^{-20(n + k)}, 
    \]
    then $T^r y \not\in B_{n,m,k}$ since the shifted nilsequence $h \mapsto \psi_y(h + r)$ in this case satisfies \eqref{eq:weak_structure_good_short_intervals}. 
    By the discrete Hardy--Littlewood maximal inequality, the number of integers $1 \leq r \leq R$ for which
    \[
    \sup_{1 \leq H \leq 2^{-10k}N_m} \E_{h \in [H]} a_y(h+r) > 2^{-20(n+k)}
    \]
    is bounded by $2^{-70(n+k)} N_m$.
    Hence, using the $T$-invariance of $\mu$, 
    \begin{align*}
        \mu(B_{n,m,k})
        &= \E_{1 \leq r \leq R} \mu(T^{-r} B_{n,m,k}) \\
        &= \int_X \E_{1 \leq r \leq R} \ind{B_{n,m,k}}(T^r y) \dd \mu(y) \\
        &= \frac{1}{R} \int_X |\{1 \leq r \leq R : T^r y \in B_{n,m,k} \}| \dd \mu(y)
        \lesssim 2^{-50(n + k)}, 
    \end{align*}
    which proves \eqref{eq:weak_structure_bad_set_bound}. 

    Combining \eqref{eq:weak_structure_bad_set_bound} with Fatou's lemma, it follows that 
    \[
    \liminf_{M \to \infty} \frac{1}{M} \sum_{m=1}^M \sum_{n=1}^m \sum_{k=1}^{\infty} 2^{10(n + k)} \ind{B_{n,m,k}} \in L^1(X, \mu). 
    \]
    Therefore, the set $\Omega_1 \subseteq X$ where this function is finite has full measure. 
    If $x \in \Omega_1$, then there exists $C'_x > 0$ and a subsequence $(m_j)_{j \geq 1}$ depending on $x$ such that for all $j \geq 1$, 
    \[
    \sum_{n=1}^{m_j} \sum_{k=1}^\infty 2^{10(n + k)} \ind{B_{n,m_j, k}}(x) \leq C'_x.  
    \]
    Pick $K_x \geq 1$ such that $2^{10 K_x} > C'_x$. 
    Then for every $j \geq 1$, all $m_j \geq n \geq 1$, and each $k \geq K_x$, 
    \[
    x \not\in B_{n,m_j,k}. 
    \]
    Finally, by Birkhoff's pointwise ergodic theorem, and since there are only countably many choices $d,n_1,\dots,n_d \geq 1,h_1,\dots,h_d \in \Z$ and $P \in \Q(i)[z_1, \dots, z_d, \overline{z_1}, \dots, \overline{z_d}]$, we may fix   
    \[
    x \in \Omega_0 \cap \Omega_1
    \]
    which is generic\footnote{We say that $x \in X$ is generic for $f \in L^1(X, \mu)$ if $\lim_{N \to \infty} \E_{h \in [N]} f(T^h x) = \int_X f \dd\mu$.} for all functions
    \[
    P(T^{h_1} \mcd f_{n_1}, \dots, T^{h_d} \mcd f_{n_d},
    \overline{T^{h_1} \mcd f_{n_1}}, \dots, \overline{T^{h_d} \mcd f_{n_d}}). 
    \]
    We may further assume that $x$ lies in the subset of full measure on which all the functions $\mcd f_n$, $\mcd_N f_n$, and their shifts are bounded in magnitude by $1$.

    Fix the subsequence $(m_j)_{j \geq 1}$ and the integer $K := K_x$ associated to this point $x$.
    For each $n \geq 1$, each $k \geq K$, and all sufficiently large $j$, we know that $x \not\in B_{n,m_j,k}$, hence we can fix a nilsequence $\psi_{n,j,k}$ of degree $s$ and complexity $M_{n,k}$ satisfying \eqref{eq:weak_structure_good_short_intervals} with $m = m_j$ and $y = x$.
    Combining this with \eqref{eq:weak_structure_finite_dual_error} and using that $N_{m_j} \to \infty$, we see that for every fixed $H \geq 1$,
    \begin{equation}
        \label{eq:weak_structure_bound_psi_j}
        \limsup_{j \to \infty} \
        \E_{h \in [H]} |\mcd f_n(T^h x) - \psi_{n,j,k}(h)|^2
    \lesssim 2^{-20(n+k)}.
    \end{equation}

    For each $n \geq 1$, $k \geq K$, and $j$ sufficiently large, write 
    \[
    \psi_{n,j,k}(h) = F_{n,j,k}(g_{n,j,k}(h) \Gamma_{n,j,k}), \quad \text{for } h \in \Z
    \]
    where $G_{n,j,k}/\Gamma_{n,j,k}$ is a filtered nilmanifold of degree $s$ and complexity $M_{n,k}$, $g_{n,j,k} \in \Poly(\Z, (G_{n,j,k})_\bullet)$ is a polynomial sequence, and $F_{n,j,k} \colon G_{n,j,k}/\Gamma_{n,j,k} \to \C$ is a $1$-bounded Lipschitz function with Lipschitz norm at most $M_{n,k}$. 

    Fix $n \geq 1$ and $k \geq K$. 
    Since $G_{n,j,k}/\Gamma_{n,j,k}$ are of uniformly bounded complexity $M_{n,k}$ for all $j$, we may assume that these filtered nilmanifolds belong to a fixed finite collection depending only on $n$ and $k$. 
    By the pigeonhole principle, there exists a filtered nilmanifold $G_{n,k}/\Gamma_{n,k}$ of degree $s$ such that, after passing to a subsequence of $(m_j)_{j \geq 1}$, 
    \[
    G_{n,j,k}/\Gamma_{n,j,k} = G_{n,k}/\Gamma_{n,k} \quad \text{for all } j. 
    \]
    Moreover, since the Lipschitz norms of the functions $F_{n,j,k}$ are uniformly bounded by $M_{n,k}$, the Arzelà--Ascoli theorem implies that after passing to a further subsequence of $(m_j)_{j \geq 1}$, there is a continuous function $F_{n,k} \colon G_{n,k}/\Gamma_{n,k} \to \C$ such that 
    \[
    F_{n,j,k} \to F_{n,k}
    \quad \text{uniformly as } j \to \infty.
    \] 
    Finally, by \cref{lem:compactness_taylor_coefficients}, we may replace each $g_{n,j,k}$ by a polynomial sequence that agrees with it modulo $\Gamma_{n,k}$, thus leaving $\psi_{n,j,k}$ unchanged, and pass to yet a further subsequence of $(m_j)_{j \geq 1}$, so that 
    \[
    g_{n,j,k} \to g_{n,k} 
    \quad \text{pointwise as } j \to \infty 
    \]
    for some polynomial sequence $g_{n,k} \in \Poly(\Z, (G_{n,k})_\bullet)$. 

    We can carry out this same construction for all choices of $n \geq 1$ and $k \geq K$.  
    By a diagonalization argument over the countably many pairs $(n, k)$, we obtain a single subsequence of $(m_j)_{j \geq 1}$ such that for all $n \geq 1$ and all $k \geq K$, 
    \[
    \psi_{n,j,k}(h) = F_{n,j,k}(g_{n,j,k}(h) \Gamma_{n,j,k}) \to F_{n,k}(g_{n,k}(h) \Gamma_{n,k}) =: \psi_{n,k}(h)
    \]
    for all $h \in \Z$.
    Then it follows by \eqref{eq:weak_structure_bound_psi_j} that for every $n \geq 1$, every $k \geq K$, and every $H \geq 1$,
    \begin{equation}
        \label{eq:weak_structure_orbit_approx}
        \E_{h \in [H]} |\mcd f_n(T^h x) - \psi_{n,k}(h)|^2
        \lesssim 2^{-20(n+k)}.
    \end{equation}

    By \cref{lem:linearising_nilsequence}, for each $n \geq 1$ and $k \geq K$, there exists an $s$-step nilsystem $(Y_{n,k}, S_{n,k})$, a point $y_{n,k} \in Y_{n,k}$, and a continuous function $\wt F_{n,k} \colon Y_{n,k} \to \C$ such that
    \[
    \psi_{n,k}(h) = \wt F_{n,k}(S_{n,k}^h y_{n,k}) \quad \text{for } h \in \Z. 
    \]
    By replacing $Y_{n,k}$ by the orbit closure of $y_{n,k}$, we may assume by \cref{thrm:orbit_closures_nilsystems_are_nilsystems} that $(Y_{n,k},S_{n,k})$ is uniquely ergodic.  
    Since $\psi_{n,k}$ is $1$-bounded and the orbit of $y_{n,k}$ is dense in this replacement, $\wt F_{n,k}$ is $1$-bounded on $Y_{n,k}$.

    We now combine these nilsystems into an inverse limit.
    For $l \geq 1$, let
    \[
    Z_l := \prod_{1 \leq n \leq l} \prod_{K \leq k \leq K + l} Y_{n,k}, 
    \]
    equipped with the product translation $R_l$, and let 
    \[
    z_l := (y_{n,k})_{1 \leq n \leq l, K \leq k \leq K + l}. 
    \]
    Then $(Z_l, R_l)$ is an $s$-step nilsystem, and after replacing $Z_l$ by the orbit closure of $z_l$, we may assume by \cref{thrm:orbit_closures_nilsystems_are_nilsystems} that it is uniquely ergodic.  
    The coordinate projections 
    \[
    Z_{l+1} \to Z_l
    \]
    are factor maps which respect restricting to the orbits, so the systems $(Z_l, R_l)$ form an inverse system. 
    Take the topological dynamical inverse limit
    \[
    (Y,S) := \varprojlim (Z_l,R_l),
    \]
    and let $\nu$ be the unique $S$-invariant probability measure on $Y$.
    By \cref{lem:topological_model_inverse_limit}, $(Y,\nu,S)$ is an ergodic $s$-step pro-nilsystem.

    For each $n \geq 1$ and $k \geq K$, let $q_{n,k} \colon Y \to Y_{n,k}$ be the coordinate factor map induced by the inverse limit.
    Let $y_0 \in Y$ be the point determined by $q_{n,k}(y_0) = y_{n,k}$ for all $n$ and $k$. 
    Define 
    \[
    \wt f_{n,k} := \wt F_{n,k} \circ q_{n,k}.
    \]
    Then $\wt f_{n,k}$ is continuous and $1$-bounded, and
    \[
    \wt f_{n,k}(S^h y_0) 
    = \wt F_{n,k}(S_{n,k}^h q_{n,k}(y_0))
    = \wt F_{n,k}(S_{n,k}^h y_{n,k})
    = \psi_{n,k}(h)
    \]
    for every $h \in \Z$.
    Combining this with \eqref{eq:weak_structure_orbit_approx}, we get for all $H \geq 1$ that 
    \[
    \E_{h \in [H]} |\mcd f_n(T^h x) - \wt f_{n,k}(S^h y_0)|^2
    \lesssim 2^{-20(n+k)}. 
    \]
    Hence, by the $1$-boundedness of the functions in the absolute value, we obtain for every $r \in \Z$ that  
    \begin{equation}
        \label{eq:weak_structure_model_orbit_approx}
        \limsup_{H \to \infty} \
        \E_{h \in [H]} |\mcd f_n(T^{h + r} x) - \wt f_{n,k}(S^{h + r} y_0)|^2
        \lesssim 2^{-20(n+k)}. 
    \end{equation}

    Since $(Y,S)$ is uniquely ergodic, \eqref{eq:weak_structure_model_orbit_approx} implies that for all $k' \geq k \geq K$,
    \[
    \int_Y |\wt f_{n,k'} - \wt f_{n,k}|^2 \dd\nu
    =
    \lim_{H \to \infty}
    \E_{h \in [H]} |\wt f_{n,k'}(S^h y_0) - \wt f_{n,k}(S^h y_0)|^2
    \lesssim 2^{-20(n+k)}.
    \]
    Thus, for each $n \geq 1$, the sequence $(\wt f_{n,k})_{k \geq K}$ is Cauchy in $L^2(Y,\nu)$.
    Let $\wt f_n$ be its $L^2$-limit.
    Since the functions $\wt f_{n,k}$ are all $1$-bounded, we have that $\|\wt f_n\|_{L^\infty(\nu)} \leq 1$.

    It remains to verify \eqref{eq:weak_structure_moment_identity}.
    Fix $d \geq 1$, $n_1,\dots,n_d \geq 1$, $h_1,\dots,h_d \in \Z$, and $P \in \Q(i)[z_1,\dots,z_d,\overline{z_1},\dots,\overline{z_d}]$. 
    Since $x$ was chosen to be  generic for the corresponding function on $X$,
    \begin{align*}
    \int_X P(T^{h_1} \mcd f_{n_1}, \dots, \overline{T^{h_d} \mcd f_{n_d}}) \dd\mu 
    =
    \lim_{H \to \infty}
    \E_{h \in [H]}
    P(\mcd f_{n_1}(T^{h+h_1}x), \dots, \overline{\mcd f_{n_d}(T^{h+h_d}x)}).
    \end{align*}
    On the other hand, by unique ergodicity of $(Y,S)$, for every $k \geq K$,
    \begin{align*}
    \int_Y P(S^{h_1}\wt f_{n_1,k}, \dots, \overline{S^{h_d}\wt f_{n_d,k}}) \dd\nu 
    =
    \lim_{H \to \infty}
    \E_{h \in [H]}
    P(\wt f_{n_1,k}(S^{h+h_1}y_0), \dots, \overline{\wt f_{n_d,k}(S^{h+h_d}y_0)}).
    \end{align*}
    The polynomial $P$, when viewed as a function of $d$ variables, is Lipschitz on $\overline{\D}^d$. 
    Therefore, the squared difference between the above two quantities is bounded up to a constant depending only on $P$ by 
    \[
    \limsup_{H \to \infty} \ \E_{h \in [H]} \sum_{i=1}^{d} |\mcd f_{n_i}(T^{h + h_i} x) - \wt f_{n_i,k}(S^{h+h_i} y_0)|^2 \lesssim_{d, P} 2^{-20 k},  
    \]
    where the last bound follows from \eqref{eq:weak_structure_model_orbit_approx}.
    Letting $k \to \infty$, and using the convergence $\wt f_{n_i,k } \to \wt f_{n_i}$ in $L^2(Y, \nu)$, we obtain precisely \eqref{eq:weak_structure_moment_identity}. 
    This completes the proof. 
\end{proof}

\section{Recovering the Host--Kra structure theorem}

We end this chapter by explaining how the weak structure theorem combines with the factor-closure theorem from \cref{chap:factor_closure} to recover the Host--Kra structure theorem, originally proved by \citet{host2005nonconventional}. 

\begin{theorem}[Host--Kra structure theorem]
    \label{thrm:host_kra_structure_theorem}
    Let $s \geq 1$. 
    An ergodic measure-preserving system is of order $s$ if and only if it is isomorphic to an $s$-step pro-nilsystem. 
\end{theorem}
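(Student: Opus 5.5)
The theorem has two implications, and I would treat them separately.

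For the structural direction, let $(X, \mu, T)$ be an ergodic system of order $s$. By \cref{thrm:weak_structure_theorem}, there is an ergodic $s$-step pro-nilsystem $(\wt X, \wt\mu, \wt T)$ and a factor map $\wt X \to X$. Applying \cref{thrm:main} to this factor map shows that $X$ is itself an $s$-step pro-nilsystem. This direction therefore needs no new input beyond the two main theorems. One point to record explicitly: the weak structure theorem relies on the inverse theorem for the Gowers norms and on \cref{prop:dual_functions_generate}, neither of which uses the structure theorem, so the argument is not circular.

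For the converse, let $(X, \mu, T) \cong \varprojlim (X_n, \mu_n, T_n)$ be an $s$-step pro-nilsystem. I would first note that the statement concerns ergodic systems, so $X$ is ergodic, and each $X_n$ is then ergodic as a factor of $X$. The argument has two steps.

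\begin{enumerate}[(a)]
    \item Each ergodic $s$-step nilsystem is of order $s$, meaning $\|\cdot\|_{U^{s+1}(X_n)}$ is a norm on $L^\infty(X_n, \mu_n)$. I would cite \citep[Chapter 12, Corollary 19]{host2018nilpotent}, as the introduction does. This is the only external input, and it is elementary relative to the Host--Kra machinery: it reduces to computing the cube measures of a nilsystem as Haar measures on Host--Kra cube nilmanifolds.
    \item Order $s$ passes to inverse limits. Take $f \in L^\infty(X, \mu)$ with $\|f\|_{U^{s+1}(X)} = 0$.
    \begin{itemize}
        \item The Host--Kra seminorms satisfy $\|\E_\mu(f \mid p_n^{-1}(\mcx_n))\|_{U^{s+1}} \le \|f\|_{U^{s+1}}$. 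Conditional expectation onto a factor is a contraction for these seminorms, which follows from the cube-measure description: the cube measure of a factor is the image of the cube measure of $X$.
        \item Hence $\E_\mu(f \mid p_n^{-1}(\mcx_n)) = g_n \circ p_n$ with $\|g_n\|_{U^{s+1}(X_n)} = 0$. Here I use that the seminorm of $g_n \circ p_n$ computed on $X$ equals that of $g_n$ computed on $X_n$, since $p_n$ intertwines the transformations.
        \item By step (a), $g_n = 0$, so every conditional expectation of $f$ vanishes.
        \item By property (ii'') of inverse limits, $f = \lim_n \E_\mu(f \mid p_n^{-1}(\mcx_n)) = 0$ in $L^2$.
    \end{itemize}
    So $X$ is of order $s$.
\end{enumerate}

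The main point requiring care is the contraction inequality for conditional expectations. If I prefer not to prove it directly, I would replace that step with the cleaner fact that every factor of a system of order $s$ is of order $s$ \citep[Chapter 9, Proposition 17]{host2018nilpotent}, combined with the observation that the functions $g \circ p_n$ are dense. Either route is standard Host--Kra seminorm theory that does not invoke the structure theorem.

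Assembling the two directions proves \cref{thrm:host_kra_structure_theorem}. For the structural direction, the substantive inputs are \cref{thrm:weak_structure_theorem} and \cref{thrm:main}.
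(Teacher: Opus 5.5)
Your structural direction is exactly the paper's proof: \cref{thrm:weak_structure_theorem} followed by \cref{thrm:main}. For the converse, the paper gives no argument of its own. It cites \citep[Chapter 12, Corollary 19]{host2018nilpotent} both for nilsystems and for the passage to inverse limits.

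Your step (b) spells out that passage, and its conclusion is correct, but not for the reason you give. That the cube measure of a factor is the image of the cube measure of $X$ only yields your second bullet, $\|g_n\circ p_n\|_{U^{s+1}(X)}=\|g_n\|_{U^{s+1}(X_n)}$. That is exactly what closure of order $s$ under factors needs, but it does not give the contraction.

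For the contraction, set $g:=\E_\mu(f\mid p_n^{-1}(\mcx_n))$. Then $\mcd_{s+1}g$ is again $p_n^{-1}(\mcx_n)$-measurable, being an $L^2$-limit of products of shifts of $g$ and $\overline{g}$, all measurable with respect to this $T$-invariant $\sigma$-algebra. Hence
\[
\|g\|_{U^{s+1}(X)}^{2^{s+1}} = \int_X g\cdot\mcd_{s+1}g\dd\mu = \int_X f\cdot\mcd_{s+1}g\dd\mu \leq \|f\|_{U^{s+1}(X)}\,\|g\|_{U^{s+1}(X)}^{2^{s+1}-1},
\]
using the Gowers--Cauchy--Schwarz inequality for the Host--Kra seminorms in the last step.

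Your proposed fallback does not replace this step. Closure of order $s$ under factors runs in the wrong direction here: you need that a system generated by an increasing sequence of order-$s$ factors is itself of order $s$. Density of the functions $g\circ p_n$ does not supply that on its own.
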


The two implications in this theorem are of a rather different nature. 
The first implication says that every ergodic $s$-step pro-nilsystem is of order $s$. 
This direction is still non-trivial, but it is direct in the sense that one starts from a concrete class of systems and verifies the corresponding seminorm property. 
More precisely, one first proves that ergodic $s$-step nilsystems are of order $s$, using an algebraic construction of the cubic structures on nilmanifolds, and then passes to inverse limits; see \citep[Chapter 12, Corollary 19]{host2018nilpotent}. 

The converse implication is the structural direction. 
It says that if an ergodic system is of order $s$, then this abstract condition, which is formulated in terms of the Gowers--Host--Kra seminorms, forces the system to belong to the concrete class of $s$-step pro-nilsystems. 
This is the difficult part of the Host--Kra structure theorem, and it is precisely this direction that we can now deduce.  

\begin{proof}[Proof of  structural direction of \cref{thrm:host_kra_structure_theorem}]
    Let $(X, \mu, T)$ be an ergodic system of order $s$. 
    By \cref{thrm:weak_structure_theorem}, there exists an ergodic $s$-step pro-nilsystem $(Y, \nu, S)$ such that $(X, \mu, T)$ is a factor of $(Y, \nu, S)$. 
    \cref{thrm:main} then implies that $(X, \mu, T)$ is isomorphic to an $s$-step pro-nilsystem. 
\end{proof}

Note that our proof of \cref{thrm:weak_structure_theorem} ultimately rests on the inverse theorem for the Gowers norms on finite intervals, as stated in \cref{thrm:inverse_theorem}. 
The previous argument therefore constitutes a deduction of the structural direction of the Host--Kra structure theorem from the inverse theorem for the Gowers norms. 

This route to prove the structure theorem was originally outlined by \citet{tao2015weak}. 
Tao observed that the Host--Kra structure theorem splits into two statements: the weak structure theorem, realizing every ergodic system of order $s$ as a factor of an ergodic $s$-step pro-nilsystem, and the factor-closure theorem for ergodic $s$-step pro-nilsystems. 
He then described how the first statement follows from the inverse theorem for the Gowers norms, by the argument worked out in the previous section. 
What remained missing in this plan was an independent proof of the factor-closure theorem. 
Our proof of \cref{thrm:main} provides precisely this missing ingredient. 
The results of this thesis thus complete Tao's proposed route from the inverse theorem for the Gowers norms to the Host--Kra structure theorem. 

\appendix
\chapter{Inverse theory for the Gowers norms}
\label[appendix]{chap:appendix}

In this appendix, we collect the input from the inverse theory for the Gowers norms that is needed in the proof of the weak structure theorem. 
The goal is not to give a self-contained account of the full theory, but rather to state the necessary definitions and results that are used in \cref{chap:weak_structure_thrm}. 
The material presented here is based on \citep{green2010arithmetic, green2007quantitative, green2012inverse, green2011inverse}.  

\section{Filtered nilmanifolds}

The inverse theorem for the Gowers norms is formulated in terms of a quantitative refinement of the notion of a nilmanifold introduced in \cref{sec:nilmanifolds_definition}, called a filtered nilmanifold. 

\begin{definition}[Filtered nilmanifold]
    Let $s \geq 1$. 
    A filtered nilmanifold of degree $s$ is a triple 
    \[
    (X = G/\Gamma, G_\bullet, \mathcal{M}),  
    \]
    where $X = G/\Gamma$ is a nilmanifold with $G$ connected and simply connected, $G_\bullet = (G\si)_{i \geq 0}$ is a sequence of connected rational subgroups 
    \[
    G = G^{(0)} = G^{(1)} \geq G^{(2)} \geq \cdots  
    \] 
    such that $[G\si, G^{(j)}] \subseteq G^{(i+j)}$ for all $i, j \geq 0$, $G^{(s+1)} = \{e_G\}$, and $\mathcal{M}$ is a Mal'cev basis adapted to the filtration $G_\bullet$.  
\end{definition}

We refer to \citep[Section 2]{green2007quantitative} for the precise definition of an adapted Mal'cev basis. 
It is only included in the definition to fix the coordinate system used to quantify the complexity of a filtered nilmanifold. 
The precise definition of the complexity of a filtered nilmanifold can be found in \citep[Definition 1.4]{green2010arithmetic}. 
Roughly speaking, a filtered nilmanifold is of complexity $M \geq 1$ if the dimension of $X$ and the degree of $G_\bullet$ are bounded by $M$, and the algebraic data describing $X$ and $G_\bullet$ in the Mal'cev coordinates provided by $\mathcal{M}$ is in a suitable technical sense bounded in size by $M$. 
Here and below, the term complexity $M$ is used in the non-minimal sense, namely, for what Green and Tao call complexity at most $M$. 

The only feature of this definition that we will explicitly need is the following finiteness property: 
for fixed $M \geq 1$, there are only finitely many filtered nilmanifolds of complexity $M$, up to the natural algebraic notion of isomorphism. 
In arguments where the complexity is bounded by $M$, we may therefore regard the filtered nilmanifolds as ranging over a finite collection depending only on $M$. 

We usually omit the filtration $G_\bullet$ and the Mal'cev basis $\mathcal{M}$ from the notation and refer to a filtered nilmanifold simply as $X = G/\Gamma$. 

\section{Polynomial sequences}

If $G$ is a group and $g \colon \Z \to G$ is a map, then for $h' \in \Z$, we denote the discrete derivative of $g$ in the direction of $h'$ by $\partial_{h'} g(h) := g(h + h') g(h)^{-1}$ for $h \in \Z$.  

\begin{definition}[Polynomial sequence]
    Let $X = G/\Gamma$ be a filtered nilmanifold with filtration $G_\bullet = (G\si)_{i \geq 0}$. 
    A polynomial sequence adapted to this filtered nilmanifold is a map $g \colon \Z \to G$ such that 
    \[
    \partial_{h_1} \dots \partial_{h_i} g (h) \in G\si
    \] 
    for all $i \geq 0$ and $h, h_1, \dots, h_i \in \Z$. 
    The set of all such polynomial sequences is denoted by $\Poly(\Z, G_\bullet)$. 
\end{definition}

The following lemma is an analogue of the Taylor expansion for classical polynomials.
A proof can be found in \citep[Lemma A.1]{green2010arithmetic}. 
Here we use the generalized binomial coefficients 
\[
\binom{h}{i} := \frac{h (h-1) \dots (h - i + 1)}{i!}
\]
for all $h \in \Z$ and $i \geq 0$. 

\begin{lemma}[Taylor expansion]
    \label[lemma]{lem:taylor_expansion}
    Let $X = G/\Gamma$ be a filtered nilmanifold of degree $s$ with filtration $G_\bullet = (G\si)_{i \geq 0}$, and let $g \in \Poly(\Z, G_\bullet)$.  
    Then there are unique Taylor coefficients $g_i \in G\si$ for $i = 0, \dots, s$ such that 
    \[
    g(h) = g_0 g_1^{\binom{h}{1}} \cdots g_s^{\binom{h}{s}}  
    \]
    for all $h \in \Z$. 
    Conversely, every expansion of this form defines a polynomial sequence. 
\end{lemma}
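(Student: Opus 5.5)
The plan is to prove existence and uniqueness of the Taylor coefficients by induction on the degree $s$, and to prove the converse by showing that the class of polynomial sequences is closed under pointwise products. The converse is the substantive part and will be used inside the existence proof. The tool for that closure is the Lazard--Leibman theorem: $\Poly(\Z, G_\bullet)$ is a group under pointwise multiplication. The standard route to it is to control the derivatives of a product by repeated application of the commutator inclusions $[G^{(i)}, G^{(j)}] \subseteq G^{(i+j)}$.

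\textbf{Step 1 (building blocks).} I would first check that for $a \in G^{(i)}$ the sequence $h \mapsto a^{\binom{h}{i}}$ lies in $\Poly(\Z, G_\bullet)$. All its values lie in the abelian group $\langle a \rangle$, so the derivatives are computed additively in the exponent, and $\partial_{h_1}\cdots\partial_{h_j}\binom{h}{i}$ is an integer combination of binomials.

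\begin{itemize}
\item For $j \le i$, the derivative is a power of $a$, hence lies in $G^{(i)} \subseteq G^{(j)}$.
\item For $j > i$, the derivative vanishes, because an integer-valued polynomial of degree $i$ has vanishing $(i+1)$-fold differences.
\end{itemize}

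Here the filtration convention $G^{(0)} = G^{(1)}$ takes care of the cases $i = 0$ and $j = 0$.

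\textbf{Step 2 (closure under products).} This is the main obstacle. I would prove by induction on $s$, using the quotient $G/G^{(s)}$, that $\Poly(\Z, G_\bullet)$ is closed under products and inverses. I would first try the Leibman approach: show that for $g, g'$ polynomial, $\partial_{h'}(gg') = \partial_{h'}g \cdot g\,(\partial_{h'}g')\,g^{-1}$. Then verify that conjugation by a polynomial sequence preserves the class of sequences polynomial with respect to the shifted filtration $G_{\bullet+1}$. For this I would run a double induction on $s$ and on the index, using the commutator inclusions to control the commutator terms. Since the paper only cites this lemma, it is reasonable to defer the full bookkeeping to \citep{green2010arithmetic}, but this is the step I would write out first.

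\textbf{Step 3 (existence by peeling off coefficients).} With Steps 1 and 2, the converse follows at once. For existence, put $g_0 := g(0) \in G$ and replace $g$ by $g_0^{-1}g$, which is still polynomial by Step 2 and vanishes at $0$. Then $g_1 := g(1)$ satisfies $g(1) = \partial_1 g(0) \in G^{(1)}$. Replace $g$ by $g_1^{-\binom{h}{1}}$ times the current sequence. Continue inductively: if the current sequence $\tilde g$ vanishes at $0, \dots, i-1$, then $\tilde g(i) = \partial_1^i \tilde g(0) \in G^{(i)}$. To see this, note that the iterated difference at $0$ collapses to $\tilde g(i)$ when the earlier values are trivial; checking this collapse in the nonabelian setting is a short induction. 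Set $g_i := \tilde g(i)$, and note that $g_i^{\binom{h}{i}}$ vanishes at $h = 0, \dots, i-1$ and equals $g_i$ at $h = i$. After stage $s$ the remaining sequence $r$ vanishes at $0, \dots, s$ and has all $(s+1)$-fold derivatives trivial, since $G^{(s+1)} = \{e_G\}$. Any such sequence is determined by its values at $0, \dots, s$, so $r \equiv e_G$, which gives the expansion.

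\textbf{Step 4 (uniqueness).} Suppose two expansions agree. Evaluating at $h = 0$ forces the $g_0$ to coincide. Cancel and evaluate at $h = 1$, where $\binom{1}{i} = 0$ for $i \ge 2$; this forces the $g_1$ to coincide. Inductively, evaluating at $h = i$, where all higher binomials vanish and the lower factors already match, gives uniqueness of each $g_i$.
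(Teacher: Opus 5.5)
Your proposal is correct. The paper gives no proof of its own and simply cites \citep[Lemma A.1]{green2010arithmetic}. Your argument is the standard one for this fact: the converse follows from the Lazard--Leibman group property together with the building blocks $h \mapsto a^{\binom{h}{i}}$. Uniqueness comes from evaluating successively at $h = 0, 1, \dots, s$. Existence comes from peeling off $g_i := \tilde g(i) = \partial_1^i \tilde g(0) \in G^{(i)}$ and killing the remainder using $G^{(s+1)} = \{e_G\}$.

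The only non-elementary input is the Lazard--Leibman theorem, which you defer to the literature. The paper does the same when it invokes \citep[Proposition 6.2]{green2007quantitative} in the proof of \cref{lem:compactness_taylor_coefficients}. The remaining nonabelian bookkeeping checks out. In particular, $\partial_1^i \tilde g(0) = \tilde g(i)$ holds when $\tilde g$ vanishes at $0, \dots, i-1$. Likewise, a sequence $r$ with $\partial_1^{s+1} r \equiv e_G$ is determined on all of $\Z$ by $r(0), \dots, r(s)$.
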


We can use the Taylor expansion to prove the following compactness result, which, for a fixed filtered nilmanifold $G/\Gamma$, allows us to extract pointwise limits from sequences in $\Poly(\Z, G_\bullet)$ after adjusting the elements of the sequence only modulo $\Gamma$.  

\begin{lemma}
    \label[lemma]{lem:compactness_taylor_coefficients}
    Let $X = G/\Gamma$ be a filtered nilmanifold of degree $s$ with filtration $G_\bullet = (G^{(i)})_{i \geq 0}$ and let $(g_j)_{j \geq 1}$ be a sequence in $\Poly(\Z, G_\bullet)$. 
    Then there exists a sequence $(\wt g_j)_{j \geq 1}$ in $\Poly(\Z, G_\bullet)$ such that 
    \[
    \wt g_j(h) \Gamma = g_j(h) \Gamma
    \]
    for each $h \in \Z$ and $j \geq 1$, and such that after passing to a subsequence, $(\wt g_j)_{j \geq 1}$ converges pointwise on $\Z$ to some polynomial sequence $g \in \Poly(\Z, G_\bullet)$. 
\end{lemma}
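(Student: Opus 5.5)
The plan is to use the Taylor expansion (\cref{lem:taylor_expansion}) to reduce the statement to a compactness argument for the finitely many Taylor coefficients. Write
\[
g_j(h) = g_{j,0}\, g_{j,1}^{\binom{h}{1}} \cdots g_{j,s}^{\binom{h}{s}}, \qquad g_{j,i} \in G^{(i)}.
\]
We modify the coefficients modulo $\Gamma$, or rather modulo $\Gamma \cap G^{(i)}$, so that they lie in a fixed compact set. We then extract a convergent subsequence of coefficients and define $g$ by the limiting coefficients. Pointwise convergence follows immediately, since for fixed $h$ the map $(g_0,\dots,g_s) \mapsto g_0 g_1^{\binom{h}{1}}\cdots g_s^{\binom{h}{s}}$ is continuous, and integer powers are continuous in a Lie group. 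The limit is a polynomial sequence by the converse part of \cref{lem:taylor_expansion}, because each $G^{(i)}$ is closed.

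The key step is the adjustment of the coefficients without changing $g_j(h)\Gamma$ for any $h$. We use the fact that for $\gamma \in \Gamma$ the sequence $h \mapsto \gamma(h)$ with Taylor coefficients in $\Gamma \cap G^{(i)}$ takes values in $\Gamma$. Indeed, each factor $\gamma_i^{\binom{h}{i}}$ is an integer power of an element of $\Gamma$. It follows that $\wt g_j := g_j \gamma_j$ satisfies $\wt g_j(h)\Gamma = g_j(h)\Gamma$ for every $h$. Moreover, $\wt g_j$ is again in $\Poly(\Z, G_\bullet)$, since the polynomial sequences form a group under pointwise multiplication (the Lazard--Leibman theorem). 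I will assume this from \citep{green2012inverse}; it is where the filtration property $[G^{(i)},G^{(j)}]\subseteq G^{(i+j)}$ enters.

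We choose $\gamma_j$ by downward recursion on the Taylor degree, working from the top coefficient. By rationality (\cref{lem:rational_subgroups_equivalent_chatacterizations}(iv)) there is a compact $K_i \subseteq G^{(i)}$ with $G^{(i)} = K_i (G^{(i)} \cap \Gamma)$. First multiply on the right by a sequence $\gamma^{(s)}(h) = \gamma_s^{\binom{h}{s}}$, where $\gamma_s \in G^{(s)}\cap\Gamma$ is chosen so that $g_{j,s}\gamma_s \in K_s$. Here $G^{(s)}$ is central, so the top coefficient of the product is simply $g_{j,s}\gamma_s$. Multiplying by $\gamma^{(i)}(h) = \gamma_i^{\binom{h}{i}}$ for $\gamma_i \in G^{(i)}\cap\Gamma$ changes the $i$-th Taylor coefficient to $g_{j,i}\gamma_i$ modulo $G^{(i+1)}$, and only disturbs coefficients of degree $> i$. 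So at stage $i$ we fix the $i$-th coefficient modulo $G^{(i+1)}$ to lie in $K_i$. The new higher coefficients are then re-adjusted. Since the corrections needed at higher levels come from the quotients $G^{(i+1)}/(G^{(i+1)}\cap\Gamma)$, I would instead organize the procedure as an induction on $i$ from $0$ upward. At each stage we normalize the $i$-th coefficient into a compact set and record that the higher coefficients remain arbitrary elements of $G^{(i+1)}$, to be normalized at later stages. Each stage multiplies by a $\Gamma$-valued polynomial sequence of degree $\geq i$, which does not alter the already normalized lower coefficients, except modulo higher groups.

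The main obstacle is this bookkeeping. One must show that the lower coefficients do not drift out of their compact sets when the higher ones are fixed. In particular, $g_{j,i}$ must be controlled exactly in $G^{(i)}$, not just modulo $G^{(i+1)}$. I would handle this by proceeding from $i=0$ upward. After normalizing $g_{j,0},\dots,g_{j,i-1}$ exactly, we multiply only by sequences whose Taylor coefficients of degree $<i$ are trivial, so those coefficients are untouched. The $i$-th coefficient then changes exactly by right multiplication $g_{j,i}\mapsto g_{j,i}\gamma_i$, by uniqueness in \cref{lem:taylor_expansion} and a product formula for the lowest nontrivial term. With all coefficients normalized into the compact sets $K_0,\dots,K_s$, a diagonal subsequence gives convergence of each coefficient, which completes the argument.
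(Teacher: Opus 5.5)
Your final organization is correct and is essentially the paper's proof: you normalize the Taylor coefficients from $i=0$ upward by right-multiplying with $h\mapsto\gamma_i^{\pm\binom{h}{i}}$ for $\gamma_i\in\Gamma\cap G^{(i)}$, use that $\Poly(\Z,G_\bullet)$ is a group, and then extract one convergent subsequence of coefficients in the compact set $K^{(0)}\times\cdots\times K^{(s)}$. The paper makes your ``product formula for the lowest nontrivial term'' concrete: the modified sequence agrees with the old one at $h=0,\dots,i-1$ and picks up exactly the factor $\gamma_i^{-1}$ at $h=i$, which forces the lower coefficients to stay the same and the $i$-th to become $b_i\gamma_i^{-1}$. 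The top-down variant you started with is unnecessary.
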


\begin{proof}
    For $i = 0, \dots, s$, let $\Gamma^{(i)} := \Gamma \cap G^{(i)}$. 
    Since $G^{(i)}$ is a rational subgroup of $G$ by definition of a filtered nilmanifold, $\Gamma^{(i)}$ is cocompact in $G^{(i)}$; see \cref{lem:rational_subgroups_equivalent_chatacterizations}.
    Hence there exists a compact subset $K\si \subseteq G\si$ such that 
    \[
    G\si = K\si\Gamma\si.  
    \]

    Fix $j \geq 1$. 
    Applying \cref{lem:taylor_expansion}, we obtain a unique Taylor expansion
    \[
    g_j(h) = a_{j,0} a_{j,1}^{\binom{h}{1}} \cdots a_{j,s}^{\binom{h}{s}} \quad \text{for } h \in \Z,  
    \]
    where $a_{j,i} \in G\si$ for $i = 0, \dots, s$. 
    We will adjust the Taylor coefficients one by one, to lie in the compact sets $K\si$. 

    Suppose that at some stage of this construction, we have a polynomial sequence $u \in \Poly(\Z, G_\bullet)$ such that 
    \[
    u(h) \Gamma = g_j(h) \Gamma 
    \]
    for all $h \in \Z$, and with Taylor expansion 
    \[
    u(h) = b_0 b_1^{\binom{h}{1}} \cdots b_s^{\binom{h}{s}} 
    \]
    where $b_i \in G\si$. 
    Fix $i = 0, \dots, s$, and choose $\gamma_i \in \Gamma\si$ such that $b_i \gamma_i^{-1} \in K\si$. 
    Define 
    \[
    u'(h) := u(h) \gamma_i^{-\binom{h}{i}}.  
    \]
    Since $h \mapsto \gamma_i^{-\binom{h}{i}}$ is a polynomial sequence, and $\Poly(\Z, G_\bullet)$ is a group by \citep[Proposition 6.2]{green2007quantitative}, the map $u'$ again belongs to $\Poly(\Z, G_\bullet)$.    
    Also, since $\gamma_i \in \Gamma$, we have that 
    \[
    u'(h) \Gamma = u(h) \Gamma = g_j(h) \Gamma \quad \text{for } h \in \Z.  
    \]
    Let $b_0', \dots, b_s'$ denote the Taylor coefficients of $u'$. 
    Observe that $u'(h) = u(h)$ for $h = 0, \dots, i - 1$, whereas $u'(i) = u(i) \gamma_i^{-1}$. 
    It follows that $b'_0 = b_0, b'_1 = b_1, \dots, b'_{i-1} = b_{i-1}$, and $b'_i = b_i \gamma_i^{-1} \in K\si$. 
    In particular, the coefficients of index less than $i$ are unchanged. 

    Starting with $u = g_j$ and applying this procedure iteratively for $i = 0, \dots, s$, we obtain a polynomial sequence $\wt g_j$ such that 
    \[
    \wt g_j(h) \Gamma = g_j(h) \Gamma \quad \text{for } h \in \Z,  
    \]
    and whose Taylor expansion 
    \[
    \wt g_j(h) = \wt a_{j,0} \wt a_{j,1}^{\binom{h}{1}} \cdots \wt a_{j,s}^{\binom{h}{s}}
    \]
    satisfies $\wt a_{j,i} \in K\si$ for $i = 0, \dots, s$. 

    The product $K^{(0)} \times \cdots \times K^{(s)}$ is compact.
    Therefore, after passing to a subsequence of $(\wt g_j)_{j \in \N}$, we can assume that 
    \[
    \wt a_{j,i} \to a_i \in G\si 
    \quad \text{ for } i = 0, \dots, s, \ \text{ as } j \to \infty. 
    \]
    Define 
    \[
    g(h) := a_0 a_1^{\binom{h}{1}} \cdots a_s^{\binom{h}{s}}. 
    \]
    By the converse part of \cref{lem:taylor_expansion}, $g \in \Poly(\Z, G_\bullet)$. 
    For each fixed $h \in \Z$, by continuity of the multiplication in $G$, we get that  
    \[
    \wt g_j(h) = \wt a_{j,0} \wt a_{j,1}^{\binom{h}{1}} \cdots \wt a_{j,s}^{\binom{h}{s}}
    \to 
    a_0 a_1^{\binom{h}{1}} \cdots a_s^{\binom{h}{s}} = g(h), 
    \]
    as $j \to \infty$, which proves the pointwise convergence.  
\end{proof}

\section{Nilsequences}

Having introduced polynomial sequences, we now recall the notion of a nilsequence. 
This is the type of structured function appearing in the inverse theorem for the Gowers norms. 

\begin{definition}[Nilsequence]
    A nilsequence of degree $s$ is a map $\psi \colon \Z \to \C$ of the form 
    \[
    \psi(h) = F(g(h) \Gamma),
    \]
    where $X = G/\Gamma$ is a filtered nilmanifold of degree $s$, $g \in \Poly(\Z, G_\bullet)$ is a polynomial sequence, and $F \colon X \to \C$ is a Lipschitz function. 
    The nilsequence $\psi$ is said to be of complexity $M$ if the filtered nilmanifold $X = G/\Gamma$ is of complexity $M$ and the Lipschitz norm of $F$ is at most $M$. 
\end{definition}

Here the Lipschitz norm is the usual inhomogeneous Lipschitz norm, that is, the sum of the supremum norm and the optimal Lipschitz constant. 
This requires a metric on $X = G/\Gamma$ to be fixed. 
For definiteness, we could use the metric constructed in \citep[Definition 2.2]{green2007quantitative}.
Note that if $\psi$ is a nilsequence of degree $s$ and complexity $M$, then so are its conjugate $\overline{\psi}$ and every shift $h \mapsto \psi(h + r)$ with $r \in \Z$, since conjugation preserves the Lipschitz norm and shifting a polynomial sequence yields again a polynomial sequence. 

The following result connects nilsequences back to the nilsystems used throughout this thesis. 
It says that nilsequences of degree $s$ can be realized as linear nilsequences, that is, as a continuous function evaluated along an orbit of an $s$-step nilsystem. 
For technical reasons, we state this result for continuous maps instead of the usual Lipschitz maps.   
A proof can be found in \citep[Chapter 14]{host2018nilpotent}. 

\begin{lemma}
    \label[lemma]{lem:linearising_nilsequence}
    Let $\psi(h) = F(g(h) \Gamma)$ be a sequence, where $X = G/\Gamma$ is a filtered nilmanifold of degree $s$, $g \in \Poly(\Z, G_\bullet)$, and $F \colon X \to \C$ is a continuous function.   
    Then there exists an $s$-step nilsystem $(\wt X = \wt G/ \wt \Gamma, \wt T)$, a point $\wt x \in \wt X$, and a continuous function $\wt F \colon \wt X \to \C$ such that 
    \[
    \psi(h) = \wt F(\wt T^h \wt x), 
    \]
    for all $h \in \Z$. 
\end{lemma}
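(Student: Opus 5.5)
The plan is to realize the polynomial orbit as a linear orbit on a larger nilmanifold, using the standard "Leibman trick" of encoding a polynomial sequence through its Taylor coefficients and a shift action. By \cref{lem:taylor_expansion}, write
\[
g(h) = g_0 g_1^{\binom{h}{1}} \cdots g_s^{\binom{h}{s}}, \qquad g_i \in G^{(i)}.
\]
Consider the group $\Poly(\Z, G_\bullet)$, which is a group by \citep[Proposition 6.2]{green2007quantitative}. Its subgroup $\Poly(\Z, \Gamma_\bullet)$ consists of the polynomial sequences taking values in $\Gamma$. The shift $\sigma \colon p \mapsto p(\cdot + 1)$ is an automorphism of $\Poly(\Z, G_\bullet)$ preserving $\Poly(\Z,\Gamma_\bullet)$. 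Via Taylor coordinates, $\Poly(\Z,G_\bullet)$ is identified with the manifold $G^{(0)} \times \cdots \times G^{(s)}$ as a Lie group. The polynomial sequences with values in $\Gamma$ correspond to a discrete cocompact subgroup, since each $\Gamma^{(i)} = \Gamma \cap G^{(i)}$ is cocompact by rationality and \cref{lem:rational_subgroups_equivalent_chatacterizations}. Cocompactness can also be seen by the coefficient-adjusting argument of \cref{lem:compactness_taylor_coefficients}.

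Next, form the semidirect product $\wt G := \Poly(\Z, G_\bullet) \rtimes \Z$, with $\Z$ acting by $\sigma$, and set $\wt\Gamma := \Poly(\Z,\Gamma_\bullet) \rtimes \Z$. Take $\wt T$ to be translation by $\tau := (e, 1)$, and let $\wt x$ be the coset of $(g, 0)$. Then
\[
\wt T^h \wt x = (e,h)(g,0)\wt\Gamma = (\sigma^h g, h)\wt\Gamma = (\sigma^h g, 0)\wt\Gamma.
\]
Define $\wt F((p,n)\wt\Gamma) := F(p(0)\Gamma)$. This is well defined: right multiplication by $(\gamma, m) \in \wt\Gamma$ sends $(p,n)$ to $(p \cdot \sigma^n\gamma, n+m)$, and $\sigma^n\gamma(0) \in \Gamma$, so the value $p(0)\Gamma$ is unchanged. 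The function $\wt F$ is continuous, and it satisfies $\wt F(\wt T^h \wt x) = F(g(h)\Gamma) = \psi(h)$.

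The main obstacle is checking that $\wt G$ is $s$-step nilpotent; the quotient is compact since $\Z\subseteq\wt\Gamma$ and $\Poly(\Z,G_\bullet)/\Poly(\Z,\Gamma_\bullet)$ is compact. Nilpotency comes from the identity $[\sigma, p] = \partial_1 p$ up to orientation, which lies in the shifted filtration $\Poly(\Z, G_{\bullet+1})$. Using the filtration $\wt G^{(i)} := \Poly(\Z, G_{\bullet+i})$ for $i\ge 2$ and $\wt G^{(1)} = \wt G$, I would verify that $[\wt G^{(i)},\wt G^{(j)}] \subseteq \wt G^{(i+j)}$. This is the Leibman/Green–Tao fact that $\Poly(\Z,G_{\bullet+i})$ forms a filtration under pointwise commutators. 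It gives $\wt G_{s+1} = \{e\}$, so $\wt G$ is $s$-step nilpotent. Finally, $\wt G/\wt G^o$ is generated by $\tau$ together with finitely many generators of $\Poly(\Z, G_\bullet)/\Poly(\Z,G_\bullet)^o$; the latter group is trivial since $G$ is connected, so the standing assumption holds.
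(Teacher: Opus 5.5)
\paragraph{Verdict.} There is a genuine gap, at exactly the step you flagged as the main obstacle. The rest of your construction is sound: the semidirect product with the shift, the orbit identity $\wt T^h\wt x=(\sigma^h g,0)\wt\Gamma$, and the well-definedness of $\wt F$ all check out. For reference, the paper does not prove this lemma itself; it cites \citep[Chapter 14]{host2018nilpotent}.

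\paragraph{The gap.} The group $\wt G=\Poly(\Z,G_\bullet)\rtimes\Z$ is in general \emph{not} $s$-step nilpotent, and the sequence you propose is not a filtration. There is an index mismatch:
\begin{itemize}
\item For $p\in\Poly(\Z,G_\bullet)$ you correctly get $[(e,1),(p,0)]=(\partial_1 p,0)$ with $\partial_1 p\in\Poly(\Z,G_{\bullet+1})$. Your filtration, however, needs it in $\wt G^{(2)}=\Poly(\Z,G_{\bullet+2})$.
\item Pointwise commutators of two elements of $\Poly(\Z,G_\bullet)$ gain no degree at all, since Lazard--Leibman with $i=j=0$ gives nothing.
\end{itemize}

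Already $s=1$ fails. Take $G=\R$, $\Gamma=\Z$, $G^{(2)}=\{0\}$. Then $\Poly(\Z,G_\bullet)$ is the group of affine sequences $h\mapsto a+bh$. For $p(h)=bh$, the commutator $[(e,1),(p,0)]$ is the nonzero constant $b$. So $\wt G$ is non-abelian, i.e.\ $2$-step, and your $\wt X$ is the skew product $(a,b)\mapsto(a+b,b)$ on $\T^2$ rather than a $1$-step nilsystem.

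In general the chain $\binom{h}{s}\mapsto\binom{h}{s-1}\mapsto\cdots\mapsto\binom{h}{0}$ under $\partial_1$ has length $s+1$. Non-abelian examples are worse. For the Heisenberg group with $G^{(2)}=[G,G]$, one has $[a^h,b^h]=[a,b]^{h^2}$, and two further commutators with $(e,1)$ leave the nontrivial constant $[a,b]^2$ in $\wt G_4$. This is not cosmetic: the lemma is used to build an $s$-step pro-nilsystem in the weak structure theorem, so the step bound is its whole content.

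\paragraph{How to repair it.} Put one derivative of $g$ into the translation element.
\begin{itemize}
\item Replace $\Poly(\Z,G_\bullet)$ by $\Poly(\Z,G_{\bullet+1})$, the sequences whose $i$-fold derivatives lie in $G^{(i+1)}$.
\item Take $\wt G:=\Poly(\Z,G_{\bullet+1})\rtimes\Z$ with your multiplication law, and $\wt\Gamma:=\{p\in\Poly(\Z,G_{\bullet+1}):p(\Z)\subseteq\Gamma\}\rtimes\Z$.
\item Set $q(h):=g(h)^{-1}g(h+1)$. It lies in $\Poly(\Z,G_{\bullet+1})$, since it is the conjugate $g^{-1}(\partial_1 g)\,g$ of $\partial_1 g\in\Poly(\Z,G_{\bullet+1})$ by an element of $\Poly(\Z,G_\bullet)$, which normalizes $\Poly(\Z,G_{\bullet+1})$.
\item Take $\tau:=(q,1)$, base point $\wt x:=(e,0)\wt\Gamma$, and $\wt F((p,n)\wt\Gamma):=F(g(0)p(0)\Gamma)$.
\end{itemize}

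Then the following hold.
\begin{itemize}
\item The first coordinate of $\tau^h$ is $q\cdot\sigma q\cdots\sigma^{h-1}q$. Its value at $0$ telescopes to $g(0)^{-1}g(h)$, and similarly for $h<0$, so $\wt F(\wt T^h\wt x)=\psi(h)$.
\item $\wt F$ is well defined because $(p\cdot\sigma^n\gamma)(0)=p(0)\gamma(n)$, exactly as in your argument.
\item Now $\wt G^{(1)}:=\wt G$ and $\wt G^{(i)}:=\Poly(\Z,G_{\bullet+i})$ for $i\ge 2$ really is a filtration. Indeed, $[\Poly(\Z,G_{\bullet+1}),\Poly(\Z,G_{\bullet+1})]\subseteq\Poly(\Z,G_{\bullet+2})$ by Lazard--Leibman, and commuting with $(e,1)$ raises the index by one.
\item Since $\Poly(\Z,G_{\bullet+s+1})=\{e\}$, the group $\wt G$ is $s$-step nilpotent.
\end{itemize}
Your remaining steps (Taylor coordinates, cocompactness via rationality of the $G^{(i)}$, connectedness) carry over unchanged.
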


\section{The Gowers norms and the inverse theorem}
\label[appendix]{sec:gowers_norms}

Let $s \geq 1$, let $G$ be a finite abelian group which we write additively, and let $f \colon G \to \C$ be a function.  
The Gowers uniformity norm is defined by 
\[
\| f \|_{U^s(G)}^{2^s} := \frac{1}{|G|^{s+1}} \sum_{x, h_1, \dots, h_s \in G} \prod_{\omega \in \{0,1\}^s} \mcc^{|\omega|} f(x + \omega_1 h_1 + \dots + \omega_s h_s),
\] 
where $\mcc$ denotes complex conjugation, and $|\omega| = \omega_1 + \dots + \omega_s$ for $\omega = (\omega_1, \dots, \omega_s) \in \{0, 1\}^s$. 

The version of the inverse theorem that we need, however, concerns functions defined on the finite intervals $[N] := \{1, \dots, N\}$, where $N \geq 1$. 
In this case, for $s \geq 1$ and $f \colon [N] \to \C$, the Gowers norm is defined by 
\[
\| f \|_{U^s[N]} := \| \wt f \|_{U^s(\Z/\wt N\Z)}/ \| \ind{[N]} \|_{U^s(\Z/\wt N\Z)},  
\]
where $\wt N \geq 2^s N$ is any integer and $\wt f$ is the extension of $f$ by zero outside of $[N]$. 
This definition does not depend on the specific choice of $\wt N$, so we may simply choose $\wt N = 2^s N$.  

We say that a function $f \colon [N] \to \C$ is $1$-bounded if $|f(h)| \leq 1$ for all $h \in [N]$, and we write 
\[
\E_{h \in [N]} f(h) := \frac{1}{N} \sum_{h \in [N]} f(h).  
\]

We can now state the inverse theorem for the Gowers $U^{s+1}[N]$-norm, proved by \citet{green2012inverse}. 

\begin{theorem}[Inverse theorem for $U^{s+1}\lbrack N \rbrack$-norm]
    \label{thrm:inverse_theorem}
    Let $s \geq 1$, let $N \geq 1$, and let $0 < \delta \leq 1$. 
    Suppose that $f \colon [N] \to \C$ is a $1$-bounded function with $\| f \|_{U^{s+1}[N]} \geq \delta$. 
    Then there exist constants $M = M(s, \delta) \geq 1$ and $c = c(s,\delta) > 0$, and a nilsequence $\psi$ of degree $s$ and complexity $M$ such that 
    \[
    | \E_{h \in [N]} f(h) \overline{\psi(h)} | \geq c(s, \delta). 
    \]
\end{theorem}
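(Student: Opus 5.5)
This statement is not proved in the thesis; it is imported from \citet{green2012inverse}. What follows is only an outline of how I would organise a proof along the lines of that work, so that it is clear what the weak structure theorem ultimately depends on.

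\emph{Base case and induction.} I would argue by induction on $s$.

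For $s = 1$, embed $f$ into $\Z/\wt N\Z$ by extension by zero. Expanding $\|\wt f\|_{U^2}^4 = \sum_\xi |\wh{\wt f}(\xi)|^4$ and using Plancherel shows that $\|\wt f\|_{U^2}^4 \leq \max_\xi |\wh{\wt f}(\xi)|^2$. Hence some frequency satisfies $|\wh{\wt f}(\xi)| \gtrsim \delta^2$. The character $h \mapsto e(\xi h/\wt N)$ is a degree $1$ nilsequence on $\T$ of bounded complexity, which gives the claim with $M$ absolute and $c \gtrsim \delta^{2}$.

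For the inductive step, I would use the identity
\[
\|f\|_{U^{s+1}}^{2^{s+1}} = \E_{h} \|\Delta_h f\|_{U^s}^{2^s},
\]
where $\Delta_h f(n) := f(n+h)\overline{f(n)}$. It shows that for a set $H \subseteq [-N,N]$ of density $\gg_\delta 1$, the multiplicative derivative $\Delta_h f$ has $U^s$-norm $\gg_\delta 1$. The induction hypothesis then supplies degree $s-1$ nilsequences $\chi_h$ of bounded complexity with $|\E_n \Delta_h f(n)\overline{\chi_h(n)}| \gg 1$ for all $h \in H$.

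\emph{The three structural steps.} The heart of the proof is to show that the assignment $h \mapsto \chi_h$ must be ``linear'' and can be integrated.
\begin{enumerate}[(i)]
    \item \emph{Additive quadruples.} A Cauchy--Schwarz argument in the style of Gowers' proof for $U^3$ shows that for many additive quadruples $h_1 + h_2 = h_3 + h_4$ in $H$, the product $\chi_{h_1}\chi_{h_2}\overline{\chi_{h_3}\chi_{h_4}}$ correlates with a nilsequence of degree $s-2$.
    \item \emph{Linear dependence of frequencies.} Using equidistribution theory for polynomial orbits \citep{green2007quantitative} and a regularization of the nilsequences, I would convert (i) into the statement that the top-degree frequencies of $\chi_h$ depend, on a dense subset of $H$, in a bracket-linear way on $h$. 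This is an arithmetic regularity and Freiman-type step.
    \item \emph{Symmetry and integration.} Finally, I would show that this bracket-linear family is symmetric, and integrate it into a single degree $s$ nilsequence $\psi$ with $\Delta_h \psi \approx \chi_h$ modulo lower order. Reversing the Cauchy--Schwarz steps then gives $|\E f\overline{\psi}| \geq c(s,\delta)$.
\end{enumerate}

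\emph{Uniformity of constants.} Throughout, all complexity bounds must depend only on $s$ and $\delta$. I would track them through the regularity lemma; alternatively, they can be recovered a posteriori by a compactness argument, using the finiteness of filtered nilmanifolds of bounded complexity. The transfer between $[N]$ and $\Z/\wt N\Z$ is harmless, since $\|\ind{[N]}\|_{U^{s+1}(\Z/\wt N\Z)}$ is bounded below by a constant depending only on $s$.

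The main obstacle is step (iii), the symmetry and integration step. This is where the genuinely non-abelian bracket-polynomial bookkeeping occurs, and it accounts for most of the length of \citep{green2012inverse}.
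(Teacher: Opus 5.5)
Your proposal matches the thesis, which does not prove this theorem either but imports it from \citet{green2012inverse}. Your Fourier-analytic base case is correct, and the outline (induction on $s$, Cauchy--Schwarz to additive quadruples, linearization of the top-order frequencies, symmetry, integration) is a fair summary of their strategy.

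Two steps of the sketch are compressed in a misleading way:
\begin{enumerate}[(a)]
    \item The Cauchy--Schwarz step actually controls the shifted products $\chi_{h_1}(n)\chi_{h_2}(n+h_1-h_4)\overline{\chi_{h_3}(n)\chi_{h_4}(n+h_1-h_4)}$. Discarding the shifts modulo degree-$(s-2)$ terms relies on working with nilcharacters, as Green, Tao and Ziegler do, rather than with general nilsequences.
    \item The final step is not a reversal of Cauchy--Schwarz. Instead, one applies the $U^{s}$ inverse theorem (the induction hypothesis) to $f\overline{\Theta}$, where $\Theta$ is the integrated degree-$s$ nilcharacter. Then $f$ correlates with $\Theta$ times a degree-$(s-1)$ nilsequence, which gives the required degree-$s$ nilsequence.
\end{enumerate}
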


We will not use the inverse theorem directly. 
Instead, we will use a well-known consequence of it, called the arithmetic regularity lemma, due to \citet{green2010arithmetic}. 
The following formulation is \citep[Proposition 2.7]{green2010arithmetic}.
By a growth function, we mean a monotone increasing function $\mcf \colon \R^+ \to \R^+$ such that $\mcf(M) \geq M$ for all $M \in \R^+$.  

\begin{theorem}[Arithmetic regularity lemma]
    \label{thrm:arithmetic_regularity}
    Let $f \colon [N] \to [0,1]$, let $s \geq 1$, let $\epsilon > 0$, and let $\mcf \colon \R^+ \to \R^+$ be a growth function.
    Then there exist $M = M(s, \epsilon, \mcf) \geq 1$ and a decomposition 
    \[
    f = \psi + r + u, 
    \] 
    where $\psi, r, u \colon [N] \to [-1, 1]$ are functions such that: 
    \begin{enumerate}[(i)]
    \item $\psi$ is a nilsequence of degree $s$ and complexity $M$,  
    \item $\| r \|_{L^2} \leq \epsilon$, 
    \item $\| u \|_{U^{s+1}[N]} \leq 1/\mcf(M)$, 
    \item $\psi$ and $\psi + r$ take values in $[0,1]$. 
    \end{enumerate} 
\end{theorem}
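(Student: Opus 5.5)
The plan is to deduce the arithmetic regularity lemma from \cref{thrm:inverse_theorem} by the standard energy-increment argument run at two scales. We work in $L^2([N])$ with normalized inner product. The structured objects are the $\sigma$-algebras, or partitions of $[N]$, generated by level sets of nilsequences of bounded complexity. Their conditional expectations play the role of the structured part. Concretely, we start from the trivial factor $\mathcal{B}_0$. Given a factor $\mathcal{B}$ generated by $k$ nilsequences of degree $s$ and complexity at most $M_k$, each discretized into intervals of length $\approx \epsilon'$ with a randomly shifted grid so that boundary sets have small measure, we test whether $\|f - \E(f\mid\mathcal{B})\|_{U^{s+1}[N]} \ge \delta$.

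If this test succeeds, \cref{thrm:inverse_theorem} yields a nilsequence $\psi'$ of complexity $M(s,\delta)$ with $|\E_{h\in[N]}(f-\E(f\mid\mathcal{B}))\overline{\psi'}| \ge c(s,\delta)$. Adjoining the level sets of $\psi'$ to $\mathcal{B}$, with its real and imaginary parts discretized at scale $c/4$, gives a refined factor $\mathcal{B}'$. By Pythagoras, the energy $\|\E(f\mid\mathcal{B}')\|_2^2$ then exceeds $\|\E(f\mid\mathcal{B})\|_2^2$ by $\gtrsim c^2$. Since the energy is bounded by $1$, this can happen only $O(c^{-2})$ times.

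To obtain the three-term decomposition with the growth function $\mcf$, we nest this inside an outer iteration on the $L^2$ scale. We build a sequence of factors $\mathcal{B}_1 \subseteq \mathcal{B}_2 \subseteq \cdots$, each of complexity $M_j$. At step $j$ we run the inner iteration with $\delta_j := 1/\mcf(M_j')$, where $M_j'$ is the complexity bound that will be used for $\psi$. This produces $\mathcal{B}_{j+1} \supseteq \mathcal{B}_j$ with $\|f - \E(f\mid\mathcal{B}_{j+1})\|_{U^{s+1}[N]} \le 1/\mcf(M_j')$. By pigeonholing the bounded energy sequence, some $j \le \epsilon^{-2}$ satisfies $\|\E(f\mid\mathcal{B}_{j+1}) - \E(f\mid\mathcal{B}_j)\|_2 \le \epsilon$. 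We then set
\[
\psi := \E(f\mid\mathcal{B}_j), \quad r := \E(f\mid\mathcal{B}_{j+1}) - \E(f\mid\mathcal{B}_j), \quad u := f - \E(f\mid\mathcal{B}_{j+1}).
\]
Property (iv) follows automatically, since conditional expectations of $[0,1]$-valued functions take values in $[0,1]$. Properties (ii) and (iii) follow from the construction. All complexity bounds depend only on $s$, $\epsilon$ and $\mcf$, because the number of steps is bounded at both levels.

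The main obstacle is property (i): $\E(f\mid\mathcal{B}_j)$ is a step function on level sets, not a Lipschitz nilsequence. We would handle this as Green and Tao do. First, the discretization cutoffs are replaced by Lipschitz partitions of unity $\{\phi_\alpha\}$ on $\C^k$. Second, the product of finitely many nilsequences is realized as a single nilsequence on the product filtered nilmanifold. Third, $\psi$ is defined as the Lipschitz function $\sum_\alpha c_\alpha \phi_\alpha(\psi_1,\dots,\psi_k)$, where the coefficients $c_\alpha\in[0,1]$ are the averages of $f$ on the corresponding weighted atoms. The error between this smoothed $\psi$ and the true conditional expectation is small in $L^2$ thanks to the random shift of the grid, and we absorb it into $r$ by slightly decreasing the $L^2$ tolerance. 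The resulting Lipschitz constant and the complexity of the product nilmanifold are bounded in terms of $k$ and the discretization scale, hence in terms of $s,\epsilon,\mcf$. That $\psi + r$ still lies in $[0,1]$ holds because it equals $\E(f\mid\mathcal{B}_{j+1})$, up to the same smoothing applied one level finer.
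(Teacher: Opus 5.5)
The thesis gives no proof of this statement. It quotes it from \citep[Proposition 2.7]{green2010arithmetic} and uses it only as a black box in \cref{lem:antiuniform_approx_nilsequence}. Your outline is the standard way to deduce it from \cref{thrm:inverse_theorem}, and it is sound. Its ingredients are nil-factors generated by discretized nilsequences, an inner energy increment driven by the inverse theorem, an outer iteration against $\mcf$ with pigeonholing of the energy, and a Lipschitz partition-of-unity smoothing of $\E(f\mid\mathcal{B}_j)$ realized on a product nilmanifold. Supplying it checks something the thesis only asserts: that the weak structure theorem ``ultimately rests on'' \cref{thrm:inverse_theorem}, since the regularity lemma used there is a formal consequence of it. The citation, in turn, spares the bookkeeping you only gesture at. That includes bounding the complexity of a product of filtered nilmanifolds with an adapted Mal'cev basis, and the Lipschitz norm of $\sum_\alpha c_\alpha \phi_\alpha(\psi_1,\dots,\psi_k)$.

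Two places in your sketch should be stated more carefully.

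First, the grid shift of each nilsequence is fixed when it is adjoined. The smoothing scale at which its boundary mass must be small is only used at the stage where you stop. This is harmless, because all parameters of the double iteration can be computed in advance from $s$, $\epsilon$, $\mcf$ alone. These are the thresholds $\delta_j$, the inner step counts, the complexity bounds, and the smoothing scales for the $O(\epsilon^{-2})$ outer stages. So each shift can be chosen good for these finitely many scales simultaneously.

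Second, do not smooth the finer level. Let $\psi$ be the smoothed version of $\E(f\mid\mathcal{B}_j)$, and set $u := f - \E(f\mid\mathcal{B}_{j+1})$ and $r := \E(f\mid\mathcal{B}_{j+1}) - \psi$. Then $\psi + r = \E(f\mid\mathcal{B}_{j+1})$ exactly, so (iii) and (iv) hold with no further argument. Also $\|r\|_{L^2}$ is at most $\|\E(f\mid\mathcal{B}_{j+1}) - \E(f\mid\mathcal{B}_j)\|_{L^2}$ plus the smoothing error. Smoothing $\mathcal{B}_{j+1}$ as well would change $u$, and you would then need a separate $U^{s+1}[N]$ estimate for that smoothing error.
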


We also need the following result, which is essentially a converse to the inverse theorem; see \citep[Theorem 1.6.12]{tao2012higher} for a proof. 

\begin{proposition}
    \label[proposition]{prop:necessity_inverse_theorem}
    Let $s \geq 1$, $M \geq 1$, and $\epsilon > 0$.
    Then there exists a constant $c = c(s, M, \epsilon) > 0$ such that if $N \geq 1$ and $f \colon [N] \to \C$ is a $1$-bounded function with 
    \[
    | \E_{h \in [N]} f(h) \overline{\psi(h)} | \geq \epsilon,
    \]
    for some nilsequence $\psi$ of degree $s$ and complexity $M$, then $\| f \|_{U^{s+1}[N]} \geq c$.  
\end{proposition}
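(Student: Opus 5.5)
We argue by induction on $s$. The key idea is to decompose the nilsequence $\psi$ into boundedly many \emph{vertical characters} and then use the van der Corput identity
\[
\| f\|_{U^{s+1}}^{2^{s+1}} = \E_h \|\Delta_h f\|_{U^s}^{2^s}
\]
to pass to degree $s-1$. To avoid boundary effects we extend $f$ by zero to $\Z/\wt N\Z$ with $\wt N = 2^{s+1}N$. Since $\|\ind{[N]}\|_{U^{s+1}(\Z/\wt N\Z)}$ is bounded below by a constant depending only on $s$, it suffices to bound $\|\wt f\|_{U^{s+1}(\Z/\wt N\Z)}$ from below. Correlation along $[N]$ is unaffected by multiplying $\psi$ by $\ind{[N]}$. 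The interval cutoff will later be absorbed by approximating $\ind{[N]}$ on $\Z/\wt N\Z$ with a bounded number of linear phases, which are nilsequences of degree $1$ and bounded complexity. Throughout, we use the finiteness of filtered nilmanifolds of complexity $M$, so all constants may be taken uniform over that finite collection.

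\textbf{Base case $s=1$.} A degree-$1$ nilsequence of complexity $M$ is $\psi(h) = F(\alpha h + \beta)$ on a torus $\T^d$ with $d \le M$ and $F$ being $M$-Lipschitz. Approximate $F$ uniformly within $\epsilon/2$ by a trigonometric polynomial with at most $C(M,\epsilon)$ frequencies and coefficients bounded by $C(M,\epsilon)$, for instance via Fejér kernels. Pigeonholing then gives a linear phase $e(\theta h)$ with $|\E_{h\in[N]} f(h) e(-\theta h)| \gtrsim_{M,\epsilon} 1$. Finally, the standard bound $\|f\|_{U^2}^4 \ge \sup_\theta |\widehat{f}(\theta)|^4$, together with the cutoff handling above, gives the claim.

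\textbf{Induction step.} Let $K = G^{(s)}/(\Gamma\cap G^{(s)})$, a torus of dimension at most $M$ that is central in the filtered sense. Fourier-decompose $F$ along $K$-orbits as $F = \sum_\xi F_\xi$, where each $F_\xi$ satisfies $F_\xi(u x) = e(\xi(u)) F_\xi(x)$ for $u \in K$. Convolving first with a smooth bump on $K$ shows that $F$ is within $\epsilon/2$ in sup norm of a sum of at most $C(M,\epsilon)$ components $F_\xi$, each Lipschitz with norm at most $C(M,\epsilon)$. Pigeonholing gives $\xi$ such that $\psi_\xi(h) = F_\xi(g(h)\Gamma)$ correlates with $f$ at level $\gtrsim_{M,\epsilon} 1$.

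The main point, and the step I expect to be the main obstacle, is to show that for each $r$ the sequence $h \mapsto \psi_\xi(h+r)\overline{\psi_\xi(h)}$ is a nilsequence of degree $s-1$ with complexity bounded in terms of $M$ and $\epsilon$, uniformly in $r$. To do this, realize it on the relative square $G\times_{G^{(s)}}G$ quotiented by the diagonal copy of $G^{(s)}$. Use the polynomial sequence $(g(h+r), g(h))$, whose top-degree part lies in the diagonal copy, and the function $F_\xi\otimes\overline{F_\xi}$, which is invariant under the diagonal $K$-action. The technical work here is checking rationality, the filtration of degree $s-1$, and the complexity bounds; I would follow Green--Tao's treatment of this construction.

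Given this, expand $|\E_h f(h)\overline{\psi_\xi(h)}|^2$ and apply Cauchy--Schwarz. This yields $\E_r |\E_h \Delta_r f(h)\, \overline{\psi_\xi(h+r)}\psi_\xi(h)| \gtrsim 1$, so a proportion $\gtrsim 1$ of shifts $r$ satisfy $|\E_h \Delta_r f(h)\, \overline{\Delta_r\psi_\xi(h)}| \gtrsim 1$. The induction hypothesis gives $\|\Delta_r f\|_{U^s} \ge c'$ for these $r$. Averaging over $r$ via the van der Corput identity then gives $\|f\|_{U^{s+1}} \ge c(s,M,\epsilon)$.
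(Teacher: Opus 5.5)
The paper gives no proof here; it cites Tao's book. Your outline is the standard argument for this converse: vertical Fourier decomposition, lowering the degree by differencing, the identity $\|\wt f\|_{U^{s+1}}^{2^{s+1}}=\E_r\|\Delta_r\wt f\|_{U^s}^{2^s}$, and induction on $s$. However, the step you single out as the main point is mis-specified and, as written, fails.

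\textbf{The relative square.} The pair $(g(h+r),g(h))$ lies in $G\times_{G^{(s)}}G$ only if $\partial_r g(h)=g(h+r)g(h)^{-1}\in G^{(s)}$. This is false in general. It already fails with $G^{(2)}$ in place of $G^{(s)}$ unless the linear Taylor coefficient of $g$ lies in $G^{(2)}$. Moreover, with the induced filtration $G^{(i)}\times_{G^{(s)}}G^{(i)}$, quotienting by the diagonal $G^{(s)}_\Delta$ would not lower the degree: the $s$-th term $G^{(s)}\times G^{(s)}$ survives modulo the diagonal.

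\textbf{The correct construction.} Use $G^\square:=G\times_{G^{(2)}}G$ with the relative filtration $(G^\square)^{(i)}:=G^{(i)}\times_{G^{(i+1)}}G^{(i)}$ for $i\ge 1$. Its $s$-th term is exactly $G^{(s)}_\Delta$. You also need to normalize the sequence. Since $g$ is linear modulo $G^{(2)}$, the sequence $\beta_r(h):=(g(h+r),g(h))\,(g(r),g(0))^{-1}$ takes values in $G^\square$ and belongs to $\Poly(\Z,(G^\square)_\bullet)$. Choose $\gamma_r\in\Gamma\times\Gamma$ so that $k_r:=(g(r),g(0))\gamma_r$ lies in a fixed compact set. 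Each $(G^\square)^{(i)}$ is normal in $G\times G$, so you obtain
\[
\psi_\xi(h+r)\overline{\psi_\xi(h)}=F_r\bigl(k_r^{-1}\beta_r(h)k_r\,\Gamma^\square\bigr),\qquad F_r:=(F_\xi\otimes\overline{F_\xi})(k_r\,\cdot\,),
\]
on the fixed nilmanifold $G^\square/\Gamma^\square$, where $\Gamma^\square:=G^\square\cap(\Gamma\times\Gamma)$.
\begin{itemize}
\item $F_r$ is invariant under $G^{(s)}_\Delta$, because $F_\xi$ has a vertical frequency and $G^{(s)}$ is central. Hence $F_r$ descends to the degree-$(s-1)$ quotient by $G^{(s)}_\Delta$.
\item The Lipschitz norm of $F_r$ is bounded uniformly in $r$ because $k_r$ stays in a compact set.
\end{itemize}
This reduction of the base point modulo $\Gamma\times\Gamma$ is what delivers the uniformity in $r$ that you need. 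Without it, the relevant lattice and Lipschitz constants depend on $r$.

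\textbf{Interval bookkeeping in the induction step.} Approximating $\ind{[N]}$ by linear phases is not what is needed there. After differencing, $\Delta_r\wt f$ is supported on $I_r=[N]\cap([N]-r)$, with no wraparound because $\wt f$ is supported on $[N]$. Good shifts $r$ automatically satisfy $|I_r|\gtrsim_{s,M,\epsilon}N$. Apply the induction hypothesis to $\Delta_r f$ on $I_r$, translated to $[\,|I_r|\,]$; translation preserves nilsequence complexity. Then compare its $U^s[\,|I_r|\,]$-norm with $\|\Delta_r\wt f\|_{U^s(\Z/\wt N\Z)}$. This loses only a factor depending on $s$, since $\wt N\ge 2^s|I_r|$ and $|I_r|\gtrsim N$.

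\textbf{Base case.} The real frequency $\theta$ must first be related to frequencies in $\frac{1}{\wt N}\Z$ before the cyclic bound $\|\wt f\|_{U^2}^4\ge\max_\xi|\widehat{\wt f}(\xi)|^4$ applies. You can do this by rounding $\theta$, or by expanding $e(\theta h)\ind{[N]}(h)$ in $L^2(\Z/\wt N\Z)$. With these corrections your argument goes through.
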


We use the arithmetic regularity lemma to prove the following result.
It says that if a function $f \colon [N] \to \C$ is anti-uniform, meaning that its correlation with any uniform function is small, then $f$ is close in $L^2$ to a nilsequence.

\begin{lemma}
    \label[lemma]{lem:antiuniform_approx_nilsequence}
    Let $s \geq 1$, $N \geq 1$, $C > 0$, and let $f \colon [N] \to \C$ be a $1$-bounded function such that 
    \[
    | \E_{h \in [N]} f(h) \overline{g(h)} | \leq C \left\|g\right\|_{U^{s+1}[N]}
    \]
    for every $1$-bounded function $g \colon [N] \to \C$. 
    Then for every $\epsilon > 0$ there exists a $1$-bounded nilsequence $\psi$ of degree $s$ and complexity $M = M(s,\epsilon, C)$, such that 
    \[
    \E_{h \in [N]} |f(h) - \psi(h)|^2 \leq \epsilon.
    \]
\end{lemma}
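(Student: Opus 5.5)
We prove \cref{lem:antiuniform_approx_nilsequence} with the arithmetic regularity lemma (\cref{thrm:arithmetic_regularity}). Applied to $f$, it gives $f = \psi + r + u$ with $\psi$ structured, $r$ small in $L^2$ and $u$ uniform. The anti-uniformity hypothesis then shows that $f$ correlates only negligibly with $u$. This forces $u$ itself to be small in $L^2$, so $f$ is close to $\psi$.

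\emph{Reduction to real $[0,1]$-valued functions.} Since \cref{thrm:arithmetic_regularity} is stated for $[0,1]$-valued functions, we first write $f = f_1 + i f_2$ with $f_1, f_2$ real and $1$-bounded, and set $f_j' := (f_j + 1)/2 \in [0,1]$. The anti-uniformity hypothesis passes to $f_j'$ after adjusting constants. For real $1$-bounded $g$, the correlation $\E_{h \in [N]} f_1 \overline{g}$ is the real part of $\E_{h \in [N]} f \overline{g}$, so it is at most $C\|g\|_{U^{s+1}[N]}$; the same argument with $i g$ handles $f_2$. For complex $g$ we split into real and imaginary parts; since $\|\mathrm{Re}\, g\|_{U^{s+1}[N]} \leq \|g\|_{U^{s+1}[N]}$ fails in general, it is more convenient to apply the bound only to the real functions $u$ that actually occur below. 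The constant function $1/2$ contributes $\E_{h \in [N]} \overline{g}/2$, which is controlled by $\|g\|_{U^1[N]} \lesssim \|g\|_{U^{s+1}[N]}$. A constant is a nilsequence of bounded complexity, and sums of boundedly many nilsequences of bounded complexity are again nilsequences of bounded complexity (take the product nilmanifold). So it suffices to treat a $[0,1]$-valued $f$ satisfying $|\E_{h \in [N]} f u| \leq C'\|u\|_{U^{s+1}[N]}$ for real $1$-bounded $u$, and to approximate it by a $[0,1]$-valued, hence $1$-bounded, nilsequence. Truncating at the end to keep the combined nilsequence $1$-bounded costs some Lipschitz constant, which is harmless.

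\emph{Main step.} Apply \cref{thrm:arithmetic_regularity} with $\epsilon' = \epsilon/100$ and a growth function $\mcf$ to be chosen, obtaining $f = \psi + r + u$. We expand
\[
\E_{h \in [N]}|u|^2 = \E_{h \in [N]} u(f - \psi - r) .
\]
Each term is estimated separately:
\begin{enumerate}[(i)]
\item $|\E_{h \in [N]} u f| \leq C'\|u\|_{U^{s+1}[N]} \leq C'/\mcf(M)$, by the hypothesis.
\item $|\E_{h \in [N]} u \psi| \leq \|u\|_{U^{s+1}[N]}\cdot c_M$, with $c_M$ depending only on $s$ and $M$. This follows from the contrapositive of \cref{prop:necessity_inverse_theorem}: a $1$-bounded function whose $U^{s+1}[N]$-norm is below $c(s,M,\eta)$ has correlation below $\eta$ with every nilsequence of complexity $M$.
\item $|\E_{h \in [N]} u r| \leq \|r\|_{L^2} \leq \epsilon'$, by Cauchy--Schwarz and $|u| \leq 1$.
\end{enumerate}
To make (ii) small, we choose $\mcf$ so that $1/\mcf(M) < c(s, M, \epsilon')$, where $c(s,M,\epsilon')$ is the constant from \cref{prop:necessity_inverse_theorem}. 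This gives $|\E_{h \in [N]} u\psi| < \epsilon'$. We also require $\mcf(M) \geq C'/\epsilon'$, which makes (i) at most $\epsilon'$. Both requirements can be met by a monotone growth function, taking a running maximum if necessary.

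\emph{Conclusion.} The three estimates give $\|u\|_{L^2}^2 \leq 3\epsilon'$, and therefore
\[
\E_{h \in [N]} |f - \psi|^2 \leq 2\|r\|_{L^2}^2 + 2\|u\|_{L^2}^2 \leq \epsilon .
\]
The resulting complexity $M(s,\epsilon',\mcf)$ depends only on $s$, $\epsilon$ and $C$, as required.

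The main obstacle is step (ii): extracting a uniform quantitative bound on the correlation of nilsequences with uniform functions from \cref{prop:necessity_inverse_theorem}, and arranging that $\mcf$ is chosen before $M$ is produced. This works precisely because $M$ is bounded in terms of $\mcf$, and that dependence is exactly what the growth-function formulation of \cref{thrm:arithmetic_regularity} provides. The reduction to real $[0,1]$-valued functions is routine bookkeeping.
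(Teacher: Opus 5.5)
Your proposal is correct and follows essentially the paper's route: the arithmetic regularity lemma with a growth function chosen to exceed both $1/c(s,M,\epsilon/100)$ and $100C/\epsilon$, followed by the same three estimates (the hypothesis, the contrapositive of \cref{prop:necessity_inverse_theorem}, and Cauchy--Schwarz). The differences are cosmetic: you reduce to shifted real and imaginary parts and bound $\|u\|_{L^2}$ first, whereas the paper splits $f$ into four positive and negative parts and expands $\|f-\psi_0\|_{L^2}^2=\langle f-\psi_0, r+u\rangle_{L^2}$ directly. Three harmless slips, none of which affects correctness:
\begin{enumerate}[(a)]
\item Item (ii), as first written, asserts a linear bound $c_M\|u\|_{U^{s+1}[N]}$. \cref{prop:necessity_inverse_theorem} does not provide this; it only gives the qualitative contrapositive, which is what you actually use.
\item The inequality $\|\mathrm{Re}\, g\|_{U^{s+1}[N]}\le\|g\|_{U^{s+1}[N]}$ does hold, by the triangle inequality together with $\|\overline{g}\|_{U^{s+1}[N]}=\|g\|_{U^{s+1}[N]}$.
\item For the final truncation, the radial projection onto $\overline{\D}$ costs nothing in Lipschitz norm and cannot increase $|f-\psi|$ pointwise. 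This last property is what you need, and it should be stated explicitly.
\end{enumerate}
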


\begin{proof}
    Denote the $L^2$-norm on $[N]$ by $\| g \|_{L^2} := (\E_{h \in [N]} |g(h)|^2)^{1/2}$, and let $\langle \cdot, \cdot \rangle_{L^2}$ be the associated inner product.  

    Fix $\epsilon > 0$. 
    Choose a growth function $\mcf \colon \R^+ \to \R^+$ such that 
    \[
    \mcf(M) > \frac{1}{c(s, M, \epsilon/100)} + \frac{100 C}{\epsilon} 
    \]
    for each $M \geq 1$, where $c(s, M, \epsilon)$ is the constant from \cref{prop:necessity_inverse_theorem}.  

    By taking the positive and negative parts of the real and imaginary parts of $f$, we can write $f$ as a linear combination of four functions taking values in $[0, 1]$. 
    Applying \cref{thrm:arithmetic_regularity} to these four functions separately, we obtain $M = M(s, \epsilon, C) \geq 1$, and a decomposition 
    \[
    f = \psi_0 + r + u 
    \]
    where $\psi_0, r, u \colon [N] \to \C$ are all bounded in magnitude by $4$, $\psi_0$ is a nilsequence of degree $s$ and complexity $M$, $\| r \|_{L^2} \leq \epsilon/100$, and $\| u \|_{U^{s+1}[N]} \leq 1/\mcf(M)$.  

    Since $f - \psi_0 = r + u$, we have that 
    \[
    \| f - \psi_0 \|_{L^2}^2 
    = |\langle f - \psi_0, r + u \rangle_{L^2}| 
    \leq |\langle f, u \rangle_{L^2}| + |\langle \psi_0, u \rangle_{L^2}| + |\langle f - \psi_0, r \rangle_{L^2}|. 
    \]
    We bound these three terms separately. 
    By the assumption on $f$, applied to the $1$-bounded function $\overline{u}/4$, we obtain
    \[
    |\langle f, u \rangle_{L^2}| \leq C \| u \|_{U^{s+1}[N]} 
    \leq C/\mcf(M) \leq \epsilon/100. 
    \]
    Moreover, since $\| u/4 \|_{U^{s+1}[N]} \leq 1/(4\mcf(M)) < c(s, M, \epsilon/100)$, it follows from \cref{prop:necessity_inverse_theorem} applied to the $1$-bounded function $\overline{u}/4$ that 
    \[
    |\langle \psi_0, u \rangle_{L^2}| \leq 4\epsilon/100.  
    \]
    Finally, since $f$ and $\psi_0$ are bounded in magnitude by $4$, by Cauchy--Schwarz it follows that 
    \[
    |\langle f - \psi_0, r \rangle_{L^2}| 
    \leq 8 \| r \|_{L^2}  \leq 8 \epsilon/100. 
    \] 
    Combining the three bounds, we see that 
    \[
    \| f - \psi_0 \|_{L^2}^2 \leq \epsilon. 
    \]
    It remains to replace $\psi_0$ by a $1$-bounded nilsequence. 
    Let $P \colon \C \to \overline{\D}, z \mapsto z/\max\{1, |z|\}$ be the radial projection onto the closed unit disk, and let  
    \[
    \psi := P \circ \psi_0.
    \] 
    Since $P$ is a contraction, $\psi$ is still a $1$-bounded nilsequence of degree $s$ and complexity $M$. 
    Moreover, since $f$ is $1$-bounded,  
    \[
    \| f - \psi \|_{L^2}^2 
    \leq \| f - \psi_0 \|_{L^2}^2 
    \leq \epsilon. 
    \]
\end{proof}

\chapter{A factor criterion}
\label[appendix]{chap:factor_criterion_appendix}

In this appendix we prove the criterion used in the proof of the weak structure theorem in \cref{chap:weak_structure_thrm} to identify one measure-preserving system as a factor of another. 
Recall from \cref{sec:measure_preserving_systems} our standing assumption that all probability spaces are standard, and that for a measure-preserving system $(X, \mu, T)$, we write $T^h f := f \circ T^h$ for the Koopman action on functions.  

We will need the following standard characterization of factors; see for example \citep[Chapter 12]{eisner2015operator}. 
This works precisely because we work on standard probability spaces. 

\begin{lemma}
    \label[lemma]{lem:factor_Linfty_homomorphism}
    Let $(X, \mu, T)$ and $(Y, \nu, S)$ be measure-preserving systems. 
    Then $(X, \mu, T)$ is a factor of $(Y, \nu, S)$ if and only if there exists a unital $\ast$-homomorphism
    \[
    \phi \colon L^\infty(X, \mu) \to L^\infty(Y, \nu) 
    \] 
    intertwining the Koopman operators $T$ and $S$, and preserving integrals, meaning that $\int_Y  \phi f \dd\nu = \int_X f \dd\mu$ for all $f \in L^\infty(X, \mu)$. 
\end{lemma}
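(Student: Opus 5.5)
The plan is to prove the two directions separately, using standardness of the underlying probability spaces in both.

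\emph{Forward direction.} Suppose $\pi \colon (Y, \nu, S) \to (X, \mu, T)$ is a factor map. Define $\phi f := f \circ \pi$. Because $\pi_* \nu = \mu$, this is well defined on equivalence classes: if $f = f'$ $\mu$-a.e., then $f \circ \pi = f' \circ \pi$ $\nu$-a.e. It is clearly a unital $\ast$-homomorphism. It preserves integrals by the change of variables formula. It intertwines the Koopman operators because $\pi \circ S = T \circ \pi$ holds $\nu$-a.e.

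\emph{Converse direction.} Here we need to build a point map from the algebra homomorphism, and standardness is essential.
\begin{enumerate}
  \item Replace $(X, \mu)$ by an isomorphic compact metric model: $X$ is a Borel subset of a compact metric space $\wt X$ carrying $\mu$. Choose a countable family $(A_k)$ of Borel sets that generates the $\sigma$-algebra and separates points. Equivalently, embed $X$ into $\{0,1\}^\N$ via $x \mapsto (\ind{A_k}(x))_k$.
  \item Set $e_k := \phi(\ind{A_k})$. Since $\phi$ is a unital $\ast$-homomorphism, $e_k^2 = e_k = \overline{e_k}$, so each $e_k$ is a.e.\ the indicator of some measurable set $B_k \subseteq Y$. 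Define $\pi \colon Y \to \{0,1\}^\N$ by $\pi(y) := (\ind{B_k}(y))_k$.
  \item Check that $\pi_* \nu$ equals the image of $\mu$ under the embedding. Multiplicativity gives $\phi(\ind{A_{k_1} \cap \dots \cap A_{k_r}}) = \ind{B_{k_1} \cap \dots \cap B_{k_r}}$, and complements are handled by $\phi(1 - \ind{A}) = 1 - \phi(\ind{A})$. Integral preservation then shows that the two measures agree on all cylinder sets, hence everywhere by a $\pi$-$\lambda$ argument.
  \item It follows that $\pi$ lands in the image of $X$ $\nu$-a.e. Modifying $\pi$ on a null set gives a measurable, measure-preserving map $\pi \colon Y \to X$. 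By construction $\phi(\ind{A}) = \ind{A} \circ \pi$ for all $A$ in the algebra generated by the $A_k$.
  \item Extend this identity to all $f \in L^\infty(X, \mu)$. A $\ast$-homomorphism between $L^\infty$ spaces is contractive, and the identity is linear, so it extends by density of simple functions on that algebra. These are dense in $L^1$, and $\phi$ is $L^1$-continuous on bounded sets because it preserves integrals of $|f|^2 = f\overline{f}$.
  \item Equivariance comes last. We have $\phi(Tf) = S\phi(f)$, which gives $\ind{A_k} \circ T \circ \pi = \ind{A_k} \circ \pi \circ S$ $\nu$-a.e.\ for every $k$. Since the $A_k$ separate points, $T \circ \pi = \pi \circ S$ holds $\nu$-a.e.
\end{enumerate}

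The main obstacle is step 4, passing from agreement of measures to a genuine point map into $X$. This requires the measure-theoretic bookkeeping that the image of $X$ is a Borel set of full measure, which is exactly where standardness (Kuratowski/Souslin) is used. Alternatively, one can simply invoke the cited result in \citep[Chapter 12]{eisner2015operator}.
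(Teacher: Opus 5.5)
Your proof is correct. The paper gives no proof of this lemma: it quotes it from \citep[Chapter 12]{eisner2015operator}. There it is an instance of von Neumann's theorem that integral-preserving homomorphisms of measure algebras over a standard space are induced by measurable point maps, proved by passing to metric topological models.

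Your argument is a concrete, self-contained version of that standard proof:
\begin{itemize}
\item coding $X$ into $\{0,1\}^{\N}$ by a countable separating generating family plays the role of the topological model;
\item the projections $\phi(\ind{A_k})$ define the candidate point map;
\item multiplicativity, unitality and integral preservation identify the two measures on cylinder sets;
\item standardness is used exactly once, through Lusin--Souslin, to invert the coding on a Borel set of full measure.
\end{itemize}
What this buys is transparency: it shows that only $X$ needs to be standard. It is also essentially the device the paper itself uses to prove \cref{prop:polynomial_factor_criterion}. There, points are coded by countably many bounded functions into a compact product, the pushforward measures are matched through moments, and standardness turns the coding into an isomorphism.

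Step 5 deserves a cleaner justification. The property doing the work there is not $C^*$-contractivity, since simple functions over a countable algebra are not $L^\infty$-dense. It is the fact that $\phi$ is an $L^2$-isometry: $\int_Y |\phi f|^2 \dd\nu = \int_Y \phi(f\overline{f}) \dd\nu = \int_X |f|^2 \dd\mu$. So $\phi$ and $f \mapsto f \circ \pi$ are two linear $L^2$-isometries on $L^\infty(X,\mu)$ that agree on the $L^2$-dense subspace of simple functions over the algebra generated by the $A_k$. Hence they agree everywhere, which is exactly what Step 6 needs when applied to $f = \ind{A_k} \circ T$.
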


\cref{lem:factor_Linfty_homomorphism} explains the hypothesis in the following criterion. 
The condition \eqref{eq:appendix_moment_identity} is exactly the statement that the assignment $T^h f_n \mapsto S^h \wt f_n$ respects all algebraic relations and all integrals, and therefore gives a plausible way to define a homomorphism as in the lemma. 
We give a different proof, where we realize both families of functions on a common factor of both systems, and use this to produce a factor map directly. 
For $d \geq 1$, we write $\Q(i)[z_1, \dots, z_d, \overline{z_1}, \dots, \overline{z_d}]$ for the polynomial ring over $\Q(i)$ in the $2d$ formal variables $z_1, \dots, z_d, \overline{z_1}, \dots, \overline{z_d}$. 

\begin{proposition}
    \label[proposition]{prop:polynomial_factor_criterion}
    Let $(X, \mu, T)$ and $(Y, \nu, S)$ be measure-preserving systems, and let $(f_n)_{n \geq 1}$ and $(\wt f_n)_{n \geq 1}$ be sequences in $L^\infty(X, \mu)$ and $L^\infty(Y, \nu)$ respectively, such that   
    \[
    \Span\{T^h f_n : n \geq 1, h \in \Z\} 
    \]
    is dense in $L^2(X, \mu)$.
    Suppose that for each $d \geq 1$, all $n_1, \dots, n_d \geq 1$, all $h_1, \dots, h_d \in \Z$, and every $P \in \Q(i)[z_1, \dots, z_d, \overline{z_1}, \dots, \overline{z_d}]$, we have that 
    \begin{equation}
    \label{eq:appendix_moment_identity}
    \begin{aligned}
        &\int_Y P(S^{h_1} \wt f_{n_1}, \dots, S^{h_d} \wt f_{n_d}, \overline{S^{h_1} \wt f_{n_1}}, \dots, \overline{S^{h_d} \wt f_{n_d}}) \dd\nu \\
        &\qquad = \int_X P(T^{h_1} f_{n_1}, \dots, T^{h_d} f_{n_d}, \overline{T^{h_1} f_{n_1}}, \dots, \overline{T^{h_d} f_{n_d}}) \dd\mu. 
    \end{aligned}
    \end{equation}
    Then $(X, \mu, T)$ is a factor of $(Y, \nu, S)$. 
\end{proposition}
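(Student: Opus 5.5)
We realize both families of functions as coordinates of a common model. All functions $f_n$ and $\wt f_n$ may be assumed $1$-bounded after rescaling, since \eqref{eq:appendix_moment_identity} is invariant under rescaling both sides by the same constant. Let $\Omega := \overline{\D}^{\N \times \Z}$ with the product topology; this is a compact metrizable space carrying the shift $\sigma((w_{n,h})) := (w_{n,h+1})$. Define measurable maps
\[
\Phi \colon X \to \Omega, \quad \Phi(x) := (f_n(T^h x))_{n,h}, \qquad \wt\Phi \colon Y \to \Omega, \quad \wt\Phi(y) := (\wt f_n(S^h y))_{n,h},
\]
where representatives are chosen so that $|f_n(T^h x)| \le 1$ off a null set. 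These maps intertwine $T$, respectively $S$, with $\sigma$ almost everywhere. Set $\rho := \Phi_*\mu$ and $\wt\rho := \wt\Phi_*\nu$.

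The first step is to show $\rho = \wt\rho$. Integrating a polynomial in finitely many coordinates and their conjugates against $\rho$ gives the right side of \eqref{eq:appendix_moment_identity}, and against $\wt\rho$ it gives the left side. Rational-coefficient polynomials are uniformly dense, by Stone--Weierstrass, in the continuous functions of finitely many coordinates on $\overline{\D}^d$, and these in turn are dense in $C(\Omega)$. Hence $\rho = \wt\rho$ as Borel measures. Consequently both $\Phi \colon (X,\mu,T) \to (\Omega,\rho,\sigma)$ and $\wt\Phi \colon (Y,\nu,S) \to (\Omega,\rho,\sigma)$ are factor maps onto the same system.

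The main step, and the main obstacle, is to show that $\Phi$ is an isomorphism, i.e.\ that $\Phi^{-1}(\mathcal{B}_\Omega) =_\mu \mcx$. Here the density hypothesis enters. Each $T^h f_n = w_{n,h} \circ \Phi$ is $\Phi^{-1}(\mathcal{B}_\Omega)$-measurable. Therefore the closed subspace $L^2(X,\Phi^{-1}(\mathcal{B}_\Omega),\mu)$ contains $\Span\{T^h f_n\}$, which is dense, and so it equals $L^2(X,\mu)$. In particular every indicator $\ind{A}$ with $A \in \mcx$ agrees almost everywhere with a $\Phi^{-1}(\mathcal{B}_\Omega)$-measurable function, giving $\mcx \subseteq_\mu \Phi^{-1}(\mathcal{B}_\Omega)$. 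By the correspondence between invariant sub-$\sigma$-algebras and factors on standard spaces from \cref{sec:measure_preserving_systems}, the map $\Phi$ is then an isomorphism of $(X,\mu,T)$ with $(\Omega,\rho,\sigma)$. One should double-check that the identity factor of $X$ and the factor $\Phi$ generate the same $\sigma$-algebra up to null sets, so the uniqueness clause yields an isomorphism $\Phi$ with a measurable inverse.

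Finally we compose. The map $\Phi^{-1} \circ \wt\Phi \colon (Y,\nu,S) \to (X,\mu,T)$ is measure-preserving, since $\wt\Phi_*\nu = \rho = \Phi_*\mu$, and it intertwines $S$ with $T$ almost everywhere. It is therefore a factor map, which proves the proposition. One could alternatively construct the $\ast$-homomorphism of \cref{lem:factor_Linfty_homomorphism}, namely $g \circ \Phi \mapsto g \circ \wt\Phi$ for bounded Borel $g$ on $\Omega$, which is well defined because $\rho = \wt\rho$. The factor-map route above avoids needing that lemma.
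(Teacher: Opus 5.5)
Your proposal is correct and follows essentially the same route as the paper. Both arguments build the shift model on a countable product of closed disks, identify the two pushforward measures via Stone--Weierstrass, deduce that $\Phi$ is an isomorphism from the $L^2$-density hypothesis, and compose $\Phi^{-1}\circ\wt\Phi$. The only cosmetic difference is that you normalize both families to be $1$-bounded, whereas the paper proves $\|\wt f_n\|_{L^\infty(\nu)}=\|f_n\|_{L^\infty(\mu)}$ via $L^{2p}$-norms and works on $\prod_{n,h}\overline{\D}_{R_n}$. Your normalization is legitimate provided you rescale $f_n$ and $\wt f_n$ by a common constant $c_n>0$ with $c_n\max\{\|f_n\|_{L^\infty(\mu)},\|\wt f_n\|_{L^\infty(\nu)}\}\le 1$. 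You should also note that the moment identity holds for every monomial, hence for all complex-coefficient polynomials, so that it survives this rescaling.
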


\begin{proof} 
    First note that \eqref{eq:appendix_moment_identity} remains valid for polynomials $P$ with coefficients in $\C$. 
    Indeed, every monomial has coefficient $1 \in \Q(i)$, so by linearity of the integral, \eqref{eq:appendix_moment_identity} holds for all polynomials. 
    Next, we observe that the assignment $f_n \mapsto \wt f_n$ preserves the $L^\infty$-norm. 
    Applying \eqref{eq:appendix_moment_identity} with $d = 1$, $h_1 = 0$, and with $P(z,\overline{z}) = z^p \overline{z}^p$ gives $\| \wt f_n \|_{L^{2p}(\nu)} = \| f_n \|_{L^{2p}(\mu)}$. 
    Letting $p \to \infty$, we obtain 
    \[
    \| \wt f_n  \|_{L^\infty(\nu)} = \| f_n \|_{L^\infty(\mu)} =: R_n
    \] 
    for all $n \geq 1$. 
    Fix measurable representatives of $f_n$ and $\wt f_n$ that are bounded everywhere by $R_n$. 

    We now construct a common factor of both systems.  
    Let $\overline{\D}_R := \{z \in \C : |z| \leq R\}$, and consider the product 
    \[
    Z := \prod_{n \geq 1} \prod_{h \in \Z} \overline{\D}_{R_n}. 
    \]
    Then $Z$ is compact and metrizable. 
    We denote an element in $Z$ by $z = (z_{n,h})_{n \geq 1, h \in \Z}$. 
    Define the shift  
    \[
    \sigma \colon Z \to Z, \quad (z_{n,h})_{n \geq 1, h \in \Z} \mapsto (z_{n,h+1})_{n \geq 1, h \in \Z}, 
    \]
    which is a homeomorphism. 
    Because $Z$ is a countable product of second countable spaces, the Borel $\sigma$-algebra on $Z$ is generated by the coordinate projections $\pi_{n,h} \colon Z \to \C, z \mapsto z_{n,h}$. 

    Define the measurable maps 
    \[
    \phi \colon X \to Z, \quad x \mapsto (f_n(T^h x))_{n \geq 1, h \in \Z},   
    \]
    and 
    \[
    \wt \phi \colon Y \to Z, \quad y \mapsto (\wt f_n(S^h y))_{n \geq 1, h \in \Z}.   
    \]
    Then $\phi \circ T = \sigma \circ \phi$, and $\wt \phi \circ S = \sigma \circ \wt \phi$. 
    Consequently, the pushforward measures 
    \[
    \lambda := \phi_* \mu, \quad \wt \lambda := \wt \phi_* \nu 
    \]
    are $\sigma$-invariant Borel probability measures on $Z$. 
    Hence $(Z, \lambda, \sigma)$ and $(Z, \wt \lambda, \sigma)$ are measure-preserving systems and $\phi$ and $\wt \phi$ are factor maps. 

    We claim that $\lambda = \wt \lambda$.  
    Let $\mca \subseteq C(Z)$ be the unital $\ast$-subalgebra generated by the coordinate functions $\pi_{n,h}$. 
    The elements of $\mca$ are exactly the polynomials with complex coefficients in finitely many coordinates and their complex conjugates. 
    For such a polynomial, \eqref{eq:appendix_moment_identity} implies that 
    \begin{align*}
    \int_Z P(\pi_{n_1, h_1}, \dots, \overline{\pi_{n_d, h_d}}) \dd \lambda 
    &= \int_X P(T^{h_1} f_{n_1}, \dots, \overline{T^{h_d} f_{n_d}}) \dd\mu \\
    &= \int_Y P(S^{h_1} \wt f_{n_1}, \dots, \overline{S^{h_d} \wt f_{n_d}}) \dd\nu
    = \int_Z P(\pi_{n_1, h_1}, \dots, \overline{\pi_{n_d, h_d}}) \dd \wt\lambda. 
    \end{align*}
    Therefore, $\int_Z a \dd\lambda = \int_Z a \dd\wt\lambda$ for $a \in \mca$. 
    By the Stone--Weierstrass theorem, the algebra $\mca$ is uniformly dense in $C(Z)$, so it follows that 
    \[
    \int_Z g \dd\lambda = \int_Z g \dd\wt\lambda
    \quad \text{for } g \in C(Z). 
    \]
    We conclude that $\lambda = \wt \lambda$. 

    We next claim that $\phi \colon (X, \mu, T) \to (Z, \lambda, \sigma)$ is an isomorphism. 
    Since the $\sigma$-algebra on $Z$ is generated by the coordinates, the corresponding sub-$\sigma$-algebra of $X$ is generated by functions $\pi_{n,h} \circ \phi = T^h f_n$ for $n \geq 1$ and $h \in \Z$. 
    By assumption, the span of these functions is dense in $L^2(X, \mu)$. 
    It follows that the sub-$\sigma$-algebra corresponding to the factor $\phi$ equals all of $\mcx$ modulo $\mu$, so $\phi$ is an isomorphism by the correspondence between $\sigma$-algebras and factors; see \cref{sec:measure_preserving_systems}.

    Let $\phi^{-1}$ denote the measurable inverse of $\phi$. 
    The composition 
    \[
    \phi^{-1} \circ \wt \phi \colon Y \to X
    \]
    is a factor map, so $(X, \mu, T)$ is a factor of $(Y, \nu, S)$ as claimed. 
\end{proof}

\printbibliography[heading=bibintoc]

\end{document}